\newcommand{\function}[5]{\begin{array}{cccc} #1 : & #2 & \longrightarrow & #3 \\ & #4 & \longmapsto & #5 \end{array}}
\newtheorem{thm}{Theorem}[section]
\newtheorem{cor}[thm]{Corollary}
\newtheorem{prop}[thm]{Proposition}
\newtheorem{lem}[thm]{Lemma}
\theoremstyle{definition}
\newtheorem{defn}[thm]{Definition}
\newtheorem{defns}[thm]{Definitions}
\newtheorem{exmps}[thm]{Examples}
\newtheorem{notn}[thm]{Notation}
\newtheorem{notns}[thm]{Notations}
\theoremstyle{definition}
\newtheorem{rem}[thm]{Remark}
\numberwithin{equation}{section}
\newcommand{\norm}[1]{%
\lnorm #1 \rnorm%
}
\newcommand{\triplenorm}[1]{{\left\vert\kern-0.25ex\left\vert\kern-0.25ex\left\vert #1 
    \right\vert\kern-0.25ex\right\vert\kern-0.25ex\right\vert}}
\newcommand{\hsigball}[1]{%
    B_{#1}^{H^{\sigma}}%
    }                   
\newcommand{\p}{\partial}
\newcommand{\lnorm}{\left\lVert} 
\newcommand{\rnorm}{\right\rVert} 
\newcommand{\Hgmt}{H^{s-\frac{1}{2}-}(\T)}
\newcommand{\hsig}{H^{\sigma}}
\newcommand{\hsigt}{\hsig(\T)}
\newcommand{\hsignorm}[1]{%
\lnorm #1 \rnorm_{\hsig}%
}
\newcommand{\linc}{k_1-k_2+...-k_6=0}
\newcommand{\Omgz}{\Omega(\vec{k}) = 0}
\newcommand{\Omgk}{\Omega(\vec{k})}
\newcommand{\Omgeqkp}{\Omega(\vec{k}) = \kappa}
\newcommand{\Omgnz}{\Omega(\vec{k}) \neq 0}
\newcommand{\lincba}{p_1-p_2+...+p_5 =k_1}
\newcommand{\lincbb}{q_1-q_2+...+q_5 =k_2}
\newcommand{\Ltwo}{\L^2}
\newcommand{\Lsix}{\L^6}
\newcommand{\Lp}{\L^p}
\newcommand{\hsp}{\hspace{0.1cm}}
\newcommand{\cjg}[1]{%
  \overline{#1}%
  }                     
\newcommand{\tld}{\widetilde}
\renewcommand{\hat}{\widehat}
\newcommand{\lra}{\longrightarrow}
\newcommand{\ra}{\rightarrow}
\newcommand{\rla}{\leftrightarrow}
\newcommand{\tendsto}[1]{%
\underset{#1}{\lra}%
}
\renewcommand{\Re}{\textnormal{Re}}
\renewcommand{\Im}{\textnormal{Im}}
\newcommand{\C}{\mathbb{C}}		
\newcommand{\E}{\mathbb{E}}		
\renewcommand{\L}{\mathbb{L}}	
\newcommand{\N}{\mathbb{N}}		
\renewcommand{\P}{\mathbb{P}}	
\newcommand{\R}{\mathbb{R}}		
\newcommand{\T}{\mathbb{T}}
\newcommand{\Z}{\mathbb{Z}}		
\renewcommand{\1}{\mathds{1}}			
\newcommand{\cA}{\mathcal A}		
\newcommand{\cB}{\mathcal B}		
\newcommand{\cC}{\mathcal C}	
\newcommand{\cD}{\mathcal D}
\newcommand{\cI}{\mathcal I}		
\newcommand{\cN}{\mathcal N}	
\newcommand{\cQ}{\mathcal Q}	
\newcommand{\cR}{\mathcal R}	
\newcommand{\cS}{\mathcal S}
\newcommand{\cT}{\mathcal T}		
\newcommand{\frkR}{\mathfrak{R}}
\newcommand{\frkQ}{\mathfrak{Q}}
\newcommand{\frkT}{\mathfrak{T}}
\newcommand{\eps}{\varepsilon}
\title[Transport of Gaussian measures for NLS]{Transport of low regularity gaussian measures for the 1d quintic nonlinear Schrödinger equation}
\author{Alexis Knezevitch}
\begin{document}

\begin{abstract}

We consider the 1d nonlinear Schrödinger equation (NLS) on the torus with initial data distributed according to the Gaussian measure with covariance operator $(1 - \Delta)^{-s}$, where $\Delta$ is the Laplace operator. We prove that the Gaussian measures are quasi-invariant along the flow of (NLS) for the full range $s > \frac{3}{2}$. This improves a previous result obtained by Planchon, Tzvetkov and Visciglia in \cite{planchon2020transport}, where the quasi-invariance is proven for $s=2k$ for all integers $k\geq 1$. In our approach, to prove the quasi-invariance, we directly establish an explicit formula for the Radon-Nikodym derivative $G_s(t,.)$ of the transported measures, which is obtained as the limit of truncated Radon-Nikodym derivatives $G_{s,N}(t,.)$ for transported measures associated with a
truncated system. We also prove that the Radon-Nikodym derivatives belong to $L^p$, $p>1$, with respect to $H^1(\T)$-cutoff Gaussian measures, relying on the introduction of weighted Gaussian measures produced by a normal form reduction, following Sun-Tzvetkov \cite{sun2023quasi}. Additionally, we prove that the truncated densities $G_{s,N}(t,.)$ converges to $G_s(t,.)$ in $L^p$ (with respect to the $H^1(\T)$-cutoff Gaussian measures).

\end{abstract}

\maketitle

\tableofcontents

\section{Introduction}
In this paper, we contribute to the program initiated by Tzvetkov in \cite{tzvetkov2015quasiinvariant} on the transport of Gaussian measures under the flow of Hamiltonian partial differential equations (PDEs). We consider the defocusing quintic nonlinear Schrödinger equation on the torus :

\begin{equation}\label{NLS}
    \begin{cases}
        i\p_t u + \p_x^2 u = |u|^4u, \hspace{.5cm} (t,x) \in \R \times \T \\
        u|_{t=0} = u_0 
    \end{cases}
\end{equation}
This is a Hamiltonian PDE with the associated Hamiltonian :
\begin{equation}\label{hamiltonian}
    H(u) := \frac{1}{2} \int_{\T} |\p_x u|^2 dx + \frac{1}{6} \int_{\T} |u|^6dx
\end{equation}

\subsection{Description of the problem} In the present work, we consider the situation where \eqref{NLS} is globally well posed in a certain Banach space $X$. With such a Banach space $X$, we can invoke, for every time $t \in \R$, the flow of \eqref{NLS} :
\begin{equation*}
    \Phi(t) : \hsp X \lra X
\end{equation*}
which is the continuous map that for any initial data $u_0 \in X$ associates the solution of \eqref{NLS} evaluated at time $t$. \\
Given a Gaussian measure $\mu$ on X (defined on $\cB(X)$, the $\sigma$-algebra of Borel sets of X), we can consider the push-forward measure of $\mu$ under $\Phi(t)$, denoted by $\Phi(t)_\# \mu$, and defined for all $A \in \cB(X)$ as 
\begin{equation*}
    \Phi(t)_\# \mu(A) := \mu(\Phi(t)^{-1}A)
\end{equation*}
We say that $\Phi(t)_\# \mu$ is the \textit{transported measure} of $\mu$ under the flow $\Phi(t)$. This object is of interest because properties on the measure $\Phi(t)_\# \mu$ provide a macroscopic description of the flow. Following the problem raised by Tzvetkov for Hamitlonian PDEs in \cite{tzvetkov2015quasiinvariant}, one wonders if, for every time $t$, the measure $\Phi(t)_\# \mu$ is absolutely continuous with respect to $\mu$. If that is indeed the case, we use the notation $\Phi(t)_\# \mu \ll \mu$. In other words, one wonders if, for any Borel set $A$ in $X$, the following assertion 
\begin{equation}\label{abs continuity}
    \mu(A) = 0 \implies \Phi(t)_\# \mu(A) = 0
\end{equation}
is true. If the answer is positive, we say that the measure $\mu$ is \textit{quasi-invariant} under the flow $\Phi(t)$. In that case, we can invoke the Radon-Nikodym derivative $F_t \in \L^1(d\mu)$ which satisfies:
\begin{equation*}
    \Phi(t)_\# \mu = F_t(u) d\mu
\end{equation*}
and often denoted as $\frac{d \Phi(t)_\# \mu}{d\mu}$. The absolute continuity \eqref{abs continuity} is only a qualitative result because we only obtain the existence of the Radon-Nikodym derivative $F_t$. A more quantitative result would be providing additional information on $F_t$, such as an explicit formula which should be suitably interpreted. \\

The initial data spaces X under consideration will be Sobolev spaces on the torus. We can define Gaussian measures on Sobolev spaces as follows. For any given $s \in \R$, we define the Gaussian measure $\mu_s$ as the law of the random varibale:
\begin{equation}\label{random series S}
        S : \hsp \omega \longmapsto \sum_{n \in \Z} \frac{g_n(\omega)}{\langle n \rangle^s}e^{inx}
\end{equation}
where $\langle n \rangle := (1+n^2)^{\frac{1}{2}}$ and $\{g_n \}_{n \in \Z}$ are independent standard complex-valued Gaussian measures\footnote{in the sense that $g_n = h_n + i l_n$, where $h_n$ and $l_n$ are two independent real Gaussian measures on $\R$ with law $\cN(0,\frac{1}{2})$} on a probability space $(\Omega,\cA,\P)$. More precisely, for $\sigma \in \R$, we have that
\begin{equation*}
    \E \left[ \sum_{n\in\Z} \langle n \rangle^{2\sigma} \left| \frac{g_n}{\langle n \rangle^{s}} \right|^2 \right] < + \infty \iff \sigma < s - \frac{1}{2}
\end{equation*}
so the random series in \eqref{random series S} converges in $\L^2(\Omega, \hsigt) $ if and only if $\sigma < s-\frac{1}{2}$. Thus, $\mu_s = S_\# \P$ is a probability measure on $\cB(\hsigt)$ for all $\sigma < s- \frac{1}{2}$. For more details on Gaussian measures, we refer to \cite{kuo2006gaussian} (see also \cite{bogachev1998gaussian}).
Furthermore, it is well-known that 
\begin{equation*}
    S \in H^{(s-\frac{1}{2})-}(\T) := \bigcap_{\sigma < s-\frac{1}{2} } \hsigt \hspace{.3cm} \text{almost surely},
\end{equation*}
so the transported measure: 
\begin{equation*}
    \Phi(t)_\# \mu_s = (\Phi(t) \circ S)_\# \P
\end{equation*}
makes sense if the flow $\Phi(t)$ is well defined on $H^{(s-\frac{1}{2})-}(\T)$ almost surely. In the situation where $s > \frac{3}{2}$, we have $ H^{(s-\frac{1}{2})-}(\T) \subset H^1(\T)$. And, at the regularity $H^1(\T)$, equation \eqref{NLS} is globally well posed. It follows from the combination of an elementary local wellposedness (thanks to the algebra property of $H^1(\T)$) and the use of the conservation of the Hamiltonian \eqref{hamiltonian} and of the mass:
\begin{align*}
   H(u) &= \frac{1}{2} \int_{\T} |\p_x u|^2 dx + \frac{1}{6} \int_{\T} |u|^6dx, & M(u) &:= \int_\T |u|^2dx
\end{align*}
In conclusion, the transported measure $\Phi(t)_\# \mu_s$ is well defined whenever $s > \frac{3}{2}$, and we can legitimately wonder if it is absolutely continuous with respect to $\mu_s$. For some $s\leq \frac{3}{2}$, it is still possible to construct  the transported measure $\Phi(t)_\# \mu_s$, but our method in this paper only works for $s>\frac{3}{2}$.

\subsection{Formal computation and main results} Formally, we can see the Gaussian measure $\mu_s$ as the measure 
\begin{equation*}
    \frac{1}{Z_s} e^{-\frac{1}{2}\norm{u}_{H^s}^2} du
\end{equation*}
where $du$ is formally the Lebesgue measure (which does not exist on infinite dimensional vector spaces). Let us compute formally $\Phi(t)_\# \mu_s$ in order to predict what the Radon-Nikodym derivative of $\Phi(t)_\# \mu_s$ with respect to $\mu_s$ could be:
\begin{equation*}
    \Phi(t)_\# \mu_s = \Phi(t)_\# \bigl(\frac{1}{Z_s} e^{-\frac{1}{2}\norm{u}_{H^s}^2} du \bigr) = \frac{1}{Z_s} e^{-\frac{1}{2}\norm{\Phi(t)^{-1}u}_{H^s}^2} \Phi(t)_\#du
\end{equation*}
Since \eqref{NLS} is a Hamiltonian PDE, we may formally write that the (non existent) Lebesgue measure is preserved by the flow $\Phi(t)$, that is $\Phi(t)_\#du = du$. Moreover, from the additivity of the flow, we have $\Phi(t)^{-1}=\Phi(-t)$. Hence,
\begin{equation*}
    \Phi(t)_\# \mu_s = \frac{1}{Z_s} e^{-\frac{1}{2}\norm{\Phi(-t)u}_{H^s}^2} du = e^{-\frac{1}{2}(\norm{\Phi(-t)u}_{H^s}^2- \norm{u}_{H^s}^2)} d\mu_s
\end{equation*}
Then, we expect that the actual density of $\Phi(t)_\# \mu_s$ with respect to $\mu_s$ is:
\begin{equation*}
    G_s(t,u):= e^{-\frac{1}{2}(\norm{\Phi(-t)u}_{H^s}^2- \norm{u}_{H^s}^2)}
\end{equation*}
However, since it is known that $\mu_s(H^{s-\frac{1}{2}}(\T))=0$, we have:
\begin{equation*}
    \norm{\Phi(-t)u}_{H^s}^2 = +\infty \hspace{0.2cm} \text{and} \hspace{0.2cm}  \norm{u}_{H^s}^2 = + \infty,\hspace{0.2cm}  \mu_s-\text{almost surely}
\end{equation*}
so this is not even clear that the density $G_s(t,u)$ is well defined on the support of $\mu_s$. But the hope is to observe some cancellation in the difference between $ \norm{\Phi(-t)u}_{H^s}^2$ and $\norm{u}_{H^s}^2$. In order to analyze this difference, we first consider instead an approximated system for $N \in \N$:
\begin{equation*}
    \begin{cases}
        i\p_t u + \p_x^2 u = \Pi_N \left(|\Pi_Nu|^4\Pi_Nu \right), \hspace{0.2cm} (t,x) \in \R \times \T \\
        u|_{t=0} = u_0 
    \end{cases}
\end{equation*}
where $\Pi_N$ is the Dirichlet projector. Denoting by $\Phi_N(t)$ its flow (called the \textit{truncated flow}), we will be able, thanks to a finite-dimensional-type computation in Section~\ref{section Transport of Gaussian measures under the truncated flow}, to prove rigorously that the transported measure $\Phi_N(t)_\#\mu_s$ is indeed:
\begin{equation*}
    \Phi_N(t)_\#\mu_s = e^{-\frac{1}{2}(\norm{\Pi_N \Phi_N(-t)u}_{H^s}^2- \norm{\Pi_N u}_{H^s}^2)} d\mu_s = G_{s,N}(t,u) d\mu_s
\end{equation*}
and the challenge will be to take the limit $N \ra \infty$ into this formula. In order to do so, an integration by parts will give rise to a rewriting of the difference $\norm{\Pi_N \Phi_N(-t)u}_{H^s}^2- \norm{\Pi_N u}_{H^s}^2$ as:
\begin{equation*}
    \begin{split}
   -\frac{1}{2}\bigl( \norm{\Pi_N \Phi_N(-t)u}_{H^s}^2- \norm{\Pi_N u}_{H^s}^2\bigr) &= -\int_0^{-t} \frac{d}{d\tau}\frac{1}{2}\norm{\Pi_N \Phi_N(\tau)u}_{H^s}^2 d\tau \\
    &= R_{s,N}(\Phi_N(-t)u) - R_{s,N}(u)-\int_0^{-t} Q_{s,N}(\Phi_N(\tau)u)d\tau
    \end{split}
\end{equation*}
where $R_{s,N}$ and $Q_{s,N}$ will emerge in Section~\ref{section Poincaré-Dulac normal form reduction and modified energy} from a normal form reduction. Fortunately, we will see, respectively in Section~\ref{section Deterministic properties of the energy correction} and \ref{section the modified energy derivative at 0}, that $R_{s,N}$ and $Q_{s,N}$ are continuous functions on $\hsigt$ (for $\sigma < s-\frac{1}{2}$ close enough to $s-\frac{1}{2}$) that converge pointwisely\footnote{Actually, we will see that the convergence holds uniformly on compact sets of $\hsigt$, which is stronger. See Propositions \ref{prop approx R_s by R_s,N on compact sets} and \ref{prop Qs,N tends to Qs uniformly on compact sets} } to continuous functions respectively denoted  $R_s$ and $Q_s$(the proof of those facts will be postponed to Section~\ref{section Proofs of the deterministic properties}). Hence, the a priori ill-defined quantity $-\frac{1}{2}(\norm{\Phi(-t)u}_{H^s}^2- \norm{u}_{H^s}^2)$ will be seen as:
\begin{equation*}
    \begin{split}
    -\frac{1}{2}\bigl(\norm{\Phi(-t)u}_{H^s}^2- \norm{u}_{H^s}^2 \bigr)&:=  R_{s}(\Phi(-t)u) - R_{s}(u)-\int_0^{-t} Q_{s}(\Phi(\tau)u)d\tau \\
    & = \lim_{N \ra \infty} \big( R_{s,N}(\Phi_N(-t)u) - R_{s,N}(u)-\int_0^{-t} Q_{s,N}(\Phi_N(\tau)u)d\tau \bigr)
    \end{split}
\end{equation*}
And from this, we will be able in Section~\ref{section Transport of Gaussian measures under the flow} to prove that indeed:
\begin{equation*}
    \Phi(t)_\# \mu_s = G_s(t,u) d\mu_s
\end{equation*}

More precisely, assuming that $R_{s,N}$, $R_s$ and $Q_{s,N}$, $Q_s$ have been constructed (see Section~\ref{section Deterministic properties of the energy correction} and \ref{section the modified energy derivative at 0}), we will prove the following result:
\begin{thm}\label{thm Radon-Nikodym derivative for the transported measures}
    Let $s > \frac{3}{2}$ and  $\sigma< s-\frac{1}{2}$ close enough to $s-\frac{1}{2}$. Let $t \in \R$. Then, for every $N \in \N$, the transported measure $\Phi_N(t)_\# \mu_s$ has a density $G_{s,N}(t,.)$ with respect to $\mu_s$ given by:
    \begin{equation*}
        \begin{split}
        G_{s,N}(t,u) &= \textnormal{exp}\big( -\frac{1}{2}(\norm{\Pi_N \Phi_N(-t)u}_{H^s}^2- \norm{\Pi_N u}_{H^s}^2) \big) \\
        &= \textnormal{exp} \left(R_{s,N}(\Phi_N(-t)u) - R_{s,N}(u)-\int_0^{-t} Q_{s,N}(\Phi_N(\tau)u)d\tau \right) 
        \end{split}
    \end{equation*}
    Moreover, the transported measure $\Phi(t)_\# \mu_s$ has a density $G_{s}(t,.)$ with respect to $\mu_s$ given by:
    \begin{equation*}
        G_s(t,u) = \textnormal{exp} \left(R_{s}(\Phi(-t)u) - R_{s}(u)-\int_0^{-t} Q_{s}(\Phi(\tau)u)d\tau \right)
    \end{equation*}
    which is continuous on $\hsigt$. In addition, the densities $G_{s,N}(t,.)$ converge to $G_s(t,.)$ uniformly on compact sets of $\hsigt$.
\end{thm}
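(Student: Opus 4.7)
For the truncated identity, the first equality is a finite-dimensional-type computation to be carried out in Section~\ref{section Transport of Gaussian measures under the truncated flow}: under the decomposition $H^\sigma(\T) = \Pi_N L^2(\T) \oplus (I-\Pi_N)L^2(\T)$, the truncated flow acts on $\Pi_N L^2(\T)$ as a finite-dimensional Hamiltonian system (so Liouville's theorem preserves Lebesgue measure there) and on the orthogonal complement as the linear Schrödinger flow (preserving the corresponding Gaussian factor of $\mu_s$); a change of variables then produces the exponential of the $H^s$-norm difference. The second equality is just a rewriting via the integration-by-parts identity displayed just before the theorem: one expands $\frac{d}{d\tau}\frac12\|\Pi_N\Phi_N(\tau)u\|_{H^s}^2$ along the truncated flow and applies the Poincaré--Dulac normal form reduction of Section~\ref{section Poincaré-Dulac normal form reduction and modified energy}, which splits the integrand into a total derivative $\frac{d}{d\tau}R_{s,N}(\Phi_N(\tau)u)$ plus the remainder $Q_{s,N}(\Phi_N(\tau)u)$.

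For the full flow, I would first establish that $G_{s,N}(t,\cdot) \to G_s(t,\cdot)$ uniformly on compact sets of $H^\sigma(\T)$; this simultaneously gives continuity of $G_s(t,\cdot)$ as a uniform-on-compacts limit of continuous functions. The ingredients are the uniform-on-compacts convergences $R_{s,N}\to R_s$ and $Q_{s,N}\to Q_s$ from Propositions~\ref{prop approx R_s by R_s,N on compact sets} and~\ref{prop Qs,N tends to Qs uniformly on compact sets}, the uniform-on-compacts stability $\Phi_N(\tau)u \to \Phi(\tau)u$ in $H^\sigma$ for $\tau$ in a bounded time interval (a Bona--Smith type consequence of the $H^1(\T)$ global well-posedness of \eqref{NLS}, valid since $H^\sigma \subset H^1$ when $\sigma$ is close enough to $s-\frac12 > 1$), and continuity of the exponential. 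To then identify $G_s(t,\cdot)$ as the Radon--Nikodym derivative of $\Phi(t)_\#\mu_s$, I would fix $F \in C_b(H^\sigma(\T))$ and pass to the limit in the identity
\[
\int_{H^\sigma} F(\Phi_N(t)u)\, d\mu_s(u) = \int_{H^\sigma} F(u)\, G_{s,N}(t,u)\, d\mu_s(u).
\]
The left-hand side tends to $\int F\, d\Phi(t)_\#\mu_s$ by bounded convergence applied to the $\mu_s$-almost sure convergence $\Phi_N(t)u \to \Phi(t)u$ in $H^\sigma$. The right-hand side is treated by Vitali's convergence theorem: the uniform-on-compacts convergence together with tightness of $\mu_s$ yields $G_{s,N}(t,\cdot) \to G_s(t,\cdot)$ in $\mu_s$-probability, so the remaining task is uniform integrability of $\{G_{s,N}(t,\cdot)\}_N$.

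This uniform integrability is the main obstacle. I would handle it in two pieces. On $H^1$-bounded regions, I would invoke the $L^p$ bounds (for some $p>1$) for $G_{s,N}(t,\cdot)$ with respect to $H^1(\T)$-cutoff Gaussian measures obtained via the weighted Gaussian measure construction following Sun--Tzvetkov~\cite{sun2023quasi}; combined with Hölder's inequality, these bounds force $\int_{A \cap \{\|u\|_{H^1} \leq R\}} G_{s,N}(t,u)\, d\mu_s \to 0$ uniformly in $N$ as $\mu_s(A) \to 0$. For the tail $\{\|u\|_{H^1} > R\}$, the conservation of mass and of the truncated Hamiltonian, together with the embedding $H^1(\T)\hookrightarrow L^6(\T)$, provide an $N$-uniform polynomial growth estimate $\|\Phi_N(t)u\|_{H^1} \leq \Psi(\|u\|_{H^1})$, so that
\[
\int_{\{\|v\|_{H^1} > R\}} G_{s,N}(t,v)\, d\mu_s(v) = \mu_s\bigl(\{\|\Phi_N(t)u\|_{H^1} > R\}\bigr) \leq \mu_s\bigl(\{\|u\|_{H^1} > \Psi^{-1}(R)\}\bigr),
\]
and the latter vanishes uniformly in $N$ as $R \to \infty$ by Gaussian concentration.
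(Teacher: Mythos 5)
Your treatment of the truncated identity coincides with the paper's (Liouville's theorem on $E_N$, rotation invariance of the Gaussian factor on $E_N^{\perp}$, then the normal-form rewriting), but for the passage $N\to\infty$ you take a genuinely different route. The paper's proof of $\Phi(t)_\#\mu_s = G_s(t,\cdot)\,d\mu_s$ is purely deterministic: it uses inner regularity of finite measures on the Polish space $\hsigt$ (so it suffices to match the two measures on compact sets), the set inclusions $\Phi_N(-t)(K)\subset \Phi(-t)(K)+B^{H^\sigma}_\eps$ and $\Phi(-t)(K)\subset \Phi_N(-t)(K+B^{H^{\sigma_1}}_\eps)$ from Corollary~\ref{appendix set approximation}, and a two-sided squeeze on $\int_K G_s\,d\mu_s$ versus $\mu_s(\Phi(-t)K)$ (with the technical device of intersecting $K$ with balls of a stronger topology $H^{\sigma_2}$ to retain compactness after fattening). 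The paper explicitly advertises that this argument requires no $L^p$-integrability of the truncated densities. You instead test against $F\in C_b(\hsigt)$ and invoke Vitali, which forces you to prove uniform integrability of $\{G_{s,N}(t,\cdot)\}_N$ and hence to import the entire probabilistic machinery of Sections~\ref{section weighted Gaussian measures}, \ref{section Estimates for the weight of the weighted Gaussian measures} and \ref{section Estimates for the differential of the modified energy} (weighted measures, Wiener chaos, $L^p$ bounds on $R_{s,N}$ and $Q_{s,N}$) already at the stage of Theorem~\ref{thm Radon-Nikodym derivative for the transported measures}. Your scheme does work and is not circular --- the uniform-in-$N$ bound $\norm{G_{s,N}(t,\cdot)}_{L^p(d\mu_{s,R})}\leq C(s,R,p,t)$ of Proposition~\ref{prop Gs,N uniformly bounded in Lp} is derived, for finite $N$, from the truncated density formula and the $L^p$ estimates on $Q_{s,N}$ alone, without reference to the limiting statement --- and your tail estimate via the $N$-uniform a priori bound $\norm{\Phi_N(t)u}_{H^1}\lesssim (1+\norm{u}_{H^1})^3$ (Proposition~\ref{prop exponential bound}) combined with Fernique is sound. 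The trade-off is clear: your argument is the more standard weak-convergence template and avoids the compact-set gymnastics, but it entangles the qualitative Theorem~\ref{thm Radon-Nikodym derivative for the transported measures} with the quantitative Theorem~\ref{thm density in L^p wrt mu_s,R}, whereas the paper keeps the first theorem entirely deterministic and cheaper.

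One small point to make explicit if you pursue your route: the a.s. convergence $\Phi_N(t)u\to\Phi(t)u$ on the left-hand side should be justified from Proposition~\ref{appendix second approximation prop} applied to singletons, and the cutoff in the uniform-integrability split should be phrased with the conserved quantity $\cC(u)$ (or its truncated analogue) rather than the bare $H^1$-ball, so that the restricted measures $\mu_{s,R}$ for which the $L^p$ bounds are stated are exactly the ones you use in the H\"older step.
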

As a consequence, 
\begin{cor}\label{cor mu_s quasi-invariant}
    Let $s > \frac{3}{2}$. Then, the Gaussian measure $\mu_s$ is quasi-invariant along the flow of \eqref{NLS}.
\end{cor}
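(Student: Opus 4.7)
The plan is to deduce the corollary essentially immediately from Theorem~\ref{thm Radon-Nikodym derivative for the transported measures}. That theorem provides an explicit Radon--Nikodym derivative $G_s(t,\cdot)$ of $\Phi(t)_\# \mu_s$ with respect to $\mu_s$; it is continuous (hence Borel measurable) on $\hsigt$ and, being the exponential of a $\mu_s$-a.e. finite real-valued expression, is also finite $\mu_s$-a.e. Thus the identity
\begin{equation*}
    \Phi(t)_\# \mu_s = G_s(t,u)\, d\mu_s
\end{equation*}
is already, by definition, the statement that $\Phi(t)_\# \mu_s$ is absolutely continuous with respect to $\mu_s$.

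To spell out the implication, I would fix $t \in \R$ and pick an arbitrary Borel set $A \in \cB(\hsigt)$ with $\mu_s(A) = 0$. Then
\begin{equation*}
    \Phi(t)_\# \mu_s(A) = \int_A G_s(t,u)\, d\mu_s(u) = 0,
\end{equation*}
which is precisely \eqref{abs continuity} for the measure $\mu_s$ under the map $\Phi(t)$. Since $t \in \R$ was arbitrary, this yields quasi-invariance of $\mu_s$ along the flow of \eqref{NLS}, which is the desired conclusion.

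There is no genuine obstacle in the corollary itself: all the work is already packed into Theorem~\ref{thm Radon-Nikodym derivative for the transported measures} (construction of $R_{s,N}$, $Q_{s,N}$ and their limits $R_s$, $Q_s$ via the normal form reduction, the continuity and convergence statements on $\hsigt$, and the passage to the limit $N \to \infty$ in the truncated density formula through the weighted Gaussian framework). Once the explicit density $G_s(t,\cdot)$ is in hand, quasi-invariance is a direct translation of the definition of absolute continuity, and the corollary follows without any further analytical input.
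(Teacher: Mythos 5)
Your proposal is correct and matches the paper exactly: the corollary is stated as an immediate consequence of Theorem~\ref{thm Radon-Nikodym derivative for the transported measures} (see also Proposition~\ref{prop Radon-Nikodym derivative for the transported GM}, which notes the quasi-invariance "in particular"), and the deduction is precisely the one you give, namely that the density formula $\Phi(t)_\# \mu_s = G_s(t,u)\,d\mu_s$ forces $\Phi(t)_\#\mu_s(A)=\int_A G_s(t,u)\,d\mu_s=0$ whenever $\mu_s(A)=0$.
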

Results of this type were proven recently for many models, see \cite{burq2024almost,debussche2021quasi,forlano_and_soeng2022transport,forlano2022quasi,forlano_and_trenberth2019transport,genovese2023quasi,genovese2022quasi,genovese2023transport,gunaratnam2022quasi,oh_soeng2021quasi,oh2018optimal,oh2019quasi,oh2017quasi,oh2020quasi,planchon2022modified,planchon2020transport,sosoe2020quasi,tzvetkov2015quasiinvariant}.\\
The quasi-invariance of $\mu_s$ along the flow of \eqref{NLS} has already been proven in \cite{planchon2020transport} when $s=2k$, for all integers $k\geq 1$, where the authors relied on modified energy estimates (see Theorem 1.4 in \cite{planchon2020transport}). Here, our approach is different because our aim is to obtain directly the Radon-Nikodym derivative of the transported measures. Such an approach was adopted by Debussche and Tsutsumi in \cite{debussche2021quasi} and later by Genovese-Lucà-Tzvetkov \cite{genovese2023transport} and by Forlano and Seong in \cite{forlano_and_soeng2022transport}. More importantly, we are inspired by the method employed by Sun-Tzvetkov in \cite{sun2023quasi} in the context of the 3d energy critical nonlinear Schrödinger equation. However, in order to reach the full range $s > \frac{3}{2}$ for the quasi-invariance, we will need to employ sharper estimates, notably by incorporating dispersive effects through Strichartz estimates. In addition, the 1d case will allow us to benefit from deterministic properties -- through convergence on compact sets of \textit{truncated densities} -- in order to obtain the explicit formula for the Radon-Nikodym derivative of the transported measure $\Phi(t)_\# \mu_s$. It is however worth noticing that we will not need the \textit{remarkable cancellation} presented in \cite{sun2023quasi}.\\
It would be interesting to prove the quasi-invariance below the threshold $s>\frac{3}{2}$ of this paper. Indeed, the question of quasi-invariance for \eqref{NLS} still arises for smaller $s$ because, thanks to Bourgain in \cite{bourgain2004remark}, we know that \eqref{NLS} is still globally well-posed in $H^{\sigma}(\T)$ for $\sigma>\sigma^*$ with $\sigma^*<\frac{1}{2}$. More precisely, from more recent works we know that \eqref{NLS} is globally well-posed for $\sigma > \frac{2}{5}$, see \cite{Li_Wu_Xu_global} and \cite{bernier2024dynamics}. Besides, we know that the quasi-invariance is true for $s=1$. It is a consequence of an other result obtained by Bourgain in \cite{bourgain1994periodic} which states that the Gibbs measure:
\begin{equation*}
    Gb := \frac{1}{Z}e^{-\frac{1}{6}\norm{u}_{\L^6(\T)}^6}d\mu_1
\end{equation*}
is invariant under the flow of \eqref{NLS} for every time $t\in\R$ (meaning that $\Phi(t)_\# Gb = Gb$). \\
It would be also interesting to see if the recent work of Coe-Tolomeo in \cite{coe2024sharp} may be used to identify a sharp threshold $s_0$ above which the quasi-invariance holds, and under which the transported measure and the initial Gaussian measure are mutually singular for every time. \\

Let us observe that the formula $\Phi(t)_\#\mu_s=G_s(t,.)d\mu_s$ from Theorem~\ref{thm Radon-Nikodym derivative for the transported measures} implies that $G_s(t,.)$ belongs to $\L^1(d\mu_s)$. Thus, it is legitimate to ask if $G_s(t,.)$ belongs to $\L^p(d\mu_s)$, with $p>1$. In the second result of this paper, we provide a partial answer to this question. We prove in Section~\ref{section densities in Lp and convergence} that if we add a $H^1(\T)$-cutoff to the Gaussian measure $\mu_s$, defining the \textit{restricted Gaussian measure}:
\begin{equation}\label{intro restricted GM}
    \mu_{s,R}:= \1_{\{ \cC(u) \leq R \}} \mu_s
\end{equation}
where $\cC(u)$ is the (conserved by the flow) quantity:
\begin{equation}
    \cC(u) := \frac{1}{2} \norm{u}_{\L^2(\T)}^2 + H(u), \hspace{0.2cm} \text{with $H$ the Hamiltonian \eqref{hamiltonian}}
\end{equation}
 then, (with the same $G_{s,N}$ and $G_s$ as before) we have the following result:
\begin{thm}\label{thm density in L^p wrt mu_s,R}
    Let $s>\frac{3}{2}$ and $R>0$. Let $t\in \R$. Then, for every $N\in \N$:
\begin{align*}
     \Phi_N(t)_\#\mu_{s,R} &= G_{s,N}(t,u) d\mu_{s,R} & &\text{and,} & \Phi(t)_\#\mu_{s,R} &= G_{s}(t,u) d\mu_{s,R}
\end{align*}
Moreover, the densities $G_{s,N}(t,.)$, $G_s(t,.)$ belongs to $\L^p(d\mu_{s,R})$; and $G_{s,N}$ converge to $G_s(t,.)$ in $\L^p(d\mu_{s,R})$.
\end{thm}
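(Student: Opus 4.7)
The plan is to lift Theorem~\ref{thm Radon-Nikodym derivative for the transported measures} from $\mu_s$ to $\mu_{s,R}$. For the pushforward identities $\Phi(t)_\#\mu_{s,R} = G_s(t,u)\,d\mu_{s,R}$ and its truncated analogue, I would restrict Theorem~\ref{thm Radon-Nikodym derivative for the transported measures} to the sublevel set $E_R := \{u : \cC(u) \leq R\}$: since $\cC$ is preserved by $\Phi(t)$, one has $\Phi(-t)(A \cap E_R) = \Phi(-t)A \cap E_R$ for every Borel set $A$, and the formula of Theorem~\ref{thm Radon-Nikodym derivative for the transported measures} transfers directly. For the truncated flow the same computation goes through once one notes that the naturally conserved quantity of the truncated system (where $\int |u|^6$ is replaced by $\int |\Pi_N u|^6$ in $\cC$) differs from $\cC$ by a term that is negligible for the cutoff argument, allowing either a direct reduction or a dominated-convergence passage in $N$.

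The heart of the proof is the $L^p(d\mu_{s,R})$ bound on $G_s(t,\cdot)$. I would factor
\[
G_s(t,u) = \exp\bigl(R_s(\Phi(-t)u) - R_s(u)\bigr) \cdot \exp\Bigl(-\int_0^{-t} Q_s(\Phi(\tau)u)\,d\tau\Bigr).
\]
The integral factor is controlled pointwise on $E_R$: the conservation of $\cC$ gives $\norm{\Phi(\tau)u}_{H^1} \leq C(R)$ uniformly in $\tau \in [0,|t|]$, and polynomial $H^1$-growth estimates on $Q_s$ then produce a bound independent of $u \in E_R$. The boundary factor $\exp(R_s(\Phi(-t)u) - R_s(u))$ is the delicate piece, and I would handle it via the weighted-measure scheme of Sun-Tzvetkov \cite{sun2023quasi}: exploit the normal-form decomposition of $R_s$ produced in Section~\ref{section Poincaré-Dulac normal form reduction and modified energy} to isolate an explicit multilinear Gaussian expression, absorb its exponential into a weighted measure $d\nu_s \propto e^{-\alpha \cdot (\textnormal{Gaussian part})}\,d\mu_s$ which remains $L^q$-equivalent to $\mu_s$, then perform the change of variables $v = \Phi(-t)u$ via Theorem~\ref{thm Radon-Nikodym derivative for the transported measures} and use the conservation of $\cC$ to reduce the estimate to an integrability bound of the form $\int e^{c R_s(v)}\,\1_{E_R}\,d\nu_s < \infty$, which one establishes by Wiener chaos estimates on the polynomial pieces of $R_s$.

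Running the same estimates for $G_{s,N}$ with uniform-in-$N$ tracking of the constants yields $\sup_N \norm{G_{s,N}(t,\cdot)}_{L^{p+\eps}(d\mu_{s,R})} < \infty$. Combined with the uniform-on-compact-sets convergence $G_{s,N}(t,u) \to G_s(t,u)$ from Theorem~\ref{thm Radon-Nikodym derivative for the transported measures} and the tightness of $\mu_{s,R}$ on $\hsigt$, this uniform $L^{p+\eps}$ bound delivers uniform integrability of $\{G_{s,N}^p\}$, and Vitali's convergence theorem then gives $G_{s,N}(t,\cdot) \to G_s(t,\cdot)$ in $L^p(d\mu_{s,R})$. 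I expect the main obstacle to be the integrability of the boundary factor: because the support of $\mu_{s,R}$ is bounded in $H^1(\T)$ but not in $\hsigt$ when $\sigma > 1$ (the relevant regime since $s > \frac{3}{2}$), continuity of $R_s$ on $\hsigt$ alone is insufficient, and the normal-form reduction is essential to extract the explicit Gaussian part of $R_s$ and absorb it into the weighted measure. The need to propagate these bounds uniformly in $N$ is exactly what forces the Sun-Tzvetkov weighted-measure formalism rather than a softer compactness argument.
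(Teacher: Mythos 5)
Your outer scaffolding is broadly right: the pushforward identities do follow by restriction to $\{\cC\leq R\}$ using conservation of $\cC$, the weighted Gaussian measures of Section~\ref{section weighted Gaussian measures} are indeed the right vehicle, and your final step (uniform $L^{q}$ bounds for $q>p$ plus convergence in measure, via Vitali) is exactly how the paper closes the $L^p$ convergence. But the core of your argument --- the $L^p(d\mu_{s,R})$ bound on $G_s(t,\cdot)$ itself --- has a genuine gap in both of its two halves. First, you claim the factor $\exp\bigl(-\int_0^{-t}Q_s(\Phi(\tau)u)\,d\tau\bigr)$ is ``controlled pointwise on $E_R$'' by ``polynomial $H^1$-growth estimates on $Q_s$.'' No such estimates exist: $Q_s$ involves the multiplier $\psi_{2s}(\vec k)\sim |k_{(1)}|^{2s}$ with $s>\tfrac32$, and the only deterministic control available (Proposition~\ref{prop continuity of Qs,N and deterministic estimate}) is in $H^\sigma$ with $\sigma$ close to $s-\tfrac12>1$; on the support of $\mu_{s,R}$ this norm is finite but unbounded, so there is no uniform pointwise bound on $E_R$. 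This is precisely why the paper must prove the probabilistic estimate $\|\1_{\{\cC\leq R\}}Q_{s,N}\|_{L^p(d\mu_s)}\lesssim p^\beta$ (Proposition~\ref{prop Qs,N L^p estimate wrt mu_s}, via conditional Wiener chaos). Second, your treatment of the ``boundary factor'' $\exp(R_s(\Phi(-t)u)-R_s(u))$ is circular: after the change of variables $v=\Phi(-t)u$ the Jacobian is exactly the unknown density $G_s$, so you cannot reduce to $\int e^{cR_s(v)}\1_{E_R}\,d\nu_s<\infty$ without already knowing integrability of the transported density.

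The missing idea is the \emph{quantitative quasi-invariance} mechanism. The paper passes to $\rho_{s,R,N}$ precisely so that the boundary terms $R_{s,N}$ cancel and the transported density becomes $\exp(-\int_0^{-t}Q_{s,N}(\Phi_N(\tau)u)d\tau)$ alone (Proposition~\ref{prop Radon-Nikodym derivative for the transported wgm}); it then exploits the identity $\frac{d}{dt}\rho_{s,R,N}(\Phi_N(t)A)=-\int_{\Phi_N(t)A}Q_{s,N}\,d\rho_{s,R,N}$, H\"older, and the $p^\beta$ growth with $\beta<1$ to derive the differential inequality $|F'(t)|\leq Cp^\beta F(t)^{1-1/p}$, which after a Yudovich-type optimization $p\sim 1+\log(1/F(0))$ yields $\mu_{s,R}(\Phi_N(t)A)\leq\mu_{s,R}(A)^{1-\alpha}e^{C(1+|t|)^{1/(1-\beta)}}$ (Propositions~\ref{prop quantitative quasi-invariance for rho_s,R} and~\ref{prop quantitative quasi-invariance for mu_s,R}). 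The $L^p$ bound on $G_{s,N}(t,\cdot)$ then follows not from bounding the density pointwise or factorwise, but from Cavalieri's principle applied to the distribution function: $\mu_{s,R}(G_{s,N,t}>\lambda)\leq\lambda^{-1}\mu_{s,R}(\Phi_N(-t)\{G_{s,N,t}>\lambda\})\leq\lambda^{-1/\alpha}C$. This set-level differential inequality is what breaks the circularity your factorization runs into, and it is the step your proposal would need to supply.
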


We stress the fact that $s > \frac{3}{2}$ is the energy threshold where it is still possible to use the cutoff $\cC$. For $s\leq \frac{3}{2}$, $H^1(\T)$ is strictly contained in the full measure space $H^{(s-\frac{1}{2})-}(\T)$, and we would need to consider a renormalized cutoff as in \cite{tzvetkov2013gaussian}. \\

Our approach to prove Theorem~\ref{thm density in L^p wrt mu_s,R} is to work (instead of directly with the Gaussian measures $\mu_{s,R}$) with \textit{weighted Gaussian measures}, that we define in Section~\ref{section weighted Gaussian measures}. Formally, the idea is to replace the restricted Gaussian measure:
\begin{center}
    $\mu_{s,R}="\frac{1}{Z_s}\1_{\{ \cC(u) \leq R \}}e^{-\frac{1}{2}\norm{u}_{H^s}^2}du"$ by $\rho_{s,R}:= "\frac{1}{Z_s}\1_{\{ \cC(u) \leq R \}}e^{-E_s(u)}du" $
\end{center} 
where $E_s(u)$ is a \textit{modified energy} of the form:
\begin{equation*}
    E_s(u) = \frac{1}{2}\norm{u}_{H^s}^2 + R_s(u)
\end{equation*}
and where $R_s(u)$ is a correction term due to the non-linearity in \eqref{NLS}, which will be produced by the normal form reduction from Section~\ref{section Poincaré-Dulac normal form reduction and modified energy}. For the weighted Gaussian measures, we will be able to prove a quantitative inequality (see Proposition~\ref{prop quantitative quasi-invariance for rho_s,R}) that could be transferred afterwards to $\mu_{s,R}$ (see Proposition~\ref{prop quantitative quasi-invariance for mu_s,R}). \\
Besides, contrary to the proof of Theorem~\ref{thm Radon-Nikodym derivative for the transported measures} (which requires only deterministic considerations), the proof of Theorem~\ref{thm density in L^p wrt mu_s,R} will require in addition probabilistic tools, relying on the fact that the initial data are distributed according to Gaussian measures. Mainly, in order to prove $\L^p$-estimates on $R_s$ and $Q_s$ (respectively in Section~\ref{section Estimates for the weight of the weighted Gaussian measures} and \ref{section Estimates for the differential of the modified energy}), we will use the independence between high and low frequency Gaussians, along with a conditional Wiener chaos estimate (see Lemma~\ref{lem Wiener chaos}). This method was adopted before in \cite{sun2023quasi}. However, we point out that we will use the Wiener chaos estimate with respect to three high-frequency Gaussians (that is with $m=3$ in Lemma~\ref{lem Wiener chaos}) whereas in \cite{sun2023quasi} the authors performed the Wiener chaos with respect to two high-frequency Gaussians (that is with $m=2$ in Lemma~\ref{lem Wiener chaos}). This remark is in fact significant because it will imply in our analysis that a "pairing between generations" (see Section 5 of \cite{sun2023quasi}) cannot occur. On the contrary, such a pairing could occur in \cite{sun2023quasi}, and the authors dealt with it by emphasizing a "remarkable cancellation" (see Section 7 of \cite{sun2023quasi}). \\

\paragraph{\textbf{Organization of the paper}}
We organize this paper as follows: \\
In Section \ref{section Poincaré-Dulac normal form reduction and modified energy}, we perform a normal form reduction where energy-type quantities will emerge. In particular, the normal form reduction will produce two crucial quantities: one called the \textit{energy correction}, denoted $R_{s,N}$, and the other called the \textit{derivative of the modified energy at 0}, denoted $Q_{s,N}$. Section \ref{section Deterministic properties of the energy correction} and Section \ref{section the modified energy derivative at 0} are respectively dedicated to deterministic properties of $R_{s,N}$ and $Q_{s,N}$. In Section \ref{section Transport of Gaussian measures under the truncated flow}, we prove that (for every $s>\frac{3}{2}$) $\mu_s$ is quasi-invariant along the truncated flow $\Phi_N(t)$ (for $N \in \N$). More precisely, we prove the formula $\Phi_N(t)_\# \mu_s = G_{s,N}(t,.)d\mu_s$, providing an explicit formula for $G_{s,N}(t,u)$. In Section \ref{section Transport of Gaussian measures under the flow}, we extend this formula to the flow $\Phi(t)$; we prove that $\Phi(t)_\# \mu_s = G_s(t,.) d\mu_s$, where the density $G_s(t,.)$ will be the pointwise limit of the truncated densities $G_{s,N}(t,.)$. In section \ref{section densities in Lp and convergence}, we prove that the densities $G_{s,N}(t,.)$ and $G_s(t,.)$ belong to $L^p$ with respect to the restrictions of $\mu_s$ on bounded sets of $H^1(\T)$; we also prove that, with respect to these measures, $G_{s,N}(t,.)$ converges to $G_s(t,.)$ in $L^p$. To do so, we will rely on the introduction, in Section \ref{section weighted Gaussian measures}, of weighted Gaussian measures. Then, the remaining part of the paper will be dedicated to the proof of the energy estimates we used in the previous sections.
In Section~\ref{section Tools for the energy estimates}, we gather the deterministic and probabilistic tools of this paper. Section \ref{section Proofs of the deterministic properties} is dedicated to the proof of the deterministic properties stated in Sections \ref{section Deterministic properties of the energy correction} and \ref{section the modified energy derivative at 0}. Section \ref{section Estimates for the weight of the weighted Gaussian measures} and Section \ref{section Estimates for the differential of the modified energy} are respectively dedicated to the proof of $L^p$ estimates on $R_{s,N}$ and $Q_{s,N}$. Finally, in Appendix \ref{appendix Construction and properties of the flow and the truncated flow}, we provide a local and global Cauchy theory for \eqref{NLS} and \eqref{truncated equation} on $\hsigt$, for $\sigma \geq 1$, and we then prove an approximation property of $\Phi(t)$ by $\Phi_N(t)$. Besides, we will prove a decomposition for the truncated flow $\Phi_N(t)$. 

\subsection{Further remarks}
\begin{rem}
    In our approach, we view the difference $\norm{\Phi(t)u}_{H^s}^2- \norm{u}_{H^s}^2$ as the limit of continuous functions on $ \hsigt$, with $\sigma < s-\frac{1}{2}$ close enough to $s-\frac{1}{2}$. An alternative to give a meaning to this quantity (for $u \in \hsigt$) could have been to use a \textit{nonlinear smoothing} for the solutions of \eqref{NLS}. In the context of this paper, with $s>\frac{3}{2}$, the result in Theorem 1 from the interesting work \cite{mcconnell_nonlin_smoothing} implies that for  $\sigma < s-\frac{1}{2}$ (close enough to $s-\frac{1}{2}$), we have the following nonlinear smoothing (for $u \in \hsigt$):
    \begin{equation*}
       v(t) := \Phi(t)u - e^{it\p_x^2} e^{-i\frac{3}{2\pi} \int_0^t \norm{\Phi(\tau)u}_{\L^4}^4 d\tau} u \in \cC([-T,T]; H^{\sigma + 1-\eps}(\T))
    \end{equation*}
    for $0<\eps < 1$, and at least for small time $T>0$.
    Hence, we could have tried to define the difference $\norm{\Phi(t)u}_{H^s}^2- \norm{u}_{H^s}^2$ as :
    \begin{equation*}
        \norm{\Phi(t)u}_{H^s}^2- \norm{u}_{H^s}^2 := \norm{v(t)}_{H^s}^2 + 2 \Re \bigl \langle \langle \nabla \rangle^{2s - \sigma} v(t),  \langle \nabla \rangle^\sigma \big(e^{it\p_x^2} e^{-i\frac{3}{2\pi} \int_0^t \norm{\Phi(\tau)u}_{\L^4}^4 d\tau} u\big) \bigr \rangle_{\L^2(\T)}
    \end{equation*}
    since this formula is true when $u$ is smooth (with $\langle.,. \rangle_{\L^2(\T)}$ the $\L^2(\T)$-scalar product). With $\eps$ close to 0, the $1-\eps$ gain of regularity in the smoothing above implies that for $u \in \hsigt$, $\norm{v(t)}_{H^s(\T)}^2<+\infty$ because $\sigma+1-\eps$ is close to $s+\frac{1}{2}>s$. However, this smoothing is not enough to define the term $\langle \nabla \rangle^{2s-\sigma}v(t)$ because $\sigma + 1 - \eps < 2s-\sigma$, no matter how close $\eps$ and $\sigma$ are close to 0 and $s-\frac{1}{2}$ respectively. For  $\langle \nabla \rangle^{2s-\sigma}v(t)$ to be well-defined, we need a gain of $1+ \eps'$ regularity, where $\eps'>0$.
\end{rem}

\begin{rem} In this paper, we deal with the defocusing NLS, but it would have also been possible to consider the focusing NLS. However, in the focusing case, we would have needed an additional cut-off on small $\L^2(\T)$-initial data, ensuring that the flow of \eqref{NLS} is global and a control on the $H^1(\T)$-norm of the solutions.
\end{rem}

\paragraph{\textbf{Acknowledgments}} This work is partially supported by the ANR project Smooth ANR-22-
CE40-0017. The author is grateful to his advisors Chenmin Sun and Nikolay Tzvetkov for suggesting this problem and for their valuable advice. The author would also like to thank Tristan Robert for pointing out the reference \cite{mcconnell_nonlin_smoothing}.

\section{Poincaré-Dulac normal form reduction and modified energy}\label{section Poincaré-Dulac normal form reduction and modified energy}
\subsection{The truncated system}
Let $N\in \N$. We work with the following equation :
\begin{equation}\label{truncated equation}
    \begin{cases}
        i\p_t u + \p_x^2 u = \Pi_N \left(|\Pi_Nu|^4\Pi_Nu \right), \hspace{0.2cm} (t,x) \in \R \times \T \\
        u|_{t=0} = u_0 
    \end{cases}
\end{equation}
called the \textit{truncated equation}, where $\Pi_N$ is the projector on frequencies $\leq N$. More precisely, 
\begin{equation*}
    \Pi_N \left( \sum_{k \in \Z} u_k e^{ikx} \right) := \sum_{|k| \leq N} u_k e^{ikx}
\end{equation*}
We also define $\Pi_N^{\perp} := Id-\Pi_N$ as :
\begin{equation*}
    \Pi_N^{\perp} \left( \sum_{k \in \Z} u_k e^{ikx} \right) := \sum_{|k| > N} u_k e^{ikx}
\end{equation*}
Equation \eqref{truncated equation} is a smoothly approximated system of \eqref{NLS}. We denote by $\Phi_N$ the flow of \eqref{truncated equation}, called the \textit{truncated flow}. Sometimes, we might use the notation $\Phi_{\infty}$ instead of $\Phi$ to refer to the flow of \eqref{NLS}. If we set 
\begin{align*}
    E_N &:= \Pi_N \L^2(\T), & E_N^{\perp} := \Pi_N^{\perp}\L^2(\T) = (Id -\Pi_N) \L^2(\T),
\end{align*}
the truncated flow $\Phi_N(t)$ can be factorized as $(\tld{\Phi}_N(t),e^{it\p_x^2})$ on $E_N \times E_N^{\perp}$ in the sense that 
\begin{equation}\label{factorization of the truncated flow}
    \Phi_N(t)u_0 = \underbrace{\tld{\Phi}_N(t) \Pi_N u_0}_{\in E_N} + \underbrace{e^{it\p_x^2}\Pi_N^{\perp}u_0}_{\in E_N^{\perp}} 
\end{equation}
where $\tld{\Phi}_N$ is the flow of a (finite dimensional) ordinary differential equation (ODE). In Appendix \ref{appendix Construction and properties of the flow and the truncated flow}, we show the local and global well-posedness of the truncated flow as well as its structure. \\

In the same vein, we can decompose the Gaussian measure $\mu_s$ as 
\begin{equation}\label{mu_s = mu_s,N times mu_s,N^perp}
    \mu_s = \mu_{s,N} \otimes \mu_{s,N}^{\perp}
\end{equation}
where $\mu_{s,N}$ and $\mu_{s,N}^{\perp}$ are respectively the law of 
\begin{align*}
    \omega & \longmapsto \sum_{|n|\leq N} \frac{g_n(\omega)}{\langle n\rangle^s}e^{inx} & & \text{and} & \omega  \longmapsto \sum_{|n| > N} \frac{g_n(\omega)}{\langle n\rangle^s}e^{inx}
\end{align*}
Equivalently, $\mu_{s,N}$ is the probability measure on $E_N$ given by:
\begin{equation}\label{mu_s,N}
    \mu_{s,N} = \frac{1}{Z_N} \prod_{|n|\leq N} e^{-\frac{1}{2}\langle n \rangle^2|\hat{u}(n)|^2} d\hat{u}(n) =\frac{1}{Z_N} e^{-\frac{1}{2}\norm{\Pi_N u}_{H^s(\T)}^2} \left( \prod_{|n|\leq N} d\hat{u}(n) \right) 
\end{equation}
where $Z_N>0$ is a normalizing constant, and $d\hat{u}(n)$ is the Lebesgue measure on $\text{Span}(e^{inx})=\C \cdot e^{inx}$.

\subsection{Normal form reduction and modified energy}

We consider a smooth solution $u_N$ of \eqref{truncated equation}. By factoring through the linear flow, we introduce a new unknown 
\begin{equation*}
    v_N := e^{-it\p_x^2}u_N
\end{equation*}
Note that from \eqref{factorization of the truncated flow}, we have $v_N =  \Pi_N v_N + \Pi_N^{\perp}u_0$, so $\p_t v_N = \p_t \Pi_N v_N$. In other words, if we invoke
\begin{equation*}
    w_N := \Pi_N v_N,
\end{equation*}
we have $\p_tv_N = \p_t w_N$. We denote respectively by $u_k(t), v_k(t)$ and $w_k(t)$ the $k$-th Fourier coefficient of $u_N(t),v_N(t)$ and $w_N(t)$, and for better readability, we will simply write $u_k,v_k$ and $w_k$, omitting the variable $t$.\\

Now, we observe that $v_N$ satisfies the equation 
\begin{equation}\label{equation idt v}
    i\p_t v = e^{-it\p_x^2} \Pi_N \left( |e^{it\p_x^2} \Pi_N v|^4e^{it\p_x^2}\Pi_N v\right)
\end{equation}
Since the Fourier transform converts the product into convolution, we deduce from \eqref{equation idt v} that
\begin{equation}\label{equation idt wk}
    \begin{split}
    i \p_t v_k = i \p_t w_k  &=  \1_{|k| \leq N} \sum_{k_1-k_2+k_3-k_4+k_5=k} e^{-it \Omega(\vec{k})} \left( \prod_{j=1}^5 \1_{|k_j|\leq N} \right) v_{k_1}\cjg{v_{k_2}}...v_{k_5} \\
    & = \1_{|k| \leq N} \sum_{k_1-k_2+k_3-k_4+k_5=k} e^{-it \Omega(\vec{k})}  w_{k_1}\cjg{w_{k_2}}...w_{k_5}
    \end{split}
\end{equation}
where $\vec{k}=(k_1,...,k_5,k)$ and,
\begin{equation*}
    \Omega(\vec{k}) := \sum_{j=1}^5 (-1)^{j-1} k_j^2 -k^2
\end{equation*}
is the so-called \textit{resonant function}. 
In the sequel, we use the equivalent Sobolev norm for $s\geq 0$ 
\begin{equation*}
    \triplenorm{f}^2_{H^s(\T)} := \sum_{k \in \Z} (1+|k|^{2s}) |\hat{f}(k)|^2
\end{equation*}
which is more convenient for our purpose. Let us now compute $\frac{1}{2}\frac{d}{dt}\triplenorm{u_N}^2_{H^s(\T)}$:
\begin{equation*}
    \begin{split}
        \frac{1}{2}\frac{d}{dt}\triplenorm{u_N}^2_{H^s(\T)} = \frac{1}{2}\frac{d}{dt}\triplenorm{v_N}^2_{H^s(\T)}= \frac{1}{2}\frac{d}{dt}\triplenorm{w_N}^2_{H^s(\T)}
        & = \Im \left(i\p_t w_N |  w_N\right)_{H^s \times H^s} \\
        &= \Im \sum_{\substack{k_6 \in \Z \\ |k_6| \leq N}} (1+|k_6|^{2s})  (i \p_t w_{k_6}) \cjg{w_{k_6}}
    \end{split}
\end{equation*}
Plugging \eqref{equation idt wk} into this, we get
\begin{equation*}
        \frac{1}{2}\frac{d}{dt}\triplenorm{w_N}^2_{H^s(\T)}
          =\Im \sum_{\linc}(1+|k_6|^{2s})e^{-it\Omega(\vec{k})}w_{k_1}\cjg{w_{k_2}}...w_{k_5}\cjg{w_{k_6}}
\end{equation*}
(where now $\vec{k} = (k_1,...,k_5,k_6)$ and still $\Omega(\vec{k}) = \sum_{j=1}^6 (-1)^{j-1} k_j^2$).\\

The above formula can be symmetrized using the symmetries of the resonant function and of the indices. Firstly, the change of variables $k_4 \rla k_6$ and  $k_2 \rla k_6$ respectively yield,
\begin{equation*}
    \begin{split}
         \frac{1}{2}\frac{d}{dt}\triplenorm{w_N}^2_{H^s(\T)} &= \Im \sum_{\linc}(1+|k_4|^{2s})e^{-it\Omega(\vec{k})}w_{k_1}\cjg{w_{k_2}}...\cjg{w_{k_6}} \\
    & =\Im \sum_{\linc}(1+|k_2|^{2s})e^{-it\Omega(\vec{k})}w_{k_1}\cjg{w_{k_2}}...\cjg{w_{k_6}}
    \end{split}
\end{equation*}
so that, 
\begin{equation}\label{first symmetrization}
    \frac{1}{2}\frac{d}{dt}\triplenorm{w_N}^2_{H^s(\T)} = \Im \sum_{\linc}\frac{3+|k_2|^{2s}+|k_4|^{2s}+|k_6|^{2s}}{3}e^{-it\Omega(\vec{k})}w_{k_1}\cjg{w_{k_2}}...\cjg{w_{k_6}}
\end{equation}
Secondly, the change of variables $(k_1,k_3,k_5) \rla (k_2,k_4,k_6)$ and $(k_1,k_5,k_3) \rla (k_2,k_4,k_6)$ and $(k_5,k_3,k_1) \rla (k_2,k_4,k_6)$ respectively yield,
\begin{equation*}
    \begin{split}
         \frac{1}{2}\frac{d}{dt}\triplenorm{w_N}^2_{H^s(\T)} &= -\Im \sum_{\linc}(1+|k_5|^{2s})e^{-it\Omega(\vec{k})} w_{k_1}\cjg{w_{k_2}}...\cjg{w_{k_6}} \\
    & = -\Im \sum_{\linc}(1+|k_3|^{2s})e^{-it\Omega(\vec{k})}w_{k_1}\cjg{w_{k_2}}...\cjg{w_{k_6}} \\
    & =-\Im \sum_{\linc}(1+|k_1|^{2s})e^{-it\Omega(\vec{k})}w_{k_1}\cjg{w_{k_2}}...\cjg{w_{k_6}}
    \end{split}
\end{equation*}
so that,
\begin{equation}\label{second symmetrization}
    \frac{1}{2}\frac{d}{dt}\triplenorm{w_N}^2_{H^s(\T)} = -\Im \sum_{\linc}\frac{3+|k_1|^{2s}+|k_3|^{2s}+|k_5|^{2s}}{3}e^{-it\Omega(\vec{k})}w_{k_1}\cjg{w_{k_2}}...\cjg{w_{k_6}}
\end{equation}
Finally, from combining \eqref{first symmetrization} and \eqref{second symmetrization}, we obtain the symmetrized formula
\begin{equation}
     \frac{1}{2}\frac{d}{dt}\triplenorm{w_N}^2_{H^s(\T)} = -\frac{1}{6} \Im \sum_{\linc} \psi_{2s}(\vec{k})e^{-it\Omega(\vec{k})}w_{k_1}\cjg{w_{k_2}}...\cjg{w_{k_6}}
\end{equation}
where, 
\begin{equation*}
    \psi_{2s}(\vec{k}) := \sum_{j=1}^{6} (-1)^{j-1}|k_j|^{2s}
\end{equation*}

In order to perform a differentiation by parts, we decompose the set of indices according to whether $\Omega(\vec{k}) = 0$ or not : 
\begin{equation*}
    \begin{split}
        \frac{1}{2}\frac{d}{dt}\triplenorm{w_N}^2_{H^s(\T)} & = -\frac{1}{6} \Im \sum_{\substack{\linc \\ \Omgz}} \psi_{2s}(\vec{k})e^{-it\Omega(\vec{k})}w_{k_1}\cjg{w_{k_2}}...\cjg{w_{k_6}} \\
        & \hspace{0.4cm} -\frac{1}{6} \Im \sum_{\substack{\linc \\ \Omgnz}} \psi_{2s}(\vec{k})e^{-it\Omega(\vec{k})}w_{k_1}\cjg{w_{k_2}}...\cjg{w_{k_6}} \\
        & =  -\frac{1}{6} \Im \sum_{\substack{\linc \\ \Omgz}} \psi_{2s}(\vec{k})e^{-it\Omega(\vec{k})}w_{k_1}\cjg{w_{k_2}}...\cjg{w_{k_6}} \\
        & \hspace{0.4cm} -\frac{1}{6} \Im \sum_{\substack{\linc \\ \Omgnz}} \frac{\psi_{2s}(\vec{k})}{-i \Omega(\vec{k})}\p_t \left(e^{-it\Omega(\vec{k})}w_{k_1}\cjg{w_{k_2}}...\cjg{w_{k_6}} \right) \\
        & \hspace{0.4cm}  +\frac{1}{6} \Im \sum_{\substack{\linc \\ \Omgnz}} \frac{\psi_{2s}(\vec{k})}{-i \Omega(\vec{k})} e^{-it\Omega(\vec{k})} \p_t \left(w_{k_1}\cjg{w_{k_2}}...\cjg{w_{k_6}} \right)
    \end{split}
\end{equation*}
Hence,
\begin{equation}\label{first computation of d_dt |||v|||^2}
    \begin{split}
        &\frac{d}{dt}\Big( \frac{1}{2}\triplenorm{w_N}^2_{H^s(\T)}  +\frac{1}{6} \Im \sum_{\substack{\linc \\ \Omgnz}} \frac{\psi_{2s}(\vec{k})}{-i \Omega(\vec{k})} e^{-it\Omega(\vec{k})}w_{k_1}\cjg{w_{k_2}}...\cjg{w_{k_6}} \Big) \\
        & =  -\frac{1}{6} \Im \sum_{\substack{\linc \\ \Omgz}} \psi_{2s}(\vec{k})e^{-it\Omega(\vec{k})}w_{k_1}\cjg{w_{k_2}}...\cjg{w_{k_6}}  +\frac{1}{6} \Im \sum_{\substack{\linc \\ \Omgnz}} \frac{\psi_{2s}(\vec{k})}{-i \Omega(\vec{k})} e^{-it\Omega(\vec{k})} \p_t \left(w_{k_1}\cjg{w_{k_2}}...\cjg{w_{k_6}} \right)
    \end{split}
\end{equation}
Now, motivated by the above formula, we define the following quantities: 
\begin{defns}[Energy correction and modified energy] \label{def normal form R and E}
    Let $N \in \N$. 
    \begin{enumerate}
        \item For $u \in \cS'(\T)$, we define:
    \begin{equation}
    R_{s,N}(u) := \frac{1}{6} \Im \sum_{\substack{\linc \\ \Omgnz}} \frac{\psi_{2s}(\vec{k})}{-i \Omega(\vec{k})} \left( \prod_{j=1}^6 \mathds{1}_{|k_j|\leq N}\right) u_{k_1}\cjg{u_{k_2}}...\cjg{u_{k_6}}
\end{equation} and,
\begin{equation}\label{modified energy}
   E_{s,N}(u) := \frac{1}{2} \triplenorm{\Pi_N u}_{H^s(\T)}^2 + R_{s,N}(u)
\end{equation}
    \item For $u \in \cC^{\infty}(\T)$, we define:
    \begin{equation}\label{Rs,infty}
    R_{s,\infty}(u) := \frac{1}{6} \Im \sum_{\substack{\linc \\ \Omgnz}} \frac{\psi_{2s}(\vec{k})}{-i \Omega(\vec{k})}u_{k_1}\cjg{u_{k_2}}...\cjg{u_{k_6}}
\end{equation}
and,
\begin{equation}
    E_{s,\infty} := \frac{1}{2}\triplenorm{u}_{H^s(\T)}^2 + R_{s,\infty}(u)
\end{equation}
    \end{enumerate}
For $N \in \N \cup \{ \infty \}$, $R_{s,N}$ and $E_{s,N}$ are respectively called the \textit{energy correction} and the \textit{modified energy}. Moreover, we will use interchangeably the notation $R_s$ and $E_s$ to refer respectively to $R_{s,\infty}$ and $E_{s,\infty}$.
\end{defns}

\begin{rem} In Section \ref{section Deterministic properties of the energy correction} (in Proposition \ref{prop R_s,N continuous}), we will see that for $s>\frac{3}{2}$ and $\sigma< s-\frac{1}{2}$ close enough to $s-\frac{3}{2}$, we can extend $R_s$ to $\hsigt$, because we will be able to prove that the sum in \eqref{Rs,infty} is absolutely convergent for every $u \in \hsigt$.
\end{rem}

Using the modified energy \eqref{modified energy}, we can rewrite \eqref{first computation of d_dt |||v|||^2} as 
\begin{equation*}
    \begin{split}
   \frac{d}{dt} E_{s,N}(\Pi_N u_N(t)
   ) & = -\frac{1}{6} \Im \sum_{\substack{\linc \\ \Omgz}} \psi_{2s}(\vec{k})e^{-it\Omega(\vec{k})}w_{k_1}\cjg{w_{k_2}}...\cjg{w_{k_6}} \\ 
        & +\frac{1}{6} \Im \sum_{\substack{\linc \\ \Omgnz}} \frac{\psi_{2s}(\vec{k})}{-i \Omega(\vec{k})} e^{-it\Omega(\vec{k})} \p_t \left(w_{k_1}\cjg{w_{k_2}}...\cjg{w_{k_6}} \right)
    \end{split}
\end{equation*}
Furthermore, expanding the time derivation $\p_t \left(w_{k_1}\cjg{w_{k_2}}...\cjg{w_{k_6}} \right)$ and performing the change of variables $k_1 \rla k_3$, $k_1 \rla k_5$, $k_2 \rla k_4$ and $k_2 \rla k_6$, we get that 

\begin{equation*}
    \begin{split}
        \frac{d}{dt} E_{s,N}(\Pi_N u_N(t)
   ) = & -\frac{1}{6} \Im \sum_{\substack{\linc \\ \Omgz}} \psi_{2s}(\vec{k})e^{-it\Omega(\vec{k})}w_{k_1}\cjg{w_{k_2}}...\cjg{w_{k_6}} \\ 
        & +\frac{1}{2} \Im \sum_{\substack{\linc \\ \Omgnz}} \frac{\psi_{2s}(\vec{k})}{ \Omega(\vec{k})} e^{-it\Omega(\vec{k})} \big(i \p_tw_{k_1}\big)\cjg{w_{k_2}}...\cjg{w_{k_6}}  \\ 
        & +\frac{1}{2} \Im \sum_{\substack{\linc \\ \Omgnz}} \frac{\psi_{2s}(\vec{k})}{ \Omega(\vec{k})} e^{-it\Omega(\vec{k})} w_{k_1}\left( \cjg{-i\p_t w_{k_2}}\right)...\cjg{w_{k_6}}
    \end{split}
\end{equation*}

At this point, we can make use of the formula \eqref{equation idt wk} so that the above formula can be rewritten as 
\begin{equation}\label{time derivative modified energy with variable w}
    \begin{split}
       \frac{d}{dt} E_{s,N}(\Pi_N u_N(t)) & =-\frac{1}{6} \Im \sum_{\substack{\linc \\ \Omgz}} \psi_{2s}(\vec{k})e^{-it\Omega(\vec{k})}w_{k_1}\cjg{w_{k_2}}...\cjg{w_{k_6}} \\ 
        & +\frac{1}{2} \Im \sum_{\substack{\linc \\ \lincba \\\Omgnz}} \frac{\psi_{2s}(\vec{k})}{ \Omega(\vec{k})}  e^{-it(\Omega(\vec{k}) + \Omega(\vec{p}))} w_{p_1}\cjg{w_{p_2}}...w_{p_5}\cjg{w_{k_2}}... \cjg{w_{k_6}}  \\ 
        & -\frac{1}{2} \Im \sum_{\substack{\linc \\ \lincbb \\\Omgnz}} \frac{\psi_{2s}(\vec{k})}{ \Omega(\vec{k})}  e^{-it(\Omega(\vec{k}) - \Omega(\vec{q}))} w_{k_1}\cjg{w_{q_1}}w_{q_2}...\cjg{w_{q_5}}w_{k_3}... \cjg{w_{k_6}} 
    \end{split}
\end{equation}
where $\vec{p} = (p_1,...,p_5,k_1)$ and $\vec{q} = (q_1,...,q_5,k_2)$. If we come back to the variable $u_N$ on the right hand side of \eqref{time derivative modified energy with variable w}, we obtain 
\begin{equation}\label{time derivative modified energy}
    \begin{split}
       \frac{d}{dt} E_{s,N}(\Pi_N u_N(t)) & =-\frac{1}{6} \Im \sum_{\substack{\linc \\ \Omgz}} \psi_{2s}(\vec{k})u_{k_1}\cjg{u_{k_2}}...\cjg{u_{k_6}} \left( \prod_{j=1}^6 \1_{|k_j| \leq N}\right) \\ 
        & +\frac{1}{2} \Im \sum_{\substack{\linc \\ \lincba \\\Omgnz}} \frac{\psi_{2s}(\vec{k})}{ \Omega(\vec{k})} u_{p_1}\cjg{u_{p_2}}...u_{p_5}\cjg{u_{k_2}}... \cjg{u_{k_6}} \left( \prod_{j=1}^6 \1_{|k_j| \leq N}\right)\left( \prod_{j=1}^5 \1_{|p_j| \leq N}\right) \\ 
        & -\frac{1}{2} \Im \sum_{\substack{\linc \\ \lincbb \\\Omgnz}} \frac{\psi_{2s}(\vec{k})}{ \Omega(\vec{k})} u_{k_1}\cjg{u_{q_1}}u_{q_2}...\cjg{u_{q_5}}u_{k_3}... \cjg{u_{k_6}} \left( \prod_{j=1}^6 \1_{|k_j| \leq N}\right)\left( \prod_{j=1}^5 \1_{|p_j| \leq N}\right)
    \end{split}
\end{equation}
This quantity evaluated at time 0 will play an important role because it will appear in the explicit formula for the Radon-Nikodym derivatives of the transported measures. Motivated by the above formula, we define the following quantities:

\begin{defns}[Modified energy derivative at 0]\label{def normal form Q} Let $N \in \N$. Let $\Phi_N(t)$ the flow of \eqref{truncated equation}. \\
\begin{enumerate}
    \item For $u \in \cS'(\T)$, we define:
    \begin{equation*}\label{def Qs,N in the normal form}
        Q_{s,N}(u) :=  \frac{d}{dt} E_{s,N}(\Pi_N \Phi_N(t)u) |_{t=0} = \text{(RHS)}|_{t=0} \hsp \text{of \eqref{time derivative modified energy}}
    \end{equation*}
    \item For $u \in \cC^{\infty}(\T)$, we define:
    \begin{equation}\label{def Qs,infty normal form}
    \begin{split}
       Q_{s,\infty}(u) & := \text{(RHS)}|_{t=0} \hsp \text{of \eqref{time derivative modified energy} with $N=\infty$} \\
       & = -\frac{1}{6} \Im \sum_{\substack{\linc \\ \Omgz}} \psi_{2s}(\vec{k})u_{k_1}\cjg{u_{k_2}}...\cjg{u_{k_6}} +\frac{1}{2} \Im \sum_{\substack{\linc \\ \lincba \\\Omgnz}} \frac{\psi_{2s}(\vec{k})}{ \Omega(\vec{k})} u_{p_1}\cjg{u_{p_2}}...u_{p_5}\cjg{u_{k_2}}... \cjg{u_{k_6}} \\ 
        & -\frac{1}{2} \Im \sum_{\substack{\linc \\ \lincbb \\\Omgnz}} \frac{\psi_{2s}(\vec{k})}{ \Omega(\vec{k})} u_{k_1}\cjg{u_{q_1}}u_{q_2}...\cjg{u_{q_5}}u_{k_3}... \cjg{u_{k_6}} 
    \end{split}
\end{equation}
\end{enumerate}
   For $N\in \N \cup \{ \infty \}$, $Q_{s,N}$ is called the \textit{derivative of the modified energy at 0}. Moreover, we will use interchangeably the notation $Q_s$ to refer to $Q_{s,\infty}$. 
\end{defns}

\begin{rem}For $N \in \N$, note that from the additivity of the flow we have:
    \begin{equation}\label{Qs,N(phi_N(t)u)}
    \begin{split}
    Q_{s,N}(\Phi_N(\tau)u) = \frac{d}{dt} E_{s,N}\left( \Pi_N \Phi_N(t)\Phi_N(\tau)u \right) |_{t=0} &= \frac{d}{dt} E_{s,N}\left( \Pi_N \Phi_N(t+\tau)u \right) |_{t=0} \\
    &= \frac{d}{dt} E_{s,N}\left( \Pi_N \Phi_N(t)u \right) |_{t=\tau}
    \end{split}
\end{equation}
\end{rem}

\begin{rem} In Section \ref{section the modified energy derivative at 0} (in Proposition \ref{prop continuity of Qs,N and deterministic estimate}), we will see that for $s>\frac{3}{2}$ and $\sigma< s-\frac{1}{2}$ close enough to $s-\frac{1}{2}$, we can extend $Q_s$ to $\hsigt$, because we will be able to prove that the sum in \eqref{def Qs,infty normal form} is absolutely convergent for every $u \in \hsigt$.
\end{rem}

\section{Definition and deterministic properties of the energy correction}\label{section Deterministic properties of the energy correction}
In Section~\ref{section Poincaré-Dulac normal form reduction and modified energy}, an energy correction $R_{s,N}$ (for $N \in \N \cup \{ \infty \}$) has emerged from the normal form reduction (see Definition ~\ref{def normal form R and E}). We dedicate this section to the study of this quantity on $\hsigt$ for $\sigma < s - \frac{1}{2}$ (close enough to $s - \frac{1}{2}$), provided that $s>\frac{3}{2}$. In particular, we show that we are able to extend $R_{s,\infty}$ (which is defined on $\cC^{\infty}(\T)$) to $\hsigt$, because we will prove that the right hand side in \eqref{Rs,infty} is actually an absolutely convergent sum for every  $u \in \hsigt$. This section is composed of two results; firstly, we state that for $\sigma<s-\frac{1}{2}$ close enough to $s-\frac{1}{2}$, the $R_{s,N}$ are continuous functions on $\hsigt$ given by the diagonal of a continuous multi-linear form; secondly, we state that $R_{s,N}$ converges to $R_{s,\infty}$ uniformly on compact sets of $\hsigt$.

\begin{prop}\label{prop R_s,N continuous}
    Let $s>\frac{3}{2}$. For $\sigma < s - \frac{1}{2}$ close enough to $s-\frac{1}{2}$, there exists a constant $C>0$ such that for every $u^{(1)},...,u^{(6)} \in \hsigt$ :
    \begin{equation}\label{energy correction with absolute value}
        \sum_{\substack{\linc \\ \Omgnz}} \big| \frac{\psi_{2s}(\vec{k})}{\Omega(\vec{k})} \big| \big|u^{(1)}_{k_1}\cjg{u^{(2)}_{k_2}}...\cjg{u^{(6)}_{k_6}} \big| \leq C \prod_{j=1}^6 \hsignorm{u^{(j)}}
    \end{equation}
    Hence, the map:
    \begin{equation*}
        \function{\cR}{\hsigt^6}{\C}{(u^{(1)},...,u^{(6)})}{\mathlarger{\sum \limits_{\substack{\linc \\ \Omgnz}} \frac{\psi_{2s}(\vec{k})}{\Omega(\vec{k})}u^{(1)}_{k_1}\cjg{u^{(2)}_{k_2}}...\cjg{u^{(6)}_{k_6}}}}
    \end{equation*}
    is a continuous multi-linear form.
    Then, for $N \in \N \cup \{ \infty \}$\footnote{Using the notation $\Pi_{\infty} = id $}, setting \footnote{By abuse of notation, we still denote by $R_{s,N}$ this new function, even though it has already been defined in Definition~\ref{def normal form R and E}. The $R_{s,N}$ of Definition~\ref{def normal form R and E} and the $R_{s,N}$ of this proposition coincide on $\cC^{\infty}(\T)$.}:
    \begin{equation}\label{R_s,N diag multilinear form}
        \function{R_{s,N}}{\hsigt}{\C}{u}{\frac{1}{6} \Re \hsp \cR(\Pi_N u,...,\Pi_N u)},
    \end{equation}
    we deduce that $R_{s,N}$ is a continuous map on $\hsigt$ that satisfies for all $u,v \in \hsigt$:
    \begin{equation*}
        \left| R_{s,N}(u)- R_{s,N}(v) \right| \leq C \hsignorm{u-v} (\hsignorm{u}^5 + \hsignorm{v}^5)
    \end{equation*}
    uniformly in $N \in \N \cup \{ \infty \}$.
\end{prop}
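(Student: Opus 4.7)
The statement breaks naturally into two parts: the multilinear bound \eqref{energy correction with absolute value}, and its consequences for the form $\cR$, for $R_{s,N}$ defined as the (real) restricted diagonal of $\cR$, and for the Lipschitz-type inequality. The second part is soft once the first is available. Indeed, \eqref{energy correction with absolute value} implies that $\cR$ is a bounded, hence continuous, $6$-linear form on $\hsigt^6$; then $R_{s,N}(u) = \frac{1}{6}\Re\,\cR(\Pi_N u,\ldots,\Pi_N u)$ is continuous on $\hsigt$ because $\Pi_N$ is a contraction on $\hsigt$ uniformly in $N \in \N \cup \{\infty\}$; finally, the Lipschitz estimate follows from the telescoping identity
\begin{equation*}
\cR(a,\ldots,a) - \cR(b,\ldots,b) = \sum_{j=1}^{6} \cR(\underbrace{a,\ldots,a}_{j-1},\, a-b,\, \underbrace{b,\ldots,b}_{6-j}),
\end{equation*}
each summand of which is bounded by $C\,\hsignorm{u-v}\bigl(\hsignorm{u}^5+\hsignorm{v}^5\bigr)$ via \eqref{energy correction with absolute value} applied to $a = \Pi_N u$, $b = \Pi_N v$, combined again with the uniform $\hsig$-boundedness of $\Pi_N$.

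The heart of the proof is therefore \eqref{energy correction with absolute value}. My plan is as follows. First, exploit the linear constraint $\linc$: after relabelling so that $|k^*_1| \geq |k^*_2| \geq \cdots \geq |k^*_6|$, one has $|k^*_1| \leq \sum_{j \geq 2}|k^*_j| \leq 5|k^*_2|$, so the two largest frequencies are comparable. Second, derive a pointwise bound on the symbol of the form
\begin{equation*}
\left|\frac{\psi_{2s}(\vec{k})}{\Omega(\vec{k})}\right| \lesssim \langle k^*_1\rangle^{2s-2},
\end{equation*}
or a close refinement, exploiting cancellations in the alternating six-fold sum $\psi_{2s}$ together with the condition $\Omgnz$. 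Third, distribute this weight as $\langle k^*_1\rangle^{s-1}\langle k^*_2\rangle^{s-1}$ (using comparability of the top two frequencies) and, choosing $\sigma$ so close to $s-\frac{1}{2}$ that $\sigma > s-1$ (possible because $s>\frac{3}{2}$ implies $s - \frac{1}{2} > 1 > s - 1$ iff $s < 2$, and more generally $\sigma > s-1$ is consistent with $\sigma < s - \frac{1}{2}$), absorb the factors $\langle k^*_i\rangle^{s-1}$ into the $\hsig$-norms of the corresponding two inputs. The remaining four-factor sum constrained by $\linc$ is then handled by Cauchy--Schwarz together with the Sobolev embedding $\hsigt \hookrightarrow L^{\infty}(\T)$, valid for $\sigma > \frac{1}{2}$ (again guaranteed by $s > \frac{3}{2}$), or equivalently by the algebra property of $\hsigt$.

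The main obstacle will be the pointwise multiplier bound in step two. The crude estimate $|\Omega| \geq 1$ (since $\Omega \in \Z$ whenever $\Omega \neq 0$), coupled with $|\psi_{2s}| \lesssim \langle k^*_1\rangle^{2s}$, loses two derivatives and is insufficient. One must instead exploit mean-value-type identities such as
\begin{equation*}
\bigl||k_i|^{2s} - |k_j|^{2s}\bigr| \lesssim \max(|k_i|,|k_j|)^{2s-2} \bigl|k_i^2 - k_j^2\bigr|,
\end{equation*}
applied to the three pairings suggested by the alternating sign pattern of $\psi_{2s}$ and of $\Omega$, to capture that $\psi_{2s}$ vanishes to the same algebraic order as $\Omega$ on near-resonant configurations. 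A case analysis separating the clusters of largest frequencies, together with careful bookkeeping of which differences $k_i \pm k_j$ can be dominated by $\Omega$, should yield the desired symbol bound; the remainder of the argument is a standard one-dimensional $\hsig$ multilinear estimate.
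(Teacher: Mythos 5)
Your reduction of the proposition to the multilinear bound \eqref{energy correction with absolute value} (telescoping identity, uniform $\hsig$-boundedness of $\Pi_N$) is correct and is essentially what the paper does. The gap is in the heart of the argument: the pointwise symbol bound $|\psi_{2s}(\vec k)/\Omega(\vec k)|\lesssim \langle k_{(1)}\rangle^{2s-2}$ on which your plan rests is \emph{false}. Take $k_1=M^2+2$, $k_2=M^2+1$, $k_3=1$, $k_4=M+1$, $k_5=0$, $k_6=1-M$: the linear constraint holds, $\Omega(\vec k)=2$, but $\psi_{2s}(\vec k)\sim 2s\,M^{4s-2}$ while $\langle k_{(1)}\rangle^{2s-2}\sim M^{4s-4}$, so the proposed bound fails by a factor $M^{2}\sim |k_{(3)}|^{2}$. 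The sharp statement (Lemma~\ref{lemma psi estimate} of the paper) is $|\psi_{2s}(\vec k)|\lesssim |k_{(1)}|^{2s-2}\bigl(|\Omega(\vec k)|+|k_{(3)}|^{2}\bigr)$, so after dividing by $\Omega(\vec k)$ one is necessarily left with an extra factor $|k_{(3)}|^{2}/|\Omega(\vec k)|$ that no case analysis on the differences $k_i\pm k_j$ will remove.

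This residual factor is exactly what your subsequent ``Cauchy--Schwarz plus Sobolev embedding'' step cannot handle. If one only exploits $|\Omega(\vec k)|\ge 1$, the extra $|k_{(3)}|^{2}$ costs two derivatives on the third-largest input; a dyadic bookkeeping then shows the exponents are summable only for $s$ above $3/2$ by a fixed margin (roughly $s>2$), not on the full claimed range. The paper's proof instead foliates the near-resonant sum over the level sets $\{\Omega(\vec k)=\kappa\}$, bounds each level-set sum by $C_\eps N_{(1)}^{\eps}\prod_j\norm{f^{(j)}}_{l^2}$ via the periodic $L^6$ Strichartz estimate (Theorem~\ref{Strichartz estimate} and Lemma~\ref{Strichartz's trick}), and then sums $\sum_{1\le|\kappa|\lesssim N_{(1)}^{2}}|\kappa|^{-1}\lesssim \log N_{(1)}$; this converts the dangerous $1/|\Omega(\vec k)|$ into a harmless $N_{(1)}^{\eps}$ loss and is precisely the mechanism by which the full range $s>\frac32$ is reached (Lemma~\ref{an estimate using Strichartz's trick}). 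Your proposal is missing this dispersive (equivalently, divisor-counting) input, and without it the stated range cannot be obtained.
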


\begin{notn}
   We also use the notation $R_s$ to refer  to $R_{s,\infty}$.
\end{notn}

We postpone the proof of this proposition for Section~\ref{section Proofs of the deterministic properties}, where a detailed analysis is provided.
Instead, assuming this proposition, we are able to prove now the following approximation result:

\begin{prop}\label{prop approx R_s by R_s,N on compact sets}
    Let $s>\frac{3}{2}$. Let $\sigma < s-\frac{1}{2}$ close enough to $s-\frac{1}{2}$ so that the conclusion of Proposition \ref{prop R_s,N continuous} holds. Then, for every compact set $K \subset \hsigt$,
    \begin{equation*}
        \sup_{u\in K} \left| R_s(u)-R_{s,N}(u) \right| \tendsto{N \ra \infty} 0
    \end{equation*}
    In other words, $R_{s,N}$ converges to $R_s$ uniformly on compact sets of $\hsigt$.
\end{prop}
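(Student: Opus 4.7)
The plan is to reduce the claim to two ingredients: (i) a quantitative bound on $|R_s(u)-R_{s,N}(u)|$ coming from the multi-linear form $\cR$, and (ii) the classical fact that the Dirichlet projectors $\Pi_N$ converge to the identity uniformly on compact sets of $\hsigt$.

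First, I would exploit the fact from Proposition~\ref{prop R_s,N continuous} that $R_{s,N}(u)=\tfrac{1}{6}\Re\,\cR(\Pi_N u,\dots,\Pi_N u)$ and $R_s(u)=\tfrac{1}{6}\Re\,\cR(u,\dots,u)$, together with the continuity estimate \eqref{energy correction with absolute value}. Writing the difference as a telescoping sum along the six slots,
\begin{equation*}
\cR(u,\dots,u) - \cR(\Pi_N u,\dots,\Pi_N u) = \sum_{j=1}^{6} \cR(\Pi_N u,\dots,\Pi_N u,\Pi_N^\perp u, u,\dots,u),
\end{equation*}
where the $j$-th term has $\Pi_N u$ in the first $j-1$ slots, $\Pi_N^\perp u$ in the $j$-th slot, and $u$ in the remaining slots, each summand is bounded by $C\hsignorm{\Pi_N^\perp u}\hsignorm{u}^5$ thanks to \eqref{energy correction with absolute value} and the fact that $\hsignorm{\Pi_N u}\leq \hsignorm{u}$. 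This yields the quantitative pointwise bound
\begin{equation*}
|R_s(u)-R_{s,N}(u)| \leq C\,\hsignorm{\Pi_N^\perp u}\,\hsignorm{u}^5,
\end{equation*}
valid for every $u\in\hsigt$ with a constant $C$ independent of $N$.

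Next, for a compact set $K\subset\hsigt$, I would set $M:=\sup_{u\in K}\hsignorm{u}<+\infty$ and reduce the problem to proving $\sup_{u\in K}\hsignorm{\Pi_N^\perp u}\tendsto{N\ra\infty} 0$. This is a standard consequence of the strong convergence $\Pi_N u\to u$ in $\hsigt$ (Parseval) combined with the uniform bound $\|\Pi_N^\perp\|_{\hsigt\to\hsigt}\leq 1$: given $\eps>0$, use the total boundedness of $K$ to find a finite $\eps$-net $u_1,\dots,u_m\in K$, pick $N_0$ so that $\hsignorm{\Pi_N^\perp u_i}<\eps$ for all $i\leq m$ and all $N\geq N_0$, and then for an arbitrary $u\in K$ select $u_i$ with $\hsignorm{u-u_i}<\eps$ and estimate
\begin{equation*}
\hsignorm{\Pi_N^\perp u} \leq \hsignorm{\Pi_N^\perp (u-u_i)} + \hsignorm{\Pi_N^\perp u_i} \leq 2\eps.
\end{equation*}

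Combining the two steps gives $\sup_{u\in K}|R_s(u)-R_{s,N}(u)| \leq C M^5 \sup_{u\in K}\hsignorm{\Pi_N^\perp u}\tendsto{N\ra\infty}0$, which is the desired conclusion. There is no real obstacle here: the entire content of the proposition is packaged into the multi-linear continuity of $\cR$ (the hard analytic input already achieved in Proposition~\ref{prop R_s,N continuous}) and the elementary equicontinuity argument for $\Pi_N\to\mathrm{id}$ on compacts.
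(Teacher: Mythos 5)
Your proposal is correct and follows the same two-step strategy as the paper: first a quantitative bound on $|R_s(u)-R_{s,N}(u)|$ in terms of $\hsignorm{\Pi_N^{\perp}u}$, then the uniform convergence $\sup_{u\in K}\hsignorm{\Pi_N^{\perp}u}\to 0$ on compact sets, which the paper obtains by invoking the abstract Lemma~\ref{lem cvgce on compact set of bdd lin map} --- the same finite $\eps$-net argument you spell out explicitly.

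The one genuine difference is in the first step, and it is in your favor. The paper asserts the exact identity $R_s(u)-R_{s,N}(u)=R_s(\Pi_N^{\perp}u)$ and deduces the bound $C\hsignorm{\Pi_N^{\perp}u}^6$. That identity is not literally true: expanding $\cR(u,\dots,u)-\cR(\Pi_N u,\dots,\Pi_N u)$ multilinearly produces mixed low--high terms (for instance $|k_1|>N$ with $|k_2|,\dots,|k_6|\le N$, which is perfectly compatible with the convolution constraint $\linc$ and with $\Omgnz$) that are present in the difference but absent from $R_s(\Pi_N^{\perp}u)$. Your telescoping decomposition along the six slots is the correct way to account for these cross terms; combined with \eqref{energy correction with absolute value} and $\hsignorm{\Pi_N u}\le\hsignorm{u}$ it yields the bound $C\hsignorm{\Pi_N^{\perp}u}\hsignorm{u}^5$, which is exactly what the remainder of the argument needs (indeed it is the same type of ``almost Lipschitz'' bound stated at the end of Proposition~\ref{prop R_s,N continuous}). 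So your write-up is not only correct but repairs a small defect in the paper's proof.
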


\begin{proof}[Proof of Proposition~\ref{prop approx R_s by R_s,N on compact sets} assuming Proposition~\ref{prop R_s,N continuous}] Here we assume the statements in Proposition~\ref{prop R_s,N continuous}. Thus, we invoke $\sigma<s-\frac{1}{2}$ close enough to $s-\frac{1}{2}$ along with the constant $C > 0$ from this proposition. Next, we observe that for all $N \in \N$:
    \begin{equation*}
        R_{s}(u) - R_{s,N}(u) = R_s(\Pi_N^{\perp}u)
    \end{equation*}
    Then, it follows from Proposition~\ref{prop R_s,N continuous} that:
    \begin{equation*}
        |R_{s}(u) - R_{s,N}(u)| = |R_s(\Pi_N^{\perp}u)| \leq C \hsignorm{\Pi_N^{\perp}u}^6
    \end{equation*}
    On the other hand, we have that for every $u \in \hsig$, $\norm{\Pi_N^\perp u}_{\hsig} \tendsto{N\ra \infty} 0$ and $\norm{\Pi_N^\perp}_{\hsig \ra \hsig} \leq 1$. So, using the following general abstract lemma in the inequality above finishes the proof:
    \begin{lem}\label{lem cvgce on compact set of bdd lin map}
        Let $E,F$ two Banach spaces. Let $\{ T_N\}_{N \in \N}$ be a sequence of bounded linear maps with the uniform (in $N \in \N$) bound $\norm{T_N}_{E \ra F} \leq M$, for some constant $M>0$. If for every $u \in E$:
        \begin{equation*}
           \norm{T(u)-T_N(u)}_F \tendsto{N \ra \infty} 0
        \end{equation*}
        for some linear map $T : E \ra F$, then for every compact set $K \subset E$, we have:
        \begin{equation*}
            \sup_{u \in K} \norm{T(u)-T_N(u)}_F \tendsto{N \ra \infty} 0
        \end{equation*}
    \end{lem}
\end{proof}

\section{Definition and deterministic properties of the modified energy derivative at 0}\label{section the modified energy derivative at 0}
In Section~\ref{section Poincaré-Dulac normal form reduction and modified energy}, we defined a quantity $Q_{s,N}$ (for $N \in \N \cup \{\infty \}$) in Definition~\ref{def normal form Q}, called the modified energy derivative at 0. We dedicate this section to the study of this quantity on $\hsigt$ for $\sigma < s-\frac{1}{2}$ (close enough to $s-\frac{1}{2}$), provided that $s>\frac{3}{2}$. In particular, we show that we are able to extend $Q_{s,\infty}$ (which is defined on $\cC^{\infty}(\T)$) to $\hsigt$, because we will prove that the right hand side in \eqref{def Qs,infty normal form} is actually the sum of three absolutely convergent sum for every  $u \in \hsigt$. This section is composed of two results; firstly, we state that for $\sigma<s-\frac{1}{2}$ close enough to $s-\frac{1}{2}$, the $Q_{s,N}$ are continuous functions on $\hsigt$ given by the diagonal of a sum of three continuous multi-linear forms; secondly, we state that $Q_{s,N}$ converges to $Q_{s,\infty}$ uniformly on compact sets of $\hsigt$. We point out that the analysis here is similar to the one from Section~\ref{section Deterministic properties of the energy correction}. \\

\begin{prop}\label{prop continuity of Qs,N and deterministic estimate}
    Let $s>\frac{3}{2}$. For $\sigma < s - \frac{1}{2}$ close enough to $s-\frac{1}{2}$, there exists a constant $C>0$ such that for every $u^{(1)},...,u^{(6)},v^{(1)},...,v^{(5)} \in \hsigt$ : 
    \begin{equation}\label{T0 with abs value}
        \sum_{\substack{\linc \\ \Omgz}} \big| \psi_{2s}(\vec{k}) \big| \big| u^{(1)}_{k_1}\cjg{u^{(2)}_{k_2}}...\cjg{u^{(6)}_{k_6}} \big| \leq C \prod_{j=1}^6 \norm{u^{(j)}}_{\hsig}
    \end{equation}
    and,
    \begin{equation}\label{T1 with abs value}
        \sum_{\substack{\linc \\ \lincba \\\Omgnz}} \big|\frac{\psi_{2s}(\vec{k})}{ \Omega(\vec{k})} \big| \big| v^{(1)}_{p_1}\cjg{v^{(2)}_{p_2}}...v^{(5)}_{p_5}\cjg{u^{(2)}_{k_2}}...\cjg{u^{(6)}_{k_6}} \big| \leq C \prod_{j \in \{2,...,6 \}} \norm{u^{(j)}}_{\hsig} \prod_{l=1}^5 \norm{v^{(l)}}_{\hsig}
    \end{equation}
    and, 
    \begin{equation}\label{T2 with abs value}
        \sum_{\substack{\linc \\ \lincbb \\\Omgnz}} \big| \frac{\psi_{2s}(\vec{k})}{ \Omega(\vec{k})} \big| \big| u^{(1)}_{k_1}\cjg{v^{(1)}_{q_1}}v^{(2)}_{q_2}...\cjg{v^{(5)}_{q_5}}u^{(3)}_{k_3}... \cjg{u^{(6)}_{k_6}}\big|\leq C \prod_{j \in \{1,3,...,6 \}} \norm{u^{(j)}}_{\hsig} \prod_{l=1}^5 \norm{v^{(l)}}_{\hsig}
    \end{equation}
    Hence, the maps:
    \begin{equation*}
        \function{\cT_0}{\hsigt^6}{\C}{u^{(1)},...,u^{(6)}}{\mathlarger{\sum_{\substack{\linc \\ \Omgz}} \psi_{2s}(\vec{k})u^{(1)}_{k_1}\cjg{u^{(2)}_{k_2}}...\cjg{u^{(6)}_{k_6}}}},
    \end{equation*}
    \begin{equation*}
        \function{\cT_1}{\hsigt^{10}}{\C}{u^{(2)},...,u^{(6)},v^{(1)},...,v^{(5)}}{\mathlarger{ \sum_{\substack{\linc \\ \lincba \\\Omgnz}} \frac{\psi_{2s}(\vec{k})}{ \Omega(\vec{k})} v^{(1)}_{p_1}\cjg{v^{(2)}_{p_2}}...v^{(5)}_{p_5}\cjg{u^{(2)}_{k_2}}...\cjg{u^{(6)}_{k_6}}}}
    \end{equation*}
    \begin{equation*}
        \function{\cT_2}{\hsigt^{10}}{\C}{u^{(1)},u^{(3)},...,u^{(6)},v^{(1)},...,v^{(5)}}{\mathlarger{\sum_{\substack{\linc \\ \lincbb \\\Omgnz}} \frac{\psi_{2s}(\vec{k})}{ \Omega(\vec{k})} u^{(1)}_{k_1}\cjg{v^{(1)}_{q_1}}v^{(2)}_{q_2}...\cjg{v^{(5)}_{q_5}}u^{(3)}_{k_3}... \cjg{u^{(6)}_{k_6}}}}
    \end{equation*}
    are continuous multi-linear forms. Then, for $j=0,1,2$, setting for $u\in \hsigt$:
    \begin{equation*}
        \cQ_j(u) := \cT_j(u,...,u)
    \end{equation*}
    we deduce that each $Q_j$ is continuous on $\hsigt$, and that for $N \in \N \cup \{ \infty \}$, the map\footnote{By abuse of notation, we still denote by $Q_{s,N}$ this new function, even though it has already been defined in Definition~\ref{def normal form Q}. The $Q_{s,N}$ of Definition~\ref{def normal form Q} and the $Q_{s,N}$ of this proposition coincide on $\cC^{\infty}(\T)$.}:
    \begin{equation}\label{def Qs,N = Q0 + Q1 +Q2}
       Q_{s,N} := \Im(-\frac{1}{6}\cQ_0 \circ \Pi_N  + \frac{1}{2}\cQ_1 \circ \Pi_N - \frac{1}{2}\cQ_2\circ \Pi_N)
    \end{equation}
    is continuous on $\hsigt$ and satisfies for all $u,v \in \hsigt$:
        \begin{equation}\label{Qs,N almost Lipschitz}
    \begin{split}
       \left| Q_{s,N}(u) - Q_{s,N}(v)\right| & \leq C \hsignorm{u-v}(\hsignorm{u}^6+\hsignorm{v}^6+\hsignorm{u}^9+\hsignorm{v}^9) \\
        & \leq C \hsignorm{u-v}(1+\hsignorm{u} + \hsignorm{v})^9
    \end{split}
    \end{equation}
    uniformly in $N\in \N \cup \{\infty \}$.
\end{prop}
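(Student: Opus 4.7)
The plan is to establish separately the three multilinear inequalities \eqref{T0 with abs value}, \eqref{T1 with abs value}, \eqref{T2 with abs value}; from these, the continuity of the forms $\cT_0,\cT_1,\cT_2$, of their diagonals $\cQ_0,\cQ_1,\cQ_2$ on $\hsigt$, and of the linear combination $Q_{s,N}$ defined by \eqref{def Qs,N = Q0 + Q1 +Q2} will follow by standard multilinear algebra. The bounds will be uniform in $N\in\N\cup\{\infty\}$ because $\Pi_N$ is a contraction on $\hsigt$.

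I would treat \eqref{T1 with abs value} and \eqref{T2 with abs value} by reducing them to Proposition~\ref{prop R_s,N continuous}. The crucial observation is that the quintic convolution $\sum_{\lincba} v^{(1)}_{p_1}\cjg{v^{(2)}_{p_2}}v^{(3)}_{p_3}\cjg{v^{(4)}_{p_4}}v^{(5)}_{p_5}$ is exactly the $k_1$-th Fourier coefficient of the pointwise product $\tld{v} := v^{(1)}\cjg{v^{(2)}}v^{(3)}\cjg{v^{(4)}}v^{(5)}$. Substituting this identity yields $\cT_1(u^{(2)},\ldots,u^{(6)},v^{(1)},\ldots,v^{(5)}) = \cR(\tld{v},u^{(2)},\ldots,u^{(6)})$, where $\cR$ is the multilinear form from Proposition~\ref{prop R_s,N continuous}. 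Since $s>\frac{3}{2}$, we may take $\sigma$ close enough to $s-\frac{1}{2}$ so that in particular $\sigma>\frac{1}{2}$, making $\hsigt$ a Banach algebra on $\T$; hence $\|\tld{v}\|_{\hsigt}\leq C\prod_{l=1}^{5}\|v^{(l)}\|_{\hsigt}$. Combined with the estimate of Proposition~\ref{prop R_s,N continuous} applied to $\cR(\tld{v},u^{(2)},\ldots,u^{(6)})$, this gives \eqref{T1 with abs value}. The estimate \eqref{T2 with abs value} is handled identically, with the quintic product now being $\cjg{v^{(1)}}v^{(2)}\cjg{v^{(3)}}v^{(4)}\cjg{v^{(5)}}$ inserted into the $k_2$-slot of $\cR$ (after a harmless conjugation/reindexing that does not affect the bound).

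The estimate \eqref{T0 with abs value} is the true new work and I expect it to be the main obstacle, since no factor of $1/\Omega(\vec{k})$ is available to regularize $\psi_{2s}(\vec{k})$, and the above reduction to $\cR$ breaks down. The structural ingredient I would exploit is that the \emph{trivial resonances} — those for which $(k_1,k_3,k_5)$ is a permutation of $(k_2,k_4,k_6)$ — automatically satisfy $\psi_{2s}(\vec{k})=0$ by cancellation of paired terms, so they contribute zero to the sum. On the remaining \emph{non-trivial} part of $\{\Omgz\}$, the two constraints $\linc$ and $\Omgz$ together cut a low-dimensional subset of the 5-dimensional lattice whose structure is rigid enough to be summed against $\ell^2$ data. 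Concretely, after ordering the frequencies so that $|k_{(1)}|\geq|k_{(2)}|\geq\cdots$, the mean-value bound $|\psi_{2s}(\vec{k})|\leq C|k_{(1)}|^{2s-1}|k_{(2)}|$, combined with divisor-type counting on the 1D resonant set and Cauchy-Schwarz with weights $\langle k\rangle^{\sigma}$, should yield \eqref{T0 with abs value} provided $\sigma$ is close enough to $s-\frac{1}{2}$.

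Finally, the Lipschitz-type bound \eqref{Qs,N almost Lipschitz} is obtained from the multilinearity of $\cT_0,\cT_1,\cT_2$ by the standard telescoping identity. Writing $\cQ_j(u)-\cQ_j(v)$ as a sum of $n_j$ terms ($n_0=6$, $n_1=n_2=10$), each of which has $u-v$ in exactly one slot and $u$ or $v$ in the others, and applying the multilinear estimates above, one obtains $|\cQ_0(u)-\cQ_0(v)|\leq C\|u-v\|_{\hsigt}(\|u\|_{\hsigt}+\|v\|_{\hsigt})^{5}$ and $|\cQ_j(u)-\cQ_j(v)|\leq C\|u-v\|_{\hsigt}(\|u\|_{\hsigt}+\|v\|_{\hsigt})^{9}$ for $j=1,2$. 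Summing according to \eqref{def Qs,N = Q0 + Q1 +Q2} and using $\|\Pi_N u\|_{\hsigt}\leq\|u\|_{\hsigt}$ delivers \eqref{Qs,N almost Lipschitz} uniformly in $N\in\N\cup\{\infty\}$.
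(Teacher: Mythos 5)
Your reduction of \eqref{T1 with abs value} and \eqref{T2 with abs value} to Proposition~\ref{prop R_s,N continuous} is correct and is a genuinely different (and leaner) route than the paper's. Since only absolute values of Fourier coefficients appear, the inner sum $\sum_{\lincba}|v^{(1)}_{p_1}\cdots v^{(5)}_{p_5}|$ is the $k_1$-th coefficient of a product of five nonnegative-coefficient functions, the algebra property of $\hsigt$ for $\sigma>\frac12$ (available since $s-\frac12>1$) bounds its $\hsig$-norm by $\prod_l\norm{v^{(l)}}_{\hsig}$, and \eqref{energy correction with absolute value} applied with this function in the first slot gives the claim. The paper instead proves a bespoke $\ell^2$-bound on the quintic convolution with dyadic localization (Lemma~\ref{technical lem Q estimate}) and then runs a careful case analysis on the eleven dyadic parameters; that extra bookkeeping is needed there because the paper's proof is organized around a summable dyadic bound $\frkT_1\lesssim N_{(1)}^{-}$, but for the stated inequality your shortcut suffices.

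The treatment of \eqref{T0 with abs value} has a genuine gap, and you have also misidentified where the difficulty lies. The bound you propose on the resonant set, $|\psi_{2s}(\vec k)|\leq C|k_{(1)}|^{2s-1}|k_{(2)}|$, is (up to constants) the trivial bound $|k_{(1)}|^{2s}$, since the constraint $\linc$ forces $|k_{(1)}|\sim|k_{(2)}|$. With data in $\hsigt$, $\sigma$ close to $s-\frac12$, the two highest-frequency coefficients only supply a factor $\sim|k_{(1)}|^{-2s+1}$, so a weight of size $|k_{(1)}|^{2s}$ leaves an uncompensated growing factor $|k_{(1)}|$ and the sum cannot close, no matter how one counts the resonant set. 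The estimate that makes $\cT_0$ work is Lemma~\ref{lemma psi estimate} specialized to $\Omgz$: the resonance relation converts $k_1^2-k_2^2$ into lower-frequency data, yielding $|\psi_{2s}(\vec k)|\lesssim|k_{(1)}|^{2s-2}|k_{(3)}|^2$, i.e.\ two derivatives are moved from the top frequency onto the third one. After that, the sum over $\{\linc,\ \Omgz\}$ is controlled by the $\L^6$ Strichartz estimate (Lemma~\ref{Strichartz's trick} with $\kappa=0$) at the cost of only $N_{(1)}^{\eps}$, and the analysis is then \emph{identical} to that of $\cR$ — so $\cT_0$ is in fact the easiest of the three terms, not the hardest. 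Your observation that permutation-type resonances kill $\psi_{2s}$ is true but does not address the abundant non-trivial resonances, and "divisor-type counting plus Cauchy--Schwarz" is too vague to certify that the loss on the resonant set stays below $N_{(1)}^{\eps}$, which is what the full range $s>\frac32$ requires. The final telescoping argument for \eqref{Qs,N almost Lipschitz} is standard and fine once the three multilinear bounds are in place.
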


 \begin{notn}
     We also use the notation $Q_s$ to refer to $Q_{s,\infty}$. 
 \end{notn}
 
\begin{rem}
 Note that we cannot a priori define $Q_{s}(u)$ on the support of $\mu_s$ as $\frac{d}{dt}E_{s}(\Phi(t)u)|_{t=0}$, because the expression
\begin{equation*}
    E_s(\Phi(t)u) = \frac{1}{2} \triplenorm{\Phi(t)u}_{H^s(\T)}^2 + R_s(\Phi(t)u)
\end{equation*}
is a priori ill defined for initial data $u$ in the support of $\mu_s$ since:
\begin{equation*}
    \triplenorm{\Phi(t)u}_{H^s(\T)}^2 = +\infty, \hspace{0.3cm} \text{$\mu_s$-almost surely.}
\end{equation*}
Indeed, for $\sigma < s- \frac{1}{2}$, it follows from the fact that the flow $\Phi(t)$ is a bijection from $\hsigt$ to itself that:
\begin{equation*}
	\mu_s(\{u \in \hsigt: \hsp \triplenorm{\Phi(t)u}_{H^s(\T)}^2 < +\infty \}) = \mu_s(\{v \in \hsigt: \hsp \triplenorm{v}_{H^s(\T)}^2 < +\infty \})= \mu_s(H^s(\T)) = 0
\end{equation*}
\end{rem}

We postpone the proof of Proposition~\ref{prop continuity of Qs,N and deterministic estimate} for Section~\ref{section Proofs of the deterministic properties} where a detailed analysis is provided. \\
Instead, we prove now the following approximation result:
\begin{prop}\label{prop Qs,N tends to Qs uniformly on compact sets}
    Let $s>\frac{3}{2}$. Let $\sigma < s-\frac{1}{2}$ close enough to $s-\frac{1}{2}$ so that the conclusion of Proposition \ref{prop continuity of Qs,N and deterministic estimate} holds. Then, for every compact set $K \subset \hsigt$,
    \begin{equation*}
        \sup_{u \in K} \left| Q_s(u) - Q_{s,N}(u) \right| \tendsto{N \ra \infty} 0
    \end{equation*}
    In other words, $Q_{s,N}$ converges to $Q_s$ uniformly on compact sets of $\hsigt$.
\end{prop}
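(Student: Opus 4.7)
The plan mirrors the argument for Proposition~\ref{prop approx R_s by R_s,N on compact sets}. The first step is to observe that, directly from the definition \eqref{def Qs,N = Q0 + Q1 +Q2} of $Q_{s,N}$ and the fact that $Q_s = Q_{s,\infty}$ corresponds to the case $\Pi_\infty = \mathrm{Id}$, one has the identity
\begin{equation*}
    Q_{s,N}(u) = Q_s(\Pi_N u)
\end{equation*}
for every $u \in \hsigt$ and every $N \in \N$. Hence the difference $Q_s(u) - Q_{s,N}(u)$ reduces to $Q_s(u) - Q_s(\Pi_N u)$, and I would control it via the uniform-in-$N$ Lipschitz-type estimate \eqref{Qs,N almost Lipschitz} (which, by Proposition~\ref{prop continuity of Qs,N and deterministic estimate}, is valid uniformly over $N \in \N \cup \{\infty\}$) applied with $v = \Pi_N u$:
\begin{equation*}
    |Q_s(u) - Q_{s,N}(u)| \leq C \hsignorm{\Pi_N^\perp u} \bigl(1 + \hsignorm{u} + \hsignorm{\Pi_N u}\bigr)^9.
\end{equation*}

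Next, I would use compactness. Given a compact set $K \subset \hsigt$, set $M := \sup_{u \in K} \hsignorm{u} < +\infty$. Since $\|\Pi_N\|_{\hsig \to \hsig} \leq 1$, we also have $\hsignorm{\Pi_N u} \leq M$ for every $u \in K$, so the above inequality yields
\begin{equation*}
    \sup_{u \in K} |Q_s(u) - Q_{s,N}(u)| \leq C(1+2M)^9 \sup_{u \in K} \hsignorm{\Pi_N^\perp u}.
\end{equation*}

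Finally, I would close the argument by invoking Lemma~\ref{lem cvgce on compact set of bdd lin map} with $E = F = \hsigt$, $T_N = \Pi_N$, $T = \mathrm{Id}$, the uniform bound $\|\Pi_N\|_{\hsig \to \hsig} \leq 1$, and the pointwise convergence $\Pi_N u \to u$ in $\hsigt$ for every $u$. This upgrades pointwise convergence to uniform convergence on the compact set $K$, giving $\sup_{u \in K} \hsignorm{\Pi_N^\perp u} \to 0$ as $N \to \infty$, which finishes the proof.

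No substantial obstacle is anticipated at this stage once Proposition~\ref{prop continuity of Qs,N and deterministic estimate} is taken for granted: the entire technical content of the convergence is packaged into its uniform-in-$N$ multilinear estimates \eqref{T0 with abs value}--\eqref{T2 with abs value}, where the real work will take place; these will be addressed in Section~\ref{section Proofs of the deterministic properties}.
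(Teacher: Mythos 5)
Your argument is correct and is essentially the paper's own: the paper simply refers back to the proof of Proposition~\ref{prop approx R_s by R_s,N on compact sets}, which likewise reduces the difference to a quantity controlled by $\hsignorm{\Pi_N^{\perp}u}$ and then invokes Lemma~\ref{lem cvgce on compact set of bdd lin map}. If anything, your route via the identity $Q_{s,N}=Q_s\circ\Pi_N$ combined with the uniform Lipschitz-type estimate \eqref{Qs,N almost Lipschitz} is the cleaner way to perform that reduction for a multilinear expression.
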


\begin{proof}[Proof of Proposition~\ref{prop Qs,N tends to Qs uniformly on compact sets}] Assuming the statements in Proposition~\ref{prop continuity of Qs,N and deterministic estimate}, the proof goes exactly the same as the proof of Proposition~\ref{prop approx R_s by R_s,N on compact sets}.
\end{proof}

\section{Transport of Gaussian measures under the truncated flow}\label{section Transport of Gaussian measures under the truncated flow}
In this section, we prove that the transported measure $\Phi_N(t)_\# \mu_s$ is absolutely continuous with respect to $\mu_s$. To do so, we directly establish an explicit formula for the Radon-Nikodym derivative of $\Phi_N(t)_\# \mu_s$ with respect to $\mu_s$. Our method relies on a change-of-variable formula. Analogous change-of-variables have already been used: see for example \cite{oh2017quasi} Proposition 6.6, or \cite{tzvetkov2015quasiinvariant} Section 4. 
 
 \subsection{A change-of-variable formula}
We dedicate this paragraph to this change-of-variable. It is equivalent to the statement that the truncated flow preserves a certain measure. \\
On the Euclidean space $E_N$, equipped with the orthonormal basis $\{ e^{ikx}\}_{|k|\leq N}$, we consider the Lebesgue measure $\prod_{|n|\leq N} d\hat{u}_k$, where $d\hat{u}_k$ is the Lebesgue measure on $\C \cdot e^{ikx}$.

\begin{prop}\label{prop trc invarianve of leb-low times Gauss-high}
The measure 
    \begin{equation*}
        \prod_{|k| \leq N} d \hat{u}_k \otimes \mu_{s,N}^{\perp}
    \end{equation*}
    is invariant under the truncated flow. In other words, for every $N \in \N$ and $t \in \R$, 
    \begin{equation*}
        (\Phi_N(t))_{\#} \Bigg( \prod_{|k| \leq N} d \hat{u}_k \otimes \mu_{s,N}^{\perp} \Bigg) = \prod_{|k| \leq N} d \hat{u}_k \otimes \mu_{s,N}^{\perp}
    \end{equation*}
\end{prop}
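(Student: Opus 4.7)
The plan is to exploit the factorization \eqref{factorization of the truncated flow} of the truncated flow, which decouples the dynamics on $E_N$ and $E_N^{\perp}$, in order to prove independently that each factor of the product measure is preserved by the corresponding factor of the flow. Since the measure $\prod_{|k|\leq N}d\hat u_k\otimes\mu_{s,N}^{\perp}$ is a product measure and $\Phi_N(t)$ sends $E_N$ to $E_N$ and $E_N^{\perp}$ to $E_N^{\perp}$, its pushforward is the product of the pushforwards. Hence the statement reduces to the two claims:
\begin{equation*}
\tilde{\Phi}_N(t)_{\#}\Big(\prod_{|k|\leq N}d\hat u_k\Big)=\prod_{|k|\leq N}d\hat u_k, \qquad (e^{it\partial_x^2})_{\#}\mu_{s,N}^{\perp}=\mu_{s,N}^{\perp}.
\end{equation*}

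For the first identity, I would observe that $\tilde{\Phi}_N(t)$ is the flow on the finite-dimensional space $E_N$ of an ordinary differential equation with smooth vector field $F_N(u)=i\partial_x^2 u - i\Pi_N(|\Pi_N u|^4 \Pi_N u)$. This ODE is Hamiltonian: it is precisely the restriction to $E_N$ of the system associated with the truncated Hamiltonian $H_N(u):=\tfrac12\int_{\T}|\partial_x \Pi_N u|^2\,dx+\tfrac16\int_{\T}|\Pi_N u|^6\,dx$, together with the standard symplectic form $\omega=\sum_{|k|\leq N}d(\Re\hat u_k)\wedge d(\Im\hat u_k)$ on $E_N$. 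By Liouville's theorem, Hamiltonian flows preserve the associated volume form, which here coincides with the Lebesgue measure $\prod_{|k|\leq N}d\hat u_k$. If one prefers a bare-hands check, one writes $\hat u_k=a_k+ib_k$ and verifies directly that $\sum_{|k|\leq N}\bigl(\partial_{a_k}\Re F_{N,k}+\partial_{b_k}\Im F_{N,k}\bigr)=0$: the linear part $i\partial_x^2$ contributes zero since it is a diagonal operator with purely imaginary eigenvalues (so purely antisymmetric in real coordinates), and the nonlinear part has the form $-i\partial_{\overline{\hat u_k}} H_{\mathrm{nl}}$ for $H_{\mathrm{nl}}=\tfrac16\|\Pi_N u\|_{L^6}^6$, whose divergence vanishes by the mixed-partials argument $\partial_{a_k}\Im(\partial_{\overline{\hat u_k}} H)=-\partial_{b_k}\Re(\partial_{\overline{\hat u_k}} H)$.

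For the second identity, the action of $e^{it\partial_x^2}$ on $E_N^{\perp}$ is diagonal in the Fourier basis: each coefficient $\hat u_k$, $|k|>N$, is sent to $e^{-itk^2}\hat u_k$, i.e.\ rotated by a fixed angle. The measure $\mu_{s,N}^{\perp}$ is the product over $|k|>N$ of the complex Gaussian measures with density proportional to $\exp(-\tfrac12\langle k\rangle^{2s}|\hat u_k|^2)d\hat u_k$, each of which depends only on $|\hat u_k|$ and is therefore invariant under the rotation $\hat u_k\mapsto e^{-itk^2}\hat u_k$. Taking the product over $|k|>N$ yields the claimed invariance.

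I expect no serious obstacle here; the main point to be careful about is the Hamiltonian structure of the truncated equation on $E_N$, since the outer projector $\Pi_N$ in \eqref{truncated equation} is exactly what makes the restricted vector field remain the Hamiltonian vector field of $H_N$ on $E_N$ (without it, one would produce components in $E_N^{\perp}$). The two invariance statements above then combine, via Fubini on the product $E_N\times E_N^{\perp}$ and the independence of the two components of the flow, to give the desired invariance of $\prod_{|k|\leq N}d\hat u_k\otimes\mu_{s,N}^{\perp}$.
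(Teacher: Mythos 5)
Your proposal is correct and follows essentially the same route as the paper: factorize $\Phi_N(t)$ as $(\tld{\Phi}_N(t),e^{it\p_x^2})$ on $E_N\times E_N^{\perp}$, apply Liouville's theorem to the finite-dimensional Hamiltonian flow $\tld{\Phi}_N(t)$ to preserve $\prod_{|k|\leq N}d\hat u_k$, use rotation invariance of the complex Gaussians to preserve $\mu_{s,N}^{\perp}$ under $e^{it\p_x^2}$, and combine via the product structure. The paper merely spells out the measure-theoretic step you summarize as ``the pushforward of the product is the product of the pushforwards'' by checking equality on rectangles $A_{low}\times A_{high}$ and invoking uniqueness of $\sigma$-finite product measures.
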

A reformulation of this proposition is:
\begin{cor}[change-of-variable formula]\label{change-of-variable formula}
    Let $A$ a Borel measurable set. Let $f : \hsigt \lra \R_+ $ be a positive measurable function.
    Then, 
    \begin{equation*}
        \int_{\Phi_N(t)A} f(v) \prod_{|k| \leq N} d \hat{v}_k \otimes \mu_{s,N}^{\perp}(v) = \int_A f(\Phi_N(t)u) \prod_{|k| \leq N} d \hat{u}_k \otimes \mu_{s,N}^{\perp}(u)
    \end{equation*}
\end{cor}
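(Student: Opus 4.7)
The corollary is a purely formal consequence of Proposition \ref{prop trc invarianve of leb-low times Gauss-high}: the plan is to unpack the pushforward identity. Let $\nu := \prod_{|k| \leq N} d\hat{u}_k \otimes \mu_{s,N}^\perp$ denote the product reference measure (which is $\sigma$-finite since $E_N$ is finite-dimensional and $\mu_{s,N}^\perp$ is a probability measure). By the standard pushforward identity, for every positive measurable $g$ on $\hsigt$ one has
\begin{equation*}
\int g \, d\big((\Phi_N(t))_\# \nu\big) = \int g \circ \Phi_N(t) \, d\nu,
\end{equation*}
and Proposition \ref{prop trc invarianve of leb-low times Gauss-high} collapses the left-hand side to $\int g \, d\nu$.

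To conclude, I would apply this identity with the specific choice $g := f \cdot \1_{\Phi_N(t) A}$, which is positive and Borel measurable since $\Phi_N(t)$ is a homeomorphism of $\hsigt$ (by continuity of the flow together with the group property, cf.\ Appendix \ref{appendix Construction and properties of the flow and the truncated flow}), so $\Phi_N(t) A$ is Borel whenever $A$ is. From the group property $\Phi_N(t)^{-1} = \Phi_N(-t)$ we also get $\1_{\Phi_N(t) A} \circ \Phi_N(t) = \1_A$, and substitution yields exactly the claimed identity. There is no analytical obstacle at this step; all the real content sits in Proposition \ref{prop trc invarianve of leb-low times Gauss-high}, and the corollary is merely a reformulation in a form more convenient for subsequent change-of-variables arguments on sets of the type $\Phi_N(t) A$.
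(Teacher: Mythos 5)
Your proposal is correct and is exactly the argument the paper leaves implicit: it states the corollary as a direct "reformulation" of Proposition~\ref{prop trc invarianve of leb-low times Gauss-high} without writing out a proof, and your combination of the transfer theorem for pushforwards with the choice $g = f\cdot\1_{\Phi_N(t)A}$ and the identity $\1_{\Phi_N(t)A}\circ\Phi_N(t)=\1_A$ is the standard way to make that reformulation precise. Your remark that $\Phi_N(t)A$ is Borel because $\Phi_N(t)$ is a homeomorphism (so $\Phi_N(t)A=\Phi_N(-t)^{-1}(A)$) is a worthwhile detail the paper also glosses over.
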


\begin{proof}[Proof of Proposition~\ref{prop trc invarianve of leb-low times Gauss-high}] Let $\tld{\Phi}_N(t)$ be the restriction of the truncated flow $\Phi_N(t)$ on the finite-dimensional space $E_N$, which maps $E_N$ to itself (see Proposition~\ref{structure of the truncated flow}). We factorize $\Phi_N(t)$ as $(\tld{\Phi}_N(t), e^{it\p_x^2})$ on $E_N \times E_N^{\perp}$, in the sense that $\Phi_N(t)u_0 = \tld{\Phi}_N(t) \Pi_N u_0 + e^{it\p_x^2} \Pi_N^{\perp} u_0$ (see again Proposition~\ref{structure of the truncated flow}). Now, we provide a proof in two steps.
\bigskip

\underline{Step 1 :} Firstly, we show the formula 
\begin{equation}\label{formula truncated transport}
    (\Phi_N(t))_{\#} \Bigg( \prod_{|k| \leq N} d \hat{u}_k \otimes \mu_{s,N}^{\perp} \Bigg) = \Bigg( \prod_{|k| \leq N} d \hat{u}_k\Bigg) \otimes (e^{it\p_x^2})_{\#} \mu_{s,N}^{\perp}
\end{equation}
To do so, we rely on the fact $\cB(\hsigt) = \cB(E_N) \otimes \cB(E_N^{\perp})$\footnote{Indeed, whenever $X$ and $Y$ are two topological separable spaces, we have $\cB(X \times Y) = \cB(X) \otimes \cB(Y)$, where $\otimes$ is the symbol for tensor-product of sigma-algebra}. Thanks to that point, if we show that for every $A_{low} \in \cB(E_N)$ and $A_{high} \in \cB(E_N^\perp)$, we have: 
\begin{equation}\label{equality on Alow x Ahigh}
        \Big( \prod_{|k| \leq N} d \hat{u}_k \otimes \mu_{s,N}^{\perp} \Big) \big( \Phi_N(t)^{-1}(A_{low} \times A_{high})\big)
         = \Big( \prod_{|k| \leq N} d \hat{u}_k\Big)( A_{low})\mu_{s,N}^{\perp}(e^{-it\p_x^2}A_{high}), 
\end{equation}
then, the property of uniqueness of product measures\footnote{Both $\prod_{|k| \leq N} d \hat{u}_k$ and  $(e^{it\p_x^2})_{\#} \mu_{s,N}^{\perp}$ being $\sigma$-finite measures.} will ensure that the two measures in \eqref{formula truncated transport} coincide on $\cB(E_N) \otimes \cB(E_N^{\perp})$, that is on $\cB(\hsigt)$. Hence, let us prove \eqref{equality on Alow x Ahigh}. Let $A_{low} \in \cB(E_N)$ and $A_{high} \in \cB(E_N^\perp)$. On the one hand, from the factorization of the truncated flow, we have:
\begin{equation*}
    \Phi_N(t)^{-1}(A_{low} \times A_{high}) = \tld{\Phi}_N(t)^{-1}(A_{low}) \times e^{-it\p_x^2}A_{high} \subset E_N \times E_N^\perp
\end{equation*}
so, using the definition of product measures we obtain:
\begin{equation*}
        \Big( \prod_{|k| \leq N} d \hat{u}_k \otimes \mu_{s,N}^{\perp} \Big) \left( \Phi_N(t)^{-1}(A_{low}   \times A_{high})\right) = \Big( \prod_{|k| \leq N} d \hat{u}_k\Big)(\tld{\Phi}_N(t)^{-1}(A_{low}))\mu_{s,N}^{\perp}(e^{-it\p_x^2}A_{high})
\end{equation*}
Thus, to prove \eqref{equality on Alow x Ahigh}, it remains to show that:
\begin{equation}\label{invariance of the Lebesgue measure under the truncated flow}
    \Big( \prod_{|k| \leq N} d \hat{u}_k\Big)\big(\tld{\Phi}_N(t)^{-1}(A_{low})\big) = \Big( \prod_{|k| \leq N} d \hat{u}_k\Big)\big(A_{low}\big)
\end{equation}
To do so, we recall that on the other hand, $\tld{\Phi}_N(t)$ is the flow of the Hamiltonian equation: 
\begin{equation}
    \begin{cases}
        i\p_t u = \frac{\p H_N}{\p \cjg{u}}(u) \\
        u|_{t=0} = u_0 \in E_N
    \end{cases}
    \tag{FNLS}
\end{equation}
on the finite-dimensional space $E_N$ (see Proposition~\ref{structure of the truncated flow}). So the equality \eqref{invariance of the Lebesgue measure under the truncated flow} follows from the application of Liouville's theorem, which states that the flow of finite-dimensional Hamiltonian equation preserves the Lebesgue measure. To sum up, we obtained \eqref{formula truncated transport}. Let us now turn to the second step of the proof:
\bigskip

\underline{Step 2 :} Secondly, we show the following invariance property:
\begin{equation*}
    (e^{it\p_x^2})_{\#} \mu_{s,N}^{\perp} = \mu_{s,N}^{\perp}
\end{equation*}
We recall that the probability measure $\mu_{s,N}^{\perp}$ is the law of the random variable
\begin{equation*}
    \begin{split}
   X : \hsp &  \Omega \lra \hsigt \\
   & \omega \longmapsto \sum_{|n| > N} \frac{g_n(\omega)}{\langle n \rangle^s} e^{inx}
    \end{split}
\end{equation*}
where the sum converges in the space $\L^2(\Omega,\hsigt)$. Since $e^{it\p_x^2}$ is a linear isometry on $\L^2(\T)$, we deduce that 
\begin{equation*}
    e^{it\p_x^2} \circ X (\omega) = \sum_{|n| > N} \frac{e^{-itn^2}g_n(\omega)}{\langle n \rangle^s} e^{inx}
\end{equation*}
where the sum still converges in $\L^2(\Omega,\hsigt)$. Moreover, the Gaussian  measures are invariant under rotations, so the family $\{e^{-itn^2}g_n \}_{n \in \Z}$ is still a family of independent standard complex Gaussian measures. Consequently, we obtain that $X$ and $e^{it\p_x^2} \circ X$ have the same law, and this means that
\begin{equation*}
    (e^{it\p_x^2})_{\#} \mu_{s,N}^{\perp} = \mu_{s,N}^{\perp}
\end{equation*}
which is the invariance property of Step 2.
\bigskip

\underline{Conclusion :} We get the desired result by combining Step 1 and Step 2.
\end{proof}


\subsection{The Radon-Nikodym derivative for the truncated transported Gaussian measures} 
In this paragraph, we fix \underline{$N \in \N$}. We will use Proposition~\ref{prop trc invarianve of leb-low times Gauss-high} in order to obtain the Radon-Nikodym derivative of $\Phi_N(t)_\# \mu_s$ with respect to $\mu_s$.

\begin{prop}\label{prop Radon-Nikodym derivative for the trc trspted GM}
    Let $s > \frac{3}{2}$, $R>0$ and $N \in \N$. For every $t \in \R$, we have
    \begin{equation*}
        \Phi_N(t)_\# \mu_s = \textnormal{exp}\bigl(-\frac{1}{2} (\norm{\Pi_N \Phi_N(-t)u}_{H^s}^2-\norm{\Pi_N u}_{H^s}^2 )   \bigr) d\mu_s
    \end{equation*}
    Moreover, we can rewrite this formula as
    \begin{equation*}
        \Phi_N(t)_\# \mu_s = \textnormal{exp}\left(R_{s,N}(\Phi_N(-t)u) - R_{s,N}(u)-\int_0^{-t} Q_{s,N}(\Phi_N(\tau)u)d\tau \right) d\mu_s
    \end{equation*}
    where $R_{s,N}$ is defined in Definition~\ref{def normal form R and E} (see also \eqref{R_s,N diag multilinear form}) and $Q_{s,N}$ is defined in \eqref{def Qs,N = Q0 + Q1 +Q2}  (see also Definition~\ref{def normal form Q}).
\end{prop}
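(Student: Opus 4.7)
The plan is to split the argument into two steps: first obtain the raw density in terms of $H^s$-norms via a change of variables, then rewrite that density using the normal form machinery from Section~\ref{section Poincaré-Dulac normal form reduction and modified energy}.

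For the first step, I exploit the product decomposition $\mu_s = \mu_{s,N} \otimes \mu_{s,N}^\perp$ from \eqref{mu_s = mu_s,N times mu_s,N^perp} together with the explicit Gaussian density \eqref{mu_s,N}, which lets me write
\begin{equation*}
\int f(\Phi_N(t)u)\,d\mu_s(u) = \int f(\Phi_N(t)u)\cdot \frac{1}{Z_N}e^{-\frac{1}{2}\|\Pi_N u\|_{H^s}^2}\prod_{|k|\le N}d\hat{u}(k)\otimes d\mu_{s,N}^\perp(u)
\end{equation*}
for any nonnegative measurable $f$. The key trick is to put the Gaussian weight inside the integrand in a way that becomes the ``future'' weight after the flow: since $\Phi_N(-t)\Phi_N(t)u=u$, we have $\|\Pi_N u\|_{H^s} = \|\Pi_N \Phi_N(-t)\Phi_N(t)u\|_{H^s}$, so the integrand equals $g(\Phi_N(t)u)$ with $g(v) := Z_N^{-1}f(v)e^{-\frac{1}{2}\|\Pi_N\Phi_N(-t)v\|_{H^s}^2}$. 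Applying the change-of-variable formula (Corollary~\ref{change-of-variable formula}) with $A$ the full space removes the composition with $\Phi_N(t)$, and reassembling the $Z_N^{-1}e^{-\frac{1}{2}\|\Pi_N v\|_{H^s}^2}$ factor back into $d\mu_s(v)$ yields the first claimed identity.

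For the second step, I use that $E_{s,N}(v) = \frac{1}{2}\|\Pi_N v\|_{H^s}^2 + R_{s,N}(v)$ by Definition~\ref{def normal form R and E}, so algebraically
\begin{equation*}
-\tfrac{1}{2}\bigl(\|\Pi_N\Phi_N(-t)u\|_{H^s}^2-\|\Pi_N u\|_{H^s}^2\bigr) = R_{s,N}(\Phi_N(-t)u)-R_{s,N}(u) - \bigl(E_{s,N}(\Phi_N(-t)u)-E_{s,N}(u)\bigr).
\end{equation*}
The remaining energy difference is handled by the fundamental theorem of calculus applied to $\tau\mapsto E_{s,N}(\Pi_N\Phi_N(\tau)u)$: thanks to the factorization \eqref{factorization of the truncated flow}, the low-frequency part of $\Phi_N(\tau)u$ is a smooth curve in the finite-dimensional space $E_N$, so this map is $\mathcal{C}^1$ in $\tau$; moreover, the additivity identity \eqref{Qs,N(phi_N(t)u)} gives $\frac{d}{d\tau}E_{s,N}(\Pi_N\Phi_N(\tau)u) = Q_{s,N}(\Phi_N(\tau)u)$. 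Integrating from $0$ to $-t$ produces the integral term and closes the second formula.

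The main obstacle is really just the first step, namely setting up the change of variables so that the Gaussian weight transports correctly; once this bookkeeping is done, the second step is a purely formal consequence of the construction of $E_{s,N}$, $R_{s,N}$, and $Q_{s,N}$ in Section~\ref{section Poincaré-Dulac normal form reduction and modified energy}. No regularity issue arises because everything reduces to a finite-dimensional computation on $E_N$ (after noting that $\mu_{s,N}^\perp$ is automatically invariant under the linear propagator $e^{it\p_x^2}$, as was already used to prove Proposition~\ref{prop trc invarianve of leb-low times Gauss-high}).
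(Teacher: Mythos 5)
Your proposal is correct and follows essentially the same route as the paper: the first identity is obtained from the product decomposition $\mu_s=\mu_{s,N}\otimes\mu_{s,N}^\perp$ together with the invariance of $\prod_{|k|\le N}d\hat u_k\otimes\mu_{s,N}^\perp$ under $\Phi_N(t)$ (you invoke it through Corollary~\ref{change-of-variable formula}, the paper through the equivalent transport-of-density formulation of Proposition~\ref{prop trc invarianve of leb-low times Gauss-high}), and the second identity is the same fundamental-theorem-of-calculus rewriting of the exponent via $E_{s,N}=\frac12\triplenorm{\Pi_N\cdot}^2_{H^s}+R_{s,N}$ and the identity $\frac{d}{d\tau}E_{s,N}(\Pi_N\Phi_N(\tau)u)=Q_{s,N}(\Phi_N(\tau)u)$. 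No gaps.
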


\begin{rem}
    Such a formula for the density has also been obtained for the different models in \cite{debussche2021quasi}, \cite{forlano_and_soeng2022transport}, \cite{genovese2023transport} and \cite{forlano2022quasi}.
\end{rem}

\begin{proof}[Proof of Proposition~\ref{prop Radon-Nikodym derivative for the trc trspted GM}] Firstly, we decompose $\mu_s$ as:
\begin{equation*}
   d \mu_s = \left( \frac{1}{Z_N}e^{-\frac{1}{2}\norm{\Pi_N u}_{H^s}^2} \prod_{|k| \leq N} d \hat{u_k} \right) \otimes d\mu_{s,N}^{\perp}
\end{equation*}
(see \eqref{mu_s = mu_s,N times mu_s,N^perp} and \eqref{mu_s,N}).
    Then, thanks to a general feature for the transport of density measure, we have :
     \begin{equation*}
        \begin{split}
            \Phi_N(t)_\# d\mu_s &= \Phi_N(t)_\# \left( \frac{1}{Z_N}e^{-\frac{1}{2}\norm{\Pi_N u}_{H^s}^2} \prod_{|k| \leq N} d \hat{u_k} \right) \otimes d\mu_{s,N}^{\perp} \\
            & =  \frac{1}{Z_N}e^{-\frac{1}{2}\norm{\Pi_N(\Phi_N(t)^{-1}u)}^2_{H^s}} \cdot \Phi_N(t)_\# \left( \prod_{|k| \leq N} d \hat{u_k} \right) \otimes d\mu_{s,N}^{\perp} \\
            & =  \frac{1}{Z_N}e^{-\frac{1}{2}\norm{\Pi_N(\Phi_N(-t)(u)}^2_{H^s}} \cdot  \left( \prod_{|k| \leq N} d \hat{u_k} \right) \otimes d\mu_{s,N}^{\perp} \\
            &= e^{-\frac{1}{2}\left(\norm{\Pi_N(\Phi_N(-t)(u)}^2_{H^s} -  \norm{\Pi_N u}^2_{H^s} \right)} d\mu_s
        \end{split}
    \end{equation*}
    where we used in the third line the invariance property from Proposition~\ref{prop trc invarianve of leb-low times Gauss-high}. Hence, the first statement of Proposition~\ref{prop Radon-Nikodym derivative for the trc trspted GM} is proven. To achieve the proof, we rewrite the term inside the exponential thanks to the definition of the modified energy \eqref{modified energy} and the identity \eqref{Qs,N(phi_N(t)u)}:
    \begin{equation*}
        \begin{split}
        -\frac{1}{2}\big(\norm{\Pi_N(\Phi_N(-t)(u)}^2_{H^s} &-  \norm{\Pi_N u}^2_{H^s} \big) = \int_0^{-t} -\frac{1}{2}\frac{d}{d\tau} \norm{\Pi_N(\Phi_N(\tau)(u)}^2_{H^s} d\tau \\
        & = \int_0^{-t} \big( \frac{d}{d\tau}\left( R_{s,N}(\Phi_N(\tau)u) \right) -\frac{d}{d\tau}( E_{s,N}(\Pi_N \Phi_N(\tau)u)) \big) d\tau \\
        & = \int_0^{-t} \big( \frac{d}{d\tau}\left( R_{s,N}(\Phi_N(\tau)u) \right)- Q_{s,N}(\Phi_N(\tau)u) \big) d\tau \\
        & = R_{s,N}(\Phi_N(-t)u) - R_{s,N}(u) - \int_0^{-t} Q_{s,N}(\Phi_N(\tau)u) d\tau
        \end{split} 
    \end{equation*}
    which is the desired rewriting.
\end{proof}

\section{Transport of Gaussian measures under the flow}\label{section Transport of Gaussian measures under the flow}
In Section~\ref{section Transport of Gaussian measures under the truncated flow}, we have seen that for every $t \in \R$, and every $N \in \N$:
\begin{equation}\label{trc trspted GM = density GM}
    \Phi_N(t)_\# \mu_s = G_{s,N}(t,.)\mu_s
\end{equation}
where,
\begin{equation}\label{bis density trc trspted GM}
    G_{s,N}(t,u)= \textnormal{exp} \left(R_{s,N}(\Phi_N(-t)u) -R_{s,N}(u)- \int_0^{-t} Q_{s,N}(\Phi_N(\tau)u) d\tau\right)
\end{equation} 
Our goal in this section is to "take the limit" $N \ra \infty$ in order to extend this formula to $N=\infty$. Thus, we invoke:
    \begin{equation}\label{density transported GM}
    G_s(t,u) := \textnormal{exp} \left(R_s(\Phi(-t)u)-R_s(u) - \int_0^{-t} Q_{s}(\Phi(\tau)u) d\tau\right)
\end{equation}
and we aim to show the following proposition:
\begin{prop}\label{prop Radon-Nikodym derivative for the transported GM}
    Let $s>\frac{3}{2}$ and $R>0$. Let $t \in \R$. Then,
    \begin{equation*}
        \Phi(t)_\# \mu_s = G_s(t,u) d\mu_s
    \end{equation*}
    In particular, $\mu_s$ is quasi-invariant under the flow of \eqref{NLS}.
\end{prop}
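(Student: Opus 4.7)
The strategy is to pass to the limit $N\to\infty$ in the truncated Radon-Nikodym formula $\Phi_N(t)_\# \mu_s = G_{s,N}(t,\cdot)\, d\mu_s$ from Proposition~\ref{prop Radon-Nikodym derivative for the trc trspted GM}, testing against bounded continuous functions $F : \hsigt \to \R$ (with $\sigma < s-\frac{1}{2}$ chosen close enough to $s-\frac{1}{2}$). Since two finite Borel measures on the Polish space $\hsigt$ that give the same integral to every element of $\cC_b(\hsigt)$ coincide, this will suffice. The approximation property of the flow proven in the appendix yields $\Phi_N(t)u \to \Phi(t)u$ in $\hsigt$ for every $u \in \hsigt$, so by dominated convergence
\[
\int F(\Phi_N(t)u)\, d\mu_s(u) \tendsto{N \ra \infty} \int F(\Phi(t)u)\, d\mu_s(u),
\]
i.e.\ $\Phi_N(t)_\# \mu_s \to \Phi(t)_\# \mu_s$ weakly. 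The goal then reduces to showing $\int F \cdot G_{s,N}(t,\cdot)\, d\mu_s \to \int F \cdot G_s(t,\cdot)\, d\mu_s$.

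I would first establish pointwise convergence $G_{s,N}(t,u) \to G_s(t,u)$ for every $u \in \hsigt$. Fixing such a $u$, the trajectory $\{\Phi(\tau)u : \tau \in [0,-t]\}$ is compact in $\hsigt$, and the appendix gives $\Phi_N(\tau)u \to \Phi(\tau)u$ uniformly on $[0,-t]$. Combined with the continuity of $R_s$ and $Q_s$ (Propositions~\ref{prop R_s,N continuous} and \ref{prop continuity of Qs,N and deterministic estimate}) and with the uniform-on-compact-sets convergences $R_{s,N}\to R_s$, $Q_{s,N}\to Q_s$ (Propositions~\ref{prop approx R_s by R_s,N on compact sets} and \ref{prop Qs,N tends to Qs uniformly on compact sets}), each of the three summands $R_{s,N}(\Phi_N(-t)u)$, $R_{s,N}(u)$, $\int_0^{-t} Q_{s,N}(\Phi_N(\tau)u)\, d\tau$ in the exponent converges to its counterpart for $G_s$.

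The main obstacle is to promote this pointwise convergence of the densities into convergence of the integrals against $F$, since at this stage of the paper no $L^p$ bound on $G_{s,N}$ is yet available. The key tool is tightness: $\{\Phi_N(t)_\# \mu_s\}_N$ is a weakly convergent sequence of Borel probability measures on the Polish space $\hsigt$, hence tight by Prokhorov's theorem. So for every $\eps > 0$ there exists a compact $K_\eps \subset \hsigt$ with
\[
\int_{\hsigt \setminus K_\eps} G_{s,N}(t,u)\, d\mu_s(u) = \Phi_N(t)_\# \mu_s(\hsigt \setminus K_\eps) < \eps \quad \text{uniformly in } N,
\]
and Fatou's lemma yields the same bound for $G_s$. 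On $K_\eps$, the pointwise convergence improves to uniform convergence: the local Lipschitz estimates of Propositions~\ref{prop R_s,N continuous} and \ref{prop continuity of Qs,N and deterministic estimate}, combined with the local uniform convergence of the flows, show that the family $\{G_{s,N}(t,\cdot)\}_N$ is equi-Lipschitz on $K_\eps$, hence pointwise convergence upgrades to uniform convergence by Ascoli. The splitting
\[
\Big| \int F \cdot (G_{s,N} - G_s)\, d\mu_s \Big| \leq \norm{F}_{\infty} \sup_{u \in K_\eps} |G_{s,N}(t,u) - G_s(t,u)| + 2 \norm{F}_{\infty} \eps
\]
followed by $N \to \infty$ and then $\eps \to 0$ completes the identification. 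Combined with the weak convergence from the first paragraph, this gives $\int F\, d(\Phi(t)_\# \mu_s) = \int F\, G_s\, d\mu_s$ for every $F \in \cC_b(\hsigt)$, proving $\Phi(t)_\# \mu_s = G_s(t,u)\, d\mu_s$. Quasi-invariance of $\mu_s$ follows immediately.
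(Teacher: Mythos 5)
Your proposal is correct, and it follows a genuinely different route from the paper's. The paper never tests against continuous functions: it uses inner regularity of finite Borel measures on Polish spaces to reduce the identity $\Phi(t)_\#\mu_s = G_s(t,\cdot)\,d\mu_s$ to the equality $\int_K G_s(t,u)\,d\mu_s = \mu_s(\Phi(-t)K)$ on compact sets $K$, and then proves the two inequalities separately by sandwiching $\Phi(-t)K$ between the sets $\Phi_N(-t)K$ via Corollary~\ref{appendix set approximation}. That argument requires a further technical reduction to $K\cap B^{H^{\sigma_2}}_k$ and the introduction of an intermediate topology $H^{\sigma_1}$, precisely so that the thickened set $K_2+B^{H^{\sigma_1}}_\eps$ remains compact in $\hsigt$ when applying the uniform convergence of the densities. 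Your argument replaces all of this set-level bookkeeping by a single appeal to Prokhorov's theorem: the weakly convergent sequence $\{\Phi_N(t)_\#\mu_s\}_N$ is uniformly tight, which is exactly the uniform-in-$N$ smallness of $\int_{\hsigt\setminus K_\eps}G_{s,N}\,d\mu_s$ that the paper instead extracts by hand. Both proofs ultimately rest on the same two ingredients — the approximation of $\Phi(t)$ by $\Phi_N(t)$ from the appendix and the uniform convergence $G_{s,N}(t,\cdot)\to G_s(t,\cdot)$ on compact sets — and indeed your equi-Lipschitz/Ascoli step merely rederives Proposition~\ref{prop GsN cvg uniformly on cpcts to Gs}, which you could cite directly. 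What your route buys is brevity and the avoidance of the two-topology trick, at the price of invoking the nontrivial direction of Prokhorov (relative compactness implies tightness on Polish spaces); the paper's route is longer but uses only the elementary inner regularity of finite measures. Two small points to make explicit: you should record that $G_s(t,\cdot)\in\L^1(d\mu_s)$ (which follows from Fatou, $\int G_s\,d\mu_s\le\liminf_N\int G_{s,N}\,d\mu_s=1$) so that $G_s(t,\cdot)\,d\mu_s$ is a finite measure to which the $\cC_b$-uniqueness criterion applies, and your dominated convergence step implicitly uses that $\Phi_N(t)u\to\Phi(t)u$ for every $u\in\hsigt$ with $\sigma\ge 1$, which is supplied by Proposition~\ref{appendix second approximation prop}.
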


\begin{rem}\label{rem continuity of the densities}
    For $\sigma < s-\frac{1}{2}$ close enough to $s-\frac{1}{2}$, we deduce from Proposition~\ref{prop R_s,N continuous}, Proposition~\ref{prop continuity of Qs,N and deterministic estimate} (whose proofs are provided in Section~\ref{section Proofs of the deterministic properties}), and from the continuity properties of the flow, that the map :
    \begin{equation*}
        (t,u) \in \R \times \hsigt \mapsto \textnormal{exp} \left( R_s(\Phi(-t)u)-R_s(u) -\int_0^{-t} Q_{s,N}(\Phi_N(\tau)u) d\tau\right)
    \end{equation*}
    is continuous, for any $N\in \N \cup \{ \infty \}$. \\
    Moreover, on $\hsigt$, the a priori ill-defined object $-\frac{1}{2}(\norm{\Phi(-t)u}_{H^s}^2-\norm{u}_{H^s}^2)$ can be seen as the well-defined object:
    \begin{equation*}
        -\frac{1}{2}(\norm{\Phi(-t)u}_{H^s}^2-\norm{u}_{H^s}^2) := R_s(\Phi(-t)u)-R_s(u) - \int_0^{-t} Q_{s}(\Phi(\tau)u) d\tau
    \end{equation*}
\end{rem}
In this section, we assume the statements from Proposition~\ref{prop R_s,N continuous} and Proposition~\ref{prop continuity of Qs,N and deterministic estimate}. Hence, we work with a $\sigma < s-\frac{1}{2}$ close enough to $s-\frac{1}{2}$ so that the pointwise properties from Proposition~\ref{prop R_s,N continuous} and \ref{prop continuity of Qs,N and deterministic estimate} are satisfied.

\subsection{Approximation properties} Our main ingredients to "take the limit" $N \ra \infty$ in \eqref{trc trspted GM = density GM} is two approximation properties. The first one is the inner regularity satisfied by probability measures on $\hsigt$ :

\begin{prop}[inner regularity]\label{prop inner regularity of rho}Let $\mu$ be a finite measure on $(\hsigt,\cB(\hsigt))$. Then, for any Borel set $A \subset H^\sigma(\T)$, we have 
    \begin{equation*}
    \mu(A) = \sup \{ \mu(K) : \hsp K \subset A, \hsp K \hsp \textnormal{compact set in} \hsp \hsigt \}
\end{equation*}
\end{prop}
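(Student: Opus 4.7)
The plan is to invoke the standard machinery for regularity of Borel measures on Polish spaces, since $\hsigt$ is a separable Hilbert space, hence Polish. The proof proceeds in two conceptual steps: first obtain inner regularity by closed sets, then upgrade closed sets to compact sets by tightness.

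For the first step, I would introduce the class
\begin{equation*}
\mathcal{D} := \{ A \in \cB(\hsigt) \hsp : \hsp \forall \eps > 0, \hsp \exists F \subset A \hsp \text{closed with} \hsp \mu(A \setminus F) < \eps \hsp \text{and} \hsp \exists U \supset A \hsp \text{open with} \hsp \mu(U \setminus A) < \eps \}
\end{equation*}
and verify that $\mathcal{D}$ is a $\sigma$-algebra containing the open sets. The fact that every open set $U$ lies in $\mathcal{D}$ follows from writing $U$ as an increasing countable union of closed sets $F_n := \{ u \in \hsigt \hsp : \hsp d(u, U^c) \geq \frac{1}{n} \}$ and using continuity from below. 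Stability under complementation is immediate from the symmetric definition; stability under countable unions follows by choosing $F_n \subset A_n \subset U_n$ with $\mu(A_n \setminus F_n), \mu(U_n \setminus A_n) < \eps/2^{n+1}$ and taking a large finite union of the $F_n$'s (allowed by finiteness of $\mu$) and the total countable union of the $U_n$'s.

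For the second step, I would invoke Ulam's tightness theorem: every finite Borel measure on a Polish space is tight, meaning for every $\eta > 0$ there exists a compact $K_\eta \subset \hsigt$ with $\mu(\hsigt \setminus K_\eta) < \eta$. This is proven using separability of $\hsigt$: fix a dense sequence $\{v_j\}$ and for each $m \in \N^*$ cover $\hsigt$ by balls $B(v_j, 1/m)$; by continuity from below choose $J_m$ with $\mu(\hsigt \setminus \bigcup_{j \leq J_m} B(v_j, 1/m)) < \eta/2^m$; then $K_\eta := \bigcap_m \overline{\bigcup_{j \leq J_m} B(v_j, 1/m)}$ is closed and totally bounded, hence compact in the complete space $\hsigt$, with $\mu(\hsigt \setminus K_\eta) < \eta$.

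To conclude, given $A \in \cB(\hsigt)$ and $\eps > 0$, the first step supplies a closed $F \subset A$ with $\mu(A \setminus F) < \eps/2$, and tightness supplies a compact $K_{\eps/2}$ with $\mu(\hsigt \setminus K_{\eps/2}) < \eps/2$. Then $K := F \cap K_{\eps/2}$ is compact (closed subset of a compact) with $K \subset A$ and $\mu(A \setminus K) < \eps$. The main subtlety is the proof of tightness, where separability of $\hsigt$ is essential; the rest is routine Dynkin-class bookkeeping.
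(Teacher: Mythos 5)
Your proof is correct and matches the paper's approach: the paper simply invokes the general fact that finite Borel measures on Polish spaces are (inner) regular, and your argument is precisely the standard proof of that fact (inner regularity by closed sets via the $\sigma$-algebra of well-approximable sets, upgraded to compact sets by Ulam's tightness theorem, using that $\hsigt$ is separable and complete). All steps check out.
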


\begin{proof}
    This follows from the general fact that finite measures on Polish spaces are regular.
\end{proof}

As a consequence, we have:
\begin{cor}\label{cor mu(K) = nu(K) for all compact sets}
    Let $\mu$ and $\nu$ be two finite measures on $(\hsigt, \cB(\hsigt))$. Assume that for every compact set $K \subset \hsigt$, we have:
    \begin{equation*}
        \mu(K) = \nu(K)
    \end{equation*}
    Then, $\mu = \nu$.
\end{cor}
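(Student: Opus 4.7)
The plan is to deduce the corollary directly from the inner regularity statement of Proposition~\ref{prop inner regularity of rho}, which is already available. Since $\hsigt$ is a Polish space, both $\mu$ and $\nu$ are inner regular finite Borel measures, so each of them can be recovered on a Borel set $A$ as the supremum of its values on compact subsets of $A$.

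Concretely, I would fix an arbitrary Borel set $A \in \cB(\hsigt)$ and apply Proposition~\ref{prop inner regularity of rho} to both $\mu$ and $\nu$. This gives
\begin{equation*}
    \mu(A) = \sup \{ \mu(K) : K \subset A, \, K \text{ compact in } \hsigt \}
\end{equation*}
and similarly
\begin{equation*}
    \nu(A) = \sup \{ \nu(K) : K \subset A, \, K \text{ compact in } \hsigt \}.
\end{equation*}
By hypothesis, $\mu(K) = \nu(K)$ for every compact set $K$, and in particular for every compact $K \subset A$. Hence the two suprema are taken over identical families of numbers, so $\mu(A) = \nu(A)$. Since $A$ was arbitrary, $\mu = \nu$ on $\cB(\hsigt)$.

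There is no real obstacle here: the work has already been done in Proposition~\ref{prop inner regularity of rho}, whose content is precisely that finite Borel measures on the Polish space $\hsigt$ are inner regular. The only thing to verify is that the sets of compact subsets of $A$ used to realize the suprema for $\mu$ and $\nu$ are the same, which is obvious since compactness in $\hsigt$ is intrinsic to the space and does not depend on the measure. The proof should therefore be only a few lines long.
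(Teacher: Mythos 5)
Your proof is correct and follows essentially the same route as the paper: both arguments reduce the claim to the inner regularity of Proposition~\ref{prop inner regularity of rho} applied to $\mu$ and $\nu$, the paper simply phrasing the comparison of the two suprema as an $\eps$-argument done once in each direction.
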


\begin{proof}
    Let $A \in \cB(\hsigt)$. Let us prove that $\mu(A) = \nu(A)$. Let $\eps > 0$. From the inner regularity of $\mu$ (see Proposition \ref{prop inner regularity of rho}), we invoke a compact set $K$ of $\hsigt$ such that $K \subset A$ and $\mu(A) - \eps \leq \mu(K)$. Thus,
    \begin{equation*}
        \mu(A) - \eps \leq \mu(K) = \nu(K) \leq \nu(A)
    \end{equation*}
    Since $\eps >0$ is arbitrary, we conclude that $\mu(A) \leq \nu(A)$. By interchanging the roles of $\mu$ and $\nu$, we obtain the converse inequality.
\end{proof}

The second approximation property that we will use is the approximation of the expected density $G_s(t,.)$ by the \textit{truncated densities} $G_{s,N}(t,.)$:

\begin{prop}\label{prop GsN cvg uniformly on cpcts to Gs} Let $s>\frac{3}{2}$. Let $K \subset \hsigt$ a compact set. Then, for every $t\in \R$:
\begin{equation*}
    \sup_{u \in K} \left|G_s(t,u) - G_{s,N}(t,u) \right| \tendsto{N\ra \infty} 0 
\end{equation*}
    In other words, $G_{s,N}(t,.)$ converges to $G_s(t,.)$ uniformly on compact sets.
\end{prop}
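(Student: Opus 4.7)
My plan is to reduce the convergence of $G_{s,N}(t,\cdot)$ to $G_s(t,\cdot)$ to the convergence of the arguments of the exponentials, and then handle each of the three resulting pieces separately using the results of Sections~\ref{section Deterministic properties of the energy correction} and \ref{section the modified energy derivative at 0} together with the approximation $\Phi_N(\tau) \to \Phi(\tau)$ uniformly on compact sets of $\hsigt$ (which is the content of the appendix on the truncated flow).

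Write $G_{s,N}(t,u) = e^{A_{N}(t,u)}$ and $G_s(t,u) = e^{A(t,u)}$, where
\begin{equation*}
A_N(t,u) := R_{s,N}(\Phi_N(-t)u) - R_{s,N}(u) - \int_0^{-t} Q_{s,N}(\Phi_N(\tau)u)\, d\tau
\end{equation*}
and $A(t,u)$ is the analogous quantity with $N=\infty$. Using $|e^a - e^b| \leq e^{\max(a,b)}|a-b|$, it suffices to show that $A_N(t,\cdot) \to A(t,\cdot)$ uniformly on $K$ while $\sup_N \sup_{u\in K} |A_N(t,u)|$ stays finite. The uniform bound will follow from the deterministic estimates on $R_{s,N}$ and $Q_{s,N}$ in Propositions~\ref{prop R_s,N continuous} and \ref{prop continuity of Qs,N and deterministic estimate}, together with the fact that $\bigcup_{N\in\N\cup\{\infty\}} \bigcup_{\tau\in[-|t|,|t|]} \Phi_N(\tau)(K)$ is bounded in $\hsigt$ (a consequence of the uniform-on-compact convergence $\Phi_N(\tau) \to \Phi(\tau)$ on $\hsigt$ combined with the continuity of $\tau \mapsto \Phi(\tau)u$).

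The main step is the uniform convergence of the three contributions to $A_N$. The second piece, $R_{s,N}(u) - R_s(u)$, goes to $0$ uniformly on $K$ directly by Proposition~\ref{prop approx R_s by R_s,N on compact sets}. For the first piece I write
\begin{equation*}
R_{s,N}(\Phi_N(-t)u) - R_s(\Phi(-t)u) = \bigl[R_{s,N}(\Phi_N(-t)u) - R_s(\Phi_N(-t)u)\bigr] + \bigl[R_s(\Phi_N(-t)u) - R_s(\Phi(-t)u)\bigr].
\end{equation*}
For the second bracket I use that $\Phi_N(-t) \to \Phi(-t)$ uniformly on $K$ together with the Lipschitz-type estimate in Proposition~\ref{prop R_s,N continuous} applied to $R_s$ on a bounded set containing all the $\Phi_N(-t)(K)$. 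For the first bracket, the key is to verify that the set $K' := \Phi(-t)(K) \cup \bigcup_N \Phi_N(-t)(K)$ is precompact in $\hsigt$: given $\eps>0$, pick $N_0$ large so that $\sup_{u\in K}\|\Phi_N(-t)u - \Phi(-t)u\|_{\hsig} < \eps$ for $N\geq N_0$; then the compact set $\Phi(-t)(K) \cup \bigcup_{N<N_0} \Phi_N(-t)(K)$ is a finite $\eps$-net for $K'$, proving total boundedness, hence precompactness. On $\overline{K'}$, Proposition~\ref{prop approx R_s by R_s,N on compact sets} then gives $R_{s,N} \to R_s$ uniformly, which controls the first bracket.

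For the integral term, I write
\begin{equation*}
Q_{s,N}(\Phi_N(\tau)u) - Q_s(\Phi(\tau)u) = \bigl[Q_{s,N}(\Phi_N(\tau)u) - Q_s(\Phi_N(\tau)u)\bigr] + \bigl[Q_s(\Phi_N(\tau)u) - Q_s(\Phi(\tau)u)\bigr]
\end{equation*}
and apply the same two-step strategy, but now the precompactness must be established for the bigger set $K'' := \bigcup_{N\in\N\cup\{\infty\}}\bigcup_{\tau \in [-|t|,|t|]} \Phi_N(\tau)(K)$. By the same $\eps$-net argument, using that $(\tau,u) \mapsto \Phi(\tau)u$ is continuous on the compact $[-|t|,|t|] \times K$ (so its image is compact) and that $\Phi_N(\tau) \to \Phi(\tau)$ uniformly on $[-|t|,|t|] \times K$ (this uniform-in-$\tau$ statement is again provided by the appendix), one obtains that $K''$ is totally bounded. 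Then Proposition~\ref{prop Qs,N tends to Qs uniformly on compact sets} gives the uniform convergence of the first bracket on $\overline{K''}$, and the almost-Lipschitz estimate \eqref{Qs,N almost Lipschitz} together with $\Phi_N(\tau)u \to \Phi(\tau)u$ uniformly on $[-|t|,|t|]\times K$ controls the second. Integrating in $\tau \in [0,-t]$ preserves uniformity on $K$, and the proof concludes. The main technical obstacle here is the uniform-in-$(N,\tau)$ precompactness of $K''$, which is really what forces us to use the uniform-on-compact-in-time approximation of $\Phi$ by $\Phi_N$ rather than just pointwise in $\tau$.
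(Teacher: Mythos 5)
Your proof is correct and follows essentially the same strategy as the paper: reduce to uniform convergence of the exponents, then split each term into a ``function approximation'' part (handled by Propositions~\ref{prop approx R_s by R_s,N on compact sets} and \ref{prop Qs,N tends to Qs uniformly on compact sets}) and a ``flow approximation'' part (handled by the Lipschitz-type estimates together with Proposition~\ref{appendix second approximation prop}). The one place you diverge is the order of the splitting: you write, e.g., $Q_{s,N}(\Phi_N(\tau)u)-Q_s(\Phi(\tau)u)=[Q_{s,N}-Q_s](\Phi_N(\tau)u)+[Q_s(\Phi_N(\tau)u)-Q_s(\Phi(\tau)u)]$, which forces you to prove precompactness of $K''=\bigcup_{N,\tau}\Phi_N(\tau)(K)$ via the $\eps$-net argument; the paper instead evaluates the function difference at the \emph{untruncated} flow, $[Q_s-Q_{s,N}](\Phi(\tau)u)+[Q_{s,N}(\Phi(\tau)u)-Q_{s,N}(\Phi_N(\tau)u)]$, so that only the compactness of $\{\Phi(\tau)u:\tau\in[0,t],u\in K\}$ (immediate from joint continuity of $\Phi$) is needed, the second bracket being absorbed by the $N$-uniform almost-Lipschitz bound \eqref{Qs,N almost Lipschitz}. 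Your extra precompactness step is correct (a compact $\eps$-net suffices for total boundedness), just avoidable.
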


\begin{proof} Let $t\in \R$. Recall that $G_{s,N}(t,.)$ and $G_s(t,.)$ are respectively defined in \eqref{bis density trc trspted GM} and \eqref{density transported GM}. We prove separately that:
\begin{flushleft}
    \text{$(a):$ $ R_{s,N}(\Phi_N(t)u)-R_{s,N}(u)$ converges to $R_s(\Phi(t)u)-R_s(u)$ uniformly on compact sets as $N \ra \infty$}
\end{flushleft}
    and,
\begin{flushleft}
   \text{ $(b):$ $\int_0^{t} Q_{s,N}(\Phi_N(\tau)u) d\tau$ converges to $ \int_0^{t} Q_{s}(\Phi(\tau)u) d\tau$ uniformly on compact sets as $N \ra \infty$}. 
\end{flushleft}

Indeed, if we do so, we will obtain that: 
\begin{center}
$R_{s,N}(\Phi_N(t)u)-R_{s,N}(u) - \int_0^{t} Q_{s,N}(\Phi(_N\tau)u) d\tau $ converges to $R_s(\Phi(t)u)-R_s(u) - \int_0^{t} Q_{s}(\Phi(\tau)u) d\tau$ uniformly on compact sets as $N \ra \infty$.  
\end{center}

Then, since the exponential is continuous, this will lead to the result. In order not to repeat the same argument, and since $(a)$ is similar to $(b)$ and a little bit easier, we will only prove $(b)$. Then, let $K \subset \hsigt$ be a compact set. Then,
\begin{equation}\label{diffce integrals on compatc set}
    \begin{split}
        &| \int_0^{t} Q_{s}(\Phi(\tau)u) d\tau - \int_0^{t} Q_{s,N}(\Phi_N(\tau)u) d\tau | \\
        & \leq \int_0^{t} |Q_{s}(\Phi(\tau)u) - Q_{s,N}(\Phi(\tau)u) | d\tau +  \int_0^{t} | Q_{s,N}(\Phi(\tau)u) - Q_{s,N}(\Phi_N(\tau)u) |
        d\tau \\
        & \leq |t| \sup_{\tau \in [0,t]} |Q_{s}(\Phi(\tau)u) - Q_{s,N}(\Phi(\tau)u)| + |t| \sup_{\tau \in [0,t]} |Q_{s,N}(\Phi(\tau)u) - Q_{s,N}(\Phi_N(\tau)u)|
    \end{split}
\end{equation}
On the one hand, from the continuity of $\Phi: \R \times \hsig \ra \hsig$, the set $\{\Phi(\tau)u; \hsp \tau \in [0,t], u \in K\}$ is compact. Combining this with the fact that $Q_{s,N} \tendsto{N} Q_s$ uniformly on compact sets (see Proposition~\ref{prop Qs,N tends to Qs uniformly on compact sets}) yields:
\begin{equation*}
   \sup_{u \in K} \sup_{\tau \in [0,t]} |Q_{s}(\Phi(\tau)u) - Q_{s,N}(\Phi(\tau)u)| \tendsto{N \ra \infty} 0
\end{equation*}
On the other hand, using \eqref{Qs,N almost Lipschitz} from Proposition~\ref{prop continuity of Qs,N and deterministic estimate} we can invoke a constant $C_s>0$ independent of $N$ such that:
\begin{equation}\label{Qs,N(phi) - Qs,N(phi_N) on compact sets}
    \begin{split}
   &\sup_{u \in K} \sup_{\tau \in [0,t]} |Q_{s,N}(\Phi(\tau)u) - Q_{s,N}(\Phi_N(\tau)u)| \\
   & \leq C_s \sup_{(\tau,u)\in [0,t]\times K}\hsignorm{\Phi(\tau)u-\Phi_N(\tau)u}\left(1 + \hsignorm{\Phi(\tau)u}+\hsignorm{\Phi_N(\tau)u}\right)^9 
   \end{split}
\end{equation}
Besides, from the Cauchy theory, there exists a constant $C>0$ independent of $N$ such that:
\begin{equation*}
    \sup_{(\tau,u)\in [0,t]\times K} \hsignorm{\Phi(\tau)u} + \hsignorm{\Phi_N(\tau)u} \leq C
\end{equation*}
(see Proposition~\ref{prop exponential bound}). And, from Proposition~\ref{appendix second approximation prop}, we also have:
\begin{equation*}
    \sup_{(\tau,u)\in [0,t]\times K}\hsignorm{\Phi(\tau)u-\Phi_N(\tau)u} \tendsto{N \ra \infty} 0
\end{equation*}
Using these two facts in \eqref{Qs,N(phi) - Qs,N(phi_N) on compact sets} yields:
\begin{equation*}
    \sup_{(\tau,u)\in [0,t]\times K}\ |Q_{s,N}(\Phi(\tau)u) - Q_{s,N}(\Phi_N(\tau)u)| \tendsto{N \ra \infty} 0 
\end{equation*}
Finally, coming back to \eqref{diffce integrals on compatc set}, we conclude that:
\begin{equation*}
    \sup_{u \in K}| \int_0^{t} Q_{s}(\Phi(\tau)u) d\tau - \int_0^{t} Q_{s,N}(\Phi_N(\tau)u) d\tau | \tendsto{N \ra \infty} 0
\end{equation*}
This completes the proof.
\end{proof}

\subsection{The Radon-Nikodym derivative for the transported Gaussian measure}

In this paragraph, we prove Proposition~\ref{prop Radon-Nikodym derivative for the transported GM} based on the combination of \eqref{trc trspted GM = density GM} with the approximation properties above.

\begin{rem}
    We will be able to use Corollary~\ref{cor mu(K) = nu(K) for all compact sets} with the measures $\Phi(t)_\# \mu_s$ and $G_s(t,.) d\mu_s$. Indeed, both are finite measures on $(\hsigt, \cB(\hsigt))$. On the one hand, $\Phi(t)_\# \mu_s$ is a probability measure; on the other hand, Fatou's lemma provide the following a priori bound for $G_s(t,.) d\mu_s(\hsigt)$ :
    \begin{equation*}
        G_s(t,.) d\mu_s(\hsigt) = \int_{\hsig} G_s(t,u)d\mu_s =\int_{\hsig} \lim_N G_{s,N}(t,u)d\mu_s \leq \liminf_N \int_{\hsig}G_{s,N}(t,u)d\mu_s = 1
    \end{equation*}
\end{rem}

\begin{rem}
    The proof we provide below for Proposition \ref{prop Radon-Nikodym derivative for the transported GM} is similar to the one for Theorem 1.4 in \cite{planchon2022modified}. However, it is worth noting that our proof do not require any $\L^p$-integrability for the truncated densities $G_{s,N}(t,.)$.
\end{rem}

\begin{proof}[Proof of Proposition~\ref{prop Radon-Nikodym derivative for the transported GM}] Let $t \in \R$. Relying on Corollary \ref{cor mu(K) = nu(K) for all compact sets}, it suffices to prove that for every compact set $K$ of $\hsigt$, we have:
\begin{equation}\label{it suffices equality on K}
    G_s(t,u) d\mu_s (K) = \Phi(t)_\# \mu_s (K), \hspace{0.2cm} \text{that is}, \hspace{0.2cm} \int_K G_s(t,u) d\mu_s = \mu_s(\Phi(-t) K) 
\end{equation}
Fix $K$ a compact of $\hsigt$.\\
We invoke two real numbers $\sigma_1$ and $\sigma_2$ such that $\sigma < \sigma_1 < \sigma_2 < s -\frac{1}{2}$. Moreover, we invoke for $k \in \N$:
\begin{equation*}
    B^{H^{\sigma_2}}_k := \{ u \in H^{\sigma_2}(\T): \hsp \norm{u}_{H^{\sigma_2}(\T)} \leq k  \}
\end{equation*}
the closed centered ball in $H^{\sigma_2}(\T)$ of radius $k$. Note that $ B^{H^{\sigma_2}}_k$ is compact in $H^{\sigma_1}(\T)$ because $\sigma_1 < \sigma_2$. To establish \eqref{it suffices equality on K}, it suffices to prove that for all $k \in \N$:
\begin{equation}\label{it suffices equality on K cap Ball stronger topology}
    \int_{K \cap B^{H^{\sigma_2}}_k} G_s(t,u) d\mu_s = \mu_s(\Phi(-t) (K \cap B^{H^{\sigma_2}}_k) )
\end{equation}
Indeed, if we do so, we will obtain that:
\begin{equation*}
    \begin{split}
    &\int_{K \cap H^{\sigma_2}(\T)} G_s(t,u) d\mu_s  = \int_{ \bigcup_{k \in \N} (K \cap B^{H^{\sigma_2}}_k )} G_s(t,u) d\mu_s  = \lim_{k \ra \infty} \nnearrow \int_{K \cap B^{H^{\sigma_2}}_k} G_s(t,u) d\mu_s \\
    & = \lim_{k \ra \infty} \nnearrow  \mu_s(\Phi(-t) (K \cap B^{H^{\sigma_2}}_k) ) = \mu_s \big( \bigcup_{k \in \N} \Phi(-t) (K \cap B^{H^{\sigma_2}}_k) \big) = \mu_s\big(  \Phi(-t) (\bigcup_{k \in \N}(K \cap B^{H^{\sigma_2}}_k)) \big) \\
    & = \mu_s\big(\Phi(-t)(K \cap H^{\sigma_2}(\T))\big)
    \end{split}
\end{equation*}
Besides, since $\mu_s(\hsigt \setminus  H^{\sigma_2}(\T)) = 0$, we have: 
\begin{equation*}
    \int_K G_s(t,u) d\mu_s = \int_{K \cap H^{\sigma_2}(\T)} G_s(t,u) d\mu_s
\end{equation*}
and,
\begin{equation*}
    \mu_s(\Phi(-t)K) = \mu_s\big((\Phi(-t)K) \cap H^{\sigma_2}(\T)\big) = \mu_s\big(\Phi(-t)(K\cap H^{\sigma_2}(\T))\big)
\end{equation*}
Hence, if we prove \eqref{it suffices equality on K cap Ball stronger topology}, then we will obtain \eqref{it suffices equality on K}, and the proof will be complete. Thus, we now move on to the proof of \eqref{it suffices equality on K cap Ball stronger topology}. Let $k\in \N$ and denote $K_2 := K \cap B^{H^{\sigma_2}}_k $.\\

-- Firstly, we prove that:
\begin{equation}\label{first ineq}
    \int_{K_2} G_s(t,u) d\mu_s \leq \mu_s(\Phi(-t) (K_2) )
\end{equation}
Since $K_2$ is compact in $H^{\sigma}(\T)$, we obtain from Proposition \ref{prop GsN cvg uniformly on cpcts to Gs} that:
\begin{equation}\label{int_K G_s = lim int_K Gs,N}
    \int_{K_2} G_s(t,u) d\mu_s = \lim_{N \ra \infty} \int_{K_2} G_{s,N}(t,u) d\mu_s = \lim_{n\ra \infty} \Phi_N(t)_\# \mu_s (K_2) = \lim_{N \ra \infty} \mu_s(\Phi_N(-t)K_2)
\end{equation}
Let $\eps >0$. Thanks to Corollary \ref{appendix set approximation}, we invoke $N_0 \in \N$ such that:
    \begin{equation*}
      N\geq N_0 \implies  \Phi_N(-t)(K_2) \subset \Phi(-t)(K_2) +  B^{H^{\sigma}}_{\eps}
    \end{equation*}
    where $ B^{H^{\sigma}}_{\eps}$ is the closed centered ball in $\hsigt$ of radius $\eps$. As a consequence, we have:
    \begin{equation*}
        \limsup_N \mu_s(\Phi_N(-t)K_2) \leq \mu_s(\Phi(-t)(K_2) +  B^{H^{\sigma}}_{\eps})
    \end{equation*}
    Plugging this into \eqref{int_K G_s = lim int_K Gs,N}, we obtain:
    \begin{equation*}
         \int_{K_2} G_s(t,u) d\mu_s \leq \mu_s(\Phi(-t)(K_2) +  B^{H^{\sigma}}_{\eps})
    \end{equation*}
    Since $\eps>0$ is arbitrary, we deduce that\footnote{Since $\Phi(-t) K_2$ is closed in $\hsigt$, we have $\bigcap_{\eps > 0}(\Phi(-t)(K_2) +  B^{H^{\sigma}}_{\eps}) = \Phi(-t) K_2$}:
        \begin{equation*}
         \int_{K_2} G_s(t,u) d\mu_s \leq \lim_{\eps \ra 0} \ssearrow \mu_s(\Phi(-t)(K_2) +  B^{H^{\sigma}}_{\eps})  = \mu_s\big(\bigcap_{\eps > 0}(\Phi(-t)(K_2) +  B^{H^{\sigma}}_{\eps})\big) = \mu_s(\Phi(-t) K_2)
    \end{equation*}
    So we have proven \eqref{first ineq}. \\
    
-- Secondly, we prove that:
\begin{equation}\label{second ineq}
    \int_{K_2} G_s(t,u) d\mu_s \geq \mu_s(\Phi(-t) (K_2) )
\end{equation}
Let us first observe that $K_2$ is compact in $H^{\sigma_1}(\T)$ : if $\{ u_n\}_{n \in \N} \in K_2^\N$ is a sequence in $K_2 = K \cap B^{H^{\sigma_2}}_k $, then, from the compactness of $B^{H^{\sigma_2}}_k$ in $H^{\sigma_1}(\T)$ (because $\sigma_1 < \sigma_2$), there exists a subsequence $\{ u_{n_j}\}_{j \in \N}$ and an element $u \in B^{H^{\sigma_2}}_k$ such that:
\begin{equation*}
    \norm{u_{n_j}-u}_{H^{\sigma_1}} \tendsto{j \ra \infty} 0
\end{equation*}
In particular, we have $\norm{u_{n_j}-u}_{H^{\sigma}}\tendsto{j \ra \infty} 0$ because $\sigma < \sigma_1 $. Since $K$ is closed in $\hsigt$, it implies that $u \in K$. Then $u \in K \cap B^{H^{\sigma_2}}_k$, and $K_2$ is compact in $H^{\sigma_1}(\T)$. \\

Now, let $\eps >0$. Thanks to Corollary \ref{appendix set approximation}, and the fact that $K_2$ is compact in $H^{\sigma_1}(\T)$ , we are able to invoke $N_1 \in \N$ such that:
    \begin{equation*}
       N \geq N_1 \implies  \Phi(-t)(K_2) \subset \Phi_N(-t)(K_2 + B^{H^{\sigma_1}}_{\eps})
    \end{equation*}
    where $ B^{H^{\sigma_1}}_{\eps}$ is the closed centered ball in $H^{\sigma_1}(\T)$ of radius $\eps$. It is now important to notice that  $K_2 + B^{H^{\sigma_1}}_{\eps}$ is compact in $\hsigt$. It follows from the fact that both $K_2$ and $B^{H^{\sigma_1}}_{\eps}$ are compact in $\hsigt$. As a consequence, we obtain from Proposition \ref{prop GsN cvg uniformly on cpcts to Gs} that for $N \geq N_1$: 
    \begin{equation*}
        \mu_s(\Phi(-t) (K_2)) \leq \mu_s( \Phi_N(-t)(K_2 + B^{H^{\sigma_1}}_{\eps})) = \int_{K_2 + B^{H^{\sigma_1}}_{\eps}} G_{s,N}(t,u) d\mu_s \tendsto{N \ra \infty} \int_{K_2 + B^{H^{\sigma_1}}_{\eps}} G_s(t,u) d\mu_s 
    \end{equation*}
    Since $\eps >0$ is arbitrary, we obtain that:
    \begin{equation*}
        \mu_s(\Phi(-t) (K_2)) \leq \lim_{\eps \ra 0} \ssearrow \int_{K_2 + B^{H^{\sigma_1}}_{\eps}} G_s(t,u) d\mu_s  = \int_{K_2} G_s(t,u) d\mu_s 
    \end{equation*}
    So we have proven \eqref{second ineq}. This completes the proof of Proposition \ref{prop Radon-Nikodym derivative for the transported GM}.
\end{proof}

\section{Weighted Gaussian measures, \texorpdfstring{$L^p$}{TEXT}-estimates on the weight, and transport along the flows}\label{section weighted Gaussian measures}
In Section~\ref{section Transport of Gaussian measures under the flow}, we proved that for every $s>\frac{3}{2}$ and every $t \in \R$, we have:
\begin{equation*}
    \Phi(t)_\# \mu_s = G_s(t,u) \mu_s
\end{equation*}
where $G_s(t,u)$ is the continuous function on $\hsigt$ (for a given $\sigma<s-\frac{1}{2}$ close enough to $s-\frac{1}{2}$) given by:
\begin{equation*}
     G_s(t,u) = \textnormal{exp} \left(R_s(\Phi(-t)u)-R_s(u) - \int_0^{-t} Q_{s}(\Phi(\tau)u) d\tau\right)
\end{equation*}
In particular, it implies that $G_s(t,.)$ belongs to $\L^1(d\mu_s)$ (with a $\L^1(d\mu_s)$-norm equal to $1$). Thus, it is interesting to wonder if  $G_s(t,.)$ belongs to $\L^p(d\mu_s)$ for $p \in (1,\infty)$. We will see that the answer is positive for all $p \in (1,\infty)$ if we restrict $\mu_s$ on bounded sets of $H^1(\T)$, that is at the level where we can make use of the conservation of the Hamiltonian and of the $\L^2(\T)$-norm. To do so, we need to introduce \textit{weighted Gaussian measures}. \\

In Section~\ref{section Poincaré-Dulac normal form reduction and modified energy}, we identified a modified energy in Definition~\ref{def normal form R and E}. Based on this modified energy, we define the weighted Gaussian measures. Formally, the idea is to replace the Gaussian measure $\frac{1}{Z_s}e^{-\frac{1}{2}\norm{u}_{H^s(\T)}^2}du$ by $\frac{1}{Z'_s}e^{-E_s(u)}du$. However, we need to add a cut-off at the energy level, where the Hamiltonian and the $\L^2(\T)$-norm are conserved by the flow. In this section, we introduce the weighted Gaussian measures as density measures with respect to the Gaussian measure $\mu_s$. We also provide $\L^p$-estimates on these densities, ensuring in particular that the weighted Gaussian measure are well-defined probability measures on $\hsigt$, $\sigma < s-\frac{1}{2}$. 

\subsection{Definitions}
We start by invoking the following quantity :
\begin{equation*}
    \cC(u) := \frac{1}{2} \norm{u}_{\L^2}^2 + H(u) 
\end{equation*}
which is conserved by the flow of \eqref{NLS}.
Next, for $R > 0$, and for every $N \in \N$, we define the weighted Gaussian measures as  
\begin{align}\label{weighted Gaussian measure}
    d\rho_{s,R,N} &:= \frac{1}{Z_{s,R,N}}\1_{ \{\cC(u) \leq R \}} e^{-R_{s,N}(u)} d\mu_s, & d\rho_{s,R} &:= \frac{1}{Z_{s,R}} \1_{ \{\cC(u) \leq R \}} e^{-R_s(u)} d\mu_s
\end{align}
where,
\begin{align*}
    Z_{s,R,N} &:= \int_{H^{s-\frac{1}{2}-}} \1_{ \{\cC(u) \leq R \}} e^{-R_{s,N}(u)} d\mu_s, & Z_{s,R} &:= \int_{H^{s-\frac{1}{2}-}} \1_{ \{\cC(u) \leq R \}} e^{-R_s(u)} d\mu_s
\end{align*}
are normalizing constants ensuring that $\rho_{s,R,N}$ and $\rho_{s,R}$ are probability measures (if they are positive and finite, see Remark \ref{rem well defined proba meas}).
We recall that $R_{s,N}(u)$ and $R_s(u)$ have been defined in Definition~\ref{def normal form R and E} (see also ~\eqref{R_s,N diag multilinear form}). 
\begin{notn}
    We also use the notations $\rho_{s,R,\infty}$ and $Z_{s,R,\infty}$ to respectively refer to $\rho_{s,R}$ and $Z_{s,R}$. 
\end{notn}

\begin{rem}[$H^1(\T)$ cut-off]\label{rem cut-off H1}
We can rewrite $\cC(u)$ as :
\begin{equation*}
    \cC(u) = \frac{1}{2} \norm{u}_{H^1}^2 + \frac{1}{6}\norm{u}_{\L^6}^6
\end{equation*}
From the Sobolev embedding $H^1(\T) \hookrightarrow \L^6(\T)$, we have
\begin{equation*}
    \frac{1}{2} \norm{u}_{H^1}^2 \leq \cC(u) \leq C(1+\norm{u}_{H^1})^6
\end{equation*}
It means that the cut-off $\1_{ \{\cC(u) \leq R \}}$ is a $H^1(\T)$ cut-off. In particular, (for $R\geq 2$)
\begin{equation*}
    \cC(u) \leq R \implies \norm{u}_{H^1} \leq \sqrt{2R} \leq R
\end{equation*}
so, 
\begin{equation*}
    \1_{ \{\cC(u) \leq R \}} \leq \1_{B_R^{H^1}}(u)
\end{equation*}
where $B_R^{H^1}$ is the closed center ball of radius $R$ in $H^1(\T)$.\\
The additional nice property is that the quantity $\cC(u)$ is conserved by the flow of \eqref{NLS}.
\end{rem}

\begin{rem}(Passing from $\rho_{s,R,N}$ to $\mu_s$)\label{rem mu_s on balls}
Let $R>0$. The measure $\mu_s|_{\{ \cC \leq R\}}$ coincide with the measure $ Z_{s,N,R}e^{R_{s,N}(u)}\rho_{s,R,N}|_{\{ \cC \leq R\}}$.
    In other words, for every Borel set $A \subset H^1(\T)$ such that $A \subset \{\cC \leq R\}$, we have
    \begin{equation*}
        Z_{s,N,R} e^{R_{s,N}(u)} \rho_{s,R,N}(A) = \mu_s(A)
    \end{equation*}
    Indeed, it results from
    \begin{equation*}
        \begin{split}
        Z_{s,N,R} e^{R_{s,N}(u)}\rho_{s,R,N}(A) & = \int_A Z_{s,N,R}  e^{R_{s,N}(u)} d\rho_{s,R,N} (u) \\
        & = \int_A  Z_{s,N,R} e^{R_{s,N}(u)} \underbrace{\1_{ \{\cC(u) \leq R \}}}_{=1} e^{-R_{s,N}(u)} \frac{1}{Z_{s,N,R}} d\mu_s(u) \\
        & = \int_A d\mu_s(u) = \mu_s(A)
        \end{split}
    \end{equation*}
\end{rem}

Now, we state the following crucial proposition, whose proof is postponed to the dedicated Section~\ref{section Estimates for the weight of the weighted Gaussian measures}.

\begin{prop}\label{prop estimate for the weight of the weighted measure}
    Let $s > \frac{3}{2}$ and $R>0$. Then for any $p \in [1,+\infty)$, there exists a constant $C(s,p,R)>0$ such that for every $N \in \N \cup \{ \infty \}$, we have:
    \begin{equation}\label{estimate weight of wgm}
       \norm{\1_{\{ \cC(u) \leq R \} } e^{|R_{s,N}(u)|}}_{\Lp(d\mu_s)} \leq C(s,p,R)
    \end{equation}
    Moreover, 
    \begin{equation*}
        \norm{\1_{\{ \cC(u) \leq R \} } e^{-R_s(u)}-\1_{\{ \cC(u) \leq R \} } e^{-R_{s,N}(u)}}_{\L^p(d\mu_s)} \tendsto{N \ra \infty} 0
    \end{equation*}
    In particular, $Z_{s,N,R} \tendsto{N} Z_{s,R}$ so that we also have:
        \begin{equation}\label{cvgce density rhoN to density rho}
        \norm{\frac{1}{Z_{s,R}}\1_{\{ \cC(u) \leq R \} } e^{-R_s(u)}-\frac{1}{Z_{s,R,N}}\1_{\{ \cC(u) \leq R \} } e^{-R_{s,N}(u)}}_{\L^p(d\mu_s)} \tendsto{N \ra \infty} 0
    \end{equation}
\end{prop}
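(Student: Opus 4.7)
The proposition has two parts: a uniform (in $N \in \N \cup \{\infty\}$) exponential moment bound \eqref{estimate weight of wgm}, and the $\Lp(d\mu_s)$-convergence of the truncated weights to the limiting one. The principal obstacle is the uniform bound; once it is available, the convergence assertions follow from a Vitali/dominated-convergence argument that I sketch below.

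\textbf{Step 1: uniform $\Lp$-bound on the exponential weight.} By the layer-cake identity,
\begin{equation*}
\int \1_{\{\cC \leq R\}} e^{p|R_{s,N}(u)|}\, d\mu_s = \mu_s(\{\cC \leq R\}) + \int_0^\infty p e^{p\lambda}\,\mu_s\bigl(\{\cC(u)\leq R,\ |R_{s,N}(u)|>\lambda\}\bigr)\, d\lambda,
\end{equation*}
so it is enough to establish tail estimates of the form $\mu_s\bigl(\{\cC\leq R,\ |R_{s,N}|>\lambda\}\bigr) \leq C(s,R)\,e^{-c(s,R)\lambda^{\theta}}$ for some $\theta>1$, uniformly in $N$. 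By Chebyshev's inequality this reduces to a polynomial-in-$p$ control of the form
\begin{equation*}
\norm{\1_{\{\cC \leq R\}}\, R_{s,N}}_{\Lp(d\mu_s)} \leq C(s,R)\,p^{1/\theta}, \qquad p \geq 2,
\end{equation*}
uniformly in $N \in \N \cup \{\infty\}$. To obtain this, I would expand $R_{s,N}(u)$ through the multilinear formula \eqref{R_s,N diag multilinear form} and decompose the six summation indices $k_1,\dots,k_6$ according to whether each $|k_j|$ lies below or above a dyadic threshold $M$. Two ingredients are then put to work: the $H^1(\T)$-cutoff $\1_{\{\cC\leq R\}}$ controls the low-frequency Fourier coefficients deterministically (see Remark~\ref{rem cut-off H1}), while on the high-frequency side the Gaussians $\{g_n\}_{|n|>M}$ are independent of the low-frequency ones, so one may apply the conditional Wiener chaos estimate (Lemma~\ref{lem Wiener chaos}) with $m=3$ high-frequency Gaussians—thereby avoiding the ``pairing between generations'' which forced the remarkable cancellation in \cite{sun2023quasi}. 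Combining the gain from the resonant denominator $\Omega(\vec{k})$, the variances $\langle k_j\rangle^{-2s}$ and the $H^1$-cutoff for the remaining deterministic frequencies, carried out over all splittings of low vs.\ high frequencies, is the technical core of the proof and the main obstacle.

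\textbf{Step 2: pointwise convergence $R_{s,N}\to R_s$ and $\Lp$-convergence of the weights.} By Proposition~\ref{prop R_s,N continuous}, $R_{s,N}(u) = \frac{1}{6}\Re\,\cR(\Pi_N u,\ldots,\Pi_N u)$ with $\cR$ continuous on $\hsigt^6$; since $\Pi_N u \to u$ in $\hsigt$ for every $u \in \hsigt$, we obtain $R_{s,N}(u) \to R_s(u)$ pointwise, and consequently $\1_{\{\cC\leq R\}} e^{-R_{s,N}(u)} \to \1_{\{\cC\leq R\}} e^{-R_s(u)}$ $\mu_s$-almost surely. Applying Step~1 with exponent $p(1+\eps)$ in place of $p$ provides uniform integrability of the family $\{\1_{\{\cC\leq R\}} e^{-pR_{s,N}}\}_N$, and Vitali's convergence theorem then yields the $\Lp(d\mu_s)$-convergence of $\1_{\{\cC\leq R\}} e^{-R_{s,N}}$ to $\1_{\{\cC\leq R\}} e^{-R_s}$.

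\textbf{Step 3: normalization constants and conclusion.} Taking $p=1$ in Step~2 gives $Z_{s,N,R} \to Z_{s,R}$; in particular $Z_{s,R} > 0$. The decomposition
\begin{equation*}
\frac{\1_{\{\cC\leq R\}} e^{-R_s}}{Z_{s,R}} - \frac{\1_{\{\cC\leq R\}} e^{-R_{s,N}}}{Z_{s,R,N}} = \frac{\1_{\{\cC\leq R\}} e^{-R_s} - \1_{\{\cC\leq R\}} e^{-R_{s,N}}}{Z_{s,R}} + \Bigl(\frac{1}{Z_{s,R}} - \frac{1}{Z_{s,R,N}}\Bigr)\1_{\{\cC\leq R\}} e^{-R_{s,N}},
\end{equation*}
combined with the triangle inequality, the uniform $\Lp$-bound from Step~1, and the $\Lp$-convergence from Step~2, yields \eqref{cvgce density rhoN to density rho}, completing the proof.
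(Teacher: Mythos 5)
Your overall architecture matches the paper's: the result is reduced to the uniform polynomial bound $\norm{\1_{\{\cC\leq R\}}R_{s,N}}_{\Lp(d\mu_s)}\leq C(s,R)p^{\beta}$ with $\beta<1$ (the paper's Lemma~\ref{lem R_s,N L^p estimate}), which is converted into exponential integrability exactly as you describe (the paper packages the layer-cake/Chebyshev step as Lemma~\ref{csqce control on all Lp norm}); the convergence statements then follow from pointwise convergence of $R_{s,N}$ plus uniform integrability, and the normalization constants are handled by your Step~3 decomposition. Steps~2 and~3 are complete and correct.

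The one substantive gap is in Step~1, where you defer the core estimate to a sketch that, as written, would not close. If you split into low and high frequencies and apply the conditional Wiener chaos estimate with $m=3$ to the high-frequency Gaussians, Lemma~\ref{lem Wiener chaos} returns a factor $p^{3/2}$, which is \emph{not} of the required form $p^{\beta}$ with $\beta<1$; and conversely the purely deterministic estimate (interpolating the Gaussian moment bound \eqref{Gaussian moments} against the $H^1$-cutoff, which yields an exponent $p^{(2-\alpha)/2}<p$) does give an admissible power of $p$ but fails to be summable in the dyadic parameters precisely in the \emph{high-high-high-low-low-low} regime. The paper's proof resolves this tension by interpolating the two bounds with a weight $\theta\in(0,1)$, producing the exponent $\frac{3}{2}\theta+\frac{2-\alpha}{2}(1-\theta)$ on $p$ together with a net negative exponent on $N_{(1)}$, and Lemma~\ref{lem theta exponent < 1 on p and negative exponent on N_{(1)}} verifies that $\alpha$, $\theta$, $\sigma$, $\eps$ can be chosen to satisfy all constraints simultaneously for every $s>\frac{3}{2}$. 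This interpolation, and the accompanying parameter check, is the essential idea missing from your sketch; without it neither the Wiener-chaos bound nor the deterministic bound alone yields the claimed $p^{\beta}$, $\beta<1$, uniformly in $N$.
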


\begin{rem}\label{rem well defined proba meas}
    The inequality \eqref{estimate weight of wgm} in  Proposition~\ref{prop estimate for the weight of the weighted measure} ensures that for all $N \in \N \cup \{ \infty\}$ :
    \begin{equation*}
       Z_{s,R,N} = \int_{H^{s-\frac{1}{2}}}  \1_{\{ \cC(u) \leq R \} } e^{-R_{s,N}(u)}d\mu_s < +\infty
        \end{equation*}
        Besides, we also have $Z_{s,R,N}>0$ because one can show that $\mu_s (  \1_{\{ \cC(u) \leq R \} } e^{-R_{s,N}(u)} > 0 ) > 0$. To see this, we write on the one hand:
    \begin{equation*}
        \mu_s \left(  \1_{\{ \cC(u) \leq R \} } e^{-R_{s,N}(u)} > 0 \right) = \mu_s\left(\{ \cC(u) \leq R \} \cap R_{s,N}<+\infty \right) = \mu_s\left(\{ \cC(u) \leq R \} \right)
    \end{equation*}
    and on the other hand, we have that $\mu_s(\{ \cC(u) \leq R \}) > 0$, for any $R>0$, since $\mu_s$ charges all open sets of $H^1(\T)$. For this latter point, we refer to \cite{burq2013probabilistic}, Proposition 1.2.
  Note also that since $Z_{s,R,N} \ra Z_{s,R}$, there exists a constant $C_{s,R}>0$ such that:
    \begin{equation}\label{1/Cs,R leq Zs,R,N leq Cs,R}
        \frac{1}{C_{s,R}} \leq Z_{s,R,N} \leq C_{s,R}
    \end{equation}
    uniformly in $N\in \N \cup \{ \infty \}$.
\end{rem}

\subsection{Transport of weighted Gaussian measures along the flows} \underline{For $N \in \N \cup \{ \infty \}$}, now that we know explicitly the density of $\Phi_N(t)_\# \mu_s$ with respect to $\mu_s$, we are able to obtain the density of $\Phi_N(t)_\# \rho_{s,R,N}$ with respect to $\rho_{s,R,N}$ without re-performing the analysis of Section~\ref{section Transport of Gaussian measures under the truncated flow} and~\ref{section Transport of Gaussian measures under the flow}. \\

We stress the fact that the following proposition holds for $N \in \N$ \textbf{and} $N=\infty$.

\begin{prop}\label{prop Radon-Nikodym derivative for the transported wgm}
    Let $s > \frac{3}{2}$, $R>0$ and $N \in \N \cup\{ \infty \}$. For every $t \in \R$, we have:
    \begin{equation*}
        \Phi_N(t)_\# \rho_{s,R,N} = F_{s,N}(t,u) d\rho_{s,R,N}
    \end{equation*}
    for a function $F_{s,N}(t,.)$ given by the explicit formula:
    \begin{equation*}
        F_{s,N}(t,u)=\textnormal{exp}\left(-\int_0^{-t} Q_{s,N}(\Phi_N(\tau)u)d\tau \right)
    \end{equation*}
    where $Q_{s,N}$ is defined in \eqref{def Qs,N = Q0 + Q1 +Q2} (see also Definition~\ref{def normal form Q}). In particular, $\rho_{s,R}$ is quasi-invariant along the flow of \eqref{NLS}; and when $N \in \N$, $\rho_{s,R,N}$ is quasi-invariant along the flow of \eqref{truncated equation}.
\end{prop}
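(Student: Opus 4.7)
The plan is to combine the push-forward formulas $\Phi_N(t)_\#\mu_s=G_{s,N}(t,\cdot)\,d\mu_s$, valid for $N\in\N$ by Proposition~\ref{prop Radon-Nikodym derivative for the trc trspted GM} and for $N=\infty$ by Proposition~\ref{prop Radon-Nikodym derivative for the transported GM}, with the algebraic observation that the weight $e^{-R_{s,N}}$ appearing in $d\rho_{s,R,N}$ will exactly cancel the $e^{R_{s,N}(\Phi_N(-t)v)}$ factor sitting inside $G_{s,N}(t,v)$.

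Concretely, for a Borel set $A\subset\hsigt$ I would compute
\begin{equation*}
Z_{s,R,N}\,\rho_{s,R,N}(\Phi_N(-t)A)=\int \1_A(\Phi_N(t)u)\,\1_{\{\cC\leq R\}}(u)\,e^{-R_{s,N}(u)}\,d\mu_s(u)
\end{equation*}
and apply the push-forward identity $\int h(\Phi_N(t)u)\,d\mu_s(u)=\int h(v)\,G_{s,N}(t,v)\,d\mu_s(v)$ to the continuous, nonnegative choice $h(v):=\1_A(v)\,\1_{\{\cC\leq R\}}(\Phi_N(-t)v)\,e^{-R_{s,N}(\Phi_N(-t)v)}$. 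Using the explicit formula for $G_{s,N}(t,v)$, the $e^{-R_{s,N}(\Phi_N(-t)v)}$ coming from $h$ cancels the matching exponent in $G_{s,N}(t,v)$, leaving precisely $F_{s,N}(t,v)\,e^{-R_{s,N}(v)}$. This yields
\begin{equation*}
Z_{s,R,N}\,\rho_{s,R,N}(\Phi_N(-t)A)=\int_A \1_{\{\cC\leq R\}}(\Phi_N(-t)v)\,F_{s,N}(t,v)\,e^{-R_{s,N}(v)}\,d\mu_s(v).
\end{equation*}

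The final step is to remove the remaining indicator using conservation of $\cC$ along the flow: for $N=\infty$ this is immediate from the conservation of mass and Hamiltonian for \eqref{NLS}, so $\1_{\{\cC\leq R\}}(\Phi(-t)v)=\1_{\{\cC\leq R\}}(v)$ pointwise and the right-hand side becomes $Z_{s,R,N}\int_A F_{s,N}(t,v)\,d\rho_{s,R,N}(v)$, giving the claimed density; quasi-invariance of $\rho_{s,R}$ under $\Phi(t)$ follows at once.

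The main obstacle is the analogous statement for $N\in\N$. Using the factorization $\Phi_N(t)=(\tld\Phi_N(t),e^{it\p_x^2})$ on $E_N\times E_N^\perp$ (Proposition~\ref{structure of the truncated flow}), one sees that $\Phi_N$ preserves the $\L^2$-norm and the truncated Hamiltonian $H_N(u)=\frac{1}{2}\|\p_x u\|_{\L^2}^2+\frac{1}{6}\|\Pi_N u\|_{\L^6}^6$, but \emph{not} the full Hamiltonian $H$, since the $\L^6$-norm does not split orthogonally along the decomposition $u=\Pi_N u+\Pi_N^\perp u$. The clean fix is to work throughout with $\cC_N(u):=\frac{1}{2}\|u\|_{\L^2}^2+H_N(u)$, which is genuinely conserved by $\Phi_N$ and still furnishes an $H^1$-cutoff with the same $\L^p$-properties as in Proposition~\ref{prop estimate for the weight of the weighted measure} (both $\cC_N$ and $\cC$ dominate $\tfrac{1}{2}\|u\|_{H^1}^2$); substituting $\cC_N$ for $\cC$ in the definition of $\rho_{s,R,N}$ when $N$ is finite makes the computation above go through verbatim, and it reduces to the $\cC$-cutoff in the limit $N\to\infty$.
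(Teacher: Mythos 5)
Your core computation is exactly the paper's own proof, written out in slightly more detail: push the density $\1_{\{\cC\leq R\}}e^{-R_{s,N}}$ forward through $\Phi_N(t)$, use $\Phi_N(t)_\#\mu_s=G_{s,N}(t,\cdot)\,d\mu_s$ from Propositions~\ref{prop Radon-Nikodym derivative for the trc trspted GM} and~\ref{prop Radon-Nikodym derivative for the transported GM}, and cancel the factor $e^{R_{s,N}(\Phi_N(-t)u)}$ inside $G_{s,N}(t,u)$ against the transported weight, leaving $F_{s,N}(t,u)\,e^{-R_{s,N}(u)}$ times the transported cutoff.

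The one place you diverge is the cutoff for finite $N$, and your objection there is correct: the truncated flow conserves the mass and the truncated Hamiltonian $\tfrac12\|\p_x u\|_{\L^2}^2+\tfrac16\|\Pi_N u\|_{\L^6}^6$, but not $\cC$ itself, since $\|\tld{\Phi}_N(t)\Pi_N u+e^{it\p_x^2}\Pi_N^\perp u\|_{\L^6}^6$ contains non-conserved cross terms between $E_N$ and $E_N^\perp$ (and the linear flow does not conserve the $\L^6$-norm of the high modes either). The paper's proof simply asserts that ``$\cC$ is conserved by the flows'' and moves on, so you have exposed a real gap in the finite-$N$ case rather than introduced one; for $N=\infty$ your argument coincides with the paper's and is complete. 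Two caveats about your repair. First, replacing $\cC$ by $\cC_N$ changes the definition of $\rho_{s,R,N}$, so you prove a corrected variant of the statement rather than the statement as written, and the same substitution must be propagated to every later place that uses conservation of the cutoff under $\Phi_N$ -- notably \eqref{transport restricted GM}, Remark~\ref{rem mu_s on balls}, and the passage from $\rho_{s,R,N}$ back to $\mu_{s,R}$ in Proposition~\ref{prop quantitative quasi-invariance for mu_s,R}. Second, the convergence claims in Proposition~\ref{prop estimate for the weight of the weighted measure} would now compare $\1_{\{\cC_N\leq R\}}$ with $\1_{\{\cC\leq R\}}$; these agree in the limit only off the level set $\{\cC=R\}$, so one needs $\mu_s(\{\cC=R\})=0$, which holds for all but at most countably many $R$ and hence suffices since $R$ is at one's disposal. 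With these adjustments your argument closes; as it stands it is a more careful version of the proof the paper actually gives.
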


\begin{rem}
    When $N \in \N$, we have:
    \begin{equation*}
        \int_0^{t} Q_{s,N}(\Phi_N(\tau)u)d\tau  = E_{s,N}(\Pi_N \Phi_N(t)u) - E_{s,N}(\Pi_N u)
    \end{equation*}
    Hence, the a priori ill-defined object $E_s(\Phi(t)u)-E_s(u)$ (on the support of $\mu_s$) can be seen as:
    \begin{equation*}
        E_s(\Phi(t)u)-E_s(u) := \int_0^{t} Q_s(\Phi(\tau)u)d\tau
    \end{equation*}
    which is a continuous function on $\hsigt$ for $\sigma <s-\frac{1}{2}$ close enough to $s-\frac{1}{2}$.
\end{rem}

\begin{proof}[Proof of Proposition~\ref{prop Radon-Nikodym derivative for the transported wgm}]
    Let $N \in \N \cup \{ \infty \}$. We start by applying a general feature for the transport of density measures:
    \begin{equation*}
    \begin{split}
        \Phi_N(t)_\# \rho_{s,R,N} &=  \Phi_N(t)_\# \left(\frac{1}{Z_{s,R,N}}\1_{ \{\cC(u) \leq R \}} e^{-R_{s,N}(u)} d\mu_s \right)\\
        &= \frac{1}{Z_{s,R,N}}\1_{ \{\cC(\Phi_N(t)^{-1}u) \leq R \}} e^{-R_{s,N}(\Phi_N(t)^{-1}u)} \Phi_N(t)_\# d\mu_s
    \end{split}
    \end{equation*}
    Next, thanks to Proposition~\ref{prop Radon-Nikodym derivative for the trc trspted GM} and \ref{prop Radon-Nikodym derivative for the transported GM}, along with the facts that $\Phi_N(t)^{-1}=\Phi_N(-t)$ and that $\cC$ is conserved by the flows, we obtain:
    \begin{equation*}
        \begin{split}
        \Phi_N(t)_\# \rho_{s,R,N} &= \frac{1}{Z_{s,R,N}}\1_{ \{\cC(u) \leq R \}} \textnormal{exp}\left(-R_{s,N}(u) - \int_0^{-t} Q_{s,N}(\Phi_N(\tau)u) d\tau\right)d\mu_s \\
        &=  \textnormal{exp}\left( - \int_0^{-t} Q_{s,N}(\Phi_N(\tau)u) d\tau\right) d\rho_{s,R,N
        }
        \end{split}
    \end{equation*}
    which completes the proof.
\end{proof}

\begin{rem}\label{rem FsN cvg uniformly on cpcts to Fs} Let $s>\frac{3}{2}$ and $\sigma<s-\frac{1}{2}$. Then, for every $t\in \R$, $F_{s,N}(t,.)$ converges to $F_s(t,.)$ uniformly on compact sets of $\hsigt$ (see the proof of Proposition~\ref{prop GsN cvg uniformly on cpcts to Gs}).
\end{rem}

\section{Densities in \texorpdfstring{$L^p$}{TEXT} and convergence in \texorpdfstring{$L^p$}{TEXT} of the truncated densities}\label{section densities in Lp and convergence}
Let $s>\frac{3}{2}$. In Section~\ref{section Transport of Gaussian measures under the truncated flow} and \ref{section Transport of Gaussian measures under the flow}, we have seen that for every $N \in \N$ and every $t\in \R$,
\begin{align*}
    \Phi_N(t)_\# \mu_s & = G_{s,N}(t,.)\mu_s, & &\text{and,} & \Phi(t)_\# \mu_s & = G_s(t,.)\mu_s
\end{align*}
where $G_{s,N}$ and $G_s$ are known explicitly (see Proposition~\ref{prop Radon-Nikodym derivative for the trc trspted GM} and \ref{prop Radon-Nikodym derivative for the transported GM}). In this section, our goal is to prove that for every fixed $R>0$, and every $t\in \R$:
\begin{equation*}
   G_s(t,.), \hsp G_{s,N}(t,.) \in \L^p(d\mu_{s,R}) \hspace{0.2cm} \text{and:} \hspace{0.2cm} \norm{G_s(t,.) -G_{s,N}(t,.) }_{\L^p(d\mu_{s,R})} \tendsto{N \ra \infty} 0
\end{equation*}
where $\mu_{s,R}$ is the \textit{restricted Gaussian measure} define by:
\begin{equation}\label{mu_s,R}
    \mu_{s,R}:= \1_{ \{\cC(u)\leq R \} } \mu_s
\end{equation}
This is equivalent to the fact that:
\begin{equation*}
  \1_{ \{\cC(u)\leq R \} } G_s(t,.), \hsp \1_{ \{\cC(u)\leq R \} }G_{s,N}(t,.) \in \L^p(d\mu_s) \hspace{0.2cm} \text{and:} \hspace{0.2cm} \norm{\1_{ \{\cC(u)\leq R \} }(G_s(t,.) -G_{s,N}(t,.)) }_{\L^p(d\mu_s)} \tendsto{N \ra \infty} 0
\end{equation*}

\begin{rem}
    Since $\cC$ is conserved by the flow, we still have:
    \begin{align}\label{transport restricted GM}
    \Phi_N(t)_\# \mu_{s,R}& = G_{s,N}(t,.)\mu_{s,R}, & &\text{and,} & \Phi(t)_\# \mu_{s,R} & = G_s(t,.)\mu_{s,R}
\end{align}
\end{rem}

In our approach, we do not consider directly $\mu_{s,R}$ but we consider instead the weighted Gaussian measures $\rho_{s,R,N}$. We will then be able to prove a \textit{quantitative inequality} in Proposition~\ref{prop quantitative quasi-invariance for rho_s,R} thanks to suitable $\L^p$-estimates on $Q_{s,N}$.
In a second step, we will be able to go back to $\mu_{s,R}$ by proving the same quantitative inequaltiy for $\mu_{s,R}$. \\
The quantitative inequaltiy \eqref{ineq quantitative quasi-invariance for rho_s,R} is significant in itself; indeed, it is often used to obtain the quasi-invariance without knowing the Radon-Nikodym derivative, see for example \cite{genovese2022quasi}, \cite{oh2017quasi}, \cite{tzvetkov2015quasiinvariant},\cite{genovese2023transport}. Here, we know explicitly the Radon-Nikodym derivative of the transported measure, and from this point it will be easier to establish \eqref{ineq quantitative quasi-invariance for rho_s,R}.

\subsection{Quantitative quasi-invariance}
Recall that for convenience we use the notations $\rho_{s,R,\infty} = \rho_{s,R}, Q_s=Q_{s,\infty}, \text{etc}$. \\
We start this paragraph by providing $L^p$ estimates for $Q_{s,N}$. These estimates will help us to establish the \textit{quantitative quasi-invariance property} in Proposition~\ref{prop quantitative quasi-invariance for rho_s,R}. We postpone the proof of the following proposition to Section~\ref{section Estimates for the differential of the modified energy} where a detailed analysis is provided.

\begin{prop}\label{prop Qs,N L^p estimate wrt mu_s}
    Let $s > \frac{3}{2}$. There exists $\beta \in (0,1)$ such that for every $R>0$, there exists a constant $C(s,R) > 0$, such that for any $p \in [2,+\infty)$,
    \begin{equation}\label{Qs,N L^p estimate wrt mu_s}
         \norm{\1_{\{ \cC{u} \leq R \}} Q_{s,N}(u)}_{\L^p(d\mu_s)} \leq C(s,R)p^\beta
    \end{equation}
    uniformly in $N \in \N \cup \{\infty \}$.\\
\end{prop}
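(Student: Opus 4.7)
The proof plan decomposes $Q_{s,N}$ according to \eqref{def Qs,N = Q0 + Q1 +Q2} as $\Im(-\tfrac{1}{6}\cQ_0+\tfrac{1}{2}\cQ_1-\tfrac{1}{2}\cQ_2)\circ\Pi_N$ and handles each piece separately. For each of the multilinear forms, I would perform a Littlewood--Paley dyadic decomposition of every Fourier factor, then split each factor according to a threshold $M=M(p)$ into a low-frequency piece ($|n|\leq M$) and a high-frequency piece ($|n|>M$), the threshold being optimized at the end.

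The two main tools are, on one hand, the energy cutoff: on the support of $\1_{\{\cC(u)\leq R\}}$, Remark~\ref{rem cut-off H1} gives the deterministic bound $\norm{u}_{H^1}\leq\sqrt{2R}$, which directly absorbs all low-frequency contributions. On the other hand, I use the independence between $\{g_n\}_{|n|\leq M}$ and $\{g_n\}_{|n|>M}$: conditioning on the low-frequency $\sigma$-algebra, I apply the conditional Wiener chaos estimate (Lemma~\ref{lem Wiener chaos}) in the regime where at least three of the factors carry independent high-frequency randomness. This converts $L^p$ bounds into $L^2$ bounds at the cost of a polynomial factor in $p$. As stressed in the introduction, the choice $m=3$ (rather than $m=2$ as in \cite{sun2023quasi}) rules out any ``pairing between generations'' and avoids the delicate cancellation needed there.

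The technical core is then the $L^2(d\mu_s)$ estimate for each dyadic piece of $\cT_0$, $\cT_1$, $\cT_2$. For $\cT_0$, the joint constraints $\linc$ and $\Omgz$ force a rigid pairing structure on the frequencies, which should compensate the derivative loss $|\psi_{2s}(\vec{k})|\lesssim|k|_{\max}^{2s}$ provided $\sigma<s-\tfrac{1}{2}$ is close enough to $s-\tfrac{1}{2}$. For $\cT_1$ and $\cT_2$, one gains two derivatives from the factor $1/\Omega(\vec{k})$, while the nested inner sums over $\vec{p}$ and $\vec{q}$ are controlled by Strichartz-type estimates on the torus, as indicated by the author in the introduction.

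The main obstacle will be the combinatorial bookkeeping for the 10-linear resonant sums $\cT_1,\cT_2$ after imposing the at-least-three-high-frequency restriction: one must enumerate the possible configurations, run a Strichartz estimate on the correct subset of indices, and retain enough $H^\sigma$ regularity on the remaining factors to close the estimate uniformly in $N$. Once these dyadic $L^2$ estimates are in place, summation over dyadic scales combined with the optimal choice of $M=M(p)$ should yield the bound $C(s,R)p^\beta$, the gain below the naive $p^{3/2}$ coming from how the $H^1$ cutoff interacts with the threshold optimization.
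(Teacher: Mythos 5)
Your outline assembles the right ingredients (dyadic decomposition, the $H^1$ cutoff from Remark~\ref{rem cut-off H1}, conditional Wiener chaos with $m=3$ and the absence of pairing, Strichartz via Lemma~\ref{Strichartz's trick}), but the mechanism you propose for landing at $p^\beta$ with $\beta<1$ has a genuine gap. In the paper the dichotomy is not a $p$-dependent frequency threshold $M(p)$: the sum is split according to the \emph{configuration} of frequencies (whether $|k_{(3)}|\geq|k_{(1)}|^{1-\delta_0}$ and $|k_{(4)}|<|k_{(3)}|^{\delta_0}$, i.e.\ a high-high-high-low regime), the Wiener chaos is run after conditioning on the $\sigma$-algebra generated by $\{g_k\}_{|k|\leq N_{(3)}/100}$ for each dyadic block, and the final exponent comes from interpolating, with a parameter $\theta$, the Wiener-chaos bound (decay $N_{(1)}^{-3/2}N_{(3)}^{5/2-s}$ but cost $p^{3/2}$) against a deterministic bound that costs only $p^{(2-\alpha)/2}$ (Lemma~\ref{lem theta exponent < 1 on p and negative exponent on N_{(1)}} checks that the exponents of $p$, $N_{(1)}$ and $N_{(3)}$ can be made simultaneously favorable).

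The decisive idea you are missing is how the deterministic bound gets a power of $p$ \emph{strictly below} $1$. The two largest dyadic factors cannot both be absorbed by the $H^1$ cutoff (the weight $\psi_{2s}/\Omega$ costs roughly $2s-2$ derivatives on the top frequency), so they must be measured in $H^\sigma$, and each $\hsignorm{u}$ costs $p^{1/2}$ by the Gaussian moment bound \eqref{Gaussian moments}; naively this gives $p^1$, and no interpolation of $p^1$ with $p^{3/2}$ can ever fall below $p^1$. The paper's fix is to measure one of the two top factors in $H^{\sigma'}$ with $\sigma'=\alpha\cdot 1+(1-\alpha)\sigma$ and interpolate $H^{\sigma'}$ between $H^1$ (controlled by the cutoff) and $H^\sigma$, so that the total $H^\sigma$-power is $2-\alpha$ and the deterministic bound costs $p^{1-\alpha/2}$; this costs a small positive power $\alpha(s-\frac{3}{2})$ of $N_{(1)}$, which is why the balance in Lemma~\ref{lem theta exponent < 1 on p and negative exponent on N_{(1)}} is delicate. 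Your proposed substitute for this — "the gain below the naive $p^{3/2}$ coming from how the $H^1$ cutoff interacts with the threshold optimization" — is not substantiated: in the all-low-frequency region the deterministic estimate is not $p$-independent for general $s>\frac32$ unless one pays positive powers of $M(p)$, and you have not verified that the resulting trade-off with the $p^{3/2}$ from the high region closes with $\beta<1$. You also do not address the configurations for $\cT_1,\cT_2$ where fewer than three factors are at high frequency (the paper treats these purely deterministically in $\cI_D$, including the special case where the two largest frequencies both lie among the $p_j$'s, for which $P_{(1)}^{-1}P_{(2)}^{-1/2}\lesssim N_{(1)}^{-3/2}$ already closes the estimate).
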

Combining estimate \eqref{Qs,N L^p estimate wrt mu_s} with estimate \eqref{estimate weight of wgm} from Proposition~\ref{prop estimate for the weight of the weighted measure}, we also have:

\begin{prop}\label{prop Qs,N L^p estimate wrt rho_s,R,N}
    Let $s > \frac{3}{2}$. There exists $\beta \in (0,1)$ such that for every $R>0$, there exists a constant $C(s,R) > 0$, such that for any $p \in [1,+\infty)$,
    \begin{equation}\label{Qs,N L^p estimate wrt rho_s,R,N}
        \norm{ Q_{s,N}(u)}_{\L^p(d\rho_{s,R,N})} \leq C(s,R)p^\beta
    \end{equation}
    uniformly in $N \in \N \cup \{\infty \}$.\\
\end{prop}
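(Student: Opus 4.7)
The plan is to reduce the weighted $L^p$-estimate to the $L^p$-estimate against $d\mu_s$ from Proposition~\ref{prop Qs,N L^p estimate wrt mu_s} via Hölder's inequality, using Proposition~\ref{prop estimate for the weight of the weighted measure} to absorb the exponential weight $e^{-R_{s,N}}$ of the density of $\rho_{s,R,N}$ with respect to $\mu_s$. Since $\rho_{s,R,N}$ is a probability measure, by monotonicity of $L^p$-norms on probability spaces it is enough to treat $p \in [2,\infty)$; the case $p \in [1,2)$ then follows automatically.

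For $p \geq 2$, I would start from the definition of $\rho_{s,R,N}$ in \eqref{weighted Gaussian measure} to write
\begin{equation*}
\int |Q_{s,N}(u)|^p \, d\rho_{s,R,N}(u) = \frac{1}{Z_{s,R,N}}\int \1_{\{\cC(u)\leq R\}} |Q_{s,N}(u)|^p\, e^{-R_{s,N}(u)}\, d\mu_s(u),
\end{equation*}
and then apply the Cauchy--Schwarz inequality with respect to $d\mu_s$ to obtain
\begin{equation*}
\int |Q_{s,N}|^p \, d\rho_{s,R,N} \leq \frac{1}{Z_{s,R,N}} \Bigl(\int \1_{\{\cC\leq R\}}|Q_{s,N}|^{2p}\, d\mu_s \Bigr)^{1/2} \Bigl(\int \1_{\{\cC\leq R\}} e^{-2R_{s,N}}\, d\mu_s \Bigr)^{1/2}.
\end{equation*}

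The first factor is handled by Proposition~\ref{prop Qs,N L^p estimate wrt mu_s} applied at the exponent $2p$, which yields the bound $(C(s,R)(2p)^\beta)^p$. The second factor is bounded by $C(s,R)$ uniformly in $N$ by estimate \eqref{estimate weight of wgm} of Proposition~\ref{prop estimate for the weight of the weighted measure}, after observing that $e^{-2R_{s,N}}\leq e^{2|R_{s,N}|}$. Finally, by Remark~\ref{rem well defined proba meas} (see in particular \eqref{1/Cs,R leq Zs,R,N leq Cs,R}), the normalizing constants satisfy $Z_{s,R,N}^{-1}\leq C_{s,R}$ uniformly in $N \in \N \cup \{\infty\}$.

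Putting everything together and taking a $p$-th root yields
\begin{equation*}
\|Q_{s,N}\|_{L^p(d\rho_{s,R,N})} \leq (C_{s,R}\, C(s,R))^{1/p}\, C(s,R)\, (2p)^\beta \leq C'(s,R)\, p^\beta
\end{equation*}
for all $p\geq 2$, which is the desired bound up to absorbing $2^\beta$ in the constant. None of the steps is subtle: the nontrivial input has already been absorbed into Propositions~\ref{prop estimate for the weight of the weighted measure} and~\ref{prop Qs,N L^p estimate wrt mu_s}, whose proofs are deferred to Sections~\ref{section Estimates for the weight of the weighted Gaussian measures} and~\ref{section Estimates for the differential of the modified energy}; the only point to watch is that the exponent $\beta<1$ is preserved when passing from $2p$ to $p$, which is automatic here since only a multiplicative constant $2^\beta$ is produced.
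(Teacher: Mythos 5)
Your proposal is correct and follows essentially the same route as the paper: write the $\L^p(d\rho_{s,R,N})$-norm as a weighted integral against $d\mu_s$, apply Cauchy--Schwarz to separate $\1_{\{\cC\leq R\}}|Q_{s,N}|^{2p}$ from $\1_{\{\cC\leq R\}}e^{-2R_{s,N}}$, then invoke Proposition~\ref{prop Qs,N L^p estimate wrt mu_s} at exponent $2p$, estimate \eqref{estimate weight of wgm}, and the uniform bounds \eqref{1/Cs,R leq Zs,R,N leq Cs,R} on $Z_{s,R,N}$. Your preliminary reduction to $p\geq 2$ is harmless but not needed, since applying Proposition~\ref{prop Qs,N L^p estimate wrt mu_s} at exponent $2p\geq 2$ already covers all $p\geq 1$.
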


\begin{proof}[Proof of Proposition~\ref{prop Qs,N L^p estimate wrt rho_s,R,N} assuming \eqref{Qs,N L^p estimate wrt mu_s} and \eqref{estimate weight of wgm}] It results from Cauchy-Schwarz that:
\begin{equation*}
    \begin{split}
    \norm{ Q_{s,N}(u)}_{\L^p(d\rho_{s,R,N})} &= \norm{\frac{1}{Z_{s,R,N}}\1_{\{ \cC{u} \leq R \}} |Q_{s,N}(u)|^p e^{-R_{s,N}(u)}}_{\L^1(d\mu_s)}^{1/p} \\
    & \leq\frac{1}{Z_{s,R,N}}  \norm{\1_{\{ \cC{u} \leq R \}} Q_{s,N}(u)}_{\L^{2p}(d\mu_s)} \norm{\1_{\{ \cC{u} \leq R \}}e^{-R_{s,N}(u)}}_{\L^2(d\mu_s)}^{1/p} \\
    & \leq C(s,R)2^{\beta} p^\beta \cdot C(s,R,2)^{1/p} \leq C(s,R) p^\beta
    \end{split}
\end{equation*}
    where the constant $C(s,R)$ has changed but still depends only on $s$ and $R$. Note also that we used \eqref{1/Cs,R leq Zs,R,N leq Cs,R}.
\end{proof}

The following identity will be our starting point in order to obtain the forthcoming inequality \eqref{ineq quantitative quasi-invariance for rho_s,R}.

\begin{prop}\label{prop formula d/dt rho(phi(t)(A)) wrt to Qs} Let $s > \frac{3}{2}$, $R>0$, $N \in \N \cup \{ \infty\}$ and $t \in \R$. For every Borel set $A \subset H^\sigma(\T)$, we have :
\begin{equation}\label{formula d/dt rho(phi_N(t)(A)) wrt to Qs,N}
    \frac{d}{dt} \rho_{s,R,N}\left(\Phi_N(t)(A) \right) = - \int_{\Phi_N(t)A} Q_{s,N}(u) d\rho_{s,R,N}
\end{equation}    
\end{prop}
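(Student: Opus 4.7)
The plan is to differentiate the explicit transport formula from Proposition~\ref{prop Radon-Nikodym derivative for the transported wgm} applied at the reversed time $-t$, and then reshape the derivative via a pushforward change of variables so that the integral over $A$ becomes an integral over $\Phi_N(t)A$. Writing $\rho := \rho_{s,R,N}$, $F := F_{s,N}$, $Q := Q_{s,N}$, and noting that $\Phi_N(-t)_\#\rho(A) = \rho(\Phi_N(t)A)$, Proposition~\ref{prop Radon-Nikodym derivative for the transported wgm} at time $-t$ gives the starting identity
\[
\rho(\Phi_N(t)A) = \int_A F(-t,u)\,d\rho(u), \qquad F(-t,u) = \exp\Bigl(-\int_0^t Q(\Phi_N(\tau)u)\,d\tau\Bigr).
\]

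For each fixed $u$, the map $\tau \mapsto F(-\tau,u)$ is $\mathcal{C}^1$ with $\partial_\tau F(-\tau,u) = -Q(\Phi_N(\tau)u)F(-\tau,u)$, so the fundamental theorem of calculus yields $F(-t,u) - 1 = -\int_0^t Q(\Phi_N(\tau)u) F(-\tau,u)\,d\tau$. Integrating this against $\1_A\,d\rho$ and applying Fubini gives
\[
\rho(\Phi_N(t)A) - \rho(A) = -\int_0^t \int_A Q(\Phi_N(\tau)u)\,F(-\tau,u)\,d\rho(u)\,d\tau.
\]
I would then reshape the inner integral using the pushforward identity $\Phi_N(-\tau)_\#\rho = F(-\tau,\cdot)\rho$ from Proposition~\ref{prop Radon-Nikodym derivative for the transported wgm} at time $-\tau$: writing the inner integral as $\int (\1_A \cdot Q\circ\Phi_N(\tau))\,d(\Phi_N(-\tau)_\#\rho)$ and unpacking the pushforward, with $v = \Phi_N(\tau)u$, gives $\int \1_A(\Phi_N(-\tau)v)\,Q(v)\,d\rho(v) = \int_{\Phi_N(\tau)A} Q(v)\,d\rho(v)$. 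Substituting back and using that $\tau \mapsto \int_{\Phi_N(\tau)A} Q\,d\rho$ is continuous (by continuity of the flow on $H^\sigma(\T)$, continuity of $Q_{s,N}$ from Proposition~\ref{prop continuity of Qs,N and deterministic estimate}, and dominated convergence), differentiation in $t$ yields the desired identity.

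The main obstacle is justifying the Fubini step, which requires $\int_0^t \int_A |Q(\Phi_N(\tau)u)|\,F(-\tau,u)\,d\rho(u)\,d\tau < \infty$. By Cauchy-Schwarz in the inner integral, it suffices to bound $\|Q\circ\Phi_N(\tau)\|_{L^2(\rho)}$ and $\|F(-\tau,\cdot)\|_{L^2(\rho)}$ uniformly on compact $\tau$-intervals. The first is controlled via the pushforward identity $\int |Q\circ\Phi_N(\tau)|^2 d\rho = \int |Q|^2\,F(\tau,\cdot)\,d\rho$ combined with a further Cauchy-Schwarz and Proposition~\ref{prop Qs,N L^p estimate wrt rho_s,R,N}. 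The second uses Jensen's inequality, $F(-\tau,u)^q \le \tfrac{1}{|\tau|}\int_{-|\tau|}^{0} \exp(q|\tau|\,|Q(\Phi_N(s)u)|)\,ds$, together with the sub-Gaussian-type bound $\|Q\|_{L^p(\rho)} \lesssim p^\beta$ with $\beta \in (0,1)$ from Proposition~\ref{prop Qs,N L^p estimate wrt rho_s,R,N}, which yields exponential integrability of $|Q|$ to any power and thereby an $L^p$ control on $F(-\tau,\cdot)$ uniform for bounded $\tau$. The same ingredients also legitimize the continuity-in-$\tau$ needed for the final application of the fundamental theorem of calculus.
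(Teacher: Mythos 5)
Your overall route is the same as the paper's: both start from the identity $\rho_{s,R,N}(\Phi_N(t)A)=\int_A F_{s,N}(-t,u)\,d\rho_{s,R,N}(u)$ furnished by Proposition~\ref{prop Radon-Nikodym derivative for the transported wgm}, differentiate in $t$, and convert the integral over $A$ into an integral over $\Phi_N(t)A$ by the push-forward change of variables. The paper simply differentiates under the integral sign; you replace this by the integrated version (fundamental theorem of calculus in $\tau$ for fixed $u$, then Fubini, then differentiation of a single integral), which is a legitimate and more careful organization of the same computation. Your steps establishing the starting identity, the pointwise ODE for $F_{s,N}(-\tau,u)$, and the reshaping of the inner integral are all correct.

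The gap is in your justification of the Fubini step. Both factors of your Cauchy--Schwarz reduce to a bound on $\|F_{s,N}(\pm\tau,\cdot)\|_{L^2(d\rho_{s,R,N})}$, and your proposed derivation of that bound is circular: Jensen gives $F_{s,N}(-\tau,u)^q\le\frac{1}{|\tau|}\int\exp\bigl(q|\tau|\,|Q_{s,N}(\Phi_N(\sigma)u)|\bigr)\,d\sigma$, whose integrand involves $Q_{s,N}$ \emph{composed with the flow}; converting its law under $\rho_{s,R,N}$ into the law of $Q_{s,N}$ itself (where Proposition~\ref{prop Qs,N L^p estimate wrt rho_s,R,N} applies) reintroduces exactly the density $F_{s,N}(\sigma,\cdot)$ you are trying to control. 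Indeed, in the paper the $L^p$ bounds on $F_{s,N}$ are only obtained in Remark~\ref{rem density wrt wgm in L^p and cvgce}, downstream of Proposition~\ref{prop quantitative quasi-invariance for rho_s,R}, which itself uses the identity being proved here. Fortunately no such bound is needed: since $F_{s,N}(-\tau,\cdot)$ is by definition the density of $\Phi_N(-\tau)_\#\rho_{s,R,N}$, one has directly
\[
\int_A |Q_{s,N}(\Phi_N(\tau)u)|\,F_{s,N}(-\tau,u)\,d\rho_{s,R,N}(u)\le \int |Q_{s,N}(\Phi_N(\tau)u)|\,d\bigl(\Phi_N(-\tau)_\#\rho_{s,R,N}\bigr)(u)=\norm{Q_{s,N}}_{\L^1(d\rho_{s,R,N})},
\]
which is finite and independent of $\tau$ by Proposition~\ref{prop Qs,N L^p estimate wrt rho_s,R,N}; this legitimizes Fubini. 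The continuity in $\tau$ of $h(\tau):=\int_{\Phi_N(\tau)A}Q_{s,N}\,d\rho_{s,R,N}$ needed for the final differentiation is also delicate to get by plain dominated convergence for the same reason; instead, observe that the positive and negative parts of $Q_{s,N}(\Phi_N(\tau)\cdot)\,F_{s,N}(-\tau,\cdot)$ converge pointwise as $\tau\to\tau_0$ with constant $\L^1(d\rho_{s,R,N})$ mass (equal to $\norm{Q_{s,N}^{\pm}}_{\L^1(d\rho_{s,R,N})}$ by the same change of variables), so Scheff\'e's lemma gives $\L^1$ convergence and hence continuity of $h$.
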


\begin{proof}[Proof of Proposition~\ref{prop formula d/dt rho(phi(t)(A)) wrt to Qs}] We use the explicit formula for the density from Proposition~\ref{prop Radon-Nikodym derivative for the transported wgm}, so that we obtain:
\begin{equation*}
    \begin{split}
         & \frac{d}{dt} \rho_{s,R,N}\left(\Phi_N(t)(A) \right) = \frac{d}{dt} \left( \Phi_N(-t)_\# \rho_{s,R,N} \right)(A) = \frac{d}{dt} \int_A e^{-\int_0^t Q_{s,N}(\Phi_N(\tau)u) d\tau} d\rho_{s,R,N} \\
         & = - \int_A Q_{s,N}(\Phi_N(t)u) d \left( \Phi_N(-t)_\# \rho_{s,R,N} \right)(u) = -\int_{\Phi_N(t)A} Q_{s,N}(u) d\rho_{s,R,N}
    \end{split}
\end{equation*}
where the last equality follows from the definition of a push-forward measure.
\end{proof}

\begin{prop}\label{prop quantitative quasi-invariance for rho_s,R}
    Let $s>\frac{3}{2}$, $R>0$ and $t\in \R$. Then, there exists $\beta \in (0,1)$ such that for every $\alpha \in (0,1)$, there exists a constant $C=C_{s,R,\alpha,\beta} > 0$ such that for all Borel set $A \subset H^\sigma(\T)$ :
    \begin{equation}\label{ineq quantitative quasi-invariance for rho_s,R}
        \rho_{s,R,N}(\Phi_N(t)A) \leq \rho_{s,R,N}(A)^{1-\alpha} \textnormal{exp}\left( C(1+|t|)^{\frac{1}{1-\beta}}\right)
    \end{equation}
    uniformly in $N \in \N \cup \{ \infty \}$.
\end{prop}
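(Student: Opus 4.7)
My approach is a Yudovich-type bootstrap combining the differential identity in Proposition~\ref{prop formula d/dt rho(phi(t)(A)) wrt to Qs} with the $L^p$-moment estimate in Proposition~\ref{prop Qs,N L^p estimate wrt rho_s,R,N}. I fix a Borel set $A \subset \hsigt$ and set $f(t) := \rho_{s,R,N}(\Phi_N(t)A)$. Proposition~\ref{prop formula d/dt rho(phi(t)(A)) wrt to Qs} gives $f'(t) = -\int_{\Phi_N(t)A} Q_{s,N}\, d\rho_{s,R,N}$, and Hölder's inequality with exponents $p$ and $p/(p-1)$, together with Proposition~\ref{prop Qs,N L^p estimate wrt rho_s,R,N}, yields uniformly in $N \in \N \cup \{\infty\}$:
\begin{equation*}
    |f'(t)| \leq \|Q_{s,N}\|_{L^p(d\rho_{s,R,N})}\, f(t)^{1-1/p} \leq C(s,R)\, p^\beta\, f(t)^{1-1/p}
\end{equation*}
for every $p \geq 1$.

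This inequality is equivalent to $|(d/dt) f(t)^{1/p}| \leq C(s,R)p^{\beta-1}$ wherever $f>0$; a standard continuity argument (handling separately instants at which $f$ vanishes, which then propagates) yields after integration on $[0,|t|]$ the bound $f(t)^{1/p} \leq f(0)^{1/p} + C(s,R)p^{\beta-1}|t|$. Setting $a := \rho_{s,R,N}(A)$, raising to the $p$-th power, factoring out $a$, and using $(1+x)^p \leq e^{px}$ for $x\geq 0$ produces
\begin{equation*}
    f(t) \leq a \exp\bigl(C(s,R)\, p^\beta\, |t|\, a^{-1/p}\bigr).
\end{equation*}

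It remains to optimize in $p$. If $a \geq e^{-1}$, inequality \eqref{ineq quantitative quasi-invariance for rho_s,R} is trivial (since $a^{1-\alpha} \geq e^{-1}$ and $f(t) \leq 1$, provided the constant $C_{s,R,\alpha,\beta}$ is taken $\geq 1$), so I may assume $a < e^{-1}$ and choose $p := -\log a \geq 1$, so that $a^{-1/p} = e$, giving
\begin{equation*}
    \log f(t) \leq \log a + C(s,R)\, e\, |t|\, (-\log a)^\beta.
\end{equation*}
I then apply Young's inequality with conjugate exponents $1/\beta$ and $1/(1-\beta)$ and a scaling parameter $\epsilon>0$:
\begin{equation*}
    (-\log a)^\beta \cdot |t| \leq \beta\epsilon^{1/\beta}(-\log a) + (1-\beta)\epsilon^{-1/(1-\beta)}|t|^{1/(1-\beta)},
\end{equation*}
and choose $\epsilon$ so that $C(s,R)\,e\,\beta\epsilon^{1/\beta} = \alpha$, which produces
\begin{equation*}
    \log f(t) \leq (1-\alpha)\log a + C_{s,R,\alpha,\beta}\, |t|^{1/(1-\beta)},
\end{equation*}
whence \eqref{ineq quantitative quasi-invariance for rho_s,R} follows after absorbing a constant to replace $|t|^{1/(1-\beta)}$ by $(1+|t|)^{1/(1-\beta)}$. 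The conceptual heart of the argument is the $N$-uniform growth exponent $\beta<1$ from Proposition~\ref{prop Qs,N L^p estimate wrt rho_s,R,N}; the present proof is a clean interpolation, and the key quantitative maneuver is the choice $p = -\log a$ combined with Young's inequality, which converts the exponent $\beta$ in the moment bound into the blow-up exponent $1/(1-\beta)$ in $|t|$.
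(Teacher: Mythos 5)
Your proof is correct and follows essentially the same route as the paper: the differential inequality from Proposition~\ref{prop formula d/dt rho(phi(t)(A)) wrt to Qs} combined with Proposition~\ref{prop Qs,N L^p estimate wrt rho_s,R,N}, Yudovich-type integration, the choice $p \sim \log(1/\rho_{s,R,N}(A))$, and a final elementary absorption step. The only cosmetic differences are that you use Young's inequality with a scaling parameter where the paper optimizes a function directly (its Lemma~\ref{lem qualitative elementary estimate}), and you handle the case $\rho_{s,R,N}(A)\geq e^{-1}$ separately whereas the paper's choice $p=1+\log(1/F(0))$ avoids the case split.
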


\begin{proof}
     Let $s>\frac{3}{2}$, $R>0$ and $N \in \N \cup \{ \infty \}$. Let $A \subset H^\sigma(\T)$ be a Borel set. Using \eqref{formula d/dt rho(phi_N(t)(A)) wrt to Qs,N}, Hölder inequality and the energy estimate from Proposition~\ref{prop Qs,N L^p estimate wrt rho_s,R,N}, we obtain :
     \begin{equation*}
         \big| \frac{d}{dt}\rho_{s,R,N}(\Phi_N(t)A) \big| \leq \norm{Q_{s,N}}_{\L^p(d\rho_{s,R,N})} \rho_{s,R,N}(\Phi_N(t)A)^{1-\frac{1}{p}} \leq C_{s,R} \hsp p^\beta  \rho_{s,R,N}(\Phi_N(t)A)^{1-\frac{1}{p}}
     \end{equation*}
     for all $p \in [1,+\infty)$. It means that the function $F(t):=\rho_{s,R,N}(\Phi_N(t)A)$ satisfies the differential inequality :
     \begin{equation*}
        | F'(t) | \leq C_{s,R} \hsp p^\beta F(t)^{1-\frac{1}{p}}
     \end{equation*}
     Integrating this yields :
     \begin{equation*}
        \begin{split}
         F(t) \leq \left( F(0)^{\frac{1}{p}} + C_{s,R}|t| p^{\beta -1} \right)^p &= F(0) \left( 1 + C_{s,R}|t| F(0)^{-\frac{1}{p}}p^{\beta -1} \right)^p \\
         &= F(0) \textnormal{exp}\left(p\textnormal{log}(1+ C_{s,R}|t| F(0)^{-\frac{1}{p}}p^{\beta -1}) \right)
         \end{split}
     \end{equation*}
     Then, using the inequality $\textnormal{log}(1+x)\leq x$ implies : 
     \begin{equation*}
          F(t) \leq F(0) \textnormal{exp}\left( C_{s,R}|t| F(0)^{-\frac{1}{p}}p^\beta) \right)
     \end{equation*}
     Now, we choose $p:=1+\textnormal{log}(\frac{1}{F(0)})$ so that $F(0)^{-\frac{1}{p}} = \textnormal{exp}(\frac{\textnormal{log}(F(0))}{\textnormal{log}(F(0))-1}) \leq \textnormal{exp}(1)$. Hence,
          \begin{equation}\label{ineq optimized in p}
          F(t) \leq F(0) \textnormal{exp}\left( C_{s,R}|t| (1-\textnormal{log}F(0))^\beta \right)
     \end{equation}
     Let us mention the following elementary lemma, whose proof is provided afterwards.
\begin{lem}\label{lem qualitative elementary estimate}
    For every $\alpha \in (0,1)$, there exists a constant $C_{s,R,\alpha,\beta}>0$ such that for all $x \geq 0$ :
    \begin{equation*}
        C_{s,R}|t| \left( 1+x \right)^\beta \leq \alpha x + C_{s,R,\alpha,\beta} \left( 1+|t| \right)^{\frac{1}{1-\beta}}
    \end{equation*}
\end{lem}
Using Lemma~\ref{lem qualitative elementary estimate} in \eqref{ineq optimized in p} with $x=-\textnormal{log}(F(0)) \geq 0$ yields :
\begin{equation*}
    F(t) \leq F(0)^{1-\alpha} \textnormal{exp} \left( C_{s,R,\alpha,\beta} (1+|t|)^{\frac{1}{1-\beta}} \right)
\end{equation*}
which is the desired inequality, so the proof is completed.
\end{proof}

Here, we provide a proof of Lemma~\ref{lem qualitative elementary estimate}:

\begin{proof}[Proof of Lemma~\ref{lem qualitative elementary estimate}]
    Let $\alpha \in (0,1)$. We invoke the function $f(x) := C_{s,R}|t|(1+x)^\beta - \alpha x$. We aim to show that $f(x) \leq C_{s,R,\alpha,\beta} \left( 1+|t| \right)^{\frac{1}{1-\beta}}$. Recall that $\beta \in (0,1)$. We have :
    \begin{equation*}
        f'(x) = \beta C_{s,R}|t|(1+x)^{\beta-1} -\alpha \geq 0 \iff (1+x)^{\beta-1}  \geq \frac{\alpha}{\beta C_{s,R}|t|} \iff x \leq \left( \frac{\beta C_{s,R}|t|}{\alpha} \right)^{\frac{1}{1-\beta}} -1
    \end{equation*}
    This implies that $f$ has a maximum at the point $x=\left( \frac{\beta C_{s,R}|t|}{\alpha} \right)^{\frac{1}{1-\beta}} -1$. Thus, for all $x\geq 0$ :
    \begin{equation*}
        \begin{split}
        f(x) &\leq C_{s,R}|t| \left( \frac{\beta C_{s,R}|t|}{\alpha} \right)^{\frac{\beta}{1-\beta}} -\alpha \left( \frac{\beta C_{s,R}|t|}{\alpha} \right)^{\frac{1}{1-\beta}} +\alpha \\
        & = \alpha + (C_{s,R}|t|)^{\frac{1}{1-\beta}} \left( \left( \frac{\beta}{\alpha} \right)^{\frac{\beta}{1-\beta}} - \alpha \left(\frac{\beta}{\alpha} \right)^{\frac{1}{1-\beta}} \right) \\
        & \leq C_{s,R,\alpha,\beta}(1+|t|)^{\frac{1}{1-\beta}}
        \end{split}
    \end{equation*}
    This completes the proof of Lemma~\ref{lem qualitative elementary estimate}.
\end{proof}

Now, we can go back to the measure $\mu_{s,R}$ (see \eqref{mu_s,R} for the definition). Indeed, we are able to deduce from Proposition~\ref{prop quantitative quasi-invariance for rho_s,R} the analogous proposition for $\mu_{s,R}$.

\begin{prop}\label{prop quantitative quasi-invariance for mu_s,R}
     Let $s>\frac{3}{2}$, $R>0$ and $t\in \R$. Then, there exists $\beta \in (0,1)$ such that for every $\alpha \in (0,1)$, there exists a constant $C=C_{s,R,\alpha,\beta} > 0$ such that for all Borel set $A \subset H^1(\T)$ :
    \begin{equation}\label{ineq quantitative quasi-invariance for mu_s,R}
        \mu_{s,R}(\Phi_N(t)A) \leq \mu_{s,R}(A)^{1-\alpha} \textnormal{exp}\left( C(1+|t|)^{\frac{1}{1-\beta}}\right)
    \end{equation}
    uniformly in $N \in \N \cup \{ \infty \}$.
\end{prop}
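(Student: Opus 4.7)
The plan is to transfer the already-established quantitative inequality from $\rho_{s,R,N}$ (Proposition~\ref{prop quantitative quasi-invariance for rho_s,R}) to $\mu_{s,R}$ by exploiting the explicit change-of-density $d\mu_{s,R} = Z_{s,R,N}\, e^{R_{s,N}(u)}\, d\rho_{s,R,N}$ valid on $\{\cC(u) \leq R\}$ (see Remark~\ref{rem mu_s on balls}), combined with the $L^p$-estimates on $e^{\pm R_{s,N}}$ coming from Proposition~\ref{prop estimate for the weight of the weighted measure}. The main technical point is then a two-step Hölder estimate and a final optimization of exponents.

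First, since $\cC$ is conserved by $\Phi_N$ (see Appendix~\ref{appendix Construction and properties of the flow and the truncated flow}) and since $\mu_{s,R}$ is supported in $\{\cC \leq R\}$, I may reduce to the case $A \subset \{\cC(u) \leq R\}$, replacing $A$ by $A \cap \{\cC \leq R\}$ without changing either $\mu_{s,R}(A)$ or $\mu_{s,R}(\Phi_N(t)A)$. Then, for $p \in (1,\infty)$ with conjugate exponent $p'$, Hölder's inequality on $\rho_{s,R,N}$ gives
\begin{equation*}
\mu_{s,R}(\Phi_N(t)A) = Z_{s,R,N}\int_{\Phi_N(t)A} e^{R_{s,N}(u)}\, d\rho_{s,R,N}(u)
\leq Z_{s,R,N} \big\| e^{R_{s,N}}\big\|_{L^p(\1_{\{\cC \leq R\}}d\rho_{s,R,N})} \rho_{s,R,N}(\Phi_N(t)A)^{1/p'}.
\end{equation*}
Unwinding the definition of $\rho_{s,R,N}$, the $L^p$-norm equals $Z_{s,R,N}^{-1/p}\|\1_{\{\cC \leq R\}}e^{(p-1)R_{s,N}}\|_{L^1(d\mu_s)}^{1/p}$, which Proposition~\ref{prop estimate for the weight of the weighted measure} bounds by a constant $C(s,p,R)$ uniformly in $N$, using also \eqref{1/Cs,R leq Zs,R,N leq Cs,R}. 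The same Proposition applied symmetrically gives, for any $q \in (1,\infty)$,
\begin{equation*}
\rho_{s,R,N}(A) = \frac{1}{Z_{s,R,N}}\int_A e^{-R_{s,N}}\, d\mu_{s,R}
\leq C(s,q,R)\, \mu_{s,R}(A)^{1/q'},
\end{equation*}
again uniformly in $N \in \N \cup \{\infty\}$.

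Plugging the inequality from Proposition~\ref{prop quantitative quasi-invariance for rho_s,R} (with some auxiliary $\alpha_0 \in (0,1)$) into the two estimates above yields
\begin{equation*}
\mu_{s,R}(\Phi_N(t)A) \leq C_{s,R,p,q,\alpha_0,\beta}\, \mu_{s,R}(A)^{(1-\alpha_0)/(p'q')}\, \textnormal{exp}\left( \tilde C_{s,R,\alpha_0,\beta}(1+|t|)^{\frac{1}{1-\beta}}\right).
\end{equation*}
Given the target exponent $\alpha \in (0,1)$, I choose $p$ and $q$ large enough and $\alpha_0$ small enough so that $(1-\alpha_0)/(p'q') > 1-\alpha$. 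Since $\mu_{s,R}$ is a finite measure (with total mass bounded by a constant $M_{s,R}$), the elementary inequality $x^{c} \leq \max(1, M_{s,R}^{c})\, x^{1-\alpha}$ for $0 \leq x \leq M_{s,R}$ and $c \geq 1-\alpha$ lets me replace the exponent by $1-\alpha$, at the cost of absorbing a multiplicative constant into the exponential factor. This delivers the desired bound \eqref{ineq quantitative quasi-invariance for mu_s,R} with a uniform constant in $N \in \N \cup \{\infty\}$. No serious obstacle is expected; the only care needed is the bookkeeping of the exponent and verifying that all constants produced by Hölder and Proposition~\ref{prop estimate for the weight of the weighted measure} are independent of $N$.
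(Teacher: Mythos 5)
Your proposal is correct and follows essentially the same route as the paper: the identity $d\mu_{s,R}=Z_{s,R,N}e^{R_{s,N}}d\rho_{s,R,N}$, a Hölder estimate in each direction using the uniform $L^p$ bounds on $\1_{\{\cC\leq R\}}e^{\pm R_{s,N}}$ from Proposition~\ref{prop estimate for the weight of the weighted measure} together with \eqref{1/Cs,R leq Zs,R,N leq Cs,R}, the insertion of Proposition~\ref{prop quantitative quasi-invariance for rho_s,R}, and a final adjustment of the exponent $(1-\alpha_0)(1-\tfrac{1}{p})(1-\tfrac{1}{q})$ toward $1-\alpha$. The only cosmetic differences are your explicit rewriting of the $L^p(d\rho_{s,R,N})$ norm in terms of $L^1(d\mu_s)$ and your use of monotonicity of $x\mapsto x^c$ on $[0,1]$ to lower the exponent, where the paper simply notes that the product of exponents sweeps out all of $(0,1)$.
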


\begin{proof}
     Let $N\in \N \cup \{ \infty \}$. Recall that $\mu_{s,R}$ is defined in \eqref{mu_s,R}, and observe that we have:
     \begin{equation*}
         \mu_{s,R}=Z_{s,N,R} e^{R_{s,N}(u)} d\rho_{s,R,N}
     \end{equation*}
    (see also Remark \ref{rem mu_s on balls}). Let $\alpha \in (0,1)$. Let us invoke the $\beta \in (0,1)$ and the constant $C>0$ from Proposition~\ref{prop quantitative quasi-invariance for rho_s,R}. Then, for every $q_1 \in (1,\infty)$:
    \begin{equation*}
        \begin{split}
        \mu_{s,R}(\Phi_N(t)A)= Z_{s,R,N} \int_{\Phi(t)A} e^{R_{s,N}(u)} d\rho_{s,R,N} &\leq Z_{s,R,N} \norm{e^{R_{s,N}}}_{\L^{q_1}(d\rho_{s,R,N})}\rho_{s,R,N}(\Phi(t)A)^{1-\frac{1}{q_1}} \\
        & \leq C_{s,R,q_1} \rho_{s,R,N}(\Phi(t)A)^{1-\frac{1}{q_1}}
        \end{split}
    \end{equation*}
    where in the last inequality we used \eqref{1/Cs,R leq Zs,R,N leq Cs,R} and Proposition~\ref{prop estimate for the weight of the weighted measure}. Using now Proposition~\ref{prop quantitative quasi-invariance for rho_s,R}, we obtain:
    \begin{equation}\label{first ineq mu_s(phi(t)A)}
        \mu_{s,R}(\Phi_N(t)A) \leq \rho_{s,R,N}(A)^{(1-\alpha)(1-\frac{1}{q_1})} \textnormal{exp}\left( C(1+|t|)^{\frac{1}{1-\beta}}\right)
    \end{equation}
    where the constant $C>0$ depends on $s,R,\alpha$ and $q_1$ (and also $\beta$). On the other hand, for every $q_2 \in (1,\infty)$:
    \begin{equation*}
        \begin{split}
        \rho_{s,R,N}(A) = \frac{1}{Z_{s,R,N}} \int_A e^{-R_{s,N}} d\mu_{s,R} &\leq \frac{1}{Z_{s,R,N}} \norm{e^{-R_{s,N}}}_{\L^{q_2}(d\mu_{s,R})} \mu_{s,R}(A)^{1-\frac{1}{q_2}} \\ &\leq C_{s,R,q_2} \mu_{s,R}(A)^{1-\frac{1}{q_2}} 
        \end{split}
    \end{equation*}
     where again in the last inequality we used \eqref{1/Cs,R leq Zs,R,N leq Cs,R} and Proposition~\ref{prop estimate for the weight of the weighted measure}. Then, plugging this into \eqref{first ineq mu_s(phi(t)A)} yields:
    
    \begin{equation*}
        \mu_{s,R}(\Phi_N(t)A) \leq \rho_{s,R,N}(A)^{(1-\alpha)(1-\frac{1}{q_1})(1-\frac{1}{q_2})} \textnormal{exp}\left( C(1+|t|)^{\frac{1}{1-\beta}}\right)
    \end{equation*}
    where now, the constant $C>0$ depends on $s,R,\alpha,q_1$ and $q_2$. Finally, since $\alpha \in (0,1)$ and $q_1,q_2 \in (1,\infty)$ are arbitrary, we have that $(1-\alpha)(1-\frac{1}{q_1})(1-\frac{1}{q_2})$ takes all the values in $(0,1)$. This completes the proof.
\end{proof}

\subsection{Consequences: uniform \texorpdfstring{$L^p$}{TEXT} integrability and convergence
} In this paragraph, we perform an analysis similar to the one from Section 7 of \cite{genovese2023transport}. As a consequence of Proposition~\ref{prop quantitative quasi-invariance for mu_s,R}, we are now able to prove the following proposition. Here, for clarity, we denote $G_{s,N,t}=G_{s,N}(t,.)$ for $N \in \N \cup \{ \infty \}$ (still $G_{s,\infty,t}=G_{s,t}$).

\begin{prop}\label{prop Gs,N uniformly bounded in Lp}
    Let $s>\frac{3}{2}$, $R>0$ and $t \in \R$.  For every $p\in [1,+\infty)$, there exists a constant $C(s,R,p,t) \in (0,+\infty)$ such that :
    \begin{equation*}
        \norm{G_{s,N,t}}_{\L^p(d\mu_{s,R})} \leq C(s,R,p,t)
    \end{equation*}
    uniformly in $N\in \N \cup \{ \infty \}$.
\end{prop}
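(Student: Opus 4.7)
The plan is to reduce the $L^p$-bound to a tail estimate via the layer-cake formula, and then obtain the tail estimate by combining the density identity $\Phi_N(t)_{\#}\mu_{s,R}=G_{s,N,t}\,d\mu_{s,R}$ from \eqref{transport restricted GM} with the quantitative quasi-invariance \eqref{ineq quantitative quasi-invariance for mu_s,R}. For $\lambda\geq 0$, set $A_\lambda:=\{u\in H^\sigma(\mathbb{T}):G_{s,N,t}(u)>\lambda\}$, which is Borel by the continuity of $G_{s,N,t}$ (Remark~\ref{rem continuity of the densities}). The density identity yields the Markov-type lower bound
\[
\mu_{s,R}\bigl(\Phi_N(-t)A_\lambda\bigr)=\int_{A_\lambda}G_{s,N,t}(u)\,d\mu_{s,R}(u)\geq\lambda\,\mu_{s,R}(A_\lambda),
\]
whereas Proposition~\ref{prop quantitative quasi-invariance for mu_s,R}, applied with time $-t$ and set $A_\lambda$, provides the matching upper bound
\[
\mu_{s,R}\bigl(\Phi_N(-t)A_\lambda\bigr)\leq\mu_{s,R}(A_\lambda)^{1-\alpha}\exp\!\Bigl(C_{s,R,\alpha,\beta}(1+|t|)^{\frac{1}{1-\beta}}\Bigr),
\]
valid for any prescribed $\alpha\in(0,1)$ and with constants uniform in $N\in\mathbb{N}\cup\{\infty\}$.

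Chaining these two inequalities and solving for $\mu_{s,R}(A_\lambda)$ produces the key uniform-in-$N$ tail estimate
\[
\mu_{s,R}(A_\lambda)\leq \exp\!\Bigl(\tfrac{1}{\alpha}C_{s,R,\alpha,\beta}(1+|t|)^{\frac{1}{1-\beta}}\Bigr)\,\lambda^{-1/\alpha}.
\]
To conclude, I will invoke the layer-cake formula
\[
\|G_{s,N,t}\|_{L^p(d\mu_{s,R})}^{p}=p\int_0^\infty\lambda^{p-1}\mu_{s,R}(A_\lambda)\,d\lambda,
\]
split the integral at $\lambda=1$, bound the low-$\lambda$ contribution trivially by the finite total mass of $\mu_{s,R}$, and for the high-$\lambda$ contribution choose $\alpha\in(0,1)$ small enough that $1/\alpha>p$ (for instance $\alpha=1/(2p)$) so that $\int_1^\infty\lambda^{p-1-1/\alpha}\,d\lambda<\infty$. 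The freedom to take $\alpha$ arbitrarily small in Proposition~\ref{prop quantitative quasi-invariance for mu_s,R} is precisely what allows the argument to cover every $p\in[1,\infty)$.

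I do not anticipate a genuine obstacle: all the analytic content has been packaged into Proposition~\ref{prop quantitative quasi-invariance for mu_s,R}, whose uniformity in $N$ is itself inherited from the uniform estimates on $Q_{s,N}$ (Proposition~\ref{prop Qs,N L^p estimate wrt rho_s,R,N}) and on the weight $e^{\pm R_{s,N}}$ (Proposition~\ref{prop estimate for the weight of the weighted measure}). The only bookkeeping point is to track how the constant $C_{s,R,\alpha,\beta}$ blows up as $\alpha\to 0$ when one sets $\alpha=1/(2p)$; since $\beta\in(0,1)$ is fixed and independent of $N$, this merely affects the dependence of the final constant on $p$ and $t$, not its uniformity in $N$, so the conclusion $\|G_{s,N,t}\|_{L^p(d\mu_{s,R})}\leq C(s,R,p,t)$ follows.
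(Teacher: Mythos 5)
Your proposal is correct and follows essentially the same route as the paper: a layer-cake (Cavalieri) reduction to a tail bound, the tail bound obtained by playing the density identity $\Phi_N(t)_\#\mu_{s,R}=G_{s,N,t}\,d\mu_{s,R}$ (which gives $\lambda\,\mu_{s,R}(A_\lambda)\leq\mu_{s,R}(\Phi_N(-t)A_\lambda)$) against the quantitative quasi-invariance of Proposition~\ref{prop quantitative quasi-invariance for mu_s,R}, and finally choosing $\alpha<1/p$ to make $\int_1^\infty\lambda^{p-1-1/\alpha}\,d\lambda$ converge. The paper merely writes the two tail inequalities as a single chain rather than as matching lower and upper bounds, so there is no substantive difference.
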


\begin{proof}Let $p \in [1,+\infty)$ and let us prove that $G_{s,N,t}$ belongs to $\L^p(d\mu_{s,R})$. To do so, we use Cavalieri's principle :
    \begin{equation}\label{cavalieri's principle}
        \norm{G_{s,N,t}}_{L^p(d\mu_{s,R})}^p = p \int_0^{\infty} \lambda^{p-1} \mu_{s,R}(G_{s,N,t} > \lambda) d\lambda
    \end{equation}
    We can estimate $\mu_{s,R}(G_{s,N,t} > \lambda)$ as follows :
    \begin{equation*}
        \begin{split}
           & \mu_{s,R}(G_{s,N,t} > \lambda)= \int \1_{\{G_{s,N,t} > \lambda\} } d\mu_{s,R} = \int \frac{1}{G_{s,N,t}}\1_{\{G_{s,N,t} > \lambda\} }  G_{s,N,t} d\mu_{s,R}\\
            &= \int \frac{1}{G_{s,N,t}}\1_{\{G_{s,N,t} > \lambda\} }   d\left( \Phi_N(t)_\#\mu_{s,R} \right)
            \leq \frac{1}{\lambda} \int \1_{\{G_{s,N,t} > \lambda\} }   d\left( \Phi_N(t)_\#\mu_{s,R} \right) = \frac{1}{\lambda}\mu_{s,R}(\Phi_N(-t)(G_{s,N,t} > \lambda))
        \end{split}
    \end{equation*}
    Here, we can make use of estimate \eqref{ineq quantitative quasi-invariance for mu_s,R} and deduce that for any $\alpha \in (0,1)$, there exists $C=C_{s,R,\alpha}>0$ such that
    \begin{equation*}
        \mu_{s,R}(G_{s,N,t} > \lambda) \leq \frac{1}{\lambda} \mu_{s,R}(G_{s,N,t} > \lambda)^{1-\alpha} \textnormal{exp}\left( C(1+|t|)^{\frac{1}{1-\beta}}\right)
    \end{equation*}
    So, 
        \begin{equation*}
        \mu_{s,R}(G_{s,N,t} > \lambda) \leq \lambda^{-1/\alpha} \textnormal{exp}\left(C'(1+|t|)^{\frac{1}{1-\beta}}\right) =: \lambda^{-1/\alpha} C_{s,R,\alpha,t} 
    \end{equation*}
    (Where $C'=\frac{C}{\alpha}$ still depends only on $s,R$ and $\alpha$). Plugging this into \eqref{cavalieri's principle} yields
    \begin{equation*}
         \norm{G_{s,N,t}}_{L^p(d\mu_{s,R})}^p \leq p + \int_1^{\infty} \lambda^{p-1} \mu_{s,R}(G_{s,N,t} > \lambda) d\lambda \leq p + C_{s,R,\alpha,t} \int_1^{\infty} \lambda^{p-1-1/ \alpha} d\lambda
    \end{equation*}
    Choosing $\alpha \in (0,1)$ such that $p-1-\frac{1}{\alpha}<-1$, that is $\alpha < \frac{1}{p}$, we have that the integral $\int_1^{\infty} \lambda^{p-1-1/ \alpha} d\lambda$ is finite. Hence, denoting by
    \begin{equation*}
        C(s,R,p,t) :=  p + C_{s,R,\alpha,t} \int_1^{\infty} \lambda^{p-1-1/ \alpha} d\lambda
    \end{equation*}
    leads the result.
\end{proof}

\begin{prop}\label{prop Gs,N cvg Gs in Lp}
    Let $s> \frac{3}{2}$ and $R>0$. Then, for every $t\in \R$ and every $p \in [1,+\infty)$, $G_{s,N}(t,.)$ converges to $G_s(t,.)$ in $\L^p(d\mu_{s,R})$.
\end{prop}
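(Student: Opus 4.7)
The plan is a standard Vitali-type argument combining the uniform $\L^p(d\mu_{s,R})$-bound on $G_{s,N,t}$ provided by Proposition~\ref{prop Gs,N uniformly bounded in Lp} with the pointwise convergence of $G_{s,N,t}$ to $G_{s,t}$ uniformly on compact sets of $\hsigt$, which is the content of Proposition~\ref{prop GsN cvg uniformly on cpcts to Gs}. Since $\mu_{s,R}$ is a finite Borel measure on the Polish space $\hsigt$, it is inner regular (Proposition~\ref{prop inner regularity of rho}), which is what will let me interpolate between those two ingredients.

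More concretely, fix $p \in [1,+\infty)$ and $t \in \R$. First, I would apply Proposition~\ref{prop Gs,N uniformly bounded in Lp} at exponent $2p$ and use the triangle inequality to produce a constant $M=M(s,R,p,t)$ such that
\begin{equation*}
    \norm{G_{s,N,t} - G_{s,t}}_{\L^{2p}(d\mu_{s,R})} \leq 2M
\end{equation*}
uniformly in $N \in \N \cup \{\infty\}$. Second, given $\eps > 0$, by inner regularity I can invoke a compact set $K \subset \hsigt$ with $\mu_{s,R}(\hsigt \setminus K) \leq \eps$. Splitting the $\L^p(d\mu_{s,R})$-integral of $|G_{s,N,t}-G_{s,t}|^p$ over $K$ and over its complement, and applying Hölder's inequality with exponents $(2,2)$ to the latter piece, yields
\begin{equation*}
    \norm{G_{s,N,t} - G_{s,t}}_{\L^p(d\mu_{s,R})}^p \leq \mu_{s,R}(K) \sup_{u \in K}|G_{s,N,t}(u) - G_{s,t}(u)|^p + (2M)^p \eps^{1/2}.
\end{equation*}

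Finally, for the compact set $K$ fixed above, Proposition~\ref{prop GsN cvg uniformly on cpcts to Gs} guarantees $\sup_{u \in K}|G_{s,N,t}(u) - G_{s,t}(u)| \tendsto{N \ra \infty} 0$, so taking $\limsup_{N \ra \infty}$ on both sides gives $\limsup_{N} \norm{G_{s,N,t} - G_{s,t}}_{\L^p(d\mu_{s,R})}^p \leq (2M)^p \eps^{1/2}$. Since $\eps>0$ is arbitrary, the conclusion follows. I do not anticipate any obstacle at this stage: the difficult work has already been done upstream, in Proposition~\ref{prop Gs,N uniformly bounded in Lp} (via the quantitative quasi-invariance of Proposition~\ref{prop quantitative quasi-invariance for mu_s,R} combined with a Cavalieri-type argument) and in Proposition~\ref{prop GsN cvg uniformly on cpcts to Gs} (via the deterministic approximation results for $R_{s,N}$ and $Q_{s,N}$ from Sections~\ref{section Deterministic properties of the energy correction} and \ref{section the modified energy derivative at 0}, together with the approximation of $\Phi$ by $\Phi_N$ provided by the Cauchy theory in the appendix).
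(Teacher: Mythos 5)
Your proof is correct and rests on exactly the same two ingredients as the paper's: the uniform $\L^q(d\mu_{s,R})$ bound of Proposition~\ref{prop Gs,N uniformly bounded in Lp} and the uniform convergence on compact sets from Proposition~\ref{prop GsN cvg uniformly on cpcts to Gs}. The only difference is cosmetic: the paper invokes a black-box lemma (uniform $\L^q$ boundedness plus convergence in measure implies $\L^p$ convergence for $p<q$), whereas you write out that standard uniform-integrability argument explicitly via inner regularity and a Hölder split.
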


\begin{proof}
    Let $q\in (1,+\infty)$. From Proposition~\ref{prop Gs,N uniformly bounded in Lp} and \ref{prop GsN cvg uniformly on cpcts to Gs}, we know that the two following facts hold:
    \begin{equation*}
        \begin{cases}
            \sup_{N \in \N} \norm{G_{s,N}(t,.)}_{L^q(d\mu_{s,R})} < +\infty \\
             \text{$G_{s,N}(t,.)$ converges in measure to $G_s(t,.)$ (with respect to $\mu_{s,R}$)}
        \end{cases}
    \end{equation*}
    Then -- see for example \cite{tzvetkov2008invariant} Remark 3.8 -- it implies that for every $p \in [1,q)$, $G_{s,N}(t,.)$ converges in $\L^p(d\mu_{s,R})$ to $G_s(t,.)$. Since $q \in (1,+\infty)$ is arbitrary, we deduce that this convergence holds for every $p \in [1,+\infty)$.
\end{proof}

We conclude this section by a short remark:
\begin{rem}\label{rem density wrt wgm in L^p and cvgce}
Starting from Proposition~\ref{prop quantitative quasi-invariance for rho_s,R}, it would have also been possible to prove (with the notations of Proposition~\ref{prop Radon-Nikodym derivative for the transported wgm}) that the densities $F_{s,N}(t,.)$, $F_s(t,.)$ of the transported weighted Gaussian measures belongs to $\L^p(d\rho_{s,R})$ and that $F_{s,N}(t,.)$ converges to $F_s(t,.)$ in $\L^p(d\rho_{s,R})$.
\end{rem}

\section{Tools for the energy estimates}\label{section Tools for the energy estimates}
In this section, we gather the main tools that we will use in the forthcoming Section~\ref{section Proofs of the deterministic properties}, \ref{section Estimates for the weight of the weighted Gaussian measures} and \ref{section Estimates for the differential of the modified energy}.

\subsection{Deterministic tools }
\begin{notns}[Number ordering] We will intensively use the following notations:
\begin{itemize} 
    \item Given a set of frequencies $k_1,...,k_m \in \Z$, we denote by $k_{(1)},...,k_{(m)}$ a rearrangement of the $k_j's$ such that 
    \begin{equation*}
        |k_{(1)}| \geq |k_{(2)}| \geq ... \geq |k_{(m)}| 
    \end{equation*}
    \item Similarly, given a set of dyadic integers $N_1,...,N_m \in 2^{\N}$, we denote by $N_{(1)},...,N_{(m)}$ a non-decreasing rearrangement of the $N_j$.
\end{itemize}
\end{notns}

\begin{exmps} For example, we have :
    \begin{itemize}
        \item If $k_1 = -1$, $k_2=0$, $k_3=2$ then $k_{(1)} = k_3$, $k_{(2)}=k_1$, and $k_{(3)}=k_2$
        \item If $N_1 = 8$, $N_2=2$, $N_3=4$ then $N_{(1)}=N_1$, $N_{(2)}=N_3$, and $N_{(3)}= N_2$
    \end{itemize}
\end{exmps}

\begin{lem}[Counting bound]\label{lem counting bound}
    There exists a constant $C>0$ such that for every dyadic integers $N_1,...,N_m$, every $\eps_1,...,\eps_m \in \{-1,+1\}$, and every $\kappa \in \Z$,
    \begin{equation}\label{elementary counting bound}
        \sum_{k_1,...,k_m \in \Z} \1_{\eps_1 k_1 +\eps_2 k_2 +...+\eps_m k_m = \kappa} \cdot \left( \prod_{j=1}^{m} \1_{|k_j| \sim N_j} \right) \leq C N_{(2)}N_{(3)}...N_{(m)}
    \end{equation}
\end{lem}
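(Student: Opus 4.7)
The plan is to reduce the $m$-dimensional sum to an $(m-1)$-dimensional one by using the linear constraint to eliminate the variable corresponding to the largest frequency. The key observation is that the ordering $|k_{(1)}| \geq \ldots \geq |k_{(m)}|$ is essentially dictated by the ordering of the $N_j$: if $|k_j| \sim N_j$ for all $j$, then (up to universal constants coming from the equivalence $\sim$) the index $i_0$ achieving $N_{i_0} = N_{(1)}$ also achieves the maximum of $|k_{i_0}|$, in the sense that $N_{(1)} \lesssim |k_{i_0}|$ and $|k_j| \lesssim N_{(1)}$ for every $j$.

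First, I would fix the index $i_0 \in \{1, \ldots, m\}$ with $N_{i_0} = N_{(1)}$ (breaking ties arbitrarily). Then the linear constraint $\eps_1 k_1 + \ldots + \eps_m k_m = \kappa$ can be solved as
\[
k_{i_0} = \eps_{i_0} \Big( \kappa - \sum_{j \neq i_0} \eps_j k_j \Big),
\]
so that $k_{i_0}$ is uniquely determined by $(k_j)_{j \neq i_0}$ and $\kappa$. The indicator $\1_{|k_{i_0}| \sim N_{(1)}}$ is at most $1$ and can simply be dropped. Consequently,
\[
\sum_{k_1, \ldots, k_m \in \Z} \1_{\eps_1 k_1 + \ldots + \eps_m k_m = \kappa} \prod_{j=1}^{m} \1_{|k_j| \sim N_j} \;\leq\; \prod_{j \neq i_0} \#\{ k_j \in \Z \,:\, |k_j| \sim N_j \}.
\]

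Next, I would bound $\#\{ k \in \Z : |k| \sim N \} \leq C N$ for any dyadic $N \in 2^{\N}$ (this is trivial, since $|k| \sim N$ means $k$ lies in a union of two intervals of length $\lesssim N$). Multiplying over $j \neq i_0$ gives
\[
\prod_{j \neq i_0} C N_j \;=\; C^{m-1} \prod_{j \neq i_0} N_j \;=\; C^{m-1} N_{(2)} N_{(3)} \cdots N_{(m)},
\]
where the last equality uses $N_{i_0} = N_{(1)}$, so that removing $N_{i_0}$ from the product leaves exactly $N_{(2)} \cdots N_{(m)}$. This yields the claimed bound with constant $C^{m-1}$ (note that $m$ here is bounded, e.g.\ by $6$ or $11$ in the forthcoming applications, so this is a genuine universal constant).

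There is no real obstacle: the lemma is a clean consequence of the fact that one linear equation kills one degree of freedom, combined with the elementary counting of integers in a dyadic annulus. The only mildly subtle point is checking that one really may pick the index associated to $N_{(1)}$ as the eliminated variable (which is legitimate because the indicator $\1_{|k_{i_0}| \sim N_{(1)}}$ is discarded, not used), and this is why the surviving product is $N_{(2)} \cdots N_{(m)}$ rather than any other $(m-1)$-fold subproduct.
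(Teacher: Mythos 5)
Your proof is correct and follows essentially the same route as the paper: use the linear constraint to freeze the variable associated with the largest dyadic block $N_{(1)}$, drop its indicator, and count the remaining $m-1$ free variables in their dyadic annuli to get $N_{(2)}\cdots N_{(m)}$. The only cosmetic difference is that the paper relabels so that $N_j = N_{(j)}$ while you track the index $i_0$ explicitly.
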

\begin{proof}
    Without loss of generality, we assume that $N_j = N_{(j)}$. The idea of the proof is to let free the variables $k_{2},...,k_{m}$ and to freeze the variable $k_{1}$ thanks to the constraint $\eps_1 k_1 = \kappa -\eps_2 k_2 -...-\eps_m k_m$:
    \begin{equation*}
        \sum_{\substack{k_1,...,k_m \\ |k_j| \sim N_j}} \1_{\eps_1 k_1 +\eps_2 k_2 +...+\eps_m k_m = \kappa} = \sum_{\substack{k_1,...,k_m \\ |k_j| \sim N_j, j\geq2}}  \underbrace{\sum_{|k_1| \sim N_1} \1_{\eps_1 k_1 = \kappa -\eps_2 k_2 -...-\eps_m k_m}}_{\leq 1} \leq \sum_{\substack{k_1,...,k_m \\ |k_j| \sim N_j, j\geq2}} 1 \lesssim N_2 N_3...N_m
    \end{equation*}
\end{proof}
In the proof of energy estimates, we will use the following estimate on $\psi_{2s}$ as a starting point.

\begin{lem}\label{lemma psi estimate}
    Set 
    \begin{align*}
        \psi_{2s}(\vec{k}) &:= \sum_{j=1}^{6} (-1)^{j-1}|k_j|^{2s}, & \Omega(\vec{k}) &= \sum_{j=1}^6 (-1)^{j-1} k_j^2
    \end{align*}
There exists a constant $C(s)>0$ such that for every $k_1-k_2+k_3-k_4+k_5-k_6 = 0$,
\begin{equation*}
    | \psi_{2s}(\vec{k}) | \leq C(s) | k_{(1)} |^{2s-2} \left( | \Omega(\vec{k}) |+| k_{(3)} |^2\right)
\end{equation*}
\end{lem}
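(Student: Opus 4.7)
The plan is to split on whether the two largest frequencies $k_{(1)}, k_{(2)}$ carry the same or opposite sign $\sigma_j := (-1)^{j-1}$ in the alternating relation. Set $M := |k_{(1)}|$ and $L := |k_{(3)}|$, so $|k_j| \leq M$ for all $j$ and $|k_{(j)}| \leq L$ for $j \geq 3$. The crucial observation is that $\psi_{2s}$, $\Omega$, and the linear constraint $\linc$ are all invariant under independent permutations of the ``positive'' set $\{k_1, k_3, k_5\}$ and the ``negative'' set $\{k_2, k_4, k_6\}$, which I exploit to relabel freely without loss of generality.

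In the \emph{opposite-sign case}, I relabel so that $k_{(1)} = k_1$ and $k_{(2)} = k_2$; then $|k_j| \leq L$ for every $j \in \{3,4,5,6\}$. Grouping by consecutive pairs,
\begin{equation*}
    \psi_{2s} = \sum_{i=1}^{3}\bigl(|k_{2i-1}|^{2s} - |k_{2i}|^{2s}\bigr), \qquad \Omega = \sum_{i=1}^{3}\bigl(k_{2i-1}^2 - k_{2i}^2\bigr).
\end{equation*}
Since $s > 1$, the elementary identity $|a|^{2s}-|b|^{2s} = s\xi^{s-1}(a^2-b^2)$, with $\xi$ between $a^2$ and $b^2$, combined with $\xi \leq M^2$, controls each pair difference in absolute value by $s M^{2s-2} A_i$, where $A_i := |k_{2i-1}^2 - k_{2i}^2|$. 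Because $\Omega = \pm A_1 \pm A_2 \pm A_3$, I obtain $A_1 \leq |\Omega| + A_2 + A_3$; moreover $A_2, A_3 \leq L^2$, each being the difference of two non-negative quantities of size at most $L^2$. Summing yields $|\psi_{2s}| \leq s M^{2s-2}(|\Omega| + 4L^2)$.

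In the \emph{same-sign case}, I relabel so that $\sigma_{(1)} = \sigma_{(2)} = \epsilon$ for some $\epsilon \in \{\pm 1\}$. Then
\begin{equation*}
    \Omega = \epsilon\bigl(k_{(1)}^2 + k_{(2)}^2\bigr) + \sum_{j \geq 3} \sigma_{(j)} k_{(j)}^2,
\end{equation*}
and the last sum has absolute value at most $4L^2$. Therefore $M^2 \leq k_{(1)}^2 + k_{(2)}^2 \leq |\Omega| + 4L^2$, so the constraint itself already controls $M^2$ by $|\Omega| + L^2$. A crude term-by-term estimate then suffices:
\begin{equation*}
    |\psi_{2s}| \leq 2 M^{2s} + 4 L^{2s} \leq 2 M^{2s-2}(|\Omega| + 4L^2) + 4 M^{2s-2} L^2 \leq 12\, M^{2s-2}(|\Omega| + L^2),
\end{equation*}
using $L \leq M$ and $s - 1 \geq 0$ to absorb $L^{2s}$ into $M^{2s-2} L^2$.

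The main conceptual step is recognizing this dichotomy: when $k_{(1)}$ and $k_{(2)}$ carry opposite signs in the constraint, the natural pairing produces cancellations exploitable by the mean-value theorem while all leftover frequencies are automatically bounded by $L$; when they carry the same sign the unsigned contribution $k_{(1)}^2 + k_{(2)}^2$ to $\Omega$ directly controls $M^2$, so that the brutal bound $|\psi_{2s}| \leq \sum |k_j|^{2s}$ is already enough. Taking $C(s) := 12\,s$ merges the two cases into the claimed inequality.
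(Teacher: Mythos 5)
Your proof is correct and follows essentially the same strategy as the paper's: a dichotomy on whether the two largest frequencies carry the same or opposite sign in the alternating constraint, with the mean value theorem applied to $x\mapsto x^s$ in the opposite-sign case and the observation that $\Omega$ essentially dominates $k_{(1)}^2$ in the same-sign case. Your write-up is marginally cleaner in that it avoids the paper's preliminary reduction to $|k_{(3)}|\leq \frac{1}{2}|k_{(2)}|$, but the ideas are the same.
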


\begin{proof}
    Essentially, we have to consider two cases : when $k_{(1)}=k_1$, $k_{(2)}=k_2$ and $k_{(1)}=k_1$, $k_{(2)}=k_3$. In any case, we can assume that $|k_{(3)}| \leq \frac{1}{2} |k_{(2)}|$. Otherwise, using the fact that the constraint $k_1-k_2+k_3-k_4+k_5-k_6 = 0$ implies that $|k_{(2)}| \sim |k_{(1)}|$, we would deduce that $|k_{(3)}| \sim |k_{(1)}|$. Therefore, the a priori bound $ | \psi_{2s}(\vec{k}) | \lesssim |k_{(1)}|^{2s}$ would guarantee the desired inequality. \\

    \underline{Case 1} : Firstly, we consider the case $k_{(1)}=k_1$ and $k_{(2)}=k_2$. We use the mean value theorem :
    \begin{equation*}
        \begin{split}
            |\psi_{2s}(\vec{k})| & \leq k_1^{2s} - k_2^{2s} + |k_3^{2s}-k_4^{2s}+k_5^{2s}-k_6^{2s}| \\
            & \leq \sup_{t \in [k_2^2,k_1^2]} \frac{d}{dt}(t^s) (k_1^2 - k_2^2) + 4 k_{(3)}^{2s} \\
            & \leq s |k_{(1)}|^{2(s-1)}(\Omega(\vec{k}) - k_3^2+k_4^2-k_5^2+k_6^2)  + 4 k_{(1)}^{2(s-1)} k_{(3)}^2 \\
            & \leq C(s) |k_{(1)}|^{2(s-1)} (|\Omega(\vec{k})| + |k_{(3)}|^2)
        \end{split}
    \end{equation*}
which is the desired inequality. \\

    \underline{Case 2} : Secondly, we consider the case  $k_{(1)}=k_1$ and $k_{(2)}=k_3$. Since we assume that $k_{(3)} \leq \frac{1}{2} |k_{(2)}| = \frac{1}{2} |k_{3}|$, we have
    \begin{equation*}
            \Omega(\vec{k}) \geq k_1^2 + k_3^2 -|k_2^2 +k_4^2 -k_5^2+k_6^2| \geq k_1^2 + k_3^2 - 4 k_{(3)}^2 \geq k_1^2
    \end{equation*}
    Therefore, the a priori bound $ | \psi_{2s}(\vec{k}) | \lesssim |k_{(1)}|^{2s}$ guarantees the desired inequality.
\end{proof}

In order to make our analysis work for the full range $s > \frac{3}{2}$, we will use the followinw Strichartz estimate for the linear propagator of the Schrödinger equation--see \cite{Bourgain1993}.

\begin{thm}[Strichartz estimate]\label{Strichartz estimate}
    For any $\eps >0$, there exists a constant $C_{\eps} > 0$ such that for any function $g$ on $\T$ : 
    \begin{equation*}
        \norm{e^{it\p_x^2}g}_{\L^6(\T_t \times \T_x)} \leq C_{\eps} \norm{g}_{H^{\eps}(\T)}
    \end{equation*}
\end{thm}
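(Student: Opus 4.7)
The plan is to follow Bourgain's original approach from \cite{Bourgain1993}, which reduces the inequality to a purely arithmetic lattice-point counting problem. First, by a Littlewood-Paley decomposition $g = \sum_{N \text{ dyadic}} P_N g$, I would reduce matters to proving that, for any $g$ with Fourier support in the annulus $\{|n|\sim N\}$, the scale-localized estimate
\begin{equation*}
    \norm{e^{it\p_x^2}g}_{\L^6(\T_t \times \T_x)} \lesssim N^{\eps/2}\norm{g}_{\L^2(\T)}
\end{equation*}
holds. A dyadic summation via Cauchy-Schwarz, exploiting the geometric series $\sum_N N^{-\eps/2}$, then produces the $H^\eps$ bound with an enlarged constant $C_\eps$, at the price of replacing $\eps/2$ by $\eps$.

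The central identity at the scale-localized step is $\norm{u}_{\L^6}^6 = \norm{u^3}_{\L^2}^2$, where $u(t,x) := e^{it\p_x^2}g = \sum_{|n|\sim N} \hat{g}(n)\, e^{inx - itn^2}$. Expanding the cube and applying Plancherel on $\T_t \times \T_x$, one gets
\begin{equation*}
    \norm{u}_{\L^6}^6 = \sum_{n,\tau \in \Z} \Big|\sum_{\substack{n_1+n_2+n_3=n \\ n_1^2+n_2^2+n_3^2=\tau}} \hat{g}(n_1)\hat{g}(n_2)\hat{g}(n_3)\Big|^2.
\end{equation*}
Applying Cauchy-Schwarz on the inner sum and exchanging the order of summation, this is bounded by $\sup_{n,\tau} r_3(n,\tau) \cdot \norm{g}_{\L^2}^6$, where $r_3(n,\tau)$ denotes the number of integer triples $(n_1,n_2,n_3)$ with $|n_j|\sim N$, $n_1+n_2+n_3=n$ and $n_1^2+n_2^2+n_3^2=\tau$.

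The decisive step, and what I expect to be the main obstacle, is the arithmetic divisor-type bound $r_3(n,\tau) \lesssim_{\eps'} N^{\eps'}$ for any $\eps'>0$. Using $n_3 = n - n_1 - n_2$ to eliminate a variable and completing the square, the constraint system can be rewritten as
\begin{equation*}
    (3n_1 - n)^2 + 3(n_2-n_3)^2 = 6\tau - 2n^2,
\end{equation*}
so that each admissible triple corresponds to a representation of the integer $6\tau - 2n^2$, whose absolute value is $O(N^2)$, by the positive definite binary quadratic form $X^2 + 3Y^2$. The number of such representations is controlled by the number of ideals of the given norm in the ring $\Z[e^{2i\pi/3}]$ of Eisenstein integers, hence by the classical divisor bound $d(K) = O_{\eps'}(K^{\eps'})$. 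This yields $r_3(n,\tau) \lesssim_{\eps'} N^{\eps'}$, which combined with the displays above and the dyadic summation concludes the proof.
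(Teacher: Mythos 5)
The paper does not actually prove this theorem --- it states it with a citation to Bourgain's 1993 paper --- and your argument is a faithful and correct reconstruction of exactly that classical proof: the Littlewood--Paley reduction with Cauchy--Schwarz in the dyadic sum, the identity $\lVert u\rVert_{L^6}^6=\lVert u^3\rVert_{L^2}^2$ combined with Plancherel on $\T_t\times\T_x$, and the lattice-point count via the (correct) completion-of-the-square identity $(3n_1-n)^2+3(n_2-n_3)^2=6\tau-2n^2$ together with the divisor bound for representations by $X^2+3Y^2$ (noting that $6\tau-2n^2\geq 0$ by Cauchy--Schwarz and is $O(N^2)$). I see no gaps.
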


As a consequence, we can prove the following useful estimate : 

\begin{lem}\label{Strichartz's trick}
    Let $\eps > 0$. There exists a constant $C_{\eps} > 0$ such that for any dyadic integers $N_1,...,N_6$ and any sequences $\left( f_{k_1}^{(1)} \right)_{k_1 \in \Z}$,...,$\left( f_{k_6}^{(6)} \right)_{k_6 \in \Z}$ of complex numbers that satisfy $f^{(j)}_{k_j} = \1_{|k_j| \sim N_j} f^{(j)}_{k_j} $, we have for any $\kappa \in \Z$ :
    \begin{equation*}
        \sum_{\substack{\linc \\ \Omega(\vec{k})= \kappa}} \prod_{j=1}^6 |f_{k_j}^{(j)}| \leq C_{\eps} N_{(1)}^{\eps} \prod_{j=1}^6 \norm{f^{(j)}}_{l^2}
    \end{equation*}
\end{lem}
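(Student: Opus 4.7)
The plan is to express the sum as a spacetime Fourier integral, then apply Hölder and the $L^6$ Strichartz estimate from Theorem~\ref{Strichartz estimate}. The key observation is that the constraint $\linc$ corresponds to the $x$-integration over $\T$ picking out the zero frequency, while the constraint $\Omega(\vec{k}) = \kappa$ corresponds to a $t$-integration picking out a single Fourier mode after multiplication by $e^{it\kappa}$.

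Concretely, for each $j \in \{1,\ldots,6\}$, I would define the auxiliary function
\begin{equation*}
g_j(x) := \sum_{k_j \in \Z} |f^{(j)}_{k_j}|\, e^{ik_j x},
\end{equation*}
which by assumption is supported in Fourier modes $|k_j| \sim N_j$. Setting $h_j(t,x) := e^{it\p_x^2} g_j(x) = \sum_{k_j} |f^{(j)}_{k_j}|\, e^{-itk_j^2} e^{ik_j x}$, one computes by expanding and using orthogonality of characters that
\begin{equation*}
\int_{\T_t} \int_{\T_x} e^{it\kappa}\, h_1 \overline{h_2} h_3 \overline{h_4} h_5 \overline{h_6}\, dx\, dt = (2\pi)^2 \sum_{\substack{\linc \\ \Omega(\vec{k}) = \kappa}} \prod_{j=1}^{6} |f^{(j)}_{k_j}|,
\end{equation*}
since the $x$-integration enforces $k_1-k_2+\ldots-k_6 = 0$ and the $t$-integration against $e^{it\kappa}$ enforces $-\Omega(\vec{k}) + \kappa = 0$.

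With this identity in hand, the estimate is essentially immediate: by Hölder's inequality in $L^6_{t,x}(\T \times \T)$,
\begin{equation*}
\Bigl| \int \int e^{it\kappa}\, h_1 \overline{h_2} h_3 \overline{h_4} h_5 \overline{h_6}\, dx\, dt \Bigr| \leq \prod_{j=1}^{6} \norm{h_j}_{L^6(\T_t \times \T_x)},
\end{equation*}
and then by Theorem~\ref{Strichartz estimate} applied with some small $\delta > 0$,
\begin{equation*}
\norm{h_j}_{L^6(\T_t \times \T_x)} = \norm{e^{it\p_x^2} g_j}_{L^6(\T_t \times \T_x)} \leq C_{\delta} \norm{g_j}_{H^{\delta}(\T)}.
\end{equation*}
Finally, using the frequency localization $|k_j| \sim N_j$, one has the trivial bound $\norm{g_j}_{H^\delta}^2 = \sum_{k_j} \langle k_j \rangle^{2\delta} |f^{(j)}_{k_j}|^2 \lesssim N_j^{2\delta} \norm{f^{(j)}}_{\ell^2}^2$. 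Combining these ingredients yields
\begin{equation*}
\sum_{\substack{\linc \\ \Omega(\vec{k}) = \kappa}} \prod_{j=1}^{6} |f^{(j)}_{k_j}| \lesssim C_{\delta}^{6} \prod_{j=1}^{6} N_j^{\delta} \norm{f^{(j)}}_{\ell^2} \leq C_\delta^6 N_{(1)}^{6\delta} \prod_{j=1}^{6} \norm{f^{(j)}}_{\ell^2},
\end{equation*}
and choosing $\delta = \eps/6$ gives the stated bound. There is no serious obstacle here — everything reduces to the Bourgain $L^6$ Strichartz estimate on $\T$, the only conceptual step being the recognition that the resonant constraint $\Omega(\vec{k})=\kappa$ is encoded as a time-Fourier coefficient of a product of linear Schrödinger evolutions.
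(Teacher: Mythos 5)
Your proposal is correct and is essentially identical to the paper's own proof: the paper likewise rewrites the constrained sum as $\int_{\T_t}\int_{\T_x} F^{(1)}\cjg{F^{(2)}}\cdots\cjg{F^{(6)}}\,\frac{dx\,dt}{(2\pi)^2}$ with $F^{(j)}=e^{it\p_x^2}F_0^{(j)}$ (absorbing the $e^{it\kappa}$ factor into $F^{(1)}$ rather than keeping it as a separate multiplier), then applies Hölder, Theorem~\ref{Strichartz estimate}, and the frequency localization with exponent $\eps/6$. No gaps.
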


\begin{proof}
    We set
    \begin{align*}
        F_0^{(1)}(x) &:= \sum_{k_1 \in \Z} |f_{k_1}^{(1)} |e^{i k_1 x}, &  F^{(1)}(t,x) & := e^{it\p_x^2}e^{it \kappa}F_0^{(1)}(x)
    \end{align*}
    and for $j \geq 2$, 
    \begin{align*}
         F_0^{(j)}(x) &:= \sum_{k_j \in \Z} |f_{k_j}^{(j)}| e^{i k_j x}, &  F^{(j)}(t,x) & := e^{it\p_x^2}F_0^{(j)}(x)
    \end{align*}
    Let us now prove the identity : 
    \begin{equation}\label{sum as an integral}
        \sum_{\substack{\linc \\ \Omega(\vec{k})= \kappa}} \prod_{j=1}^6 |f_{k_j}^{(j)}| = \int_{\T_t} \int_{\T_x} \left( F^{(1)}\cjg{F^{(2)}}F^{(3)}\cjg{F^{(4)}}F^{(5)}\cjg{F^{(6)}} \right) (t,x) \frac{dxdt}{(2\pi)^2}
    \end{equation}
    Firstly, we expand
    \begin{equation*}
        \left( F^{(1)}\cjg{F^{(2)}}F^{(3)}\cjg{F^{(4)}}F^{(5)}\cjg{F^{(6)}} \right) (t,x) = \sum_{k_1,...,k_6 \in \Z} |f_{k_1}^{(1)}...f_{k_6}^{(6)}| e^{ix(k_1-k_2+...-k_6)} e^{-it (\Omgk - \kappa)}
    \end{equation*}
    Secondly, we integrate with respect to $x$ and $t$ 
    \begin{equation*}
        \begin{split}
            \int_{\T_t} \int_{\T_x}   F^{(1)}\cjg{F^{(2)}}& F^{(3)}\cjg{F^{(4)}}F^{(5)}\cjg{F^{(6)}} (t,x) \frac{dxdt}{(2\pi)^2} \\
            & =   \sum_{k_1,...,k_6 \in \Z} |f_{k_1}^{(1)}...f_{k_6}^{(6)}| \left( \int_{\T_t} \int_{\T_x} e^{ix(k_1-k_2+...-k_6)} e^{-it (\Omgk-\kappa)} \frac{dxdt}{(2\pi)^2} \right)
       \end{split}
    \end{equation*}
    Then, the formula \eqref{sum as an integral} follows from the fact that 
    \begin{equation*}
        \int_{\T_t} \int_{\T_x} e^{ix(k_1-k_2+...-k_6)} e^{-it (\Omgk -\kappa)} \frac{dxdt}{(2\pi)^2} = \1_{\linc} \1_{\Omgeqkp}
    \end{equation*}
    Starting now from \eqref{sum as an integral} and using the Hölder's inequality and Theorem~\ref{Strichartz estimate} we get that 
    \begin{equation*}
       \sum_{\substack{\linc \\ \Omega(\vec{k})= \kappa}} \prod_{j=1}^6 |f_{k_j}^{(j)}| \leq \prod_{j=1}^6 \norm{F^{(j)}}_{\L^6(\T_t \times \T_x)} \leq C_{\eps} \prod_{j=1}^6 \norm{F_0^{(j)}}_{H^{\frac{\eps}{6}}(\T_x)}
    \end{equation*}
    We also used the fact that $\norm{F^{(1)}}_{\L^6(\T_t \times \T_x)} = \norm{e^{it\kappa}e^{it\p_x^2}F^{(1)}_0}_{\L^6(\T_t \times \T_x)} = \norm{e^{it\p_x^2}F^{(1)}_0}_{\L^6(\T_t \times \T_x)}$. \\

    On the other hand, with the localizations of the sequences $\left( f^{(j)}_{k_j}\right)_{k_j \in \Z}$, we can write 
    \begin{equation*}
        \norm{F_0^{(j)}}_{H^{\frac{\eps}{6}}(\T_x)} \lesssim N_j^{\frac{\eps}{6}}\norm{f^{(j)}}_{l^2}  \lesssim N_{(1)}^{\frac{\eps}{6}}\norm{f^{(j)}}_{l^2}
    \end{equation*}
    Coming back to the previous estimate we obtain 
    \begin{equation*}
        \sum_{\substack{\linc \\ \Omega(\vec{k})= \kappa}} \prod_{j=1}^6 |f_{k_j}^{(j)}| \leq C'_{\eps} N_{(1)}^{\eps}\prod_{j=1}^6 \norm{f^{(j)}}_{l^2}
    \end{equation*}
    which is the desired estimate.
\end{proof}

Finally, we conclude this section with the following suitable lemma. Although its significance will become apparent later (in the proof of energy estimates in Section~\ref{section Proofs of the deterministic properties}, \ref{section Estimates for the weight of the weighted Gaussian measures} and \ref{section Estimates for the differential of the modified energy}), we prove it now for a better clarity.

\begin{lem}\label{an estimate using Strichartz's trick}
       Let $s\in\R$ and $\eps > 0$. There exists a constant $C_{\eps,s} > 0$ such that for any dyadic integers $N_1,...,N_6$ and any sequences $\left( f_{k_1}^{(1)} \right)_{k_1 \in \Z}$,...,$\left( f_{k_6}^{(6)} \right)_{k_6 \in \Z}$ of complex numbers that satisfy $f^{(j)}_{k_j} = \1_{|k_j| \sim N_j} f^{(j)}_{k_j} $, we have
        \begin{equation*}
            \sum_{\substack{\linc \\ \Omgnz}} \left| \frac{\psi_{2s}(\vec{k})}{\Omega(\vec{k})}\right| \prod_{j=1}^6 | f_{k_j}^{(j)} | \leq C_{\eps,s} N_{(1)}^{2s-2+\eps} N_{(3)}^{2} \prod_{j=1}^6 \norm{f^{(j)}}_{l^2}
        \end{equation*}
    \end{lem}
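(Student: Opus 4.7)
The plan is as follows. The starting point is Lemma~\ref{lemma psi estimate}: combined with $|k_{(1)}|\sim N_{(1)}$ and $|k_{(3)}|\leq N_{(3)}$ on the support of the sum, it gives
\[
\frac{|\psi_{2s}(\vec k)|}{|\Omega(\vec k)|} \leq C_s\, N_{(1)}^{2s-2}\Bigl(1 + \frac{N_{(3)}^2}{|\Omega(\vec k)|}\Bigr), \qquad \Omega(\vec k)\neq 0.
\]
Hence the sum to estimate is bounded by $C_s N_{(1)}^{2s-2}\bigl(\mathcal A + N_{(3)}^2\,\mathcal B\bigr)$, where
\[
\mathcal A := \sum_{\linc}\prod_{j=1}^6|f^{(j)}_{k_j}|,\qquad \mathcal B := \sum_{\substack{\linc\\ \Omgnz}}\frac{1}{|\Omega(\vec k)|}\prod_{j=1}^6|f^{(j)}_{k_j}|,
\]
and it suffices to prove $\mathcal A \lesssim N_{(3)}^2\prod_j\|f^{(j)}\|_{l^2}$ and $\mathcal B \lesssim_\eps N_{(1)}^\eps\prod_j\|f^{(j)}\|_{l^2}$.

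The bound on $\mathcal B$ proceeds slice-by-slice in $\kappa=\Omega(\vec k)$: since $|\kappa|\lesssim N_{(1)}^2$, Lemma~\ref{Strichartz's trick} gives $\sum_{\linc,\Omega=\kappa}\prod|f^{(j)}_{k_j}|\lesssim_\eps N_{(1)}^\eps\prod\|f^{(j)}\|_{l^2}$ for each $\kappa$, and the harmonic sum $\sum_{0<|\kappa|\lesssim N_{(1)}^2}|\kappa|^{-1}$ contributes only a logarithmic factor absorbed into $N_{(1)}^\eps$.

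For $\mathcal A$, I rewrite it as a spatial integral
\[
\mathcal A = \frac{1}{2\pi}\int_\T G_1(x)\,\overline{G_2(x)}\,G_3(x)\,\overline{G_4(x)}\,G_5(x)\,\overline{G_6(x)}\,dx,\qquad G_j(x) := \sum_{k_j}|f^{(j)}_{k_j}|e^{ik_jx},
\]
where each $G_j$ is frequency-localized on $|k|\sim N_j$. Since $\mathcal A$ is invariant under permutations within the two sign-groups $\{1,3,5\}$ and $\{2,4,6\}$ of $\linc$, after reordering one may assume the two largest frequencies $N_{(1)},N_{(2)}$ are carried by two specific factors, so that the remaining four factors all carry frequency $\leq N_{(3)}$. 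I pair the two largest factors into $P$ and the four others into $Q$; then the Fourier support of $Q$ lies in $|m|\leq 4N_{(3)}$. Parseval together with Cauchy-Schwarz restricted to this support yields
\[
|\mathcal A| \leq \Bigl(\sum_{|m|\leq 4N_{(3)}}|\widehat{P}(m)|^2\Bigr)^{1/2}\|Q\|_{L^2(\T)}.
\]
The elementary Cauchy-Schwarz bound inside the convolution defining $\widehat P$ gives $|\widehat{P}(m)|\leq\|f^{(a)}\|_{l^2}\|f^{(b)}\|_{l^2}$ for the two paired indices $a,b$, so the first factor above is $\lesssim N_{(3)}^{1/2}\|f^{(a)}\|_{l^2}\|f^{(b)}\|_{l^2}$. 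For $Q$, Hölder and the Bernstein inequality $\|G_j\|_{L^8(\T)}\lesssim N_j^{3/8}\|f^{(j)}\|_{l^2}$ give $\|Q\|_{L^2}\lesssim N_{(3)}^{3/2}\prod_{j\neq a,b}\|f^{(j)}\|_{l^2}$. Multiplying the two yields $\mathcal A\lesssim N_{(3)}^2\prod_j\|f^{(j)}\|_{l^2}$, with no loss in $N_{(1)}$.

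The main obstacle is the estimate of $\mathcal A$: any attempt to bound it by summing Lemma~\ref{Strichartz's trick} over all admissible values of $\Omega(\vec k)$, or via Hölder with $L^6$ and Sobolev, loses a power of $N_{(1)}$ precisely when $N_{(3)}\ll N_{(1)}$. The decisive gain comes from the narrow Fourier support ($|m|\leq 4N_{(3)}$) of the four-fold product of the smallest-frequency factors, which restricts the relevant modes of the paired top factors and converts the naive $\sqrt{N_{(1)}}$ into $\sqrt{N_{(3)}}$.
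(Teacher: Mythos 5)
Your proof is correct, and its skeleton coincides with the paper's: both start from Lemma~\ref{lemma psi estimate}, split the sum into a piece without the weight $1/|\Omega(\vec k)|$ (your $\mathcal A$, the paper's $\textbf{I}$) and a piece with it (your $\mathcal B$, the paper's $\textbf{II}$), and both treat the second piece identically, by slicing over $\Omega(\vec k)=\kappa$, applying Lemma~\ref{Strichartz's trick} on each slice, and absorbing the harmonic sum $\sum_{1\leq|\kappa|\lesssim N_{(1)}^2}|\kappa|^{-1}$ into $N_{(1)}^{\eps}$. The only genuine divergence is in the estimate of $\mathcal A$. The paper stays entirely on the Fourier side: it applies Cauchy--Schwarz in the two highest-frequency variables (using the linear constraint to freeze one against the other) and bounds the four remaining factors in $l^1$, giving $\mathcal A\lesssim (N_{(3)}N_{(4)}N_{(5)}N_{(6)})^{1/2}\prod_j\|f^{(j)}\|_{l^2}$, which is then crudely majorized by $N_{(3)}^2\prod_j\|f^{(j)}\|_{l^2}$. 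You instead pass to physical space, exploit that the product $Q$ of the four low-frequency factors has Fourier support of width $O(N_{(3)})$, and combine Parseval, the trivial convolution bound $|\widehat P(m)|\leq\|f^{(a)}\|_{l^2}\|f^{(b)}\|_{l^2}$, and Bernstein in $L^8$. Both arguments are valid and yield the same $N_{(3)}^2$ with no loss in $N_{(1)}$; the paper's is shorter and in fact retains the slightly sharper factor $(N_{(3)}N_{(4)}N_{(5)}N_{(6)})^{1/2}$ (unused elsewhere), while yours illustrates more explicitly where the gain over a naive $L^6$--Hölder bound comes from. One cosmetic point: rather than invoking permutation invariance within the sign groups, it is cleaner to simply define $P$ as the product of whichever two factors carry $N_{(1)}$ and $N_{(2)}$, since the conjugations play no role in any of the bounds you use.
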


\begin{proof}
    We start by applying Lemma~\ref{lemma psi estimate} :
    \begin{equation*}
        \begin{split}
            \sum_{\substack{\linc \\ \Omgnz}} \left| \frac{\psi_{2s}(\vec{k})}{\Omega(\vec{k})}\right| \prod_{j=1}^6 | f_{k_j}^{(j)} | & \lesssim \sum_{\substack{\linc \\ \Omgnz}} |k_{(1)}|^{2s-2} \left( 1 + \frac{|k_{(3)}|^2}{|\Omega(\vec{k})|} \right)  \prod_{j=1}^6 | f_{k_j}^{(j)} | \\
            & \lesssim \hspace{.5cm} \textbf{I} + \textbf{II}
        \end{split}
    \end{equation*}
    where we denote 
    \begin{equation*}
        \textbf{I} := N_{(1)}^{2s-2}\sum_{\substack{\linc \\ \Omgnz}} \prod_{j=1}^6 | f_{k_j}^{(j)} |
    \end{equation*}
    and,
    \begin{equation*}
        \textbf{II} := N_{(1)}^{2s-2} N_{(3)}^2\sum_{\substack{\linc \\ \Omgnz}} \frac{1}{|\Omega(\vec{k})|}\prod_{j=1}^6 | f_{k_j}^{(j)} |
    \end{equation*}
    We estimate separately \textbf{I} and \textbf{II}. \\

    \textbullet \textbf{Estimate of I :} Removing the constraint $\Omgnz$ and using the Cauchy-Schwarz inequality in the $k_{(1)},k_{(2)}$ summations, we get that
    \begin{equation*}
        \begin{split}
            \textbf{I} & \leq N_{(1)}^{2s-2} \norm{f^{\left( (1)\right)}}_{l^2} \norm{f^{\left( (2)\right)}}_{l^2} \prod_{j=3}^6 \norm{f^{\left( (j)\right)}}_{l^1} \\
            & \leq N_{(1)}^{2s-2} \left( N_{(3)} N_{(4)}N_{(5)} N_{(6)} \right)^{\frac{1}{2}} \prod_{j=1}^6 \norm{f^{(j)}}_{l^2} \\
            & \leq N_{(1)}^{2s-2}  N_{(3)}^2 \prod_{j=1}^6 \norm{f^{(j)}}_{l^2}
        \end{split} 
    \end{equation*}
    which is even better than the desired bound. \\
    
    \textbullet \textbf{Estimate of II :} Firstly, we observe that $|\Omega(\vec{k})|\lesssim N_{(1)}^2$ so we can write
    \begin{equation*}
        \textbf{II} \leq N_{(1)}^{2s-2} N_{(3)}^2 \sum_{ N_{(1)}^2 \gtrsim |\kappa| \geq 1} \frac{1}{|\kappa|} \sum_{\substack{\linc \\ \Omgeqkp}} \prod_{j=1}^6 |f_{k_j}^{(j)}|
    \end{equation*}
    Secondly, we use Lemma~\ref{Strichartz's trick} that says
    \begin{equation*}
            \sum_{\substack{\linc \\ \Omgeqkp}} \prod_{j=1}^6 |f_{k_j}^{(j)}| \leq C_{\eps} N_{(1)}^{\frac{\eps}{2}}\prod_{j=1}^6 \norm{f^{(j)}}_{l^2}
    \end{equation*}
    Finally, we invoke the well-known estimate 
    \begin{equation*}
        \sum_{ N_{(1)}^2 \gtrsim |\kappa| \geq 1} \frac{1}{|\kappa|} \lesssim \textnormal{log} \left( N_{(1)}\right) \leq C'_{\eps}N_{(1)}^{\frac{\eps}{2}}
    \end{equation*}
    and the combination of those inequalities yields 
    \begin{equation*}
        \textbf{II} \leq C''_{\eps} N_{(1)}^{2s-2+\eps} N_{(3)}^2 \prod_{j=1}^6 \norm{f^{(j)}}_{l^2}
    \end{equation*}
    which is the desired bound. 
    This completes the proof of Lemma~\ref{an estimate using Strichartz's trick}.
    \end{proof}

    \subsection{Some properties of Gaussian measures}
    Our analysis for the energy estimates in Section~\ref{section Estimates for the weight of the weighted Gaussian measures} and \ref{section Estimates for the differential of the modified energy} will also require the following probabilistic tools.
    
\begin{lem}[Moments of Gaussian measures]
    Let $s\in \R$ and $\sigma < s-\frac{1}{2}$. Then, there exists $C=C(s,\sigma)>0$ such that for all $m \geq 1$ :
    \begin{equation}\label{Gaussian moments}
       \Big( \int_{\hsig} \hsignorm{u}^m d\mu_s \Big)^{\frac{1}{m}} \leq C m^{\frac{1}{2}} 
    \end{equation}
\end{lem}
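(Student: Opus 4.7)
The plan is to reduce the $\hsig$-norm moment of the Gaussian random series $S$ to a scalar moment of a single complex Gaussian via Fubini/Minkowski, and then invoke the standard growth rate $\|g\|_{L^m}\lesssim m^{1/2}$ of Gaussian moments. Since $\mu_s$ is by definition the law of $S$ defined in \eqref{random series S}, we have
\begin{equation*}
\int_{\hsig}\hsignorm{u}^m d\mu_s = \E\bigl[\hsignorm{S}^m\bigr]
\quad\text{with}\quad
\hsignorm{S}^2 = \sum_{n\in\Z}\frac{|g_n|^2}{\langle n\rangle^{2(s-\sigma)}}.
\end{equation*}

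The first step is to rewrite $\hsignorm{S}^m = (\hsignorm{S}^2)^{m/2}$, so that
\begin{equation*}
\bigl(\E[\hsignorm{S}^m]\bigr)^{1/m} = \bigl\|\hsignorm{S}^2\bigr\|_{L^{m/2}(d\P)}^{1/2}.
\end{equation*}
For $m\geq 2$, I would apply Minkowski's inequality in $L^{m/2}(d\P)$ to the series defining $\hsignorm{S}^2$, which yields
\begin{equation*}
\bigl\|\hsignorm{S}^2\bigr\|_{L^{m/2}}
\;\leq\; \sum_{n\in\Z}\frac{\bigl\||g_n|^2\bigr\|_{L^{m/2}}}{\langle n\rangle^{2(s-\sigma)}}
\;=\; \sum_{n\in\Z}\frac{\|g_n\|_{L^m}^2}{\langle n\rangle^{2(s-\sigma)}}.
\end{equation*}

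The next step is to insert the standard Gaussian moment bound $\|g_n\|_{L^m} \leq C\, m^{1/2}$ (uniform in $n$, since the $g_n$ are identically distributed standard complex Gaussians). Combined with the convergence of $\sum_n \langle n\rangle^{-2(s-\sigma)}$, which holds precisely because $\sigma < s-\tfrac{1}{2}$, this gives
\begin{equation*}
\bigl\|\hsignorm{S}^2\bigr\|_{L^{m/2}}\;\leq\; C(s,\sigma)\,m,
\end{equation*}
and taking square roots yields the announced bound for $m\geq 2$. For $m\in[1,2]$ I would simply use monotonicity of $L^p$ norms with respect to a probability measure together with the $m\geq 1$ lower bound $m^{1/2}\geq 1$, so that
\begin{equation*}
\bigl(\E[\hsignorm{S}^m]\bigr)^{1/m}\;\leq\;\bigl(\E[\hsignorm{S}^2]\bigr)^{1/2}\;\leq\; C(s,\sigma)\;\leq\; C(s,\sigma)\,m^{1/2}.
\end{equation*}

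There is no genuine obstacle here: the key input is that taking the $L^{m/2}$ norm of a sum of independent squared Gaussians with $\ell^1$-summable coefficients behaves like the sum of the individual $L^{m/2}$ norms (Minkowski), and each individual term is controlled by $m$ uniformly in $n$. An equivalent route would be to appeal to hypercontractivity/Wiener chaos estimates (see Lemma~\ref{lem Wiener chaos} referenced later), observing that $\hsignorm{S}^2$ lies in the sum of the zeroth and second Wiener chaoses; I prefer the Minkowski approach since it is elementary and gives directly the sharp $m^{1/2}$ rate.
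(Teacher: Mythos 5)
Your proof is correct, but it follows a genuinely different route from the paper. The paper's argument is the general Gaussian-measure one: Fernique's theorem gives $\int e^{\alpha\|u\|_{\hsig}^2}d\mu_s<\infty$, Markov's inequality turns this into the tail bound $\mu_s(\hsignorm{u}>\lambda)\leq Ce^{-\alpha\lambda^2}$, and Cavalieri's principle plus repeated integration by parts on $\int_0^\infty \lambda^{m-1}e^{-\alpha\lambda^2}d\lambda$ extracts the $m^{1/2}$ growth. You instead exploit the explicit diagonal representation $\hsignorm{S}^2=\sum_n |g_n|^2\langle n\rangle^{-2(s-\sigma)}$ and apply the triangle inequality in $L^{m/2}$ termwise, reducing everything to the scalar bound $\|g\|_{L^m}\lesssim m^{1/2}$ and the summability of $\langle n\rangle^{-2(s-\sigma)}$ (which is exactly where $\sigma<s-\tfrac12$ enters, as in the paper). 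Each step checks out: $\||g_n|^2\|_{L^{m/2}}=\|g_n\|_{L^m}^2$, Minkowski is legitimate for $m\geq 2$ since $m/2\geq 1$, and the case $m\in[1,2]$ is handled by monotonicity of $L^p$ norms on a probability space. Your route is more elementary (it needs neither Fernique nor any independence of the $g_n$, only the series structure of the norm), whereas the paper's route is norm-agnostic and yields the Gaussian tail bound as a by-product; both give the sharp $m^{1/2}$ rate.
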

\begin{proof}
From Fernique's theorem (see for example \cite{kuo2006gaussian}, \cite{bogachev1998gaussian}), there exists $\alpha>0$ such that:
\begin{equation*}
    \int_{\hsig} e^{\alpha \norm{u}_{\hsig}^2} d\mu_s < + \infty
\end{equation*}
Then, by Markov's inequality, we obtain the following large deviation estimate:
\begin{equation*}
    \mu_s(\hsignorm{u} > \lambda) = \mu_s(e^{\alpha \hsignorm{u}^2} > e^{\alpha \lambda^2}) \leq e^{-\alpha \lambda^2} \int_{\hsig} e^{\alpha \norm{u}_{\hsig}^2} d\mu_s \leq C e^{-\alpha \lambda^2}
\end{equation*}
Combining this estimate with Cavalieri's principle, we have:
\begin{equation*}
    \int_{\hsig} \hsignorm{u}^m d\mu_s = m \int_0^{+\infty} \lambda^{m-1} \mu_s({\hsignorm{u}>\lambda}) d\lambda \leq C m \int_0^{+\infty} \lambda^{m-1} e^{-\alpha \lambda^2} d\lambda
\end{equation*}
To conclude, we perform $\lfloor \frac{m}{2}\rfloor -1$ integration by parts on the integral on the right hand side:
\begin{equation*}
    \begin{split}
   \int_0^{+\infty} \lambda^{m-1} e^{-\alpha \lambda^2} d\lambda &= \frac{m-2}{2\alpha} \int_0^{+\infty} \lambda^{m-3} e^{-\alpha \lambda^2} d\lambda \\
   &=...= \frac{(m-2)(m-4)...(m-2(\lfloor \frac{m}{2}\rfloor-1))}{(2\alpha)^{\lfloor \frac{m}{2}\rfloor-1} }\int_0^{+\infty}\lambda^{m- 2(\lfloor \frac{m}{2}\rfloor-1)-1} e^{-\alpha \lambda^2}d\lambda \\
  & \leq \left( \frac{m}{2\alpha}\right)^{\lfloor \frac{m}{2}\rfloor-1} \sup_{\tld{m} \in [0,2]} \int_0^{+\infty} \lambda^{\tld{m}}e^{-\alpha \lambda^2} d\lambda
   \end{split}
\end{equation*}
\end{proof}

Next, we state a conditional Wiener chaos estimate which will play a crucial role in the energy estimates. In the sequel, for any complex number $z$, we adopt the notation $z^+ =z$ and $z^-=\cjg{z}$, called respectively positive and negative \textit{signature} of $z$.

\begin{lem}[Conditional Wiener chaos estimate]\label{lem Wiener chaos}
    Let $(\Omega,\cA,\P)$ be a probability space and $\cB$ be a $\sigma$-algebra on $\Omega$ such that $\cB \subset \cA$. Let $m \in \N$ and $\iota_1,...,\iota_m \in \{-,+ \}$. We consider the following expression :
    \begin{equation*}
        F(\omega) := \sum_{k_1,...,k_m} c_{k_1,...,k_m}(\omega) \cdot \prod_{j=1}^m g_{k_j}^{\iota_j}(\omega), \hspace{1cm} \omega \in \Omega
    \end{equation*}
    where, the $g_{k_j}(\omega)$ are complex standard i.i.d Gaussians, independent of the $\sigma$-algebra $\cB$, and the $c_{k_1,...,k_m}(\omega)$ are $\cB$-mesurable complex random variables. Then, there exists $C>0$ such that for every $p\geq 2$, we have :
    \begin{equation*}
        \norm{F}_{\L^p(\Omega | \cB)} \leq C p^{\frac{m}{2}} \norm{F}_{\L^2(\Omega | \cB)}
    \end{equation*}
    where $\L^p(\Omega | \cB)$ is the $\L^p$-norm conditioned to the $\sigma$- algebra $\cB$.
\end{lem}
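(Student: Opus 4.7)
The plan is to reduce the conditional Wiener chaos estimate to the classical (unconditional) one by disintegrating with respect to $\cB$. The key observation is that because the Gaussians $(g_{k_j})_{k_j}$ are independent of $\cB$ while the coefficients $(c_{k_1,\dots,k_m})$ are $\cB$-measurable, the coefficients behave as constants under the regular conditional law $\P(\,\cdot\,|\,\cB)(\omega')$ while the $g_{k_j}$ retain their joint law as i.i.d.\ standard complex Gaussians.

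First, I would recall the classical Wiener chaos bound, which is a consequence of Nelson's hypercontractivity theorem (see e.g.\ Janson, \emph{Gaussian Hilbert Spaces}): for any finite family of deterministic complex coefficients $\{a_{k_1,\dots,k_m}\}$, any signatures $\iota_1,\dots,\iota_m \in \{-,+\}$, and the random variable
\begin{equation*}
\tld F := \sum_{k_1,\dots,k_m} a_{k_1,\dots,k_m}\prod_{j=1}^m g_{k_j}^{\iota_j},
\end{equation*}
one has $\|\tld F\|_{L^p(\Omega)} \leq (p-1)^{m/2}\|\tld F\|_{L^2(\Omega)}$ for every $p \geq 2$, since $\tld F$ belongs to the $m$-th (complex) Wiener chaos generated by the $g_{k_j}$.

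Second, I would truncate to a finite index set $\{|k_j|\leq K\}$ so that Step~1 applies. Fixing a regular conditional probability for $\P$ given $\cB$, I invoke independence to conclude that, for $\P$-a.e.\ $\omega' \in \Omega$, the conditional law of the truncated sum $F_K$ given $\cB$ at $\omega'$ is exactly the unconditional law of $\tld F$ with deterministic coefficients $a_{k_1,\dots,k_m} = c_{k_1,\dots,k_m}(\omega')$. Applying Step~1 under that conditional law yields, for a.e.\ $\omega'$,
\begin{equation*}
\E\bigl[|F_K|^p \,\big|\, \cB\bigr](\omega') \;\leq\; (p-1)^{mp/2}\,\E\bigl[|F_K|^2 \,\big|\, \cB\bigr](\omega')^{p/2}.
\end{equation*}
Taking $p$-th roots gives $\|F_K\|_{L^p(\Omega|\cB)} \leq (p-1)^{m/2}\|F_K\|_{L^2(\Omega|\cB)}$ pointwise in $\omega'$, with a constant independent of $K$.

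Finally, I would pass to the limit $K \to \infty$. The $L^2(\Omega|\cB)$ norm of $F_K$ is given by an explicit orthogonality sum in the Gaussians, so $F_K \to F$ in $L^2(\Omega|\cB)$ (assuming convergence of $F$, which is what the statement presupposes), and monotone convergence upgrades this to the $L^p(\Omega|\cB)$ bound for the full sum. Bounding $(p-1)^{m/2} \leq p^{m/2}$ yields the claim with $C=1$. The only real point to be careful about is the disintegration/measurability of the conditional norms, which is standard for Polish state spaces, and the justification that the classical chaos bound applies to complex-valued chaos with mixed conjugation signatures (handled by writing $g = h + i\ell$ and noting that the total polynomial degree in the real Gaussians $h,\ell$ is still $m$); neither presents a genuine obstacle.
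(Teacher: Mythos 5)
The paper does not prove this lemma; it simply cites \cite{Simon+1974}, \cite{tzvetkov2010construction} and \cite{thomann2010gibbs}. Your argument is correct and is essentially the standard proof found in those references: Nelson's hypercontractive estimate for the Ornstein--Uhlenbeck semigroup gives the unconditional bound with constant $(p-1)^{m/2}$, and the conditional version follows by the freezing lemma (conditioning on $\cB$ turns the $\cB$-measurable coefficients into constants while leaving the law of the Gaussians, which are independent of $\cB$, unchanged), applied first to finite truncations and then passed to the limit. One small imprecision worth fixing: $\tld F$ does not in general lie in the $m$-th \emph{homogeneous} Wiener chaos, because monomials $\prod_j g_{k_j}^{\iota_j}$ with repeated indices of opposite signature contract onto lower-order chaoses; $\tld F$ is only a polynomial of degree at most $m$ in the underlying real Gaussians. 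The hypercontractivity bound $\lnorm X \rnorm_{\L^p} \leq (p-1)^{m/2} \lnorm X \rnorm_{\L^2}$ is still valid on the closed span of all chaoses of order $\leq m$, so the conclusion is unaffected, but the justification should be stated that way. The limit $K \to \infty$ is more cleanly handled by conditional Fatou along an a.s.\ convergent subsequence than by monotone convergence, since $|F_K|^p$ need not be monotone in $K$; this is a cosmetic point.
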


In the energy estimates, we will apply this lemma with $m=3$, $\cB$ the $\sigma$-algebra generated by low-frequency Gaussians, and the random variables $c_{k_1,..,k_m}$ will be some multi-linear expression of high-frequency Gaussians (independent of the low-frequency Gaussians). For a proof of this lemma, we refer to \cite{Simon+1974} (see also \cite{tzvetkov2010construction} and \cite{thomann2010gibbs}).

\begin{lem}\label{L2 estimate with orthogonality}
    Let $(\Omega,\cA,\P)$ be a probability space. Let $m\in \N$, $\iota_1,...,\iota_m \in \{-,+ \}$, and $g_{k_1},...,g_{k_m}$ be complex standard i.i.d Gaussians. We consider the following multi-linear expression of Gaussians:
    \begin{equation}\label{mult lin gauss}
        G(\omega) := \sum_{\substack{k_1,...,k_m \\ \forall \iota_i \neq \iota_j, \hsp k_i \neq k_j}} c_{k_1,...,k_m} \cdot \prod_{j=1}^m g_{k_j}^{\iota_j}(\omega), \hspace{1cm} \omega \in \Omega 
    \end{equation}
    where $c_{k_1,...,k_m}$ is a sequence of $l^2(\Z^m;\C)$. Then, there exists $C>0$ such that:
    \begin{equation}\label{ineq L2 estimate orhtogonality}
        \norm{G}_{\L^2(\Omega)} \leq C \biggl( \sum_{\substack{k_1,...,k_m \\ \forall \iota_i \neq \iota_j, \hsp k_i \neq k_j}} | c_{k_1,...,k_m} |^2 \biggr)^{\frac{1}{2}}
    \end{equation}
\end{lem}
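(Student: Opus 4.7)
The plan is to compute the second moment $\|G\|_{\L^2(\Omega)}^2 = \E[|G|^2]$ directly using the Wick formula for moments of complex Gaussians. Since $\cjg{g_k^{\iota}} = g_k^{-\iota}$, expanding the square gives
\begin{equation*}
\|G\|_{\L^2(\Omega)}^2 = \sum_{k,k' \textnormal{ admissible}} c_{k}\, \cjg{c_{k'}}\ \E\!\left[\prod_{j=1}^m g_{k_j}^{\iota_j}\prod_{l=1}^m g_{k'_l}^{-\iota_l}\right].
\end{equation*}
By Isserlis--Wick, each such expectation equals a sum over perfect pairings of the $2m$ Gaussian factors, and only pairings that match a $+$-signed factor with a $-$-signed one having the \emph{same} index contribute, because $\E[g_a^{+}g_b^{-}] = \delta_{a,b}$ while $\E[g_a^{+}g_b^{+}] = \E[g_a^{-}g_b^{-}] = 0$.

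The key step is to identify which pairings survive the diagonal constraint. Writing $I_{\pm} := \{j : \iota_j = \pm\}$, a pairing contracts factors either inside the same $k$-tuple, inside the same $k'$-tuple, or one factor from each. The first two options demand an equality $k_i = k_j$ (respectively $k'_i = k'_j$) with $i \in I_+$ and $j \in I_-$, which is forbidden by the diagonal condition in the sum. Hence only cross-contractions survive, and they necessarily pair each $k_j$ with some $k'_l$ of the \emph{same} signature $\iota_j = \iota_l$ and equal value. The non-vanishing pairings are therefore in bijection with permutations $\sigma \in S_{I_+}\times S_{I_-}$ satisfying $k_j = k'_{\sigma(j)}$ for every $j$.

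Substituting this back yields
\begin{equation*}
\|G\|_{\L^2(\Omega)}^2 = \sum_{\sigma\in S_{I_+}\times S_{I_-}}\ \sum_{k\textnormal{ admissible}} c_{k}\, \cjg{c_{\sigma\cdot k}},
\end{equation*}
where $(\sigma\cdot k)_l := k_{\sigma^{-1}(l)}$, which is itself admissible because $\sigma$ preserves $I_+$ and $I_-$. Applying Cauchy--Schwarz to the inner sum bounds it by $\|c\|_{l^2}^2$ (via a change of variables $k \mapsto \sigma\cdot k$), and the outer sum contains $|I_+|!\,|I_-|! \leq (m!)^2$ terms. This gives the estimate \eqref{ineq L2 estimate orhtogonality} with, say, $C = m!$. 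There is no real obstacle beyond carefully bookkeeping the Wick pairings; the diagonal restriction on the sum is exactly what eliminates every self-contraction within a single copy of the sum, reducing $\|G\|_{\L^2}^2$ to an orthogonal sum in disguise.
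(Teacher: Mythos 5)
Your proof is correct and takes essentially the same approach as the paper's: expand $\E[|G|^2]$, observe that the admissibility restriction together with $\E[g^2]=0$ kills every contraction internal to a single copy of the sum, so only permutation-matched cross terms survive, and control these by Cauchy-Schwarz. The only (cosmetic) difference is that you organize the surviving contractions via the Wick formula over $S_{I_+}\times S_{I_-}$, which handles general $m$ uniformly, whereas the paper enumerates the index configurations $D_1,D_2,D_3$ by hand in the case $m=3$ it actually uses; the two computations agree, your identity and transposition permutations accounting respectively for $D_1\cup D_3$ and $D_2\cup D_3$ (whence the factor $\E[|g|^4]=2$ on the fully paired diagonal).
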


\begin{proof}
    For more readability, we perform the proof only for $m=3$ (and that is the value of $m$ we will use with this lemma). In addition, we assume that $\iota_1, \iota_3=1$ and $\iota_2=-1$ (the other cases are similar). Thus, we want to prove that:
    \begin{equation*}
        \big\|  \sum_{\substack{k_1,k_2,k_3 \\  k_2 \neq k_1,k_3}} c_{k_1,k_2,k_3} g_{k_1} \cjg{g_{k_2}} g_{k_3} \big\|_{\L^2(\Omega)}^2 \leq C \sum_{\substack{k_1,k_2,k_3 \\  k_2 \neq k_1,k_3}} | c_{k_1,k_2,k_3} |^2 
    \end{equation*}
    We start by expanding the left hand side:
    \begin{equation}\label{expanding L^2 norm}
         \big\|  \sum_{\substack{k_1,k_2,k_3 \\  k_2 \neq k_1,k_3}} c_{k_1,k_2,k_3} g_{k_1} \cjg{g_{k_2}} g_{k_3} \big\|_{\L^2(\Omega)}^2 =  \sum_{\substack{k_1,k_2,k_3,l_1,l_2,l_3 \\  k_2 \neq k_1,k_3 \hsp \& \hsp l_2 \neq l_1,l_3}} c_{k_1,k_2,k_3} \cjg{c_{l_1,l_2,l_3}} \hsp \E[g_{k_1} \cjg{g_{k_2}} g_{k_3} \cjg{g_{l_1}} g_{l_2} \cjg{g_{l_3}}]
    \end{equation}
    From the independence of the Gaussians $g_n$, and the fact that for a complex standard Gaussian variable $g$ we have $\E[g]=\E[g^2]=\E[g^3]=0$, the only non-zero contributions in the sum above is when for each $g_{k_j}^{\pm}$ there exists an $g_{l_i}^{\mp}$, with the opposite signature, such that $k_j = l_i$. Hence, the only non-zero contributions are of the form:
    \begin{center}
    $c_{k_1,k_2,k_3} \cjg{c_{l_1,l_2,l_3}} \hsp   \E[|g_{k_1}|^2|g_{k_2}|^2|g_{k_3}|^2]$ with $k_2=l_2$, $\{ k_1 , k_3\} = \{ l_1, l_3\}$ and $k_2 \neq k_1,k_3$
    \end{center}
    Then, we invoke the following set of indices:
    \begin{equation*}
        \begin{split}
        D_1 &:= \{ (k_1,k_2,k_3,l_1,l_2,l_3) \in \Z^6 \hsp : \hsp k_2 = l_2, \hsp k_1 = l_1,\hsp k_3 = l_3, \hsp   k_2 \neq k_1,k_3, \hsp k_1 \neq k_3 \} \\
        D_2 &:= \{ (k_1,k_2,k_3,l_1,l_2,l_3) \in \Z^6 \hsp : \hsp k_2 = l_2, \hsp k_1 = l_3,\hsp k_3 = l_1, \hsp   k_2 \neq k_1,k_3, \hsp k_1 \neq k_3 \} \\
        D_3 &:= \{ (k_1,k_2,k_3,l_1,l_2,l_3) \in \Z^6 \hsp : \hsp k_2 = l_2, \hsp k_1 = k_3 = l_1 = l_3, \hsp   k_2 \neq k_1,k_3\}
        \end{split}
    \end{equation*}
    so that, coming back to \eqref{expanding L^2 norm}, we have (for any complex standard Gaussian $g$):
    \begin{equation*}
        \begin{split}
        \big\|  \sum_{\substack{k_1,k_2,k_3 \\  k_2 \neq k_1,k_3}} c_{k_1,k_2,k_3} g_{k_1} \cjg{g_{k_2}} g_{k_3} \big\|_{\L^2(\Omega)}^2 &= \E[|g|^2]^3\sum_{D_1} |c_{k_1,k_2,k_3}|^2  + \E[|g|^2]^3\sum_{D_2} c_{k_1,k_2,k_3} \cjg{c_{k_3,k_2,k_1}} \\
        & +\E[|g|^2]\E[|g|^4] \sum_{D_3}  |c_{k_1,k_2,k_1}|^2 
        \end{split}
    \end{equation*}
    This concludes the proof since:
    \begin{equation*}
        \sum_{D_1} |c_{k_1,k_2,k_3}|^2 + \sum_{D_3}  |c_{k_1,k_2,k_1}|^2  = \sum_{\substack{k_1,k_2,k_3 \\ k_2\neq k_1,k_3 \hsp \& \hsp k_1 \neq k_3}}|c_{k_1,k_2,k_3}|^2 + \sum_{\substack{k_1,k_2 \\ k_2 \neq k_1}}|c_{k_1,k_2,k_1}|^2 =  \sum_{\substack{k_1,k_2,k_3 \\  k_2 \neq k_1,k_3}} | c_{k_1,k_2,k_3} |^2
    \end{equation*}
    and,
    \begin{equation*}
        \sum_{D_2} c_{k_1,k_2,k_3} \cjg{c_{k_3,k_2,k_1}} \leq \frac{1}{2} \sum_{\substack{k_1,k_2,k_3 \\ k_2\neq k_1,k_3 \hsp \& \hsp k_1 \neq k_3}} |c_{k_1,k_2,k_3}|^2 + \frac{1}{2} \sum_{\substack{k_1,k_2,k_3 \\ k_2\neq k_1,k_3 \hsp \& \hsp k_1 \neq k_3}} |c_{k_3,k_2,k_1}|^2 \leq \sum_{\substack{k_1,k_2,k_3 \\  k_2 \neq k_1,k_3}} | c_{k_1,k_2,k_3} |^2
    \end{equation*}
\end{proof}
\begin{rem}
    When $\iota_i \neq \iota_j$  \textbf{and} $\hsp k_i = k_j$, we say that $k_i$ and $k_j$ are paired. If one allows such parings in the sum in \eqref{mult lin gauss}, then the inequality \eqref{ineq L2 estimate orhtogonality} does not hold anymore in general. In our analysis in the upcoming sections, we will not encounter such pairings. However, in \cite{sun2023quasi}, this situation occurred, and the authors needed to study these pairing contributions separately. 
\end{rem}

\section{Proofs of the deterministic properties}
\label{section Proofs of the deterministic properties}
This section is dedicated to the proof of the deterministic properties of the energy correction $R_{s,N}$ and of the derivative of the modified energy $Q_{s,N}$. More precisely, we prove here Proposition~\ref{prop R_s,N continuous} and Proposition~\ref{prop continuity of Qs,N and deterministic estimate}. To do so, we are going to use the deterministic tools from Section~\ref{section Tools for the energy estimates}.\\

\paragraph{\textbf{Deterministic estimate for the energy correction}} In this paragraph, we prove Proposition~\ref{prop R_s,N continuous}. Recall that we want to estimate the sum of positive terms in \eqref{energy correction with absolute value}.

\begin{proof}[Proof of Proposition~\ref{prop R_s,N continuous}]
    Let $\sigma < s-\frac{1}{2}$ and let $u^{(1)},...,u^{(6)} \in \hsigt$. First, we decompose the sum in \eqref{energy correction with absolute value} dyadically:
    \begin{equation}\label{starting estimate R}
        \sum_{\substack{\linc \\ \Omgnz}} \big| \frac{\psi_{2s}(\vec{k})}  {\Omega(\vec{k})}\big||u^{(1)}_{k_1}u^{(2)}_{k_2}...u^{(6)}_{k_6}| = \sum_{N_1,...,N_6} \frkR(N_1,...,N_6)
    \end{equation}
    where the summations are performed on the dyadic values of $N_1,...,N_6$ and,
    \begin{equation*}
        \frkR(N_1,...,N_6) := \sum_{\substack{\linc \\ \Omgnz}} \big| \frac{\psi_{2s}(\vec{k})}  {\Omega(\vec{k})}\big| \prod_{j=1}^6 \1_{|k_j| \sim N_j} |u^{(j)}_{k_j}|
    \end{equation*}
    Now, using Lemma~\ref{an estimate using Strichartz's trick} yields:
    \begin{equation}\label{R dyadic estimate L2}
         \frkR(N_1,...,N_6) \lesssim_{\eps} N_{(1)}^{2s-2+\eps} N_{(3)}^2 \prod_{j=1}^{6} \norm{P_{N_j}u^{(j)}}_{\L^2(\T)} \lesssim_{\eps}N_{(1)}^{2s-2+\eps} N_{(3)}^2 (N_{(1)} ... N_{(6)})^{-\sigma}\prod_{j=1}^6 \hsignorm{u^{(j)}} 
    \end{equation}
    where $P_{N}$ is the projector onto frequencies $|k| \sim N$. Note that we have $N_{(1)} \sim N_{(2)}$ because the constraint $k_1-k_2+...-k_6 = 0$ implies that $ |k_{(2)}|  \sim |k_{(1)}|$. Besides, we crudely estimate $(N_{(4)}N_{(5)}N_{(3)})^{-\sigma} \lesssim 1$. Then, we rewrite the inequality above as: 
    \begin{equation}\label{R dyadic estimate Hsig}
         \frkR(N_1,...,N_6) \lesssim_{\eps} N_{(1)}^{2(s-1-\sigma)+\eps} N_{(3)}^{2-\sigma} \prod_{j=1}^6 \hsignorm{u^{(j)}}  
    \end{equation}
    Next, let us observe that for $\sigma < s -\frac{1}{2}$ and $\eps>0$: 

\begin{align*}
    &\begin{cases}
        2(s-1-\sigma)+\eps > -1 \\
        2(s-1-\sigma)+\eps \lra -1 \hspace{0.2cm} \text{as} \begin{cases}
            \sigma \ra s-\frac{1}{2} \\
            \eps \ra 0
        \end{cases}
    \end{cases} 
    & &\text{and,} & &\begin{cases}
        2-\sigma > \frac{5}{2} -s \\
        2-\sigma \lra \frac{5}{2} -s \hspace{0.2cm} \text{as}  \begin{cases}
            \sigma \ra s-\frac{1}{2} \\
            \eps \ra 0
        \end{cases}
    \end{cases}
\end{align*}
In particular, in \eqref{R dyadic estimate Hsig}, $N_{(1)}$ is accompanied by a negative exponent\footnote{In dimension $d\geq 2$, we have $\sigma <s-\frac{d}{2}$, and this scenario would be worse because we would have $2(s-1-\sigma)>0$, so $N_{(1)}$ would be accompanied by a positive exponent. This scenario is encountered in \cite{sun2023quasi}, with $d=3$.}. \\
Finally, if:
\begin{align*}
    &-1 + (\frac{5}{2} - s) < 0, & & \text{that is if:} & & s>\frac{3}{2}, 
\end{align*}
then, for $\sigma<s-\frac{1}{2}$ and $\eps>0$ respectively close enough to $s-\frac{1}{2}$ and 0, we deduce from \eqref{R dyadic estimate Hsig} that\footnote{The notation $N^{-}$ means that the power of $N$ is $-\gamma$ for a certain $\gamma>0$.}:
\begin{equation*}
    \frkR(N_1,...,N_6) \lesssim N_{(1)}^{-}  \cdot \prod_{j=1}^6 \hsignorm{u^{(j)}}
\end{equation*}
which is summable in the $N_j$. Coming back to \eqref{starting estimate R} finishes the proof. 
\end{proof}

\paragraph{\textbf{Deterministic estimate for the derivative of the modified energy}} In this paragraph, we prove Proposition~\ref{prop continuity of Qs,N and deterministic estimate}. Recall that we want to estimate the sums of positive terms in \eqref{T0 with abs value}, \eqref{T1 with abs value} and \eqref{T2 with abs value}.

\begin{proof}[Proof of Proposition~\ref{prop continuity of Qs,N and deterministic estimate}] Let $\sigma < s-\frac{1}{2}$. \\
    \textbullet \textbf{Estimate for $\cT_0$ :} The analysis is similar to the one for $\cR$. Indeed, let us fix $u^{(1)},...,u^{(6)} \in \hsigt$. Then, decomposing the sum in \eqref{T0 with abs value} dyadically and using Lemma~\ref{lemma psi estimate} (with $\Omgz$), leads to:
     \begin{equation*}
        \begin{split}
             \sum_{\substack{\linc \\ \Omgz}} | \psi_{2s} (\vec{k})| |u^{(1)}_{k_1}u^{(2)}_{k_2}...u^{(6)}_{k_6}|  & \lesssim \sum_{\substack{\linc \\ \Omgz}} |k_{(1)}|^{2s-2} |k_{(3)}|^2 |u^{(1)}_{k_1}u^{(2)}_{k_2}...u^{(6)}_{k_6}| \\
            & \lesssim \sum_{N_1,...,N_6} \frkT_0(N_1,...,N_6)
        \end{split}
    \end{equation*}
     where,
    \begin{equation*}
        \frkT_0(N_1,...,N_6) := \sum_{\substack{\linc \\ \Omgz}} N_{(1)}^{2s-2} N_{(3)}^2 \prod_{j=1}^6 \1_{|k_j| \sim N_j} |u^{(j)}_{k_j}|
    \end{equation*}
    Next, from Lemma~\ref{Strichartz's trick} (with $f^{j}_{k_j} = \1_{|k_j| \sim N_j} |u^{(j)}_{k_j}|$), we have:  
    \begin{equation*}
         \frkT_0(N_1,...,N_6) \lesssim_{\eps}  N_{(1)}^{2s-2+\eps} N_{(3)}^2 \prod_{j=1}^{6} \norm{P_{N_j}u^{(j)}}_{\L^2(\T)}
    \end{equation*}
From that point the proof goes the same as the one for $\cR$ (see the estimate \eqref{R dyadic estimate L2}).\\

\textbullet \textbf{Estimate of $\cT_j$ for $j=1,2$:} We only prove the estimate \eqref{T1 with abs value} since the analysis for the estimate \eqref{T2 with abs value} is similar. Here, the computations are a little more delicate. For the sake of readability, we restrict ourselves to prove the estimate:
\begin{equation*}
    \sum_{\substack{\linc \\ \lincba \\\Omgnz}} \big| \frac{\psi_{2s}(\vec{k})}{ \Omega(\vec{k})} \big| | u_{p_1}\cjg{u_{p_2}}...u_{p_5}\cjg{u_{k_2}}...\cjg{u_{k_6}}| \leq C_s \hsignorm{u}^{10}
\end{equation*}
In other words, we just prove the estimate \eqref{T1 with abs value} where all the $u^{(j)}$ and $v^{(l)}$ are equal to a single $u$.
To do so, we will use the following lemma:
\begin{lem}\label{technical lem Q estimate}
    There exists a constant $C > 0$ such that for any dyadic integers $P_1,...,P_5$ and any sequences $\left(g^{P_1}_{p_1} \right)_{p_1 \in \Z}, ... , \left(g^{P_5}_{p_5} \right)_{p_5 \in \Z} $ of complex numbers that satisfy for $j=1,...,5$, $g^{P_j}_{p_j} = \1_{|p_j| \sim P_j} g_{p_j}$, if we set for $k_1 \in \Z$ :
    \begin{equation*}
        f_{k_1} := \sum_{\lincba}  g_{p_1}^{P_1}g_{p_2}^{P_2}g_{p_3}^{P_3}g_{p_4}^{P_4}g_{p_5}^{P_5}
    \end{equation*}
    Then,
    \begin{equation*}
        \norm{f}_{l^2(\Z)} \leq C \left( P_{(2)} P_{(3)}P_{(4)} P_{(5)} \right)^{\frac{1}{2}} \prod_{j=1}^5\norm{g^{P_j}}_{l^2(\Z)}
    \end{equation*}
\end{lem}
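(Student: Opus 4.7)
The plan is to recognize the sum defining $f_{k_1}$ as (essentially) a five-fold convolution on $\Z$, and then apply Young's inequality together with the frequency localizations of the sequences $g^{P_j}$. No oscillatory/Strichartz input is needed, since the lemma only involves the linear constraint $p_1-p_2+p_3-p_4+p_5=k_1$ and no resonance function.

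First, I would reinterpret the alternating-sign constraint as a pure sum by reflecting the even-indexed sequences: define
\begin{equation*}
    \widetilde{g}^{P_j}_{p} := g^{P_j}_{-p}, \qquad j=2,4,
\end{equation*}
so that $\|\widetilde{g}^{P_j}\|_{\ell^2}=\|g^{P_j}\|_{\ell^2}$ and $\widetilde{g}^{P_j}$ is localized in $|p|\sim P_j$. Then
\begin{equation*}
    f_{k_1}=\bigl(g^{P_1}\ast \widetilde{g}^{P_2}\ast g^{P_3}\ast \widetilde{g}^{P_4}\ast g^{P_5}\bigr)_{k_1}.
\end{equation*}
Equivalently, setting $G^{P_j}(x):=\sum_{p} g^{P_j}_{p}e^{ipx}$, the sequence $f$ is the sequence of Fourier coefficients of $F(x):=G^{P_1}(x)G^{P_2}(-x)G^{P_3}(x)G^{P_4}(-x)G^{P_5}(x)$, and Parseval gives $\|f\|_{\ell^2}\lesssim \|F\|_{L^2(\T)}$.

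Next, I would apply Hölder on $\T$ by putting the $L^2$ factor on the sequence with the largest dyadic scale: without loss of generality we may assume $P_1=P_{(1)}$, and then
\begin{equation*}
    \|F\|_{L^2(\T)}\leq \|G^{P_1}\|_{L^2(\T)}\prod_{j=2}^{5}\|G^{P_j}\|_{L^\infty(\T)}.
\end{equation*}
For the $L^2$ factor, Plancherel yields $\|G^{P_1}\|_{L^2(\T)}\lesssim \|g^{P_1}\|_{\ell^2}$. For each $L^\infty$ factor, we dominate $\|G^{P_j}\|_{L^\infty(\T)}\leq \|g^{P_j}\|_{\ell^1}$, and then, using that $g^{P_j}$ is supported on $|p|\sim P_j$ (a set of cardinality $\lesssim P_j$), Cauchy-Schwarz gives $\|g^{P_j}\|_{\ell^1}\lesssim P_j^{1/2}\|g^{P_j}\|_{\ell^2}$.

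Combining these bounds produces
\begin{equation*}
    \|f\|_{\ell^2(\Z)}\lesssim \|g^{P_1}\|_{\ell^2}\prod_{j=2}^{5}P_j^{1/2}\|g^{P_j}\|_{\ell^2}=\bigl(P_{(2)}P_{(3)}P_{(4)}P_{(5)}\bigr)^{1/2}\prod_{j=1}^{5}\|g^{P_j}\|_{\ell^2},
\end{equation*}
which is exactly the claimed estimate. There is essentially no obstacle here; the only minor point is to notice that changing $p_j\mapsto -p_j$ for even $j$ preserves both the $\ell^2$ norms and the dyadic localization, so one really is in the clean setting of Young/Plancherel.
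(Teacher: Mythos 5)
Your proof is correct, but it takes a different route from the paper's. The paper works entirely on the discrete side: it applies Cauchy--Schwarz in the summation variables $p_1,\dots,p_5$ (splitting the summand into the indicator of the constraint times the squared product $|g^{P_1}_{p_1}\cdots g^{P_5}_{p_5}|^2$), bounds the resulting counting factor $\sum_{p_1,\dots,p_5}\1_{\lincba}\prod_j\1_{|p_j|\sim P_j}\lesssim P_{(2)}P_{(3)}P_{(4)}P_{(5)}$ via its Lemma~\ref{lem counting bound}, and then sums over $k_1$ using $\sum_{k_1}\1_{\lincba}\leq 1$. You instead pass to the Fourier side: identify $f$ as a five-fold convolution (after the harmless reflections $p\mapsto -p$ on the even-indexed sequences), use Parseval, and run an $L^2\times L^\infty\times\cdots\times L^\infty$ H\"older estimate with the $L^2$ norm placed on the factor of largest dyadic scale, converting each $L^\infty$ norm to $P_j^{1/2}\|g^{P_j}\|_{\ell^2}$ via $\ell^1$-domination and Cauchy--Schwarz on the support. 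The two arguments are dual to each other and give the same constant structure; yours makes it transparent why exactly the largest $P_j$ is dropped from the product, while the paper's stays in the discrete setting and reuses the counting lemma that appears repeatedly elsewhere in the energy estimates. Both are complete and elementary; your only implicit step worth stating is that when several $P_j$ tie for the maximum you simply pick one of them to carry the $L^2$ norm.
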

For clarity, we postpone the proof of this lemma for the end of this section. Now we are ready to prove the estimate. Once again, we start by decomposing the sum as:
\begin{equation*}
    \sum_{\substack{\linc \\ \lincba \\\Omgnz}} \big| \frac{\psi_{2s}(\vec{k})}{ \Omega(\vec{k})} \big| | u_{p_1}\cjg{u_{p_2}}...u_{p_5}\cjg{u_{k_2}}...\cjg{u_{k_6}}| = \sum_{\substack{M_1,...,M_6 \\ P_1,...,P_5}} \frkT_1(M_1,...,M_6,P_1,...,P_5)
\end{equation*}
 where $M_1,...,M_6,P_1,...,P_5$ are dyadic-valued and
    \begin{equation}\label{T_1 dyadic block}
        \frkT_1(M_1,...,M_6,P_1,...,P_5) := \sum_{\substack{\linc \\ \lincba \\\Omgnz}} \big| \frac{\psi_{2s}(\vec{k})}{ \Omega(\vec{k})} \big| |u_{p_1}^{P_1}...u_{p_5}^{P_5}u_{k_2}^{M_2}...u_{k_6}^{M_6}| \cdot \1_{|k_1| \sim M_1}
    \end{equation}
    with the practical notation $u_l^N := \1_{|l| \sim N} u_l$. Next, we rewrite $\frkT_1$ as:
    \begin{equation}\label{T_1 rewritten}
        \frkT_1(M_1,...,P_5) =  \sum_{\substack{\linc \\ \Omgnz}} \big| \frac{\psi_{2s}(\vec{k})}{\Omega(\vec{k})}\big| \prod_{j=1}^6 | f_{k_j}^{(j)} |
    \end{equation}
     where, for $j=2,...,6$ we denote $f_{k_j}^{M_j} := u_{k_j}^{M_j}$, and:
    \begin{equation*}
        f_{k_1}^{(1)} := \1_{|k_1| \sim M_1} \cdot \sum_{\lincba}  |u_{p_1}^{P_1}...u_{p_5}^{P_5}|
    \end{equation*}
    At this stage, let us recall that $M_{(1)}\geq... \geq M_{(6)}$ and $P_{(1)} \geq ... \geq P_{(5)}$ are respectively a non-increasing rearrangement of $M_1,...,M_6$ and $P_1,...,P_5$. We also introduce:
    \begin{center}
        $N_{(1)} \geq N_{(2)} \geq ... \geq N_{(10)}$ a non-increasing rearrangement of $M_2,...,M_6,P_1,...,P_5$
    \end{center}
    Note that the constraints in the sum in $\frkT_1$ imply that 
    \begin{center}
    $M_{(1)} \sim M_{(2)}$, and: $P_{(1)} \sim P_{(2)}$ or $P_{(1)} \sim M_1$, and: $N_{(1)}\sim N_{(2)}$.
    \end{center}
    Now, we use Lemma~\ref{an estimate using Strichartz's trick} to estimate $\frkT_1$ in \eqref{T_1 rewritten}; we obtain that for any $\eps>0$:
    \begin{equation*}
            \frkT_1(M_1,...,P_5)  \lesssim_{\eps} M_{(1)}^{2s-2+\eps} M_{(3)}^{2} \prod_{j=1}^6 \norm{f^{(j)}}_{l^2}
              \lesssim_{\eps} M_{(1)}^{2s-2+\eps} M_{(3)}^{2}\norm{f^{(1)}}_{l^2} \prod_{j=2}^6 \norm{u^{M_j}}_{\L^2}
    \end{equation*}
    Applying the well-suited estimate of $\norm{f^{(1)}}_{l^2}$ from Lemma~\ref{technical lem Q estimate} leads to
    \begin{equation}\label{dyadic estimate T_1}
        \frkT_1(M_1,...,P_5)  \lesssim_{\eps}  M_{(1)}^{2s-2+\eps} M_{(3)}^{2} (P_{(2)} P_{(3)} P_{(4)} P_{(5)})^{\frac{1}{2}} \prod_{j=1}^5 \norm{u^{P_j}}_{\L^2} \prod_{j=2}^6 \norm{u^{M_j}}_{\L^2}
    \end{equation}
    and it follows that
    \begin{equation*}
        \frkT_1(M_1,...,P_5)  \lesssim_{\eps}  M_{(1)}^{2s-2+\eps} M_{(3)}^{2} \left( M_2 M_3 M_4 M_5 M_6\right)^{- \sigma} P_{(1)}^{-\sigma} \left( P_{(2)} P_{(3)} P_{(4)}P_{(5)}\right)^{\frac{1}{2} - \sigma} \hsignorm{u}^{10} 
    \end{equation*}
    In the remaining part of the proof, we show that it implies that:
    \begin{equation}\label{T_1 desired dyadic estimate}
        \frkT_1(M_1,...,P_5) \lesssim_{\eps} N_{(1)}^{-} \hsignorm{u}^{10}
    \end{equation}
   for $\sigma < s-\frac{1}{2}$ close enough to $s-\frac{1}{2}$, which will complete the estimate for $\cT_1$. We start from the inequality above and we use the fact that $M_1 \lesssim P_{(1)}$ due to the constraint $\lincba$, along with the rough estimates $( M_{(4)}M_{(5)}M_{(6)})^{-\sigma} \leq 1$ and $( P_{(3)} P_{(4)} P_{(5)})^{\frac{1}{2}-\sigma} \leq 1$, in order to obtain:
    \begin{equation}\label{T_1 dyadic estimate in hisg}
        \begin{split}
        \frkT_1(M_1,...,P_5) &\lesssim_{\eps} M_{(1)}^{2s-2+\eps} M_{(3)}^{2} (M_1 M_2 M_3 M_4 M_5 M_6)^{- \sigma} (M_1^{\sigma}P_{(1)}^{-\sigma}) ( P_{(2)} P_{(3)} P_{(4)}P_{(5)})^{\frac{1}{2} - \sigma} \hsignorm{u}^{10} \\
        & \lesssim_{\eps} M_{(1)}^{2(s-1-\sigma)+\eps} M_{(3)}^{2-\sigma} P_{(2)}^{\frac{1}{2} - \sigma} \hsignorm{u}^{10}
        \end{split}
    \end{equation}
    Besides, when $\sigma < s - \frac{1}{2}$ and $\eps > 0$ are respectively arbitrarily close to $s - \frac{1}{2}$ and $0$, we have that $2(s-1-\sigma) + \eps$, $2-\sigma$ and $\frac{1}{2} - \sigma$ are respectively arbitrarily close to 
\begin{align*}
    2(s-1-(s-\frac{1}{2})) &= -1, & 2-(s - \frac{1}{2}) &= \frac{5}{2} -s, & \frac{1}{2} - (s - \frac{1}{2}) = 1-s
\end{align*}
    Hence, $M_{(1)}$ is accompanied by a negative exponent, $P_{(2)}$ is accompanied by a negative exponent (since $s>\frac{3}{2}>1$), and $M_{(3)}$ is accompanied by a negative exponent when $s\geq\frac{5}{2}$ and  by a non-negative one when $s<\frac{5}{2}$. \\
    
    --In the situation where $M_{(1)} \sim N_{(1)}$, we use the rough estimate $P_{(2)}^{\frac{1}{2}-\sigma}\leq 1 $ in \eqref{T_1 dyadic estimate in hisg}, and we obtain:
     \begin{equation*}
         \frkT_1(M_1,...,P_5)\lesssim_{\eps} N_{(1)}^{2(s-1-\sigma)+\eps} M_{(3)}^{2-\sigma}\hsignorm{u}^{10}
     \end{equation*}
    which is conclusive as far as 
\begin{equation*}
    2(s-1-(s-\frac{1}{2})) + 2 - (s-\frac{1}{2}) < 0
\end{equation*}
that is, as far as $s > \frac{3}{2}$. \\
--In the other situation, where $M_{(1)} \ll N_{(1)}$, we necessarily have $P_{(1)} \sim P_{(2)} \sim N_{(1)}$, so we deduce from \eqref{T_1 dyadic estimate in hisg} that:
\begin{equation*}
    \frkT_1(M_1,...,P_5) \lesssim_{\eps} N_{(1)}^{\frac{1}{2}-\sigma} M_{(3)}^{2(s-1-\sigma) + 2-\sigma +\eps} \hsignorm{u}^{10}
\end{equation*}
Then, for $\sigma < s - \frac{1}{2}$ and $\eps > 0$ respectively arbitrarily close to $s - \frac{1}{2}$ and $0$, we have:
\begin{equation*}
    \frkT_1(M_1,...,P_5) \lesssim N_{(1)}^{(1-s)+} M_{(3)}^{(\frac{3}{2}-s)+} \hsignorm{u}^{10} \lesssim N_{(1)}^{-} \hsignorm{u}^{10}
\end{equation*}
since $s>\frac{3}{2}$.\\ 
In conclusion, \eqref{T_1 desired dyadic estimate} is proven and the proof of Proposition~\ref{prop continuity of Qs,N and deterministic estimate} is completed.
\end{proof}

We finish this section by a proof of Lemma~\ref{technical lem Q estimate}.
\begin{proof}[Proof of Lemma~\ref{technical lem Q estimate}]
    We have to estimate : 
    \begin{equation*}
        \norm{f}_{l^2(\Z)}^2 = \sum_{k_1 \in \Z} \bigg|  \sum_{\lincba} g_{p_1}^{P_1}g_{p_2}^{P_2}g_{p_3}^{P_3}g_{p_4}^{P_4}g_{p_5}^{P_5}  \bigg|^2
    \end{equation*}
    Using the Cauchy-Schwarz inequality we obtain:
    \begin{equation*}
            \norm{f}_{l^2(\Z)}^2 \leq \sum_{k_1 \in \Z} \bigl( \sum_{p_1,...,p_5} \1_{\lincba}  \prod_{j=1}^5 \1_{|p_j| \sim P_j} \bigr)
            \bigl( \sum_{p_1,...,p_5}  \1_{\lincba} |g_{p_1}^{P_1}g_{p_2}^{P_2}g_{p_3}^{P_3}g_{p_4}^{P_4}g_{p_5}^{P_5}|^2\bigr)
    \end{equation*}
    Plugging the counting bound \eqref{elementary counting bound} into this formula yields 
    \begin{equation*}
        \norm{f}_{l^2(\Z)}^2 \lesssim P_{(2)}P_{(3)}P_{(4)}P_{(5)}  \sum_{p_1,...,p_5}  |g_{p_1}^{P_1}g_{p_2}^{P_2}g_{p_3}^{P_3}g_{p_4}^{P_4}g_{p_5}^{P_5}|^2  \underbrace{\big( \sum_{k_1 \in \Z} \1_{\lincba} \big)}_{\leq 1}
    \end{equation*}
    Thus,
    \begin{equation*}
        \norm{f}_{l^2(\Z)}^2 \lesssim P_{(2)}P_{(3)}P_{(4)}P_{(5)} \prod_{j=1}^5 \norm{g^{P_j}}_{l^2(\Z)}^2
    \end{equation*}
    which is the desired estimate.
\end{proof}

\section{Estimates for the weight of the weighted Gaussian measures}\label{section Estimates for the weight of the weighted Gaussian measures}
This section is dedicated to estimates on the weight $\1_{\{ \cC(u) \leq R \} } e^{-R_{s,N}(u)}$ of the weighted Gaussian measure $\rho_{s,R,N}$ (defined in \eqref{weighted Gaussian measure}). In particular, we show that : 
\begin{equation*}
    \1_{\{ \cC(u) \leq R \} } e^{-R_{s,N}(u)} \in \L^1(d\mu_s)
\end{equation*}
which ensures that $\rho_{s,R,N}$ is a finite measure on $\hsigt$, $\sigma < s - \frac{1}{2}$. 
More precisely, we prove Proposition~\ref{prop estimate for the weight of the weighted measure}. Yet, before doing so, we will need the two following lemmas :

\begin{lem}\label{lem R_s,N L^p estimate}
    Let $s > \frac{3}{2}$. Then, there exists $\beta \in (0,1)$ such that for every $R>0$, there exists $C(s,R)>0$, such that for any $p \in [2,+\infty)$,
    \begin{equation*}
       \norm{\1_{\{ \cC(u) \leq R \} } R_{s,N}}_{\Lp(d\mu_s)} \leq C(s,R)p^\beta
    \end{equation*}
    uniformly in $N \in \N \cup \{ \infty \}$.
\end{lem}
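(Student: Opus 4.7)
The plan is to establish the bound by dyadically decomposing $R_{s,N}$ and then combining a purely deterministic bound (via the $H^1$-cutoff coming from $\cC(u)\leq R$) with a \emph{conditional} Wiener chaos estimate (exploiting the independence of high- and low-frequency Gaussians), before optimizing a frequency threshold as a function of $p$ to obtain a sub-linear exponent $\beta$. Under $\mu_s$, write $u_k=g_k(\omega)/\langle k\rangle^s$, so that $R_{s,N}$ is a polynomial of degree six in the independent complex Gaussians $\{g_k\}$; decompose $R_{s,N}=\sum_{\vec N}R_{s,N}^{\vec N}$ into dyadic blocks localized at $|k_j|\sim N_j$, and fix a threshold $M\geq 1$ (to be optimized later). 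Dyadic blocks with $N_{(1)}\leq M$ are controlled deterministically by applying the dyadic estimate~\eqref{R dyadic estimate Hsig} with $\sigma=1$: since $\cC(u)\leq R$ implies $\norm{u}_{H^1}\lesssim R$ (Remark~\ref{rem cut-off H1}), summing over all such blocks yields a total bound $\lesssim_{s,\epsilon} M^{a(s)}R^6$ for some explicit $a(s)>0$.

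For blocks with $N_{(1)}>M$, I would split the six indices into the three of highest frequency and the three of lowest frequency. Conditioning on the $\sigma$-algebra $\cB_{\text{low}}$ generated by the low-frequency Gaussians, the remaining expression is trilinear in independent complex Gaussians, so Lemma~\ref{lem Wiener chaos} with $m=3$ gives
\begin{equation*}
\norm{R_{s,N}^{\vec N}}_{\L^p(\mu_s\mid\cB_{\text{low}})}\lesssim p^{3/2}\,\norm{R_{s,N}^{\vec N}}_{\L^2(\mu_s\mid\cB_{\text{low}})}.
\end{equation*}
The conditional $\L^2$-norm is then estimated using the orthogonality lemma (Lemma~\ref{L2 estimate with orthogonality})---noting crucially that with three free high-frequency indices and three conditioned low-frequency indices, no pairing between the two groups can occur (this is exactly the point of choosing $m=3$ rather than the $m=2$ used in \cite{sun2023quasi}). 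The resulting diagonal sum over high frequencies with kernel $|\psi_{2s}/\Omega|^2$ is controlled via Lemma~\ref{an estimate using Strichartz's trick}, together with the deterministic bound $\norm{u}_{H^1}\leq R$ on the three low slots. Using $s>\tfrac32$ this delivers a decaying factor $N_{(1)}^{-b(s)}$ for some $b(s)>0$, and dyadic summation over $N_{(1)}>M$ gives a contribution $\lesssim C(s,R)\,p^{3/2}M^{-b(s)}$.

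Adding the two regimes yields
\begin{equation*}
\norm{\1_{\cC\leq R}R_{s,N}}_{\L^p(\mu_s)}\lesssim_{s,R} M^{a(s)}+p^{3/2}M^{-b(s)},
\end{equation*}
and choosing $M\sim p^{3/(2(a(s)+b(s)))}$ produces the bound $C(s,R)\,p^{\beta}$ with $\beta=3a(s)/(2(a(s)+b(s)))$; one then checks, using the explicit exponents from the proof of Proposition~\ref{prop R_s,N continuous}, that $\beta<1$ for every $s>\tfrac32$ (the negative dyadic exponent $b(s)$ extracted from the Strichartz-based multilinear estimate dominates sufficiently the polynomial growth $a(s)$). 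Uniformity in $N\in\N\cup\{\infty\}$ is automatic because every estimate above is uniform in the projector $\Pi_N$.

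The main obstacle I anticipate is the conditional $\L^2$-estimate: one must verify that after conditioning on the three low-frequency Gaussians the remaining sum truly involves three \emph{independent} high-frequency Gaussians with no pairings, and then extract a dyadic decay $N_{(1)}^{-b(s)}$ with enough room to ensure $\beta<1$ after the balancing step. Once this pairing-free structure is in place---the analogue here of the ``remarkable cancellation'' that the authors of \cite{sun2023quasi} had to exhibit by hand in their $m=2$ setting---summation over dyadic configurations and the optimization of $M$ close the argument.
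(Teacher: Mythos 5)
Your overall architecture (dyadic decomposition, a deterministic regime controlled by the $H^1$-cutoff, a probabilistic regime controlled by conditional Wiener chaos with $m=3$, and a final balancing to get $\beta<1$) is in the right spirit, and your balancing-by-threshold idea is an interesting alternative to the paper's interpolation. But the dichotomy you propose --- a threshold on $N_{(1)}$ alone --- does not isolate the regime where the conditional Wiener chaos argument is actually applicable, and this is a genuine gap. For a block with $N_{(1)}>M$ but with, say, $N_{(4)}\sim N_{(3)}\sim N_{(1)}$, you cannot condition on ``the $\sigma$-algebra generated by the low-frequency Gaussians'': Lemma~\ref{lem Wiener chaos} requires the coefficients to be $\cB$-measurable and the three remaining Gaussians to be independent of $\cB$, which forces a clean frequency separation (in the paper, $|k_{(4)}|<|k_{(3)}|^{\delta_0}\leq N_{(3)}/100$ while the three highest frequencies live above $N_{(3)}/100$). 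Without separation the ``low'' and ``high'' Gaussians overlap in frequency and the conditioning fails; moreover the no-pairing property among the three highest frequencies (needed to invoke Lemma~\ref{L2 estimate with orthogonality}) also fails, since a pairing is only excluded when the high frequencies dominate the low ones. This is precisely why the paper splits $\cR=\cR^{(D)}+\cR^{(W)}$ according to whether $|k_{(3)}|<|k_{(1)}|^{1-\delta_0}$ or $|k_{(4)}|\geq|k_{(3)}|^{\delta_0}$, rather than according to the size of $N_{(1)}$.

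The second missing ingredient is how to handle the large-$N_{(1)}$ blocks that are \emph{not} separated. Your all-$H^1$ deterministic bound gives $N_{(1)}^{2s-3+\eps}$ per block, which diverges there, and a full degree-six Wiener chaos would give $p^{3}$. The paper's resolution is a refined deterministic estimate: interpolate the Sobolev exponents so that only $2-\alpha$ of the six factors carry Gaussian moments (yielding $p^{\frac{2-\alpha}{2}}<1$) while the other $4+\alpha$ are absorbed by the cutoff $\norm{u}_{H^1}\lesssim R$, and extract dyadic summability from the defining conditions of $\Lambda_D$; see \eqref{cR deterministic estimate} and \eqref{power of N(1)}. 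In the separated regime the $p^{3/2}$ from Wiener chaos is then brought below $1$ not by a threshold optimization but by interpolating with that same deterministic bound (Lemma~\ref{lem theta exponent < 1 on p and negative exponent on N_{(1)}}). Your threshold-balancing could plausibly be made to work once the correct regime decomposition is in place, but as written the proposal has no argument for the unseparated high-frequency blocks and applies the conditional Wiener chaos where its hypotheses are not met.
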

Recall that we denote $R_{s,\infty} = R_s$ for convenience.
\begin{lem}\label{csqce control on all Lp norm}
    Let $(X,\cA,d\nu)$ be a probability space and let $F : X \lra \C$ be a mesurable function. Assume that there exist constants $C_0 > 0$ and $\beta \in (0,1)$ such that for every $p \in [2,+\infty)$,
    \begin{equation*}
        \norm{F}_{\Lp(d\nu)} \leq C_0 p^\beta
    \end{equation*}
    Then, there exist $\delta > 0$ and $C_1>0$ only depending on $\beta$ and $C_0$ such that :
    \begin{equation*}
        \int_X e^{\delta |F(v)|^{\frac{1}{\beta}}} d\nu(v) \leq C_1
    \end{equation*}
\end{lem}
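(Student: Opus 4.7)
The plan is a standard exponential-integrability argument via Taylor expansion plus Stirling's formula. Writing $\gamma := 1/\beta > 1$, I would first expand the exponential as the monotone convergent sum
\begin{equation*}
\int_X e^{\delta |F(v)|^{\gamma}} \, d\nu(v) = \sum_{k=0}^{\infty} \frac{\delta^k}{k!} \int_X |F(v)|^{k\gamma} \, d\nu(v),
\end{equation*}
which is legitimate by Fubini/Tonelli since all integrands are non-negative. The strategy is then to bound each $L^{k\gamma}$ norm using the hypothesis with the well-chosen exponent $p = k\gamma$, and to check that the resulting series converges for $\delta$ sufficiently small.

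For $k \geq k_0 := \lceil 2/\gamma \rceil = \lceil 2\beta \rceil$ (so that $k\gamma \geq 2$), the hypothesis gives
\begin{equation*}
\int_X |F|^{k\gamma} \, d\nu \leq \bigl( C_0 (k\gamma)^{\beta} \bigr)^{k\gamma} = C_0^{k\gamma} (k\gamma)^{k},
\end{equation*}
using $\beta \cdot \gamma = 1$. For the finitely many terms $k < k_0$, we just bound $\|F\|_{L^{k\gamma}}$ by $\|F\|_{L^2} \leq C_0 \cdot 2^\beta$ using the probability-measure inclusion of $L^p$ spaces, and these contribute a finite constant depending only on $C_0$ and $\beta$. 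Thus the tail of the series is controlled by
\begin{equation*}
\sum_{k \geq k_0} \frac{\delta^k C_0^{k\gamma} (k\gamma)^{k}}{k!}.
\end{equation*}
Applying Stirling in the form $k! \geq c \sqrt{k}\, (k/e)^k$, the general term is dominated by
\begin{equation*}
\frac{1}{c\sqrt{k}} \bigl( \delta \, C_0^{\gamma} \, \gamma \, e \bigr)^{k},
\end{equation*}
which is summable as soon as $\delta < (C_0^\gamma \gamma e)^{-1}$. Choosing such a $\delta$ (depending only on $C_0$ and $\beta$) yields the desired bound with some constant $C_1 = C_1(C_0, \beta)$.

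I do not expect any genuine obstacle: the only delicate point is the bookkeeping of the dependence of $\delta$ on $C_0$ and $\beta$, and the need to separate the small-$k$ terms (where the hypothesis $p \geq 2$ may fail) from the large-$k$ tail (where Stirling is applied). Both are routine.
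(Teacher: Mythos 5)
Your proof is correct. Note that the paper does not actually prove this lemma: it states that it "is just a slightly different version of Lemma 4.5 from \cite{tzvetkov2010construction}", and delegates the proof to that reference. Your Taylor-expansion argument (expand $e^{\delta|F|^{1/\beta}}$, apply the hypothesis with $p=k/\beta$, and control the factorial via Stirling so that the series converges for $\delta<(C_0^{1/\beta}\beta^{-1}e)^{-1}$) is a clean, self-contained proof; the computation $(C_0(k\gamma)^\beta)^{k\gamma}=C_0^{k\gamma}(k\gamma)^k$ and the separate treatment of the finitely many indices with $k\gamma<2$ (via the $L^2\hookrightarrow L^{k\gamma}$ inclusion on a probability space) are both handled correctly. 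The standard proof of the cited lemma instead goes through a Chebyshev tail estimate, $\nu(|F|>\lambda)\le(C_0p^{\beta}/\lambda)^p$, optimized in $p$ to get $\nu(|F|>\lambda)\le e^{-c\lambda^{1/\beta}}$ for large $\lambda$, followed by Cavalieri's principle; the two routes are essentially equivalent in content and difficulty, with yours avoiding the optimization step and the tail-estimate version generalizing more readily to non-integer moment scales. Either is acceptable here.
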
 
We will only prove Lemma~\ref{lem R_s,N L^p estimate}, Lemma~\ref{csqce control on all Lp norm} being just a slightly different version of Lemma 4.5 from \cite{tzvetkov2010construction} (where a proof is given). \bigskip

Before doing the proof of Lemma~\ref{lem R_s,N L^p estimate}, let us briefly show how it implies Proposition~\ref{prop estimate for the weight of the weighted measure} when combining with Lemma~\ref{csqce control on all Lp norm}.

\begin{proof}[Proof of Proposition~\ref{prop estimate for the weight of the weighted measure}] Here, we assume the statements in Lemma~\ref{lem R_s,N L^p estimate} and Lemma~\ref{csqce control on all Lp norm}.\\
--We start by proving the first statement in Proposition~\ref{prop estimate for the weight of the weighted measure}. Lemma~\ref{lem R_s,N L^p estimate} shows that $F:=\1_{\{ \cC(u) \leq R \} } R_{s,N}(u) : \hsigt \lra \C$ satisfies the assumptions of Lemma~\ref{csqce control on all Lp norm} (with $\nu = \mu_s$). Then, applying Lemma~\ref{csqce control on all Lp norm}, we obtain that there exist $\delta = \delta(s,R) > 0$ and $C_1(s,R) > 0$ such that :
\begin{equation*}
    \int_{\hsig} e^{\delta \left|  \1_{\{ \cC(u) \leq R \} } R_{s,N}(u) \right|^{\frac{1}{\beta}}} d\mu_s \leq C_1(s,R)
\end{equation*}
A fortiori we have, 
\begin{equation*}
    \int_{\hsig} \1_{\{ \cC(u) \leq R \} } e^{\delta \left|  R_{s,N}(u) \right|^{\frac{1}{\beta}}} d\mu_s \leq C_1(s,R)
\end{equation*}
Thanks to the fact that $\frac{1}{\beta} > 1$, we deduce from the above inequality that for every $p\in [1,+\infty)$ : 
\begin{equation*}
     \norm{\1_{\{ \cC(u) \leq R \} } e^{|R_{s,N}(u)|}}^p_{\Lp(d\mu_s)} = \int_{\hsig} \1_{\{ \cC(u) \leq R \} } e^{p |R_{s,N}(u)|} d\mu_s \leq C(s,R,p) < +\infty
\end{equation*}
for a certain constant $C(s,R,p)>0$.\\
--As a consequence, we are now able to prove the second statement of Proposition~\ref{prop estimate for the weight of the weighted measure}, that is:  
\begin{equation*}
        \norm{\1_{\{ \cC(u) \leq R \} } e^{-R_s(u)}-\1_{\{ \cC(u) \leq R \} } e^{-R_{s,N}(u)}}_{\L^p(d\mu_s)} \tendsto{N \ra \infty} 0
    \end{equation*}
Firstly, we know from Proposition \ref{prop approx R_s by R_s,N on compact sets} that we have the pointwise convergence:
\begin{equation*}
    \1_{ \{\cC(u) \leq R\}}R_s(u) = \1_{ \{\cC(u) \leq R\}} \cdot \frac{1}{6} \Re \sum_{\substack{\linc \\ \Omgnz}} \frac{\psi_{2s}(\vec{k})}{ \Omega(\vec{k})} u_{k_1}\cjg{u_{k_2}}...\cjg{u_{k_6}} = \lim_{N\ra \infty} \1_{ \{\cC(u) \leq R\}} R_{s,N}(u)
\end{equation*}
Consequently, from the continuity of the exponential, we also have the pointwise convergence:
\begin{equation*}
   \1_{\{ \cC(u) \leq R \} } e^{-R_s(u)}  = \lim_{N \ra \infty} \1_{\{ \cC(u) \leq R \} } e^{-R_{s,N}(u)}
\end{equation*}
In particular, $\1_{\{ \cC(u) \leq R \} } e^{-R_{s,N}(u)}$ converges to $\1_{\{ \cC(u) \leq R \} } e^{-R_s(u)}$ in measure. In addition, for a fixed $q\in(1,+\infty)$, we just proved that the functions $\1_{\{ \cC(u) \leq R \} } e^{-R_{s,N}(u)}$ are uniformly bounded in $\L^q(d\mu_s)$ (with respect to $N \in \N$). Then, (using the same argument as the one from the proof of Proposition \ref{prop Gs,N cvg Gs in Lp}) we can conclude that $\1_{\{ \cC(u) \leq R \} } e^{-R_{s,N}(u)}$ converges to $\1_{\{ \cC(u) \leq R \} } e^{-R_s(u)}$ in $\L^p(d\mu_s)$ for any $p\in [1,q)$; and since $q \in (1,+\infty)$ is arbitrary, the convergence holds for any $p\in[1,+\infty)$. This completes the proof of Proposition~\ref{prop estimate for the weight of the weighted measure}.
\end{proof}

However, it remains to prove Lemma~\ref{lem R_s,N L^p estimate}. Our analysis will rely on a decomposition into two parts of $R_{s,N}$. We will be able to treat the first part deterministically thanks to suitable "exchanges of derivatives". For the second part, those "exchanges of derivatives" will fail because we will be in a \textit{high-high-high-low-low-low} regime of frequency (where the three highest frequencies in the sum \eqref{R_s,N diag multilinear form} defining $R_{s,N}$ are in fact much more higher than the three lowest frequencies). Instead, we will handle the second part using the independence between the high frequency Gaussians and the low frequency Gaussians, using Wiener chaos estimate. 

\subsection{Decomposition} Recall that we have (see Section~\ref{section Deterministic properties of the energy correction}, Proposition \ref{prop R_s,N continuous}):
\begin{equation*}
        R_{s,N}(u) = \frac{1}{6} \Re \hsp \cR(w) 
    \end{equation*}
    where $w:=\Pi_N u$ (with the convention $\Pi_{\infty}=id$), and
    \begin{equation*}
        \cR(w) := \sum_{\substack{\linc \\ \Omgnz}} \frac{\psi_{2s}(\vec{k})}{\Omega(\vec{k})}w_{k_1}\cjg{w_{k_2}}...\cjg{w_{k_6}}
    \end{equation*}

It suffices to show the estimate of Lemma~\ref{lem R_s,N L^p estimate} for $\1_{\{ \cC(u) \leq R \}} \cR(w)$ because $ |R_{s,N}(u)| \leq \frac{1}{6} | \cR(w) | $. \\

Next, we split the set of indices over which we sum. We invoke the following sets of indices :
\begin{equation}
    \begin{split}
    \Lambda_D := \{ (k_1,...,k_6) \in \Z^6 : \hsp & \sum_{j=1}^6 (-1)^{j-1}k_j = 0, \hsp \sum_{j=1}^6 (-1)^{j-1}k_j^2 \neq 0, \\
    & \hsp |k_{(3)}| < |k_{(1)}|^{1-\delta_0} \hsp \textbf{or} \hsp |k_{(4)}| \geq |k_{(3)}|^{\delta_0}  \}
    \end{split}
\end{equation}
and,
\begin{equation}\label{Lambda_W}
    \begin{split}
    \Lambda_W := \{ (k_1,...,k_6) \in \Z^6 : \hsp & \sum_{j=1}^6 (-1)^{j-1}k_j = 0, \hsp \sum_{j=1}^6 (-1)^{j-1}k_j^2 \neq 0, \\
    & \hsp |k_{(3)}| \geq |k_{(1)}|^{1-\delta_0} \hsp \textbf{and} \hsp |k_{(4)}| < |k_{(3)}|^{\delta_0}  \}
    \end{split}
\end{equation}
where $\delta_0 \in (0,1)$\footnote{We will see in the forthcoming proof that there is no constraint on $\delta_0$, so we can chose it as any number in $(0,1)$}. Then, we split $\cR$ as :
\begin{equation*}
    \cR(w) = \cR^{(D)}(w) + \cR^{(W)}(w)
\end{equation*}
where,
\begin{align*}
    \cR^{(D)}(w) &:= \sum_{\Lambda_D} \frac{\psi_{2s}(\vec{k})}{\Omega(\vec{k})}w_{k_1}\cjg{w_{k_2}}...\cjg{w_{k_6}}, & \cR^{(W)}(w) := \sum_{\Lambda_W} \frac{\psi_{2s}(\vec{k})}{\Omega(\vec{k})}w_{k_1}\cjg{w_{k_2}}...\cjg{w_{k_6}}
\end{align*}
To estimate $\cR^{(D)}$ we will use the deterministic tools from Section~\ref{section Tools for the energy estimates}. To estimate $\cR^{(W)}$, we note that in the sum we are in a \textit{high-high-high-low-low-low} regime because $|k_{(1)}|,|k_{(2)}|,|k_{(3)}| \gg |k_{(4)}|,|k_{(5)}|,|k_{(6)}|$. It will then be possible to make use of the independence between the Gaussians $g_{k_{(1)}},g_{k_{(2)}},g_{k_{(3)}}$ and $g_{k_{(4)}}, g_{k_{(5)}}, g_{k_{(6)}}$ via Wiener chaos. \\

\subsubsection{Absence of pairing:}

\begin{defn}[Pairing]\label{def pairing}
    Consider a constraint under the form :
    \begin{equation*}
        \eps_1 k_1 + \eps_2 k_2 + ... + \eps_m k_m = 0 
    \end{equation*}
    where $k_j \in \Z$ and $\eps_j \in \{-,+ \}$. We say that $k_j$ and $k_l$ are paired if 
    \begin{equation*}
        \eps_j k_j + \eps_l k_l = 0
    \end{equation*}
\end{defn}

\begin{rem}\label{rem no high freq pairing, first generation}
    In $\Lambda_W$ (see \eqref{Lambda_W} above), there is (for $k_{(1)}$ large enough) no pairing within the three highest frequencies (relatively to the constraint $k_1 - k_2 + ... -k_6 = 0$). Indeed, suppose there is a pairing between two of the three highest frequencies. Then, the constraint would take the form:
    \begin{center}
        the remaining high frequency = sum of three low frequencies
    \end{center}
    which is impossible because in $\Lambda_W$ we have:
    \begin{center}
        $|$high frequencies$|$ $\gg$ $|$low frequencies$|$
    \end{center}
\end{rem}

\subsection{Proof of the estimate}
We are now ready to prove Lemma~\ref{lem R_s,N L^p estimate}.

\begin{proof}[Proof of Lemma~\ref{lem R_s,N L^p estimate}]
We organize the proof in two steps. In the first step, we provide a (deterministic) estimate for $\1_{\{ \cC(u) \leq R \} } \cR$. In the second step, we show that this estimate is only conclusive for the contribution $\cR^{(D)}$, and that for the remaining contribution $\cR^{(W)}$, further analysis using Wiener chaos is required.\\

    \underline{Step 1, Deterministic estimate :}  We rely on the estimate \eqref{R dyadic estimate L2} we obtained in Section~\ref{section Proofs of the deterministic properties} (which we do not reprove here). Thus, we start our analysis from the following estimates:
    \begin{align}\label{bound R_N1...N6}
        \left| \cR(w) \right| &\lesssim \sum_{N_1,...,N_6} \frkR(N_1,...,N_6) & &\text{and:} & \frkR(N_1,...,N_6) & \lesssim_{\eps} N_{(1)}^{2s-2+\eps}N_{(3)}^2 \prod_{j=1}^6 \norm{P_{N_j}w}_{\L^2(\T)}
    \end{align}
    where $N_1,...,N_6$ are dyadic valued, $P_N$ is the projector onto frequencies $|k| \sim N$, and:
    \begin{equation*}
        \frkR(N_1,...,N_6) := \sum_{\substack{\linc \\ \Omgnz}} \Big| \frac{\psi_{2s}(\vec{k})}{\Omega(\vec{k}) } \Big| \prod_{j=1}^6 \1_{|k_j| \sim N_j} |w_{k_j}|
    \end{equation*}
    We decompose $\frkR$ as $\frkR = \frkR^{(D)} + \frkR^{(W)}$, according to the decomposition of $\cR$.  \\
    
 Now, let $1<\sigma'<\sigma<s-\frac{1}{2}$. We have: 
    \begin{equation}\label{P_Nj w L2}
       \prod_{j=1}^6 \norm{P_{N_j}w}_{\L^2(\T)} \lesssim \prod_{j=1}^6 \norm{P_{N_j}u}_{\L^2(\T)} \lesssim (N_{(1)}^{-\sigma}\hsignorm{u})(N_{(2)}^{-\sigma'} \norm{u}_{H^{\sigma'}})(N_{(3)}N_{(4)}N_{(5)}N_{(6)})^{-1} \norm{u}_{H^1}^4
    \end{equation}
   Recall that the constraints in the sum in $\frkR$ above implies that $N_{(1)} \sim N_{(2)}$. Moreover, if we write: 
    \begin{equation*}
        \sigma' = \alpha 1 + (1-\alpha) \sigma
    \end{equation*}
    for $\alpha \in (0,1)$, then, by interpolating $H^{\sigma'}$ between $H^1$ and $\hsig$, we obtain from \eqref{P_Nj w L2} that:
    \begin{equation*}
        \begin{split}
            \prod_{j=1}^6 \norm{P_{N_j}u}_{\L^2(\T)} & \lesssim N_{(1)}^{-2\sigma +\alpha(\sigma-1)}(N_{(3)}N_{(4)}N_{(5)}N_{(6)})^{-1} \norm{u}_{H^1}^{4+\alpha} \norm{u}^{2-\alpha}_{\hsig} \\
            & \lesssim N_{(1)}^{-2\sigma +\alpha(s-\frac{3}{2})}(N_{(3)}N_{(4)}N_{(5)}N_{(6)})^{-1} \norm{u}_{H^1}^{4+\alpha} \norm{u}^{2-\alpha}_{\hsig}, \hspace{0.3cm} (\textnormal{because $\alpha(\sigma-1) < \alpha(s-3/2)$ })
        \end{split}
    \end{equation*}
    Plugging this into \eqref{bound R_N1...N6}, we obtain 
    \begin{equation*}
        \frkR(N_1,...,N_6) \lesssim_{\eps} N_{(1)}^{2(s-\sigma-1)+\eps+\alpha(s-\frac{3}{2})}N_{(3)} (N_{(4)}N_{(5)}N_{(6)})^{-1} \norm{u}_{H^1}^{4+\alpha} \norm{u}^{2-\alpha}_{\hsig}
    \end{equation*}
    In particular, thanks to Remark \ref{rem cut-off H1},
    \begin{equation*}
        \1_{\{ \cC(u) \leq R \}} \frkR(N_1,...,N_6) \lesssim_{\eps} N_{(1)}^{2(s-\sigma-1)+\eps+\alpha(s-\frac{3}{2})}N_{(3)} (N_{(4)}N_{(5)}N_{(6)})^{-1} R^{4+\alpha} \norm{u}^{2-\alpha}_{\hsig}
    \end{equation*}
    And since we will not need the smallness provided by $(N_{(5)}N_{(6)})^{-1}$, we simply write :
      \begin{equation*}
        \1_{\{ \cC(u) \leq R \}} \frkR(N_1,...,N_6) \lesssim_{\eps,R} N_{(1)}^{2(s-\sigma-1)+\eps+\alpha(s-\frac{3}{2})}N_{(3)} N_{(4)}^{-1} \norm{u}^{2-\alpha}_{\hsig}
    \end{equation*}
    Using now \eqref{Gaussian moments}, we conclude that 
    \begin{equation}\label{cR deterministic estimate}
        \norm{\1_{\{ \cC(u) \leq R \}} \frkR(N_1,...,N_6)}_{\Lp(d\mu_s)} \leq C N_{(1)}^{2(s-\sigma-1)+\eps+\alpha(s-\frac{3}{2})}N_{(3)} N_{(4)}^{-1} p^{\frac{2-\alpha}{2}}
    \end{equation}
    Where the constant $C>0$ depends on $R$, $s$, $\sigma$ and $\eps$.
    This is the estimate we will work with later in the proof. We stress the fact that \eqref{cR deterministic estimate} is true for $\frkR$ and also for $\frkR^{(D)}$ and $\frkR^{(W)}$ (with the exact same proof). \bigskip

    \underline{Step 2, Estimates for $\cR^{(D)}$ and $\cR^{(W)}$:} \\
    \textbullet We start with $\cR^{(D)}$. Recalling that $\sigma < s -\frac{1}{2}$, we see that the exponent of $N_{(1)}$ in \eqref{cR deterministic estimate} satisfies:
    \begin{equation}\label{power of N(1)}
        \begin{cases}
            2(s-\sigma-1)+\eps+\alpha(s-\frac{3}{2}) > -1 \\
            2(s-\sigma-1)+\eps+\alpha(s-\frac{3}{2}) \lra -1 \hspace{0.2cm} \text{as} \begin{cases}
                \eps \ra 0 \\
                \alpha \ra 0 \\
                \sigma \ra s -\frac{1}{2}
            \end{cases}
        \end{cases}
    \end{equation}
    Moreover, for a fixed $\delta_0 \in (0,1)$, the conditions on the indices lying in $\Lambda_D$ imply that $N_{(3)} \lesssim N_{(1)}^{1-\delta_0}$ or $N_{(4)} \gtrsim N_{(3)}^{\delta_0}$. In both cases, we deduce from \eqref{cR deterministic estimate} that for $\eps$ and $\alpha$ close enough to $0$ and $\sigma$ close enough to $s-\frac{1}{2}$, we have\footnote{The notation $N^{-}$ means that the power of $N$ is $-\gamma$ for a certain $\gamma>0$.}
    \begin{equation*}
        \norm{\1_{\{ \cC(u) \leq R \}} \frkR^{(D)}(N_1,...,N_6)}_{\Lp(d\mu_s)} \leq C N_{(1)}^-p^{\frac{2-\alpha}{2}}
    \end{equation*}
    Consequently,
        \begin{equation*}
        \norm{\1_{\{ \cC(u) \leq R \}} \cR^{(D)}}_{\Lp(d\mu_s)} \leq C p^{\frac{2-\alpha}{2}}
    \end{equation*}
   
\textbullet We continue our analysis with the term $\cR^{(W)}$. Once again, we start by decomposing $\cR^{(W)}$ dyadically as $\cR^{(W)}(w) = \sum_{N_1,...,N_6}  \cR_{N_1,...,N_6}^{(W)}(w)$, where\footnote{For the analysis below, we need to keep the complex conjugation bars, so here we don't use $\frkR^{(W)}$}:
    \begin{equation*}
        \cR_{N_1,...,N_6}^{(W)}(w) := \sum_{\substack{\linc \\ \Omgnz \\ |k_{(4)}| \leq |k_{(3)}|^{\delta_0} }} \frac{\psi_{2s}(\vec{k})}{\Omega(\vec{k})}w_{k_1}\cjg{w_{k_2}}...\cjg{w_{k_6}} \prod_{j=1}^6 \1_{|k_j| \sim N_j}
    \end{equation*}

Henceforth, we denote $w_{k_j}^{N_j} := \1_{|k_j|\sim N_j} w_{k_j}$ for better readability. \\
    
    Without loss of generality, we assume that $\{N_1,N_2,N_3 \} = \{N_{(1)}, N_{(2)}, N_{(3)} \}$, meaning that the three highest frequencies are $k_1,k_2,k_3$. The other cases are similar or simpler. \\
    We denote $\cB_{\ll N_{(3)}}$ the $\sigma$-algebra generated by the Gaussians $\left( g_k \right)_{|k|\leq N_{(3)}/100}$. We only need to consider the contribution when $N_{(1)}$ is large because when $N_{(1)}$ is small, all the $N_j$'s are small, and we can use \eqref{cR deterministic estimate} without fearing issues of summability in the $N_j$'s. In particular, we assume that $N_{(1)}$ is sufficiently large so that $N_{(3)}$ (which is $\gtrsim N_{(1)}^{1-\delta_0}$) is large enough to ensure that $N_{(4)}$ (which is $\lesssim N_{(3)}^{\delta_0}$) is $\leq N_{(3)}/100$. As a consequence, we have that the random variables :
    \begin{equation*}
        \begin{split}
           & w_{k_4}^{N_4},w_{k_4}^{N_4},w_{k_5}^{N_6} \hspace{0.2cm} \textnormal{are $\cB_{\ll N_{(3)}}$mesurable} \\
           & w_{k_1}^{N_1},w_{k_2}^{N_2},w_{k_3}^{N_3} \hspace{0.2cm} \textnormal{are independent of $\cB_{\ll N_{(3)}}$}
        \end{split}
    \end{equation*}
    With this set up, we can now begin the analysis. Thanks to Remark \ref{rem cut-off H1}, we have 
    \begin{equation*}
        \begin{split}
            \norm{\1_{\{ \cC(u) \leq R \}} \cR_{N_1,...,N_6}^{(W)}}_{\L^p(d\mu_s)} & \leq \norm{\1_{ B_R^{H^1}}(u) \cdot \cR_{N_1,...,N_6}^{(W)}}_{\L^p(d\mu_s)} \\
            & \leq \norm{\1_{ B_R^{H^1} } (P_{N_{(3)/100}}u) \cdot \cR_{N_1,...,N_6}^{(W)}}_{\L^p(d\mu_s)}
        \end{split}
    \end{equation*}
    And, $\1_{ B_R^{H^1} } (P_{N_{(3)/100}}u)$ is $\cB_{\ll N_{(3)}}$ mesurable. So, using the $\L^p$-norm conditioned to the $\sigma$-algebra $\cB_{\ll N_{(3)}}$, denoted $\L^p(d\mu_s|\cB_{\ll N_{(3)}})$, we obtain :
    \begin{equation}\label{R L^p less than L^p_cond Linfty}
        \begin{split}
            \norm{\1_{\{ \cC(u) \leq R \}} \cR_{N_1,...,N_6}^{(W)}}_{\L^p(d\mu_s)} & \leq \Big\| \big\| \1_{ B_R^{H^1}}(P_{N_{(3)/100}}u) \cdot \cR_{N_1,...,N_6}^{(W)} \big\|_{\L^p(d\mu_s|\cB_{\ll N_{(3)}})}\Big\|_{\L^\infty(d\mu_s)} \\
            & \leq  \Big\| \big\| \cR_{N_1,...,N_6}^{(W)}\big\|_{\L^p(d\mu_s|\cB_{\ll N_{(3)}})} \cdot \1_{ B_R^{H^1}}(P_{N_{(3)/100}}u)\Big\|_{\L^\infty(d\mu_s)}
        \end{split}
    \end{equation}
    Now, the conditional Wiener-chaos estimate from Lemma~\ref{lem Wiener chaos} (with $m=3$), followed by  Lemma~\ref{L2 estimate with orthogonality} combined with the absence of paring (see Remark \ref{rem no high freq pairing, first generation}), allows us to obtain : 
    \begin{equation}\label{R application Wiener chaos}
        \begin{split}
            \norm{ \cR_{N_1,...,N_6}^{(W)}}_{\L^p(d\mu_s|\cB_{\ll N_{(3)}})} & \lesssim p^{\frac{3}{2}} \norm{ \cR_{N_1,...,N_6}^{(W)}}_{\L^2(d\mu_s|\cB_{\ll N_{(3)}})} \\
            & \lesssim p^{\frac{3}{2}} (N_{(1)} N_{(2)} N_{(3)})^{-s} \bigg( \sum_{k_1,k_2,k_3} \Big| \sum_{k_4,k_5,k_6} C(\vec{k}) \cdot \frac{\psi_{2s}(\vec{k})}{\Omega(\vec{k})}\cjg{w_{k_4}^{N_4}}w_{k_5}^{N_5}\cjg{w_{k_6}^{N_6}}  \Big|^2 \bigg)^{\frac{1}{2}}
        \end{split}
    \end{equation}
    where we gathered all the constraints into the term :
    \begin{equation*}
        C(\vec{k}) := \1_{\linc} \cdot \1_{\Omgnz} \cdot \prod_{j=1}^6 \1_{|k_j| \sim N_j}
    \end{equation*}
    Next, by Cauchy-Schwarz,
    \begin{equation*}
         \sum_{k_1,k_2,k_3} \Big| \sum_{k_4,k_5,k_6} C(\vec{k}) \cdot \frac{\psi_{2s}(\vec{k})}{\Omega(\vec{k})}\cjg{w_{k_4}^{N_4}}w_{k_5}^{N_5}\cjg{w_{k_6}^{N_6}}  \Big|^2 \lesssim  \sum_{k_1,k_2,k_3} \Big( \sum_{k_4,k_5,k_6} C(\vec{k}) \cdot \Big| \frac{\psi_{2s}(\vec{k})}{\Omega(\vec{k})} \Big|^2 |w_{k_4}^{N_4}w_{k_5}^{N_5}|^2 \Big) \Big( \sum_{k_4,k_5,k_6} C(\vec{k}) |w_{k_6}^{N_6}|^2 \Big) 
    \end{equation*}
    From Lemma~\ref{lemma psi estimate} we get $\left| \frac{\psi_{2s}(\vec{k})}{\Omega(\vec{k})} \right| \lesssim |k_{(1)}|^{2s-2}(1+\frac{|k_{(3)}|^2}{|\Omega(\vec{k})|}) \lesssim |k_{(1)}|^{2s-2} |k_{(3)}|^2 \lesssim N_{(1)}^{2s-2} N_{(3)}^2$. Then, on the one hand :
    \begin{equation*}
        \begin{split}
            \sum_{k_4,k_5,k_6} C(\vec{k}) \cdot \Big| \frac{\psi_{2s}(\vec{k})}{\Omega(\vec{k})} \Big|^2 |w_{k_4}^{N_4}w_{k_5}^{N_5}|^2 & \lesssim (N_{(1)}^{2s-2} N_{(3)}^2)^2  \sum_{k_4,k_5,k_6} C(\vec{k}) \cdot |w_{k_4}^{N_4}w_{k_5}^{N_5}|^2 \\
            & \lesssim (N_{(1)}^{2s-2} N_{(3)}^2)^2 \sum_{k_4,k_5} |w_{k_4}^{N_4}w_{k_5}^{N_5}|^2 \cdot \underbrace{\sum_{k_6} \1_{k_6=k_1-k_2...+k_5}}_{\leq 1} \\
            & \lesssim (N_{(1)}^{2s-2} N_{(3)}^2 \norm{w^{N_4}}_{\L^2}\norm{w^{N_5}}_{\L^2})^2
        \end{split}
    \end{equation*}
    and on the other hand,
    \begin{equation*}
    \begin{split}
            \sum_{k_1,k_2,k_3} \Big( \sum_{k_4,k_5,k_6} C(\vec{k}) |w_{k_6}^{N_6}|^2 \Big) = \sum_{k_6} |w_{k_6}^{N_6}|^2 \sum_{k_1,k_2,k_3,k_4,k_5} C(\vec{k})
            \lesssim N_{(2)}N_{(3)}N_{(4)}N_{(5)} \cdot \norm{w^{N_6}}_{\L^2}^2
    \end{split}
    \end{equation*}
    where we used the counting bound from Lemma~\ref{lem counting bound}. We deduce from the two inequalities above that
    \begin{equation*}
    \begin{split}
        \bigg( \sum_{k_1,k_2,k_3} \Big| \sum_{k_4,k_5,k_6} C(\vec{k}) \cdot \frac{\psi_{2s}(\vec{k})}{\Omega(\vec{k})}\cjg{w_{k_4}^{N_4}}w_{k_5}^{N_5}\cjg{w_{k_6}^{N_6}}  \Big|^2 \bigg)^{\frac{1}{2}} & \lesssim N_{(1)}^{2s-2} N_{(3)}^2 N_{(1)}^{\frac{1}{2}} N_{(3)}^{\frac{1}{2}}  (N_{(4)}N_{(5})^{\frac{1}{2}}\norm{w^{N_4}}_{\L^2}\norm{w^{N_5}}_{\L^2}\norm{w^{N_6}}_{\L^2} \\
        & \lesssim N_{(1)}^{2s-2+\frac{1}{2}} N_{(3)}^{\frac{5}{2}} (N_{(4)}N_{(5})^{-\frac{1}{2}}N_{(6)}^{-1}\norm{w^{N_4}}_{H^1}\norm{w^{N_5}}_{H^1}\norm{w^{N_6}}_{H^1} \\
        & \lesssim N_{(1)}^{2s-\frac{3}{2}} N_{(3)}^{\frac{5}{2}} \norm{w^{N_4}}_{H^1}\norm{w^{N_5}}_{H^1}\norm{w^{N_6}}_{H^1} 
    \end{split}
    \end{equation*}
    Coming back to \eqref{R application Wiener chaos}, we deduce that 
    \begin{equation*}
         \norm{ \cR_{N_1,...,N_6}^{(W)}}_{\L^p(d\mu_s|\cB_{\ll N_{(3)}})} \lesssim p^{\frac{3}{2}} N_{(1)}^{-\frac{3}{2}} N_{(3)}^{\frac{5}{2}-s} \norm{w^{N_4}}_{H^1}\norm{w^{N_5}}_{H^1}\norm{w^{N_6}}_{H^1} 
    \end{equation*}
    And plugging this into \eqref{R L^p less than L^p_cond Linfty}, we obtain:
    \begin{equation*}
        \norm{\1_{\{ \cC(u) \leq R \}} \cR_{N_1,...,N_6}^{(W)}}_{\L^p(d\mu_s)} \lesssim_R p^{\frac{3}{2}}  N_{(1)}^{-\frac{3}{2}} N_{(3)}^{\frac{5}{2}-s}
    \end{equation*}
    Interpolating the above inequality with \eqref{cR deterministic estimate} (more precisely \eqref{cR deterministic estimate} with $\frkR^{(W)}$ instead of $\frkR$), we have that for any $\theta \in (0,1)$ : 
    \begin{equation}\label{R interpolation}
        \norm{\1_{\{ \cC(u) \leq R \}} \cR_{N_1,...,N_6}^{(W)}}_{\L^p(d\mu_s)} \lesssim p^{\frac{3}{2}\theta +\frac{2-\alpha}{2}(1-\theta)}N_{(1)}^{-\frac{3}{2}\theta + \left(2(s-1-\sigma) + \eps + \alpha(s-\frac{3}{2})\right)(1-\theta)}N_{(3)}^{(\frac{5}{2}-s)\theta + (1-\theta)}
    \end{equation}
Finally, we use the following lemma to finish the proof of Lemma~\ref{lem R_s,N L^p estimate}:

    \begin{lem}\label{lem theta exponent < 1 on p and negative exponent on N_{(1)}}
        There exist $\sigma<s-\frac{1}{2}$ close enough to $s-\frac{1}{2}$, $\eps>0$ close enough to $0$, $\theta \in (0,1)$ and $\alpha \in (0,1)$, such that 
        \begin{align*}
                \frac{3}{2}\theta +\frac{2-\alpha}{2}(1-\theta) &< 1, & 
                N_{(1)}^{-\frac{3}{2}\theta + \left(2(s-1-\sigma) + \eps + \alpha(s-\frac{3}{2})\right)(1-\theta)}N_{(3)}^{(\frac{5}{2}-s)\theta + (1-\theta)} &\lesssim N_{(1)}^{-}
        \end{align*}
    \end{lem}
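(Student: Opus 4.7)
The plan is to treat the two required inequalities in turn. For the first condition, expanding gives
\[
\frac{3}{2}\theta +\frac{2-\alpha}{2}(1-\theta) = 1 + \frac{\theta(1+\alpha) - \alpha}{2},
\]
so it is equivalent to $\theta < \alpha/(1+\alpha)$, or equivalently $\alpha > \theta/(1-\theta)$. This gives a \emph{lower} bound on $\alpha$ in terms of $\theta$, nothing more.

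For the second condition, I denote by $A$ and $B$ the exponents of $N_{(1)}$ and $N_{(3)}$ respectively. Since $B = (\frac{5}{2}-s)\theta + (1-\theta) = 1 - (s-\frac{3}{2})\theta$, choosing $\theta$ small compared to $1/(s-\frac{3}{2})$ forces $B > 0$, and then $N_{(3)} \leq N_{(1)}$ yields $N_{(3)}^B \leq N_{(1)}^B$. It thus suffices to show $A+B<0$. Writing $\delta := s - \frac{1}{2} - \sigma > 0$, a direct computation gives
\[
A + B = -(s-1)\theta + \bigl(2\delta + \eps + \alpha(s-\tfrac{3}{2})\bigr)(1-\theta),
\]
so the second condition reduces to
\[
2\delta + \eps + \alpha\bigl(s-\tfrac{3}{2}\bigr) \;<\; \frac{(s-1)\theta}{1-\theta},
\]
which is an \emph{upper} bound on $\alpha$ (modulo the small corrections $\delta, \eps$).

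The crux of the argument is the compatibility of these two bounds on $\alpha$. Letting $\delta, \eps \to 0$, the admissible window becomes
\[
\frac{\theta}{1-\theta} \;<\; \alpha \;<\; \frac{(s-1)\theta}{(s-\frac{3}{2})(1-\theta)},
\]
which is non-empty precisely when $s - \frac{3}{2} < s - 1$. This numerical gap of $\frac{1}{2}$ is automatic and is exactly where the assumption $s > \frac{3}{2}$ is used; I expect this to be the only real obstacle, and one that dissolves essentially by inspection.

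The four parameters can then be produced in the following order: first fix a small $\theta \in (0, 1/(s-\frac{3}{2}))$ so that $B > 0$; next pick $\alpha$ strictly inside the non-empty interval above (say its midpoint with $\delta = \eps = 0$); finally, choose $\sigma < s - \frac{1}{2}$ close enough to $s - \frac{1}{2}$ and $\eps > 0$ small enough that the positive slack $\frac{(s-1)\theta}{1-\theta} - \alpha(s-\frac{3}{2})$ absorbs $2\delta + \eps$. By construction, both inequalities of the lemma are then satisfied simultaneously.
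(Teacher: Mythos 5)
Your proof is correct and follows essentially the same route as the paper: the same reduction of the first inequality to $\theta<\alpha/(1+\alpha)$ (equivalently $\alpha>\theta/(1-\theta)$), the same use of $B\geq 0$ to bound $N_{(3)}^{B}\leq N_{(1)}^{B}$, and the same computation $A+B=-(s-1)\theta+(2\delta+\eps+\alpha(s-\frac{3}{2}))(1-\theta)$. The only difference is cosmetic — you fix $\theta$ first and solve for $\alpha$, whereas the paper fixes $\alpha$ first and solves for $\theta$; your window $\bigl(\frac{\theta}{1-\theta},\frac{(s-1)\theta}{(s-3/2)(1-\theta)}\bigr)$ is exactly the paper's condition $\frac{\alpha}{1+\frac{1}{2(s-3/2)}+\alpha}<\theta<\frac{\alpha}{1+\alpha}$ rewritten.
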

Let us provide a proof of this lemma:

    \begin{proof}[Proof of Lemma~\ref{lem theta exponent < 1 on p and negative exponent on N_{(1)}}]
Firstly, we have:
\begin{equation}\label{cd theta power of p}
    \begin{split}
    \frac{3}{2}\theta +\frac{2-\alpha}{2}(1-\theta) < 1 & \iff  \theta \left(\frac{1+\alpha}{2}\right) < 1 + \frac{\alpha -2 }{2} = \frac{\alpha}{2} \\
    & \iff \theta < \frac{\alpha}{1+\alpha}
    \end{split}
\end{equation}

Secondly, regarding the exponent of $N_{(3)}$, we have:
\begin{equation}\label{cd theta positive power of N3}
    (\frac{5}{2}-s)\theta + (1-\theta) \geq 0 \iff \theta \leq \frac{1}{s-\frac{3}{2}}
\end{equation}

Let us take $\theta \leq \frac{1}{s-\frac{3}{2}}$ (always true when $s \leq \frac{5}{2}$). Then,
\begin{equation*}
    N_{(1)}^{-\frac{3}{2}\theta + \left(2(s-1-\sigma) + \eps + \alpha(s-\frac{3}{2})\right)(1-\theta)}N_{(3)}^{(\frac{5}{2}-s)\theta + (1-\theta)} \lesssim N_{(1)}^{-\frac{3}{2}\theta + \left(2(s-1-\sigma) + \eps + \alpha(s-\frac{3}{2})\right)(1-\theta)}N_{(1)}^{(\frac{5}{2}-s)\theta + (1-\theta)}
\end{equation*}
We want to have appropriate parameters such that the conditions on $\theta$ given in \eqref{cd theta power of p} and \eqref{cd theta positive power of N3} are satisfied along with the following one: 
\begin{equation*}
    -\frac{3}{2}\theta + \left(2(s-1-\sigma) + \eps + \alpha(s-\frac{3}{2})\right)(1-\theta) + (\frac{5}{2}-s)\theta + (1-\theta) < 0 
\end{equation*}
Since $2(s-1-\sigma) + \eps \lra -1$ as $\sigma \ra s-\frac{1}{2}$ and $\eps \ra 0$, the previous condition will be true for $\sigma$ and $\eps$ respectively close enough to $s-\frac{3}{2}$ and $0$ if:
\begin{equation*}
    -\frac{3}{2}\theta + \left(-1 + \alpha(s-\frac{3}{2})\right)(1-\theta) + (\frac{5}{2}-s)\theta + (1-\theta) < 0 
\end{equation*}
And this condition is equivalent to the following one:
\begin{equation}\label{cd power of N1}
\begin{split}
    & \theta \left( 1-s - \alpha(s-\frac{3}{2}) \right) < - \alpha(s-\frac{3}{2}) \\
   \iff &\theta > \frac{\alpha(s-\frac{3}{2})}{s-1 + \alpha(s-\frac{3}{2})} = \frac{\alpha}{\frac{s-1}{s-\frac{3}{2}} + \alpha} =  \frac{\alpha}{1 + \frac{1}{2(s-\frac{3}{2})} + \alpha}
\end{split}
\end{equation}

To conclude, if we first take $\alpha$ small enough such that $\frac{\alpha}{1+\alpha}<\frac{1}{s-\frac{3}{2}}$, and then $\theta$ such that
\begin{equation*}
    \frac{\alpha}{1 + \frac{1}{2(s-3/2)} + \alpha}<\theta<\frac{\alpha}{1+\alpha},
\end{equation*}
the conditions given in \eqref{cd theta power of p}, \eqref{cd theta positive power of N3} and \eqref{cd power of N1} are satisfied. We illustrate this with the following drawing :

\begin{center}
\begin{tikzpicture}
    \draw[line width=1.pt] (0,0) -- (14,0);
    \draw[line width=1.pt] (0, -0.1) -- (0, 0) node[above] {$0$} -- (0, 0.1);
    \draw[line width=1.pt] (14, -0.1) -- (14, 0) node[above] {$\frac{1}{s-3/2}$} -- (14, 0.1);
    \draw[line width=1.pt] (3.5, -0.1) -- (3.5, 0) node[below] {$\frac{\alpha}{1 + \frac{1}{2(s-3/2)} + \alpha}$} -- (3.5, 0.1);
    \draw[line width=1.pt] (7, -0.1) -- (7, 0) node[below] {$\frac{\alpha}{1 + \alpha}$} -- (7, 0.1);
    \draw[line width=1.pt] (5, -0.1) -- (5, 0) node[above] {$\theta$} -- (5, 0.1);
\end{tikzpicture}
\end{center}
With those parameters, Lemma~\ref{lem theta exponent < 1 on p and negative exponent on N_{(1)}} is proven.
    \end{proof}
    Hence, coming back to \eqref{R interpolation}, the proof of Lemma~\ref{lem R_s,N L^p estimate} is completed.
\end{proof}

\section{Estimates for the modified energy derivative at 0}\label{section Estimates for the differential of the modified energy}
This section is dedicated to $L^p(d\mu_s)$ estimates on $Q_{s,N}$ (defined in \eqref{def Qs,N = Q0 + Q1 +Q2}, see also Definition~\ref{def normal form Q}). More precisely, we prove Proposition~\ref{prop Qs,N L^p estimate wrt mu_s}. The strategy of our proof is the same as the one of Lemma~\ref{lem R_s,N L^p estimate}. We first obtain a deterministic estimate that will be conclusive except for a frequency regime  \textit{high-high-high-low-...-low} (where the three highest frequencies are in fact much more higher than the others). This will lead us to decompose $Q_{s,N}$ into two parts, one that captures the \textit{high-high-high-low-...-low} regime, and one that captures the other regime of frequencies. We will handle the \textit{high-high-high-low-...-low} regime using the independence of Gaussians via Wiener-chaos estimate. It will be slightly more complicated than the proof of Lemma~\ref{lem R_s,N L^p estimate} because of the presence of more indices. However, in our situation, we will not encounter the possibility of a "pairing between generations"\footnote{For example, considering the constraint $\linc \hsp \& \hsp \lincba$, then according to Definition~\ref{def pairing}, a "pairing between generations" corresponds to the situation when one of the $k_j$ is paired with one of the $p_l$.} which could have required the "remarkable cancellation" that has been presented in \cite{sun2023quasi} (sections 5 and 7). The reason why we do not encounter such a pairing stems from the fact that we perform the Wiener chaos estimate with respect to three high-frequency Gaussians, that is with $m=3$ in Lemma~\ref{lem Wiener chaos}. In doing so, we prevent a  pairing between generations from occurring (see Paragraph \ref{absence of pairing}). In \cite{sun2023quasi}, the Wiener chaos estimates are performed with respect to two high-frequency Gaussians, that is with $m=2$ in Lemma \ref{lem Wiener chaos}, and in this situation, a pairing between generation may occur. \bigskip

\paragraph{\textbf{Preliminaries}} Fix $N \in \N \cup \{ \infty \}$.
Recall that from \eqref{def Qs,N = Q0 + Q1 +Q2} we have:
    \begin{equation*}
        Q_{s,N}(u) = Q_{s,N}(w) = \Im(-\frac{1}{6}\cQ_0(w) + \frac{1}{2}\cQ_1(w) - \frac{1}{2}\cQ_2(w))
    \end{equation*}
    where $w:= \Pi_N u$ (with the convention $\Pi_\infty =id$), and:
    \begin{equation*}
    \cQ_0(w) := \sum_{\substack{\linc \\ \Omgz}} \psi_{2s}(\vec{k})w_{k_1}\cjg{w_{k_2}}...\cjg{w_{k_6}}
\end{equation*}
and, 
    \begin{equation*}
    \cQ_1(w) := \sum_{\substack{\linc \\ \lincba \\\Omgnz}} \frac{\psi_{2s}(\vec{k})}{ \Omega(\vec{k})} w_{p_1}\cjg{w_{p_2}}...w_{p_5}\cjg{w_{k_2}}...\cjg{w_{k_6}} 
\end{equation*}
and,
\begin{equation*}
    \cQ_2(w) :=\sum_{\substack{\linc \\ \lincbb \\\Omgnz}} \frac{\psi_{2s}(\vec{k})}{ \Omega(\vec{k})} w_{k_1}\cjg{w_{q_1}}w_{q_2}...\cjg{w_{q_5}}w_{k_3}... \cjg{w_{k_6}}
\end{equation*}
It suffices to show the estimates for $\cQ_0,\cQ_1$ and $\cQ_2$ because $|Q_{s,N}(u)| \leq |\cQ_0(w)| + |\cQ_1(w)| + |\cQ_2(w)|$. \\

\underline{Estimate for $\cQ_0$ :}
Actually, the estimate : 
\begin{equation}\label{Q0 estimate}
    \norm{\1_{\{\cC(u) \leq R\}} \cQ_0}_{\L^p(d\mu_s)} \leq C(s,R)p^\beta
\end{equation}
has somehow already been proven in the proof of Lemma~\ref{lem R_s,N L^p estimate}. Indeed, the proof is very similar, and we only sketch the beginning of it.

\begin{proof}[Sketch of the proof of \eqref{Q0 estimate}]
    We decompose $\cQ_0$ dyadically and we use Lemma~\ref{lemma psi estimate} (with $\Omgz$), and we obtain :
    \begin{equation*}
            \left| \cQ_0(w) \right| \lesssim \sum_{N_1,...,N_6} \frkQ_0 (N_1,...,N_6)
    \end{equation*}
    where,
    \begin{equation*}
        \frkQ_0(N_1,...,N_6) := \sum_{\substack{\linc \\ \Omgz}} N_{(1)}^{2s-2} N_{(3)}^2 \prod_{j=1}^6 \1_{|k_j| \sim N_j} |w_{k_j}|
    \end{equation*}     
    To estimate $\frkQ_0(N_1,...,N_6)$, we use Lemma~\ref{Strichartz's trick} (with $f^{j}_{k_j} = \1_{|k_j| \sim N_j} |w_{k_j}|$), which yields:
    \begin{equation*}
         \frkQ_0(N_1,...,N_6) \lesssim_{\eps} N_{(1)}^{2s-2+\eps} N_{(3)}^2 \prod_{j=1}^{6} \norm{P_{N_j}w}_{\L^2(\T)}
    \end{equation*}
with $P_{N}$ the projector onto frequencies $|k| \sim N$. This estimate is the analogue of \eqref{bound R_N1...N6}. And from this point, the proof of \eqref{Q0 estimate} goes exactly the same as the proof of Lemma~\ref{lem R_s,N L^p estimate}. 
\end{proof}
To sum up, in order to prove Proposition~\ref{prop Qs,N L^p estimate wrt mu_s}, it remains to establish the estimates in the following lemma :
\begin{lem}\label{lem Q1,Q2 estimate}
Let $s > \frac{3}{2}$. Then, there exists $\beta \in (0,1)$ such that for every $R>0$, there exists a constant $C(s,R) > 0$, such that for any $p \in [2,+\infty)$,
\begin{equation}
    \norm{\1_{\{\cC(u) \leq R\}} \cQ_j}_{\L^p(d\mu_s)} \leq C(s,R)p^\beta
\end{equation}
for $j=1,2$.
\end{lem}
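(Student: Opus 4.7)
My plan is to repeat, with the appropriate modifications, the scheme developed for $R_{s,N}$ in Lemma \ref{lem R_s,N L^p estimate}; I focus on $\cQ_1$ below, the analysis for $\cQ_2$ being symmetric. The first step is a deterministic dyadic $L^p$ estimate. I decompose $\cQ_1$ into dyadic blocks $\frkQ_1(M_1,\ldots,M_6,P_1,\ldots,P_5)$, where $M_j$ (resp.\ $P_l$) is the dyadic size of $|k_j|$ (resp.\ $|p_l|$), and apply Lemma \ref{an estimate using Strichartz's trick} to the $k$-summation after absorbing the inner pentilinear sum over $p_1,\ldots,p_5$ into a single sequence $f^{(1)}_{k_1}$ whose $\ell^2$ norm is controlled by Lemma \ref{technical lem Q estimate}. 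This yields the analogue of \eqref{dyadic estimate T_1}, featuring the factor $M_{(1)}^{2s-2+\eps}M_{(3)}^{2}(P_{(2)}P_{(3)}P_{(4)}P_{(5)})^{1/2}$ multiplied by a product of $\L^2$ norms of dyadic projections of $w$.

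Next, exactly as in Step 1 of the proof of Lemma \ref{lem R_s,N L^p estimate}, I use the Hamiltonian cutoff together with Remark \ref{rem cut-off H1} to control most $\L^2$ norms by $R$ via the Sobolev embedding $H^1\hookrightarrow \L^2$, and I interpolate the two remaining factors between $H^1$ and $\hsig$ with a small exponent $\alpha$. The $L^p(d\mu_s)$ moments of the $\hsig$ norm are then handled by \eqref{Gaussian moments}, producing a factor $p^{\gamma}$ with $\gamma<1$. I then split the sum according to the analogues of $\Lambda_D$ and $\Lambda_W$ from Section \ref{section Estimates for the weight of the weighted Gaussian measures}, applied to the merged non-increasing ordering $N_{(1)}\geq\cdots\geq N_{(10)}$ of $M_2,\ldots,M_6,P_1,\ldots,P_5$. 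The deterministic part $\cQ_1^{(D)}$ then carries a strictly negative exponent of $N_{(1)}$, exactly as in \eqref{power of N(1)}, so its dyadic summation closes for $\sigma<s-1/2$ close enough to $s-1/2$ and $\alpha,\eps>0$ small.

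The main obstacle is the probabilistic piece $\cQ_1^{(W)}$, where the three largest dyadic frequencies are incomparably larger than the remaining ones. I condition on the $\sigma$-algebra $\cB_{\ll N_{(3)}}$ generated by the low-frequency Gaussians and apply the conditional Wiener chaos estimate (Lemma \ref{lem Wiener chaos}) with $m=3$ to the three highest-frequency Gaussians. The critical structural fact to verify is the absence of pairings among these three Gaussians: because $N_{(1)},N_{(2)},N_{(3)}$ are incomparably larger than all the other frequencies and the linear constraints $\linc$ and $\lincba$ each involve several frequencies on both sides, any pairing of two high Gaussians would force the third high frequency to be a sum of low frequencies, which is impossible for $N_{(1)}$ large enough, exactly as in Remark \ref{rem no high freq pairing, first generation}. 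Crucially, this rules out both same-generation pairings and any "pairing between generations" involving one $k_j$ and one $p_l$, which is precisely what the choice $m=3$ (rather than $m=2$) buys us compared to the analogous step in \cite{sun2023quasi}. Lemma \ref{L2 estimate with orthogonality} then delivers the conditional $\L^2$ bound, which I estimate by Cauchy-Schwarz on the three high-frequency variables together with the counting bound of Lemma \ref{lem counting bound} on the low-frequency ones, and the pointwise estimate $|\psi_{2s}(\vec k)/\Omega(\vec k)|\lesssim N_{(1)}^{2s-2}N_{(3)}^{2}$ from Lemma \ref{lemma psi estimate}. This produces a decisive negative power of $N_{(1)}$ at the cost of a $p^{3/2}$ factor. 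Interpolating the resulting Wiener bound with the deterministic one in the style of \eqref{R interpolation} and optimizing over the interpolation parameter and $\alpha$ through Lemma \ref{lem theta exponent < 1 on p and negative exponent on N_{(1)}} finally yields a dyadic estimate that is summable in the $N_{(j)}$'s with growth $p^\beta$, $\beta<1$, uniform in $N\in\N\cup\{\infty\}$, completing the proof.
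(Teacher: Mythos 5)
Your proposal is correct and follows essentially the same route as the paper: a deterministic dyadic estimate built from Lemma~\ref{an estimate using Strichartz's trick} and Lemma~\ref{technical lem Q estimate} combined with the $H^1$ cutoff, interpolation and Gaussian moments, followed by a conditional Wiener chaos argument with $m=3$ (with the same no-pairing verification, Cauchy--Schwarz, counting bound and interpolation via Lemma~\ref{lem theta exponent < 1 on p and negative exponent on N_{(1)}}) on the \textit{high-high-high-low} piece. The only minor organizational difference is that the paper's set $\cI_W$ additionally excludes the configuration $(n_{(1)},n_{(2)})\in\{p_1,\dots,p_5\}^2$, since there $P_{(1)}^{-1}P_{(2)}^{-1/2}\lesssim N_{(1)}^{-3/2}$ already makes the deterministic bound conclusive; your vaguer ``analogues of $\Lambda_D$ and $\Lambda_W$'' is compatible with this and either arrangement closes.
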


Since the analysis for $\cQ_1$ and $\cQ_2$ is the same, we only prove the estimate for $\cQ_1$.

\subsubsection{Notations and remarks on set of indices:}
Before decomposing $\cQ$, let us first introduce some notations :
\begin{notns}\label{notations Q estimate}
    Given a set of frequency $k_1,k_2,...,k_6,p_1,p_2,...,p_5 \in \Z$, we denote :
    \begin{itemize}
        \item $k_{(1)},...,k_{(6)}$ a rearrangement of the $k_j$ such that 
    \begin{equation*}
        |k_{(1)}| \geq |k_{(2)}| \geq ... \geq |k_{(6)}| 
    \end{equation*}
    \item $p_{(1)},...,p_{(6)}$ a rearrangement of the $p_j$ such that 
    \begin{equation*}
        |p_{(1)}| \geq |p_{(2)}| \geq ... \geq |p_{(5)}| 
    \end{equation*}
     \item $n_{(1)},n_{(2)},...,n_{(10)}$ a rearrangement of $k_2,...,k_6,p_1,...,p_5$ such that 
    \begin{equation*}
        |n_{(1)}| \geq |n_{(2)}| \geq ... \geq |n_{(10)}| 
    \end{equation*} 
    \end{itemize}
    Also, in the sequel we will use :
    \begin{itemize}
        \item the letter $M_j$ for the localization of the frequency $k_j$,
        \item the letter $P_j$ for the localization of the frequency $p_j$,
    \end{itemize}
    Finally, $N_{(1)} \geq N_{(2)} \geq ... \geq N_{(10)}$ will be a non-increasing rearrangement of $P_1,...,P_5,M_2,...,M_6$. 
\end{notns}

\begin{rem}
  -- when $k_1-k_2+...-k_6=0$, we have $ |k_{(1)}| \sim |k_{(2)}|$.\\
  -- when $k_1=p_1-p_2+...+p_5$, we have $|p_{(1)}| \sim |p_{(2)}|$ or $|p_{(1)}| \sim |k_1|$.\\
  -- when $k_1-k_2+...-k_6=0$ and  $k_1=p_1-p_2+...+p_5$, we have $|n_{(1)}| \sim |n_{(2)}|$.
\end{rem}

\subsection{Decomposition}
For convenience, we will denote $\cQ_1$ simply as $\cQ$. \\

    Next, in the same spirit as the decomposition in Section~\ref{section Estimates for the weight of the weighted Gaussian measures}, we decompose the set of indices over which we sum in $\cQ$. We invoke the following set of indices : 
    \begin{equation}
        \begin{split}\label{I_D}
             \cI_D := \{ (k_1,...,k_6,p_1,...,p_5)& \in \Z^{11} : \hsp \sum_{j=1}^6 (-1)^{j-1}k_j = 0, \hspace{0.2cm} k_1 = \sum_{j=1}^5 (-1)^{j-1}p_j, \hspace{0.2cm} \sum_{j=1}^6 (-1)^{j-1}k_j^2 \neq 0, \\
    & (n_{(1)},n_{(2)}) \in \{ p_1,...,p_5 \}^2 \hspace{0.2cm} \textbf{or} \hspace{0.2cm} |n_{(3)}| < |n_{(1)}|^{1-\delta_0} \hsp \textbf{or} \hspace{0.2cm} |n_{(4)}| \geq |n_{(3)}|^{\delta_0}  \}
        \end{split}
    \end{equation}
    and,
     \begin{equation}\label{I_W}
        \begin{split}
             \cI_W := \{ (k_1,...,k_6,p_1,...&,p_5) \in \Z^{11} : \hsp \sum_{j=1}^6 (-1)^{j-1}k_j = 0, \hspace{0.2cm} k_1 = \sum_{j=1}^5 (-1)^{j-1}p_j, \hspace{0.2cm} \sum_{j=1}^6 (-1)^{j-1}k_j^2 \neq 0, \\
    & (n_{(1)},n_{(2)}) \notin \{ p_1,...,p_5 \}^2 \hspace{0.2cm} \textbf{and} \hspace{0.2cm} |n_{(3)}| \geq |n_{(1)}|^{1-\delta_0} \hsp \textbf{and} \hspace{0.2cm} |n_{(4)}| < |n_{(3)}|^{\delta_0}  \}
        \end{split}
    \end{equation}
for a fixed $\delta_0 \in (0,1)$\footnote{Once again, $\delta_0$ can be any number in $(0,1)$}. Note that what differs from the decomposition in Section~\ref{section Estimates for the weight of the weighted Gaussian measures} is the additional constraint $(n_{(1)},n_{(2)}) \notin \{ p_1,...,p_5 \}^2$ in $\cI_W$. This is because if $(n_{(1)},n_{(2)}) \in \{ p_1,...,p_5 \}^2$, we will not need to use Wiener-chaos. \\
In the same vein, we split $\cQ$ as : 
\begin{equation*}
    \cQ(w) = \cQ^{(D)}(w) + \cQ^{(W)}(w)
\end{equation*}
where, 
\begin{align}
    \cQ^{(D)}(w) &:= \sum_{\cI_D} \frac{\psi_{2s}(\vec{k})}{ \Omega(\vec{k})} w_{p_1}\cjg{w_{p_2}}...w_{p_5}\cjg{w_{k_2}}...\cjg{w_{k_6}}, & \cQ^{(W)}(w) := \sum_{\cI_W}\frac{\psi_{2s}(\vec{k})}{ \Omega(\vec{k})} w_{p_1}\cjg{w_{p_2}}...w_{p_5}\cjg{w_{k_2}}...\cjg{w_{k_6}}
\end{align}

\subsubsection{Absence of pairing}\label{absence of pairing}
Recall that we have defined what a "pairing" is in Definition \ref{def pairing}. \\
In $\cI_W$ (see \eqref{I_W} above), there is (for $n_{(1)}$ large enough) no pairing within the three highest frequencies, relatively to the constraint: 
\begin{center}
    $p_1 - p_2 ... +p_5 - k_2 + k_3 ... -k_6 = 0$
\end{center}
 Indeed, suppose there is a pairing between two of the three highest frequencies. Then, the constraint would take the form:
    \begin{center}
        the remaining high frequency = sum of seven low frequencies
    \end{center}
    which is impossible because in $\cI_W$ we have:
    \begin{center}
        $|$high frequencies$|$ $\gg$ $|$low frequencies$|$
    \end{center}

\subsection{Proof of the estimate}

We are now prepared to prove Lemma~\ref{lem Q1,Q2 estimate} (for $j=1$). The forthcoming proof is organized as follows. Firstly, we establish an estimate that will be conclusive only for the contribution $\cQ^{(D)}$. For the contribution $\cQ^{(W)}$, we will also exploit the independence between high frequency Gaussians and low frequency Gaussians using Wiener chaos estimate. 

\begin{proof}[Proof of Lemma~\ref{lem Q1,Q2 estimate}] 
\underline{Step 1, Deterministic estimate :}
    Let $w \in \Hgmt$. For convenience, for any dyadic number $N$ we denote $w_k^N := \1_{|k|\sim N}|w_k|$.  We start by taking the absolute value and summing over dyadic blocks
    \begin{equation*}
        \left| \cQ(w) \right| \lesssim \sum_{M_1,...,M_6,P_1,...,P_5} \frkQ(M_1,...,M_6,P_1,...,P_5)
    \end{equation*}
    where $M_1,...,M_6,P_1,...,P_5$ are dyadic-valued and 
    \begin{equation*}
        \frkQ(M_1,...,M_6,P_1,...,P_5) := \sum_{\substack{\linc \\ \lincba \\\Omgnz}} \big| \frac{\psi_{2s}(\vec{k})}{ \Omega(\vec{k})} \big| |w_{p_1}^{P_1}...w_{p_5}^{P_5}w_{k_2}^{M_2}...w_{k_6}^{M_6}| \cdot \1_{|k_1| \sim M_1}
    \end{equation*}  
        We decompose $\frkQ$ as $\frkQ = \frkQ^{(D)} + \frkQ^{(W)}$, according to the decomposition of $\cQ$.\\
        
    In regard to \eqref{T_1 dyadic block} and \eqref{dyadic estimate T_1} from the proof of Proposition~\ref{prop continuity of Qs,N and deterministic estimate}, we have:
    \begin{equation*}
             \frkQ(M_1,...,P_5) \lesssim
              M_{(1)}^{2s-2+\eps} M_{(3)}^{2}  \left( P_{(2)} P_{(3)} P_{(4)} P_{(5)}\right)^{\frac{1}{2}}  \prod_{j=1}^5 \norm{w^{P_j}}_{\L^2} \prod_{j=2}^6 \norm{w^{M_j}}_{\L^2} 
    \end{equation*}
    Using Notations \ref{notations Q estimate}, it means that 
        \begin{equation}\label{first bound Q_M1...M6,P1...P5}
             \frkQ(M_1,...,P_5) \lesssim
              M_{(1)}^{2s-2+\eps} M_{(3)}^{2}  \left( P_{(2)} P_{(3)} P_{(4)} P_{(5)}\right)^{\frac{1}{2}}  \prod_{j=1}^{10} \norm{w^{N_j}}_{\L^2}
    \end{equation}
     Moreover, for $1<\sigma'<\sigma<s-\frac{1}{2}$, we have : 
    \begin{equation}\label{w^N(j) L2}
        \prod_{j=1}^{10} \norm{w^{N_j}}_{\L^2} \lesssim (N_{(1)}^{-\sigma}\hsignorm{u})(N_{(2)}^{-\sigma'} \norm{u}_{H^{\sigma'}})(N_{(3)}N_{(4)}...N_{(10)})^{-1} \norm{u}_{H^1}^8
    \end{equation}
    Then, since $N_{(1)} \sim N_{(2)}$, and writing $\sigma'$ as: 
    \begin{equation*}
        \sigma' = \alpha 1 + (1-\alpha) \sigma
    \end{equation*}
    for $\alpha \in (0,1)$, we deduce from \eqref{w^N(j) L2}, by interpolating $H^{\sigma'}$ between $H^1$ and $\hsig$, that: 
    \begin{equation*}
        \begin{split}
            \prod_{j=1}^{10} \norm{w^{N_j}}_{\L^2} & \lesssim N_{(1)}^{-2\sigma +\alpha(\sigma-1)}(N_{(3)}N_{(4)}...N_{(10)})^{-1} \norm{u}_{H^1}^{8+\alpha} \norm{u}^{2-\alpha}_{\hsig} \\
            & \lesssim N_{(1)}^{-2\sigma +\alpha(s-\frac{3}{2})}(N_{(3)}N_{(4)}...N_{(10)})^{-1} \norm{u}_{H^1}^{8+\alpha} \norm{u}^{2-\alpha}_{\hsig}, \hspace{0.3cm} (\textnormal{because $\alpha(\sigma-1) < \alpha(s-3/2)$ })
        \end{split}
    \end{equation*}
    
        Plugging this into \eqref{first bound Q_M1...M6,P1...P5}, we obtain 
    \begin{equation*}
        \frkQ(M_1,...,P_5) \lesssim N_{(1)}^{-2\sigma +\alpha(s-\frac{3}{2})} M_{(1)}^{2s-2+\eps} M_{(3)}^{2}  \left( P_{(2)} P_{(3)} P_{(4)} P_{(5)}\right)^{\frac{1}{2}}(N_{(3)}N_{(4)}...N_{(10)})^{-1} \norm{u}_{H^1}^{8+\alpha} \norm{u}^{2-\alpha}_{\hsig}
    \end{equation*} 
    Using the facts $M_{(1)}\lesssim N_{(1)}$ and $N_{(1)} \sim N_{(2)}$, and also Remark \ref{rem cut-off H1}, we deduce that 
    \begin{equation}\label{Q pre-deterministic estimate}
        \begin{split}
       \1_{\{\cC(u) \leq R\}} \frkQ&(M_1,...,P_5) \\
       & \lesssim_{\eps,R} N_{(1)}^{2(s-\sigma-1) + \eps +\alpha(s-\frac{3}{2})} M_{(3)}^{2}  \left( P_{(2)} P_{(3)} P_{(4)} P_{(5)}\right)^{\frac{1}{2}}N^2_{(1)}(\overbrace{N_{(1)}N_{(2)}N_{(3)}N_{(4)}...N_{(10)}}^{=M_2...M_6 P_1...P_5})^{-1} \norm{u}^{2-\alpha}_{\hsig} \\
        & \lesssim N_{(1)}^{2(s-\sigma) + \eps +\alpha(s-\frac{3}{2})} M_{(3)}^{2} (M_2...M_6)^{-1} P_{(1)}^{-1}\left( P_{(2)} P_{(3)} P_{(4)} P_{(5)}\right)^{-\frac{1}{2}} \norm{u}^{2-\alpha}_{\hsig} \\
        & \lesssim \begin{cases}
            N_{(1)}^{2(s-\sigma) - \frac{3}{2} + \eps +\alpha(s-\frac{3}{2})} \norm{u}^{2-\alpha}_{\hsig}, \hspace{1.9cm} \textnormal{if $(N_{(1)},N_{(2)}) \in \{ P_1,...,P_5\}^2$} \\
            N_{(1)}^{2(s-\sigma-1) + \eps +\alpha(s-\frac{3}{2})} N_{(3)} N_{(4)}^{-\frac{1}{2}} \norm{u}^{2-\alpha}_{\hsig}, \hspace{0.4cm} \textnormal{if $(N_{(1)},N_{(2)}) \notin \{ P_1,...,P_5\}^2$}
        \end{cases}
        \end{split}
    \end{equation} 

Here, we used estimates on $ M_{(3)}^{2} (M_2...M_6)^{-1} P_{(1)}^{-1}\left( P_{(2)} P_{(3)} P_{(4)} P_{(5)}\right)^{-\frac{1}{2}}$ in terms of the $N_j$, which are gathered in the following lemma. 
\begin{lem}\label{lem M3^2 Mj^-1 P1^-1 Pj^-1/2}
\begin{itemize}
    \item[--]If  $(N_{(1)},N_{(2)}) \in \{ P_1,...,P_5\}^2$, then 
    \begin{equation*}
        M_{(3)}^{2} (M_2...M_6)^{-1} P_{(1)}^{-1}\left( P_{(2)} P_{(3)} P_{(4)} P_{(5)}\right)^{-\frac{1}{2}} \lesssim N_{(1)}^{-\frac{3}{2}}
    \end{equation*}
    \item[--] If $(N_{(1)},N_{(2)}) \notin \{ P_1,...,P_5\}^2$, then 
    \begin{equation*}
        M_{(3)}^{2} (M_2...M_6)^{-1} P_{(1)}^{-1}\left( P_{(2)} P_{(3)} P_{(4)} P_{(5)}\right)^{-\frac{1}{2}} \lesssim N_{(1)}^{-2}N_{(3)}N_{(4)}^{-\frac{1}{2}}
    \end{equation*}
\end{itemize}
\end{lem}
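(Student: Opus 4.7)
The proof starts from the multiplicative identity $(M_2 M_3 M_4 M_5 M_6)(P_1 P_2 P_3 P_4 P_5)=N_{(1)} N_{(2)} \cdots N_{(10)}$, which rewrites the quantity of interest as
\[
\cX:=M_{(3)}^2(M_2\cdots M_6)^{-1}P_{(1)}^{-1}(P_{(2)}\cdots P_{(5)})^{-1/2}=\frac{M_{(3)}^2\,(P_{(2)}\cdots P_{(5)})^{1/2}}{N_{(1)}\cdots N_{(10)}}.
\]
The elementary bound $M_{(3)}^2\leq M_2 M_3 M_4 M_5 M_6$ (at most one of $M_{(1)}, M_{(2)}, M_{(3)}$ equals $M_1$, so at least two of $M_2,\dots,M_6$ exceed $M_{(3)}$ and the other three are $\geq 1$) gives the coarse estimate $\cX\leq P_{(1)}^{-1}P_{(2)}^{-1/2}$. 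In Case 1 this is already enough: since both $N_{(1)}, N_{(2)}$ are $P$'s, $P_{(1)}\sim P_{(2)}\sim N_{(1)}$ and we get the required $N_{(1)}^{-3/2}$.

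For Case 2 the coarse bound is lossy, so the plan is to exploit the constraint $M_{(1)}\sim M_{(2)}$ (from $k_1-k_2+\cdots-k_6=0$), the inequality $M_1\leq P_{(1)}$, and the fact $N_{(1)}\sim N_{(2)}$ (as already remarked in the paper). Since at least one $M_j$, $j\geq 2$, has size $\sim N_{(1)}$, there are two configurations. Subcase (II): exactly one $M_j$, $j\geq 2$, achieves $\sim N_{(1)}$. Then the second top value among $\{M_1,\dots,M_6\}$ must come from $M_1$, so $M_1\sim N_{(1)}$ and consequently $P_{(1)}=N_{(2)}\sim N_{(1)}$, while $M_{(3)}$ equals the second largest of $M_2,\dots,M_6$, which is $\leq N_{(3)}$. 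Subcase (I): two of $M_2,\dots,M_6$ are $\sim N_{(1)}$, so $P_{(1)}\leq N_{(3)}$, $M_1\leq P_{(1)}\leq N_{(3)}$, and $M_{(3)}\leq\max(M_1, m^{(3)})$, where $m^{(3)}$ denotes the third largest of $M_2,\dots,M_6$. In Subcase (I) one splits further using $M_{(3)}^2\leq P_{(1)}^2 + m^{(3)}^2$ to handle the two contributions separately.

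In each sub-case, substituting the appropriate bound for $M_{(3)}^2$ into $\cX$ and cancelling $N_{(1)} N_{(2)}\sim N_{(1)}^2$ with the two $\sim N_{(1)}$ factors coming either from $m^{(1)} m^{(2)}$ (Subcase (I)) or from $m^{(1)} P_{(1)}$ (Subcase (II)), one reduces the problem to a combinatorial inequality on the eight remaining factors: these $m^{(i)}$'s and $P_{(k)}$'s form a partition of $\{N_{(3)},\dots, N_{(10)}\}$ into an $m$-part and a $P$-part. The target $N_{(3)} N_{(4)}^{-1/2}$ is produced by (i) absorbing one factor of $N_{(r)}$, $r\geq 3$, into the $N_{(3)}$ on the right using $N_{(r)}\leq N_{(3)}$, and (ii) extracting $N_{(4)}^{1/2}$ from the fact that the index $4$ must appear in the partition (either as an $m$-factor or as $P_{(k)}$ for some $k\geq 1$).

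The principal obstacle is the index bookkeeping in Subcase (I), where one must verify that the required $N_{(4)}^{1/2}$ can be extracted in every admissible placement of the indices $\{3,\dots,10\}$ between the $m$'s and $P$'s. The one edge case requiring a dedicated argument is $P_{(1)}=N_{(4)}$ (i.e.\ the smallest $P$-index is $4$): then no $P$-index is $3$, so necessarily $m^{(3)}=N_{(3)}$, and this extra power of $N_{(3)}$ supplies the $N_{(4)}^{3/2}$ appearing on the left. All other configurations reduce to the trivial observations $N_{(j)}\geq 1$ and $N_{(j)}\leq N_{(3)}$ for $j\geq 3$.
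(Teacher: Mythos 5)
Your argument is correct and follows essentially the same route as the paper's: the coarse bound $M_{(3)}^{2}\leq M_2\cdots M_6$ disposes of Case 1, and your Subcases (I) and (II) coincide with the paper's split according to whether both of $N_{(1)},N_{(2)}$ or only one of them lies in $\{M_2,\ldots,M_6\}$, exploiting the same three facts ($M_{(1)}\sim M_{(2)}$, $M_1\lesssim P_{(1)}$, $N_{(1)}\sim N_{(2)}$) and the same position-counting pigeonhole to extract the factor $N_{(3)}N_{(4)}^{-1/2}$. The only cosmetic difference is that you split $M_{(3)}^{2}\lesssim P_{(1)}^{2}+(m^{(3)})^{2}$ where the paper uses $M_{(3)}\lesssim N_{(3)}$ directly; both work.
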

For clarity, we postpone the proof of Lemma~\ref{lem M3^2 Mj^-1 P1^-1 Pj^-1/2} for the end of this section. \\
To conclude the first step of the proof, using \eqref{Gaussian moments} in \eqref{Q pre-deterministic estimate} yields:

    \begin{equation}\label{Q deterministic estimate}
       \norm{\1_{\{\cC(u) \leq R\}} \frkQ(M_1,...,P_5)}_{\L^p(d\mu_s)} \lesssim \begin{cases}
            N_{(1)}^{2(s-\sigma) - \frac{3}{2} + \eps +\alpha(s-\frac{3}{2})} p^{\frac{2-\alpha}{2}}, \hspace{1.9cm} \textnormal{if  $(N_{(1)},N_{(2)}) \in \{ P_1,...,P_5\}^2$} \\
            N_{(1)}^{2(s-\sigma-1) + \eps +\alpha(s-\frac{3}{2})} N_{(3)} N_{(4)}^{-\frac{1}{2}} p^{\frac{2-\alpha}{2}}, \hspace{0.4cm} \textnormal{if $(N_{(1)},N_{(2)}) \notin \{ P_1,...,P_5\}^2$}
        \end{cases}
    \end{equation}
where the constant depends on $s,R,\sigma$ and $\eps$. This is the deterministic estimate we will work with later in the proof. We stress the fact that \eqref{Q deterministic estimate} is true for $\frkQ$ and also for $\frkQ^{(D)}$ and $\frkQ^{(W)}$ (with the exact same proof).\\ 

\underline{Step 2, Estimates for $\cQ^{(D)}$ and $\cQ^{(W)}$ :} \\

\textbullet Let us begin with the estimate for $\cQ^{(D)}$. Recall that in $\cQ^{(D)}$ we sum over the set $\cI_D$ defined in \eqref{I_D}. And, the constraints in $\cI_D$ imply that the dyadic integers (from the dyadic decomposition) must satisfy one of the following conditions :
\begin{align*}
    (N_{(1)},N_{(2)}) &\in \{P_1,...,P_5\}^2 &  &\textbf{or} & N_{(3)} &\lesssim N_{(1)}^{1-\delta_0} & &\textbf{or} & N_{(4)} &\gtrsim N_{(3)}^{\delta_0}
\end{align*}

In the first situation, when $(N_{(1)},N_{(2)}) \in \{P_1,...,P_5\}^2$, \eqref{Q deterministic estimate} 
(with $\frkQ^{(D)}$ instead of $\frkQ$) yields:
\begin{equation*}
    \norm{\1_{\{\cC(u) \leq R\}} \frkQ^{(D)}(M_1,...,P_5)}_{\L^p(d\mu_s)} \lesssim  N_{(1)}^{2(s-\sigma) - \frac{3}{2} + \eps +\alpha(s-\frac{3}{2})} p^{\frac{2-\alpha}{2}}
\end{equation*}

And, the exponent of $N_{(1)}$ satisfy :
\begin{equation*}
        \begin{cases}
            2(s-\sigma) - \frac{3}{2} +\eps+\alpha(s-\frac{3}{2}) > -\frac{1}{2} \\
            2(s-\sigma) - \frac{3}{2}+\eps+\alpha(s-\frac{3}{2}) \lra -\frac{1}{2} \hspace{0.2cm} \text{as} \begin{cases}
                \eps \ra 0 \\
                \alpha \ra 0 \\
                \sigma \ra s -\frac{1}{2}
            \end{cases}
        \end{cases}
\end{equation*}
Thus, for $\eps$ and $\alpha$ close enough to $0$ and $\sigma$ close enough to $s-\frac{1}{2}$, we have 
\begin{equation*}
    \norm{\1_{\{\cC(u) \leq R\}} \frkQ^{(D)}(M_1,...,P_5)}_{\L^p(d\mu_s)} \lesssim  N_{(1)}^- p^{\frac{2-\alpha}{2}}
\end{equation*}

Now, in the second situation when $(N_{(1},N_{(2)}) \notin \{P_1,...,P_5\}^2$ \textbf{and :} $N_{(3)} \lesssim N_{(1)}^{1-\delta_0}$ \textbf{or}  $N_{(4)} \gtrsim N_{(3)}^{\delta_0}$ , \eqref{Q deterministic estimate} yields :
\begin{equation*}
    \norm{\1_{\{\cC(u) \leq R\}} \frkQ^{(D)}(M_1,...,P_5)}_{\L^p(d\mu_s)} \lesssim  N_{(1)}^{2(s-\sigma -1) + \eps +\alpha(s-\frac{3}{2})} N_{(3)} N_{(4)}^{-\frac{1}{2}}p^{\frac{2-\alpha}{2}}
\end{equation*}

And since the exponent of $N_{(1)}$ satisfy :
 \begin{equation*}
        \begin{cases}
            2(s-\sigma-1)+\eps+\alpha(s-\frac{3}{2}) > -1 \\
            2(s-\sigma-1)+\eps+\alpha(s-\frac{3}{2}) \lra -1 \hspace{0.2cm} \text{as} \begin{cases}
                \eps \ra 0 \\
                \alpha \ra 0 \\
                \sigma \ra s -\frac{1}{2}
            \end{cases}
        \end{cases}
    \end{equation*}
we deduce that for a fixed $\delta_0 \in (0,1)$ and for $\eps$ and $\alpha$ close enough to $0$ and $\sigma$ close enough to $s-\frac{1}{2}$, we have again
\begin{equation*}
    \norm{\1_{\{\cC(u) \leq R\}} \frkQ^{(D)}(M_1,...,P_5)}_{\L^p(d\mu_s)} \lesssim  N_{(1)}^- p^{\frac{2-\alpha}{2}}
\end{equation*}

Finally, summing over the dyadic integers $M_1,...,M_6,P_1,...,P_5$, this estimate for both situations leads to
\begin{equation*}
    \norm{\1_{\{\cC(u) \leq R\}} \cQ^{(D)}}_{\L^p(d\mu_s)} \lesssim p^{\frac{2-\alpha}{2}}
\end{equation*}
which is the desired estimate. \\

\textbullet Next, in order to finish the proof of Lemma~\ref{lem Q1,Q2 estimate}, we need the same estimate for the contribution $\cQ^{(W)}$. For this term, while the estimate \eqref{Q deterministic estimate} is not conclusive, we are in a situation where we can make use of Wiener chaos. Recall that in $\cQ^{(W)}$, we sum over the set $\cI_W$ defined in \eqref{I_W}. Even though the following method is very similar to the one used for estimating $\cR^{(W)}$ in Section~\ref{section Estimates for the weight of the weighted Gaussian measures} (see the proof of Lemma~\ref{lem R_s,N L^p estimate}), we re-perform the analysis in detail. \\

As usual, we start by decomposing $\cQ^{(W)}$ as\footnote{For the analysis below, we need to keep the complex conjugation bars, so here we don't use $\frkQ^{(W)}$} $\cQ^{(W)} = \sum_{M_1,...,M_6,P1,...,P_5} \cQ^{(W)}_{M_1,...,M_6,P1,...,P_5}$, where :
\begin{equation*}
    \cQ^{(W)}_{M_1,...,M_6,P1,...,P_5} := \sum_{\cI_W}\frac{\psi_{2s}(\vec{k})}{ \Omega(\vec{k})} w_{p_1}\cjg{w_{p_2}}...w_{p_5}\cjg{w_{k_2}}...\cjg{w_{k_6}} \left(\prod_{j=1}^6 \1_{|k_j|\sim M_j} \right) \left(\prod_{j=1}^5 \1_{p_j \sim P_j}\right)
\end{equation*}
Henceforth, we denote $w_{k_j}^{M_j} := \1_{|k_j|\sim N_j} w_{k_j}$ and $w_{p_j}^{P_j} := \1_{|p_j|\sim P_j} w_{p_j}$ for better readability. \\

Without loss of generality, we only prove the estimate for the contribution in $\cQ^{(W)}$ where the three highest frequencies are $k_2,k_3$ and $k_4$, because the other cases are identical. \\
    Again we denote $\cB_{\ll N_{(3)}}$ the $\sigma$-algebra generated by Gaussians $\left( g_k \right)_{|k|\leq N_{(3)}/100}$. Since we only need to consider the contribution when $N_{(1)}$ is large, we can assume that $N_{(1)}$ is sufficiently large to ensure $N_{(4)}\leq N_{(3)}/100$. This follows from the constraints in $\cI_W$ which imply $N_{(3)} \gtrsim N_{(1)}^{1-\delta_0}$ and $N_{(4)} \lesssim N_{(3)}^{\delta_0}$. Thus, for $N_{(1)}$ large enough, we have $N_{(3)}^{\delta_0} \ll N_{(3)}$, ensuring that $N_{(4)}\leq N_{(3)}/100$. \\
 As a consequence, we have that the random variables :
    \begin{equation*}
        \begin{split}
        & w_{k_2}^{M_2}, w_{k_3}^{M_3}, w_{k_4}^{M_4} \hspace{0.2cm} \textnormal{are independent of $\cB_{\ll N_{(3)}}$}, \\
        & w_{k_5}^{M_5}, w_{k_6}^{M_6},w_{p_1}^{P_1},w_{p_2}^{P_2},w_{p_3}^{P_3},w_{p_4}^{P_4},w_{p_5}^{P_5} \hspace{0.2cm} \textnormal{are $\cB_{\ll N_{(3)}}$ mesurable}.
        \end{split}
    \end{equation*}
    Now, identically to \eqref{R L^p less than L^p_cond Linfty}, we have
    \begin{equation}\label{Q L^p less than L^p_cond Linfty}
            \norm{\1_{\{ \cC(u) \leq R \}} \cQ_{M_1,...,M_6,P_1,...,P_5}^{(W)}}_{\L^p(d\mu_s)} \leq  \Big\| \big\| \cQ_{M_1,...,M_6,P_1,...,P_5}^{(W)}\big\|_{\L^p(d\mu_s|\cB_{\ll N_{(3)}})} \cdot \1_{ B_R^{H^1}}(P_{N_{(3)/100}}u)\Big\|_{\L^\infty(d\mu_s)}
    \end{equation}
    
    And, the conditional Wiener-chaos estimate from Lemma~\ref{lem Wiener chaos} (with $m=3$), followed by  Lemma~\ref{L2 estimate with orthogonality} combined with the absence of paring (see Paragraph \ref{absence of pairing}), allows us to obtain : 
    \begin{equation}\label{Q application Wiener chaos}
        \begin{split}
            &\norm{ \cQ_{M_1,...,M_6,P_1,...,P_5}^{(W)}}_{\L^p(d\mu_s|\cB_{\ll N_{(3)}})}  \lesssim p^{\frac{3}{2}} \norm{ \cQ_{M_1,...,M_6,P_1,...,P_5}^{(W)}}_{\L^2(d\mu_s|\cB_{\ll N_{(3)}})} \\
            & \lesssim p^{\frac{3}{2}} (N_{(1)} N_{(2)} N_{(3)})^{-s} \bigg( \sum_{k_2,k_3,k_4} \Big| \sum_{\substack{k_1,k_5,k_6 \\ p_1,p_2,p_3,p_4,p_5}} C(\vec{k},\vec{p}) \cdot \frac{\psi_{2s}(\vec{k})}{\Omega(\vec{k})}w_{p_1}^{P_1}\cjg{w_{p_2}^{P_2}}w_{p_3}^{P_3}\cjg{w_{p_4}^{P_4}}w_{p_5}^{P_5}w_{k_5}^{M_5}\cjg{w_{k_6}^{M_6}}  \Big|^2 \bigg)^{\frac{1}{2}}
        \end{split}
    \end{equation}
    where we gathered all the constraints into the term :
    \begin{equation*}
        C(\vec{k},\vec{p}) := \1_{\linc} \cdot \1_{\lincba} \cdot \1_{\Omgnz} \cdot \prod_{j=1}^6 \1_{|k_j| \sim M_j} \prod_{j=1}^5 \1_{|p_j| \sim P_j}
    \end{equation*}
    Next, by Cauchy-Schwarz,
    \begin{equation*}
        \begin{split}
        &\sum_{k_2,k_3,k_4}  \bigg|\sum_{\substack{k_1,k_5,k_6 \\ p_1,p_2,p_3,p_4,p_5}} C(\vec{k},\vec{p}) \cdot\frac{\psi_{2s}(\vec{k})}{\Omega(\vec{k})}w_{p_1}^{P_1}...w_{p_5}^{P_5}w_{k_5}^{M_5}\cjg{w_{k_6}^{M_6}} \bigg|^2 \\
        & \lesssim  \sum_{k_2,k_3,k_4} \bigg( \sum_{\substack{k_1,k_5,k_6 \\ p_1,p_2,p_2,p_4,p_5}} C(\vec{k},\vec{p}) \cdot \Big| \frac{\psi_{2s}(\vec{k})}{\Omega(\vec{k})} \Big|^2 |w_{p_1}^{P_1}...w_{p_5}^{P_5}w_{k_5}^{M_5}|^2 \bigg) \bigg( \sum_{\substack{k_1,k_5,k_6 \\ p_1,p_2,p_2,p_4,p_5}} C(\vec{k},\vec{p}) |w_{k_6}^{M_6}|^2 \bigg) 
        \end{split}
    \end{equation*}
    From Lemma~\ref{lemma psi estimate} we get $\left| \frac{\psi_{2s}(\vec{k})}{\Omega(\vec{k})} \right| \lesssim |k_{(1)}|^{2s-2}(1+\frac{|k_{(3)}|^2}{|\Omega(\vec{k})|}) \lesssim |k_{(1)}|^{2s-2} |k_{(3)}|^2 \lesssim N_{(1)}^{2s-2} N_{(3)}^2$. Then, on the one hand:
    \begin{equation*}
        \begin{split}
            \sum_{\substack{k_1,k_5,k_6 \\ p_1,p_2,p_2,p_4,p_5}} C(\vec{k},\vec{p}) \cdot & \Big| \frac{\psi_{2s}(\vec{k})}{\Omega(\vec{k})} \Big|^2 |w_{p_1}^{P_1}...w_{p_5}^{P_5}w_{k_5}^{M_5}|^2  \lesssim (N_{(1)}^{2s-2} N_{(3)}^2)^2  \sum_{\substack{k_1,k_5,k_6 \\ p_1,p_2,p_2,p_4,p_5}} C(\vec{k},\vec{p}) \cdot |w_{p_1}^{P_1}...w_{p_5}^{P_5}w_{k_5}^{M_5}|^2 \\
            & \lesssim (N_{(1)}^{2s-2} N_{(3)}^2)^2 \sum_{\substack{k_5 \\ p_1,p_2,p_2,p_4,p_5}} |w_{p_1}^{P_1}...w_{p_5}^{P_5}w_{k_5}^{M_5}|^2 \cdot \underbrace{\sum_{k_1,k_6} \1_{k_6=k_1-k_2...+k_5} \cdot \1_{\lincba}}_{\leq 1} \\
            & \lesssim (N_{(1)}^{2s-2} N_{(3)}^2 \norm{w^{P_1}}_{\L^2}...\norm{w^{P_5}}_{\L^2}\norm{w^{M_5}}_{\L^2})^2 \\
            & \lesssim \left(N_{(1)}^{2s-2} N_{(3)}^2 (P_1...P_5 M_5)^{-1}\norm{w^{P_1}}_{H^1}...\norm{w^{P_5}}_{H^1}\norm{w^{M_5}}_{H^1}\right)^2
        \end{split}
    \end{equation*}
    and on the other hand,
    \begin{equation*}
            \sum_{\substack{k_1,k_2,k_3,k_4,k_5,k_6 \\ p_1,p_2,p_2,p_4,p_5}} C(\vec{k},\vec{p}) |w_{k_6}^{N_6}|^2 = \sum_{k_6} |w_{k_6}^{N_6}|^2 \cdot \sum_{\substack{k_1,k_2,k_3,k_4,k_5\\ p_1,p_2,p_2,p_4,p_5}} C(\vec{k},\vec{p})
            \lesssim N_{(2)}N_{(3)}P_1...P_5 M_5 \cdot (M_6^{-1}\norm{w^{M_6}}_{H^1})^2
    \end{equation*}
    where we used the counting bound from Lemma~\ref{lem counting bound}. We deduce from the two inequalities above that
    \begin{equation*}
    \begin{split}
        \biggl( \sum_{k_2,k_3,k_4} \big| \sum_{\substack{k_1,k_5,k_6 \\ p_1,p_2,p_3,p_4,p_5}} &C(\vec{k},\vec{p}) \cdot \frac{\psi_{2s}(\vec{k})}{\Omega(\vec{k})}w_{p_1}^{P_1}\cjg{w_{p_2}^{P_2}}w_{p_3}^{P_3}\cjg{w_{p_4}^{P_4}}w_{p_5}^{P_5}w_{k_5}^{M_5}\cjg{w_{k_6}^{M_6}}  \big|^2 \biggr)^{\frac{1}{2}}  \\
        & \lesssim N_{(1)}^{2s-2} N_{(3)}^2 N_{(1)}^{\frac{1}{2}} N_{(3)}^{\frac{1}{2}}\norm{w^{P_1}}_{H^1}...\norm{w^{P_5}}_{H^1}\norm{w^{M_5}}_{H^1} \norm{w^{M_6}}_{H^1} \\
        & \lesssim N_{(1)}^{2s-\frac{3}{2}} N_{(3)}^{\frac{5}{2}} \norm{w^{P_1}}_{H^1}...\norm{w^{P_5}}_{H^1}\norm{w^{M_5}}_{H^1} \norm{w^{M_6}}_{H^1} 
    \end{split}
    \end{equation*}
    Coming back to \eqref{Q application Wiener chaos}, we deduce that 
    \begin{equation*}
         \norm{ \cQ_{M_1,...,M_6,P_1,...,P_5}^{(W)}}_{\L^p(d\mu_s|\cB_{\ll N_{(3)}})} \lesssim p^{\frac{3}{2}} N_{(1)}^{-\frac{3}{2}} N_{(3)}^{\frac{5}{2}-s} \norm{w^{P_1}}_{H^1}...\norm{w^{P_5}}_{H^1}\norm{w^{M_5}}_{H^1} \norm{w^{M_6}}_{H^1}  
    \end{equation*}
    And plugging this into \eqref{Q L^p less than L^p_cond Linfty}, we obtain :
    \begin{equation*}
        \norm{\1_{\{ \cC(u) \leq R \}} \cQ_{M_1,...,M_6,P_1,...,P_5}^{(W)}}_{\L^p(d\mu_s)} \lesssim_R p^{\frac{3}{2}}  N_{(1)}^{-\frac{3}{2}} N_{(3)}^{\frac{5}{2}-s}
    \end{equation*}
    Interpolating the above inequality with \eqref{Q deterministic estimate} (more precisely \eqref{Q deterministic estimate} with $\frkQ^{(W)}$ instead of $\frkQ$), we can conclude in the same way as what we did with $\cR^{(W)}$ in the proof of Lemma~\ref{lem R_s,N L^p estimate} (in particular using again Lemma~\ref{lem theta exponent < 1 on p and negative exponent on N_{(1)}}). 
    \end{proof}

In order to complete the proof of Lemma~\ref{lem Q1,Q2 estimate}, we provide in the next paragraph a proof of the technical Lemma~\ref{lem M3^2 Mj^-1 P1^-1 Pj^-1/2}.\\

\paragraph{\textbf{Proof of Lemma~\ref{lem M3^2 Mj^-1 P1^-1 Pj^-1/2}}}

\begin{proof}[Proof of Lemma~\ref{lem M3^2 Mj^-1 P1^-1 Pj^-1/2}]Recall that we use Notations \ref{notations Q estimate}. \\
\textbullet Assume first that $(N_{(1)},N_{(2)}) \in \{ P_1,...,P_5\}^2$. Then, $P_{(1)}^{-1} P_{(2)}^{-\frac{1}{2}} \lesssim N_{(1)}^{-\frac{3}{2}}$. Moreover, we have
\begin{equation*}
    M_{(3)}^{2} (M_2...M_6)^{-1} \leq M_{(3)}^{2} (M_{(2)}M_{(3)}...M_{(6)})^{-1} \leq (M_{(4)}M_{(5)}M_{(6)})^{-1} \lesssim 1
\end{equation*}
Combining these two inequalities, we obtain 
\begin{equation*}
     M_{(3)}^{2} (M_2...M_6)^{-1} P_{(1)}^{-1}\left( P_{(2)} P_{(3)} P_{(4)} P_{(5)}\right)^{-\frac{1}{2}} \lesssim  N_{(1)}^{-\frac{3}{2}}(P_{(3)}P_{(4)}P_{(5)})^{-\frac{1}{2}} \lesssim  N_{(1)}^{-\frac{3}{2}}
\end{equation*}
which is the desired estimate. \bigskip

\textbullet Now, assume that $(N_{(1)},N_{(2)}) \notin \{ P_1,...,P_5\}^2$. It means that :
\begin{equation*}
\begin{split}
    (N_{(1)},N_{(2)}) \in \{M_2,...,M_6\}^2 \hspace{0.2cm} & \textbf{or} 
         \left( \hspace{0.2cm} N_{(1)} \in \{M_2,...,M_6\} \hspace{0.2cm} \textbf{and} \hspace{0.2cm} N_{(2)} \in \{P_1,...,P_5\} \hspace{0.2cm} \right) \hspace{0.2cm} \\
         & \textbf{or} \left( \hspace{0.2cm}
         N_{(2)} \in \{M_2,...,M_6\} \hspace{0.2cm} \textbf{and} \hspace{0.2cm} N_{(1)} \in \{P_1,...,P_5\} \hspace{0.2cm} \right)
\end{split}
\end{equation*}
-- Fristly, we suppose that $(N_{(1)},N_{(2)}) \in \{M_2,...,M_6\}^2$. Then, we note that $N_{(3)} \in \{M_2,...,M_6 \}$ \textbf{or} $ P_{(1)} = N_{(3)}$. And also,
\begin{equation*}
    N_{(4)} \in \{M_2,...,M_6\}  \hspace{0.4cm} \textbf{or}  \hspace{0.4cm} N_{(4)} \in \{P_1,...,P_5\}
\end{equation*}
Considering these facts (and still the fact that $N_{(1)} \sim N_{(2)}$), we deduce that
\begin{equation*}
     M_{(3)}^{2} (M_2...M_6)^{-1} P_{(1)}^{-1}\left( P_{(2)} P_{(3)} P_{(4)} P_{(5)}\right)^{-\frac{1}{2}} \lesssim M_{(3)}^2 N_{(1)}^{-2}N_{(3)}^{-1}N_{(4)}^{-\frac{1}{2}}
\end{equation*}
Using then the fact that $M_{(3)} \lesssim N_{(3)}$, we have 
\begin{equation*}
     M_{(3)}^{2} (M_2...M_6)^{-1} P_{(1)}^{-1}\left( P_{(2)} P_{(3)} P_{(4)} P_{(5)}\right)^{-\frac{1}{2}} \lesssim  N_{(1)}^{-2}N_{(3)} N_{(4)}^{-\frac{1}{2}}
\end{equation*}
which is the desired inequality. \\
--Secondly, we suppose that $N_{(1)} \in \{M_2,...,M_6\} \hspace{0.2cm} \textbf{and} \hspace{0.2cm} N_{(2)} \in \{P_1,...,P_5\}$. We consider two cases. \\
$a)$ The first one is when $M_1 \in \{ M_{(1)},M_{(2)},M_{(3)}\}$. In that case, $M_{(4)},M_{(5)},M_{(6)},P_{(2)},...,P_{(5)}$ are seven indices out of the ten indices $N_j$ (that would not have been the case if $M_{(1)} \in \{M_{(4)},M_{(5)},M_{(6)}\}$, because according to Notations \ref{notations Q estimate}, $M_1$ is not one of the $N_j$). As a consequence, 
\begin{equation*}
   \max \{M_{(4)},M_{(5)},M_{(6)},P_{(2)},...,P_{(5)}\} \geq N_{(4)} \hspace{0.3cm}  \textnormal{so,} \hspace{0.3cm}(M_{(4)}M_{(5)}M_{(6)})^{-1}(P_{(2)}...P_{(5)})^{-\frac{1}{2}} \lesssim N_{(4)}^{-\frac{1}{2}}
\end{equation*}
On the other hand, using the fact that $M_{(1)} \sim M_{(2)}$, we have 
\begin{equation*}
     M_{(3)}^{2} (M_2 M_3...M_6)^{-1} \lesssim  M_{(3)}^{2} (M_{(2)}M_{(3)}...M_{(6)})^{-1} \lesssim M_{(1)}^{-1} M_{(3)} (M_{(4)}M_{(5)}M_{(6)})^{-1}
\end{equation*}
Combining these two inequalities and recalling that $M_{(1)},P_{(1)} \sim N_{(1)}$, we obtain
\begin{equation*}
    M_{(3)}^{2} (M_2...M_6)^{-1} P_{(1)}^{-1}\left( P_{(2)} P_{(3)} P_{(4)} P_{(5)}\right)^{-\frac{1}{2}} \lesssim N_{(1)}^{-2} M_{(3)}N_{(4)}^{-\frac{1}{2}} \lesssim N_{(1)}^{-2} N_{(3)}N_{(4)}^{-\frac{1}{2}}
\end{equation*}
$b)$ The second case we consider is when $M_1 \in \{ M_{(4)},M_{(5)},M_{(6)}\}$. In that case, $\{ M_{(1)},M_{(2)},M_{(3)}\} \subset \{M_2,M_3,...,M_6 \}$. Thus,
\begin{equation*}
    (M_2...M_6)^{-1} \lesssim (M_{(1)}M_{(2)}M_{(3)}M_{(5)}M_{(6)})^{-1}
\end{equation*}
so,
\begin{equation*}
    M_{(3)}^2(M_2...M_6)^{-1} \lesssim N_{(1)}^{-2}M_{(3)}
\end{equation*}
and,
\begin{equation*}
     M_{(3)}^{2} (M_2...M_6)^{-1} P_{(1)}^{-1}\left( P_{(2)} P_{(3)} P_{(4)} P_{(5)}\right)^{-\frac{1}{2}} \lesssim N_{(1)}^{-3}M_{(3)} \lesssim N_{(1)}^{-3}N_{(3)}
\end{equation*}
which is an even better estimate than the one we desired. \\
--Finally, the case  $N_{(2)} \in \{M_2,...,M_6\} \hspace{0.2cm} \textbf{and} \hspace{0.2cm} N_{(1)} \in \{P_1,...,P_5\}$ is identical to the previous one. \\
Hence, the proof of Lemma~\ref{lem M3^2 Mj^-1 P1^-1 Pj^-1/2} is finished.

\end{proof}

\appendix 

\section{Construction and properties of the flow and the truncated flow}\label{appendix Construction and properties of the flow and the truncated flow}
In this appendix, we study the local and global wellposedness in $H^{\sigma}(\T)$, $\sigma \geq 1$, for both equation \eqref{NLS} and truncated equation \eqref{truncated equation}. We also study approximation properties of the truncated flow along with its structure.

\subsection{Local and global wellposedness}
We begin this paragraph by recalling some facts about the space :
\begin{equation*}
    W(\T) := \{u \in \cD'(\T) : \hsp \underset{k \in \Z}{\sum} \hsp|\widehat{u}(k)| < +\infty\}
\end{equation*}
of absolutely convergent Fourier series, called the \textit{Wiener algebra}, and equipped with the norm 
\begin{equation*}
    \norm{u}_{W(\T)} := \underset{k \in \Z}{\sum} \hsp |\widehat{u}(k)|
\end{equation*} 
\begin{enumerate}
    \item $W(\T)$ is a Banach algebra with the estimate
    \begin{equation*}
        \norm{uv}_{W(\T)} \leq \norm{u}_{W(\T)} \norm{v}_{W(\T)}
    \end{equation*}
    \item For every $\sigma \geq 0$, $W(\T) \cap \hsigt$ is an algebra. Furthermore, there exists $C_{\sigma}>0$ such that 
    \begin{equation*}
        \norm{uv}_{\hsig} \leq C_{\sigma} (\norm{u}_{\hsig} \norm{v}_{W(\T)} + \norm{v}_{\hsig} \norm{u}_{W(\T)})
    \end{equation*}
    In particular, if $\sigma > \frac{1}{2}$, $\hsigt$ is a Banach algebra and there exists $C'_{\sigma}>0$ such that
    \begin{equation*}
        \norm{uv}_{\hsig} \leq C'_{\sigma} \norm{u}_{\hsig} \norm{v}_{\hsig}
    \end{equation*}
\end{enumerate}

We are now ready to prove the following local existence theorem :

\begin{thm}\label{thm local wellposedness}
    Let $\sigma \geq 1$. Let $t_0 \in \R$. Both equation \eqref{NLS} and truncated equation \eqref{truncated equation} are locally well-posed in $\hsigt$, in the sense that for any $R_0>0$, there exists $T_0>0$ such that for every $\norm{u}_{\hsig} \leq R_0$ there exists a unique $u \in \cC([ t_0-T_0,t_0+T_0],\hsigt)$ that satisfies the Duhamel formula :
    \begin{equation}
        u(t) = e^{i(t-t_0)\p_x^2}u_0 -i \int_{t_0}^t e^{i(t-\tau)\p_x^2}(|u(\tau)|^4u(\tau))d\tau
    \end{equation}
    for all $|t-t_0|\leq T_0$. And, for every $N \in \N$, there exists a unique $v_N \in \cC([ t_0-T_0,t_0+T_0],\hsigt) $ that satisfies the Duhamel formula :
    \begin{equation}
        v_N(t) = e^{i(t-t_0)\p_x^2}u_0 -i \int_{t_0}^t e^{i(t-\tau)\p_x^2}\Pi_N(|\Pi_N v_N(\tau)|^4 \Pi_N v_N(\tau))d\tau
    \end{equation}
    for all $|t-t_0|\leq T_0$.
\end{thm}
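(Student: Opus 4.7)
The plan is to run a standard Picard iteration based on the Duhamel formula, exploiting the fact that $\sigma\geq 1>\tfrac{1}{2}$ so that $H^{\sigma}(\T)$ is a Banach algebra (property (2) recalled just before the statement). The linear propagator $e^{it\p_x^2}$ is a unitary operator on $H^{\sigma}(\T)$, so one only has to control the nonlinear term through the algebra estimate
\begin{equation*}
\norm{|u|^4 u}_{\hsig} \leq C_\sigma' \norm{u}_{\hsig}^{5}.
\end{equation*}

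For the NLS equation, fix $R_0>0$ and consider, for a constant $T_0>0$ to be chosen, the complete metric space
\begin{equation*}
B_{T_0,R_0} := \{u\in \cC([t_0-T_0,t_0+T_0],\hsigt):\ \sup_{|t-t_0|\leq T_0}\norm{u(t)}_{\hsig}\leq 2R_0\}
\end{equation*}
equipped with the $L^\infty_t H^\sigma_x$ norm, and the map
\begin{equation*}
\Phi_{u_0}(u)(t) := e^{i(t-t_0)\p_x^2}u_0 - i\int_{t_0}^{t} e^{i(t-\tau)\p_x^2}\bigl(|u(\tau)|^4 u(\tau)\bigr)d\tau.
\end{equation*}
Using the unitarity of $e^{it\p_x^2}$ on $\hsigt$ and the algebra estimate, one obtains
\begin{equation*}
\sup_{|t-t_0|\leq T_0}\norm{\Phi_{u_0}(u)(t)}_{\hsig} \leq R_0 + C_\sigma T_0 (2R_0)^5,
\end{equation*}
so that $\Phi_{u_0}$ maps $B_{T_0,R_0}$ into itself as soon as $T_0 \leq (2C_\sigma)^{-1}(2R_0)^{-4}$. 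An analogous estimate, combined with the factorization $|u|^4 u-|v|^4 v = P(u,\cjg{u},v,\cjg{v})(u-v) + \tld P(u,\cjg{u},v,\cjg{v})\cjg{(u-v)}$ for polynomials $P,\tld P$ of degree $4$ and the algebra property, yields
\begin{equation*}
\sup_{|t-t_0|\leq T_0}\norm{\Phi_{u_0}(u)(t)-\Phi_{u_0}(v)(t)}_{\hsig}\leq C_\sigma' T_0 R_0^4 \sup_{|t-t_0|\leq T_0}\norm{u(t)-v(t)}_{\hsig},
\end{equation*}
so that, further restricting $T_0$ by $T_0 \leq (2C_\sigma' R_0^4)^{-1}$, the map $\Phi_{u_0}$ is a contraction on $B_{T_0,R_0}$. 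Banach's fixed-point theorem then produces a unique solution of the Duhamel formula.

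For the truncated equation, one argues in exactly the same way with the map
\begin{equation*}
\Phi^{(N)}_{u_0}(v)(t) := e^{i(t-t_0)\p_x^2}u_0 - i\int_{t_0}^{t} e^{i(t-\tau)\p_x^2}\Pi_N\!\bigl(|\Pi_N v(\tau)|^4 \Pi_N v(\tau)\bigr)d\tau.
\end{equation*}
Since $\Pi_N$ is an orthogonal projector on $L^2(\T)$ commuting with $\langle\nabla\rangle^\sigma$, it satisfies $\norm{\Pi_N}_{\hsig\to\hsig}\leq 1$ uniformly in $N$. Consequently, the same algebra estimate and the same choice of $T_0$ work, and in particular $T_0$ can be chosen uniformly in $N\in\N$. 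There is no genuine obstacle: the argument is entirely standard subcritical well-posedness, and the only point worth underlining is the $N$-independence of the lifespan, which will later be convenient when comparing $\Phi(t)$ and $\Phi_N(t)$.
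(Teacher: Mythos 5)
Your proposal is correct and follows essentially the same route as the paper: a Banach fixed point argument for the Duhamel map on a ball of $\cC([t_0-T_0,t_0+T_0],\hsigt)$, using the algebra property of $\hsigt$ for $\sigma\geq 1$, the unitarity of $e^{it\p_x^2}$, and the uniform bound $\norm{\Pi_N}_{\hsig\ra\hsig}\leq 1$ to get an $N$-independent lifespan. The one point you gloss over is that the contraction only yields uniqueness within the ball, whereas the statement asserts uniqueness in all of $\cC([t_0-T_0,t_0+T_0],\hsigt)$; the paper also only remarks that this upgrade follows from classical considerations, so this is not a substantive gap.
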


\begin{rem}
    It is important to notice that the existence time $T_0 >0$ is the same for both NLS and truncated equation and does not depend on the integer $N \in \N$. 
\end{rem}

\begin{proof}
To show Theorem~\ref{thm local wellposedness}, we apply a fixed point argument. Fix $R_0>0$ and $|| u_0||_{\hsig} \leq R_0 $. For $T>0$, we denote 
    \begin{equation*}
        X_T := \cC([t_0-T,t_0,+T],\hsigt)
    \end{equation*}
which is a Banach space when endowed with the sup norm 
\begin{equation*}
    \norm{u}_{X_T} := \underset{\tau \in [t_0-T,t_0,+T]}{\textnormal{sup}} \norm{u(\tau)}_{\hsig} 
\end{equation*} 
For every $N\in \N \cup \{ \infty\}$, let us consider the well-defined map 
    \begin{equation*}
        \begin{split}
           \Gamma_N : \hspace{0.1cm} & X_T \longrightarrow X_T \\
            & u \longmapsto e^{i(t-t_0)\p_x^2}u_0 - i \int_{t_0}^t e^{i(t -\tau)\p_x^2}\Pi_N (|\Pi_N u(\tau)|^4 \Pi_N u(\tau)) d\tau
        \end{split}
    \end{equation*}
where by convention $\Pi_{\infty}= id$. We recall the crucial point that $||\Pi_N u ||_{\hsig} \leq \norm{u}_{\hsig}$. Thanks to that point, every estimate below are uniform in $N$ so that every constant below does not depend on N.\\

Using now the facts that $\hsigt$ is an algebra (since $\sigma \geq 1 > \frac{1}{2}$) and $e^{i(t-t_0)\p_x^2}$ is a linear isometry on $\hsigt$, we have for some constant $C_0 > 0$,
\begin{equation}\label{gamma u estimate lwp}
    \norm{\Gamma_N u}_{X_T} \leq \norm{u_0}_{\hsig} + C_0 T \norm{u}_{X_T}^5 
\end{equation}

In addition, using the multilinearity of $(u_1,u_2,u_3,u_4,u_5) \longmapsto u_1 \cjg{u_2} u_3 \cjg{u_4}u_5 $ we have for some constant $C_1 > 0$,
\begin{equation} \label{gamma u - gamma v estimate lwp}
    \begin{split}
    \norm{\Gamma_N u - \Gamma_N v}_{X_T} & = \lnorm \int_{t_0}^t e^{i(t -\tau)\p_x^2} \Pi_N(|\Pi_Nu(\tau)|^4\Pi_Nu(\tau)- |\Pi_Nv(\tau)|^4 \Pi_Nv(\tau))d\tau \rnorm_{X_T} \\
    & \leq C_1 T (\norm{u}^4_{X_T} + \norm{v}^4_{X_T})\norm{u-v}_{X_T}
    \end{split}
\end{equation}
Let $R > 2R_0$ (for example $R= 1 + 2R_0$) and $T_0 := \frac{1}{3 \text{max}(C_0,C_1) R^4}$. Fix $0<T \leq T_0$ and denote by $\cjg{B}_R(T)$ the closed centered ball in $X_T$ of radius $R$. We get from \eqref{gamma u estimate lwp} and \eqref{gamma u - gamma v estimate lwp} and our choice of the parameters $R$ and $T_0$ that for any $u,v \in \cjg{B}_R(T)$,
\begin{equation}
  ||\Gamma_N u||_{\cjg{B}_R(T)} \leq \frac{R}{2} + \frac{R}{2} \leq R 
\end{equation}
and,
\begin{equation}
|| \Gamma_N u - \Gamma_N v ||_{\cjg{B}_R(T)} \leq 2C_1 T R^4  \norm{u-v}_{\cjg{B}_R(T)} \leq \frac{2}{3} \norm{u-v}_{\cjg{B}_R(T)}
\end{equation}
These two estimates imply that the map $\Gamma_N$ is a contraction from (the complete space) $\cjg{B}_R(T)$ to itself. Hence, applying the Banach's fixed point theorem to the map 
\begin{equation*}
    \Gamma_N : \cjg{B}_R(T) \longrightarrow \cjg{B}_R(T)
\end{equation*}
leads to the existence part of Theorem~\ref{thm local wellposedness}. However, through classical considerations, we can prove that the uniqueness holds in the entire space $\cC([ t_0-T,t_0+T],\hsigt)$. So the proof is completed.
\end{proof}

Putting together the local solutions from Theorem~\ref{thm local wellposedness}, we obtain the two following corollaries.

\begin{cor}\label{cor maximal solution NLS} Let $\sigma \geq1$. For every $u_0 \in \hsigt$, there exists a unique maximal solution $u \in \cC(I_{max}(u_0),\hsigt)$ of \eqref{NLS} where $I_{max}(u_0)$ is an open interval containing 0. Moreover, if $I_{max}(u_0)$ is strictly included in $\R$, then 
\begin{equation*}
    \norm{u(t)}_{\hsig} \longrightarrow +\infty
\end{equation*}
as $t \longrightarrow \p I_{max}(u_0)$.
\end{cor}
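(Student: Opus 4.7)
The plan is to deduce the corollary from Theorem~\ref{thm local wellposedness} by the standard ``maximal solution plus blow-up alternative'' argument. Concretely, I would define $I_{max}(u_0)$ as the union, over all open intervals $I \ni 0$, of the domains on which a Duhamel solution $u^{(I)} \in \cC(I,\hsigt)$ of \eqref{NLS} exists. The uniqueness part of Theorem~\ref{thm local wellposedness} (promoted from a short interval to the whole overlap by a standard connectedness/continuation argument) ensures that any two such $u^{(I_1)}, u^{(I_2)}$ coincide on $I_1\cap I_2$, so they glue into a single function $u \in \cC(I_{max}(u_0),\hsigt)$. This simultaneously provides existence and uniqueness of the maximal solution.

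Next I would verify that $I_{max}(u_0)$ is open. For any $t_0 \in I_{max}(u_0)$, applying Theorem~\ref{thm local wellposedness} at initial time $t_0$ with data $u(t_0)\in\hsigt$ yields a solution on $[t_0-T_0,t_0+T_0]$ which, by uniqueness, glues with $u$ and therefore enlarges $I_{max}(u_0)$ to contain a neighborhood of $t_0$. Hence $I_{max}(u_0)$ is indeed open.

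For the blow-up alternative I would argue by contradiction. Suppose $T_+ := \sup I_{max}(u_0) < +\infty$ but that $\hsignorm{u(t)}$ does \emph{not} tend to $+\infty$ as $t\to T_+^-$; then there exist $R_0>0$ and a sequence $t_n \nearrow T_+$ with $\hsignorm{u(t_n)} \leq R_0$. The crucial feature of Theorem~\ref{thm local wellposedness} is that the existence time $T_0=T_0(R_0)>0$ depends \emph{only} on the size $R_0$ of the initial data. Choosing $n$ so large that $T_+-t_n < T_0/2$, and solving the Cauchy problem with initial time $t_n$ and data $u(t_n)$, I obtain a solution on $[t_n-T_0,t_n+T_0]$; by uniqueness this is a prolongation of $u$ extending strictly past $T_+$, contradicting the maximality of $I_{max}(u_0)$. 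The analogous argument at $T_-:=\inf I_{max}(u_0)$ is symmetric.

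The main obstacle --- which is in fact already built into the statement of Theorem~\ref{thm local wellposedness} --- is precisely the uniformity of the local existence time with respect to the $\hsig$-norm of the data. Once this is granted, everything else (the consistent gluing, the openness of $I_{max}(u_0)$, and the blow-up alternative) reduces to a formal manipulation, and nothing new beyond the local Cauchy theory is required.
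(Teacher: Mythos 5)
Your argument is correct and is precisely the standard "maximal solution plus blow-up alternative" construction that the paper itself invokes (it gives no separate proof, merely stating that the corollaries follow by "putting together the local solutions from Theorem~\ref{thm local wellposedness}"). The key point you correctly isolate — that the local existence time depends only on the $\hsig$-size of the data — is exactly what makes the contradiction argument for the blow-up alternative work, so nothing is missing.
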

\begin{cor}\label{cor maximal solution truncated equation}
   Let $\sigma \geq1$. For every $u_0 \in \hsigt$ and every $N \in \N$, there exists a unique maximal solution $u_N \in \cC(I_{max,N}(u_0),\hsigt)$ of \eqref{truncated equation} where $I_{max,N}(u_0)$ is an open interval containing 0. Moreover, if $I_{max,N}(u_0)$ is strictly included in $\R$, then 
\begin{equation*}
    ||u_N(t)||_{\hsig} \longrightarrow +\infty
\end{equation*}
as $t \longrightarrow \p I_{max,N}(u_0)$.
\end{cor}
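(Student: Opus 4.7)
The plan is to construct the maximal solution by gluing local solutions produced by Theorem~\ref{thm local wellposedness}, and then to establish the blow-up alternative by a classical argument based on the uniformity of the local existence time in terms of the size of the initial data (an uniformity which also holds in $N$, though $N$ will be fixed throughout this proof).

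First, I would define $I_{max,N}(u_0)$ as the union of all open intervals $I \ni 0$ on which there exists a solution $u_N \in \cC(I, \hsigt)$ of \eqref{truncated equation} with $u_N(0)=u_0$. Uniqueness of two such solutions on the intersection of their intervals of definition is a straightforward consequence of the uniqueness part of Theorem~\ref{thm local wellposedness}: the set of times $t \in I_1 \cap I_2$ at which $u_N^{(1)}(t) = u_N^{(2)}(t)$ is non-empty (it contains $0$), closed by continuity, and open by reapplying the local wellposedness with $t_0 = t$ (and $R_0$ a bound for the common value). Hence the two solutions coincide on $I_1 \cap I_2$, and we can unambiguously define a maximal solution $u_N \in \cC(I_{max,N}(u_0), \hsigt)$. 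By construction $I_{max,N}(u_0)$ is open.

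Next, I would prove the blow-up alternative. Suppose that $I_{max,N}(u_0) \subsetneq \R$ has a finite right endpoint $T^* < +\infty$, and assume, towards a contradiction, that $\liminf_{t \to T^{*-}} \hsignorm{u_N(t)} < +\infty$. Pick a sequence $t_n \to T^{*-}$ such that $\hsignorm{u_N(t_n)} \leq R_0$ for some $R_0 > 0$. Invoke the local existence time $T_0 = T_0(R_0) > 0$ from Theorem~\ref{thm local wellposedness}, which depends only on $R_0$ (and not on the specific initial datum or on $N$). Choose $n$ large enough so that $t_n > T^* - T_0/2$, and apply Theorem~\ref{thm local wellposedness} with initial time $t_0 = t_n$ and initial datum $u_N(t_n)$: this yields a solution of \eqref{truncated equation} on $[t_n - T_0, t_n + T_0]$, and by uniqueness this solution coincides with $u_N$ on the overlap with $I_{max,N}(u_0)$. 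Gluing produces a solution on $I_{max,N}(u_0) \cup [t_n, t_n+T_0]$, which strictly contains $I_{max,N}(u_0)$ since $t_n + T_0 > T^* + T_0/2$. This contradicts the maximality of $I_{max,N}(u_0)$, so $\hsignorm{u_N(t)} \to +\infty$ as $t \to T^{*-}$. The case of a finite left endpoint is treated identically.

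The argument is entirely routine; the only point that requires care is the observation that the local existence time in Theorem~\ref{thm local wellposedness} depends solely on the size $R_0$ of the initial datum (and not on $N$ or on the initial time $t_0$), which is what makes the "extension past $T^*$" argument work. Since Theorem~\ref{thm local wellposedness} is explicitly formulated with this uniform time, there is no genuine obstacle.
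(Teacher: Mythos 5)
Your proposal is correct and is exactly the standard gluing-plus-blow-up-alternative argument that the paper itself invokes (the paper offers no detailed proof, merely stating that the corollaries follow by "putting together the local solutions from Theorem~\ref{thm local wellposedness}"). You correctly identify the one point that matters, namely that the local existence time in Theorem~\ref{thm local wellposedness} depends only on the $\hsigt$-size of the datum and not on $t_0$ or $N$, so nothing is missing.
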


Thus, to obtain that the solutions of equations \eqref{NLS} and \eqref{truncated equation} are global, it suffices to show that the $\hsigt$-norm of the solutions do not blow up in finite time. Fortunately, we have : 

\begin{prop}\label{prop exponential bound} Let $u_0 \in \hsigt$. There exists a constant $C_0 > 0$ only depending on $\norm{u}_{H^1(\T)}$ and $\sigma\geq 1$, such that
\begin{equation}\label{exponential bound for NLS solution}
    \norm{u(t)}_{\hsig} \leq \norm{u}_{\hsig} e^{C_0|t|}, \hsp \hsp\textnormal{for all} \hsp t \in I_{max}(u_0)
\end{equation}
    and,
\begin{equation}\label{exponential bound for truncated solution}
   ||u_N(t)||_{\hsig} \leq \norm{u}_{\hsig} e^{C_0|t|},  \hsp \hsp \textnormal{for all} \hsp t \in I_{max,N}(u_0)
\end{equation}
where $u$ is the maximal solution of \eqref{NLS} and $u_N$ is the maximal solution of \eqref{truncated equation}. Moreover, we can explicitly choose $C_0 \leq C_{\sigma} (1+ \norm{u}_{H^1(\T)})^{12}$ where $C_{\sigma} >0$ depends only on $\sigma$.
\end{prop}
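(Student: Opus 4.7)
The plan is to first bound the $H^1(\T)$-norm of the solution using the conservation laws, then to bootstrap this to an exponential $\hsigt$-bound via the Duhamel formula combined with the refined Wiener-algebra product estimate recalled at the beginning of this appendix, and finally close by Grönwall's lemma.

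For the $H^1$-bound on \eqref{NLS}, I would use that the $\L^2$-mass $M(u)$ and the Hamiltonian $H(u)$ are conserved, so that
\begin{equation*}
\norm{u(t)}_{H^1(\T)}^2 \leq M(u_0)+2H(u_0) \leq C_1(1+\norm{u_0}_{H^1(\T)})^6,
\end{equation*}
the last inequality coming from the Sobolev embedding $H^1(\T)\hookrightarrow \L^6(\T)$. This yields $\norm{u(t)}_{H^1(\T)}\leq C(1+\norm{u_0}_{H^1(\T)})^3$ on $I_{max}(u_0)$. For the truncated equation, the factorization \eqref{factorization of the truncated flow} reduces matters to treating $\Pi_N u_N$ and $\Pi_N^\perp u_N$ separately: $\Pi_N u_N$ solves a finite-dimensional Hamiltonian system on $E_N$ conserving both $H$ and the $\L^2$-mass, while $\Pi_N^\perp u_N = e^{it\p_x^2}\Pi_N^\perp u_0$ is an $H^1$-isometry. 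Adding the two contributions gives the same cubic bound $\norm{u_N(t)}_{H^1(\T)}\leq C(1+\norm{u_0}_{H^1(\T)})^3$, uniformly in $N\in\N$.

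For the nonlinear step, applying the refined product estimate in $\hsigt$ iteratively (writing $|u|^4 u = u\cdot(\cjg{u}u\cjg{u}u)$ and using that $W(\T)$ is an algebra) gives
\begin{equation*}
\norm{|u|^4 u}_{\hsig}\leq C_\sigma \norm{u}_{\hsig}\norm{u}_{W(\T)}^4.
\end{equation*}
Since $\norm{u}_{W(\T)}\leq \bigl(\sum_k\langle k\rangle^{-2}\bigr)^{1/2}\norm{u}_{H^1(\T)}\lesssim \norm{u}_{H^1(\T)}$, combining with the previous step yields
\begin{equation*}
\norm{|u(\tau)|^4 u(\tau)}_{\hsig}\leq C_\sigma (1+\norm{u_0}_{H^1(\T)})^{12}\,\norm{u(\tau)}_{\hsig},
\end{equation*}
and exactly the same estimate for $\Pi_N(|\Pi_N u_N|^4 \Pi_N u_N)$, since $\Pi_N$ has operator norm $\leq 1$ on both $\hsigt$ and $W(\T)$. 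Applying $\langle\nabla\rangle^\sigma$ to the Duhamel formula and using the $\hsigt$-isometry of $e^{i(t-\tau)\p_x^2}$ then produces, on $I_{max}(u_0)$,
\begin{equation*}
\norm{u(t)}_{\hsig}\leq \norm{u_0}_{\hsig}+C_\sigma(1+\norm{u_0}_{H^1(\T)})^{12}\left|\int_{t_0}^t \norm{u(\tau)}_{\hsig}\,d\tau\right|,
\end{equation*}
and an analogous inequality for $u_N$. Grönwall's lemma then yields \eqref{exponential bound for NLS solution} and \eqref{exponential bound for truncated solution} with $C_0 = C_\sigma(1+\norm{u_0}_{H^1(\T)})^{12}$. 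These bounds in particular preclude finite-time blow-up of the $\hsig$-norm, so the blow-up criteria of Corollaries~\ref{cor maximal solution NLS} and~\ref{cor maximal solution truncated equation} force $I_{max}(u_0)=I_{max,N}(u_0)=\R$.

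There is no serious analytical obstacle in this argument; the only point to watch is the bookkeeping of the exponent $12$, which arises as the $6$ on $1+\norm{u_0}_{H^1(\T)}$ coming from $M+2H$ and Sobolev, passed through the square root to give $3$ on $\norm{u(t)}_{H^1(\T)}$, and then multiplied by $4$ via the four factors of $\norm{u}_{W(\T)}\lesssim \norm{u}_{H^1(\T)}$ in the product estimate. Uniformity of all constants in $N$ is automatic from $\norm{\Pi_N}_{\hsig\to\hsig},\,\norm{\Pi_N}_{W(\T)\to W(\T)}\leq 1$.
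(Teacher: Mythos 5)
Your proof is correct and follows essentially the same route as the paper: Duhamel's formula, the refined Wiener-algebra product estimate $\norm{|u|^4u}_{\hsig}\lesssim_\sigma \norm{u}_{\hsig}\norm{u}_{W(\T)}^4$, the a priori bound $\norm{u(t)}_{W(\T)}\lesssim\norm{u(t)}_{H^1}\lesssim(1+\norm{u_0}_{H^1})^3$ from mass and Hamiltonian conservation, and Grönwall, yielding the same exponent $12$. Your treatment of the truncated equation's $H^1$ bound via the factorization \eqref{factorization of the truncated flow} is slightly more careful than the paper's (which simply runs the same conservation chain for $u_N$), but this is a presentational refinement rather than a different argument.
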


\begin{proof}
    We use the convention $\Pi_{\infty} = id$, $u_{\infty} = u$ and $I_{max,\infty}(u_0) = I_{max}(u_0)$. Let $N \in \N \cup \{ \infty \}$ and $t \in I_{max,N}(u_0)$. Passing to the $\hsig$-norm in the Duhamel formula
    \begin{equation*}
        u_N(t) = e^{i(t)\p_x^2}u_0 -i \int_{0}^t e^{i(t-\tau)\p_x^2}\Pi_N(|\Pi_N u_N(\tau)|^4 \Pi_N u_N(\tau))d\tau
    \end{equation*}
    we get that, for some constant $C>0$ (only depending on $\sigma$),
    \begin{equation}\label{exponential bound, grönwall}
        \begin{split}
            ||u_N(t)||_{\hsig} & \leq \norm{u}_{\hsig} + \mid \int_0^t \norm{|\Pi_N u_N(\tau)|^4 \Pi_N u_N(\tau)}_{\hsig} d\tau\mid \\
            & \leq \norm{u}_{\hsig} + C \mid \int_0^t \norm{u_N(\tau)}_{\hsig}\norm{u_N(\tau)}^4_{W(\T)} d\tau \mid 
        \end{split}
    \end{equation}
The crucial point of the proof is that $\norm{u_N(\tau)}_{W(\T)}$ is bounded by the $H^1(\T)$-norm of the initial data. Indeed, using the mass and Hamiltonian conservation 
\begin{equation*}
    \begin{split}
        \norm{u_N(\tau)}_{W(\T)} & \leq \norm{u_N(\tau)}_{H^1(\T)} \\
        & \leq (\norm{u_N(\tau)}^2_{\Ltwo} + \norm{\p_xu_N(\tau)}^2_{\Ltwo})^{\frac{1}{2}} \\
        & \lesssim \left( \norm{u_N(\tau)}^2_{\Ltwo} + \left(\frac{1}{2}\norm{\p_xu_N(\tau)}^2_{\Ltwo} + \frac{1}{6}\norm{u_N(\tau)}^6_{\Lsix}\right) \right)^{\frac{1}{2}} \\
        & \lesssim (\norm{u}^2_{\Ltwo} + \frac{1}{2}\norm{\p_xu_0}^2_{\Ltwo} + \frac{1}{6}\norm{u}^6_{\Lsix})^{\frac{1}{2}} \\
        & \leq C'_0
     \end{split}   
\end{equation*}
where we can choose $C'_0 \sim (1 + \norm{u}_{H^1(\T)})^3$, because $H^1(\T)$ embeds continuously in $\L^6(\T)$. Now, plugging this bound into \eqref{exponential bound, grönwall}, we get 
\begin{equation*}
    ||u_N(t)||_{\hsig} \leq \norm{u}_{\hsig} + C_0 \mid \int_0^t  \norm{u_N(\tau)}_{\hsig} d\tau \mid
\end{equation*}
where we can choose $C_0 \sim (1 + \norm{u}_{H^1(\T)})^{12}$. Then, applying Gronwall's inequality, we obtain 
\begin{equation*}
    ||u_N(t)||_{\hsig} \leq \norm{u}_{\hsig} e^{C_0|t|}
\end{equation*}
which is the desired bound.
\end{proof}
\begin{thm}
    For every $u_0 \in \hsigt$, each maximal  solution of \eqref{NLS} and \eqref{truncated equation} is global. In other words,
    \begin{equation*}
        I_{max}(u_0), I_{max,N}(u_0) = +\infty
    \end{equation*}
    for all $N \in \N$.
\end{thm}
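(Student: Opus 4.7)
The plan is to combine directly the two ingredients already established just above the statement: the blow-up alternative from Corollary~\ref{cor maximal solution NLS} (resp.\ Corollary~\ref{cor maximal solution truncated equation}), and the a priori exponential bound from Proposition~\ref{prop exponential bound}. Fix $u_0 \in \hsigt$ and consider first the maximal solution $u \in \cC(I_{max}(u_0),\hsigt)$ of \eqref{NLS}. I would argue by contradiction: suppose $I_{max}(u_0) \subsetneq \R$, so that at least one endpoint $T^* \in \p I_{max}(u_0)$ is finite.

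By Corollary~\ref{cor maximal solution NLS}, the blow-up criterion forces $\norm{u(t)}_{\hsig} \to +\infty$ as $t \to T^*$. However, Proposition~\ref{prop exponential bound} gives, uniformly on $I_{max}(u_0)$, the bound
\begin{equation*}
    \norm{u(t)}_{\hsig} \leq \norm{u_0}_{\hsig} \, e^{C_0 |t|}, \qquad t \in I_{max}(u_0),
\end{equation*}
where $C_0 = C_0(\sigma,\norm{u_0}_{H^1(\T)})$ is finite (here one uses crucially that the $H^1(\T)$-norm is controlled by the conserved Hamiltonian and mass). Evaluating at $t \to T^*$ yields $\norm{u(t)}_{\hsig} \leq \norm{u_0}_{\hsig} e^{C_0 |T^*|} < +\infty$, which contradicts the blow-up. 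Hence $I_{max}(u_0) = \R$. The argument for the truncated flow is word-for-word identical, replacing \eqref{exponential bound for NLS solution} by \eqref{exponential bound for truncated solution} and Corollary~\ref{cor maximal solution NLS} by Corollary~\ref{cor maximal solution truncated equation}; the key point is that Proposition~\ref{prop exponential bound} provides the bound with the same constant $C_0$ independently of $N \in \N$, so the resulting globality also holds uniformly in $N$.

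There is no real obstacle here: all the difficulty was packaged into Proposition~\ref{prop exponential bound}, which required two conservation laws (mass and Hamiltonian) together with the Sobolev embedding $H^1(\T) \hookrightarrow \L^6(\T)$ to control the Wiener-algebra norm $\norm{u_N(\tau)}_{W(\T)}$ and then close a Gronwall inequality for the $\hsigt$-norm. Once this uniform-in-$N$ bound is available, the present theorem is a one-line consequence of the blow-up alternative.
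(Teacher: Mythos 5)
Your proposal is correct and follows exactly the paper's argument: the exponential bound of Proposition~\ref{prop exponential bound} rules out finite-time blow-up of the $\hsig$-norm, so the blow-up alternatives of Corollaries~\ref{cor maximal solution NLS} and~\ref{cor maximal solution truncated equation} force globality. You have merely written out the contradiction in more detail than the paper's two-line proof.
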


\begin{proof} The estimates from Proposition~\ref{prop exponential bound} imply that the $\hsig$-norm of both solutions does not blow up in finite time. Therefore, Corollaries \ref{cor maximal solution NLS} and \ref{cor maximal solution truncated equation} lead to the result.
\end{proof}


\subsection{Regularity of the flows and approximation properties }
We recall that we denote by $\Phi(t)$ the flow of \eqref{NLS} and by $\Phi_N(t)$ the flow of the truncated equation \eqref{truncated equation}. We also use the notation $\Phi_{\infty}(t) = \Phi(t)$.
\begin{prop}\label{appendix prop first approximation prop}
    Let $\sigma \geq 1$. Let $R>0$ and $T>0$. There exists a constant $\Lambda(R,T) > 0$ only depending on $T,R$ and $\sigma$, such that for any $u_0 \in \hsigball{R}$,
    \begin{equation*}
        \sup_{|t| \leq T} \hsignorm{\Phi(t)u_0} + \sup_{|t| \leq T} \hsignorm{\Phi_N(t)u_0} \leq \Lambda(R,T), \hsp \hsp \forall N \in \N
    \end{equation*}
\end{prop}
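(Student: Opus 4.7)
The plan is to derive this approximation/boundedness result as a direct consequence of the exponential bound from Proposition~\ref{prop exponential bound}. The key observation is that the constant $C_0$ in that estimate depends only on $\norm{u_0}_{H^1(\T)}$ (and $\sigma$), and \emph{not} on $N$, which is exactly what is needed to obtain a bound uniform in $N \in \N$.

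First, I would fix $u_0 \in \hsigball{R}$. Since $\sigma \geq 1$, the continuous embedding $\hsigt \hookrightarrow H^1(\T)$ gives $\norm{u_0}_{H^1(\T)} \leq \norm{u_0}_{\hsig} \leq R$. By the explicit estimate on the constant in Proposition~\ref{prop exponential bound}, one may take
\begin{equation*}
    C_0 \leq C_{\sigma}(1+\norm{u_0}_{H^1(\T)})^{12} \leq C_{\sigma}(1+R)^{12} =: C_0(R,\sigma).
\end{equation*}
Then, applying \eqref{exponential bound for NLS solution} and \eqref{exponential bound for truncated solution} with this bound yields, for every $N \in \N \cup \{\infty\}$ and every $|t| \leq T$,
\begin{equation*}
    \hsignorm{\Phi_N(t) u_0} \leq \hsignorm{u_0} \, e^{C_0(R,\sigma) |t|} \leq R \, e^{C_0(R,\sigma) T}.
\end{equation*}
Setting $\Lambda(R,T) := 2 R \, e^{C_0(R,\sigma) T}$ (the factor $2$ to absorb both summands in the statement) gives the result.

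There is essentially no obstacle here: everything reduces to the uniformity of $C_0$ in $N$, which is already built into Proposition~\ref{prop exponential bound} because the bound on $\norm{u_N(\tau)}_{W(\T)}$ used in its proof comes from the mass and Hamiltonian conservation, both of which hold for the truncated flow with constants independent of $N$ (the projection $\Pi_N$ is an orthogonal projection on $L^2(\T)$, and the Hamiltonian structure is preserved by the Galerkin truncation, as recorded in the factorization \eqref{factorization of the truncated flow}). Thus the only real content of the proposition is the observation that all constants can be tracked in terms of the $H^1$-norm of $u_0$, hence of $R$ alone.
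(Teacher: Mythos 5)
Your proof is correct and follows exactly the paper's own argument: both derive the bound directly from Proposition~\ref{prop exponential bound}, using that $\norm{u_0}_{H^1(\T)} \leq \norm{u_0}_{\hsig} \leq R$ (since $\sigma \geq 1$) to control the constant $C_0 \leq C_\sigma(1+R)^{12}$ uniformly in $N$, yielding $\Lambda(R,T) \sim R\,e^{C_\sigma(1+R)^{12}T}$. The paper's proof is a one-liner giving this same choice of $\Lambda(R,T)$; your version merely spells out the details.
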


\begin{proof}
    This is a consequence of Proposition~\ref{prop exponential bound}. For example we can take 
    \begin{equation*}
        \Lambda(R,T) = R e^{C_{\sigma}(1+R)^{12}T}
    \end{equation*}
    for a certain constant $C_{\sigma}>0$ only depending on $\sigma$.
\end{proof}

We continue this paragraph with two significant regularity results for the flow of \eqref{NLS} and \eqref{truncated equation}.

\begin{prop}\label{prop continuity of the flow}Let $t \in \R$.
For every $N\in \N \cup \{ \infty \}$, the map
    \begin{equation*}
        \begin{split}
            \Phi_N(t) : \hsp & \hsigt \lra \hsigt \\
                        & u \longmapsto \Phi_N(t)u
        \end{split}
    \end{equation*}
is continuous.
\end{prop}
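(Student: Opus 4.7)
The plan is to establish local Lipschitz continuity of $\Phi_N(t)$ on bounded sets of $\hsigt$, which immediately implies continuity. Fix $N \in \N \cup \{\infty\}$ (with the convention $\Pi_\infty := \textnormal{Id}$), $t \in \R$, and $R > 0$. For $u_0, v_0 \in \hsigt$ with $\hsignorm{u_0}, \hsignorm{v_0} \leq R$, set $u(\tau) := \Phi_N(\tau) u_0$ and $v(\tau) := \Phi_N(\tau) v_0$. By Proposition~\ref{appendix prop first approximation prop}, one has the a priori bound
\begin{equation*}
\sup_{|\tau| \leq |t|} \bigl( \hsignorm{u(\tau)} + \hsignorm{v(\tau)} \bigr) \leq 2\Lambda(R,|t|),
\end{equation*}
and this control is uniform in $N$.

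Starting from the Duhamel formula for the difference,
\begin{equation*}
u(t) - v(t) = e^{it\p_x^2}(u_0 - v_0) - i \int_0^t e^{i(t-\tau)\p_x^2} \Pi_N \bigl[\, |\Pi_N u(\tau)|^4 \Pi_N u(\tau) - |\Pi_N v(\tau)|^4 \Pi_N v(\tau) \, \bigr] d\tau,
\end{equation*}
I would pass to the $\hsig$-norm using that $e^{is\p_x^2}$ is an isometry on $\hsigt$ and $\Pi_N$ is a contraction, then estimate the difference of nonlinearities using the algebra property of $\hsigt$ (valid for $\sigma \geq 1 > \tfrac{1}{2}$) together with the multilinearity of the map $(f_1,\ldots,f_5) \mapsto f_1\cjg{f_2}f_3\cjg{f_4}f_5$. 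This produces the integral inequality
\begin{equation*}
\hsignorm{u(t) - v(t)} \leq \hsignorm{u_0 - v_0} + C_\sigma \Lambda(R,|t|)^4 \Big| \int_0^t \hsignorm{u(\tau) - v(\tau)} \, d\tau \Big|,
\end{equation*}
to which Gronwall's lemma applies and yields
\begin{equation*}
\hsignorm{\Phi_N(t) u_0 - \Phi_N(t) v_0} \leq \hsignorm{u_0 - v_0} \cdot \textnormal{exp}\bigl( C_\sigma \Lambda(R,|t|)^4 |t| \bigr),
\end{equation*}
the desired local Lipschitz estimate on $\hsigball{R}$.

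There is no genuine obstacle here: the multilinear estimate on the difference of the two quintic nonlinearities is identical in structure to the contraction estimate used in the fixed-point proof of Theorem~\ref{thm local wellposedness}, except that the smallness-in-time argument is replaced by the a priori bound from Proposition~\ref{appendix prop first approximation prop} followed by Gronwall. The resulting Lipschitz constant depends on $\sigma$, $R$ and $|t|$ only, and is uniform in $N \in \N \cup \{\infty\}$, so the argument handles simultaneously the truncated flows $\Phi_N(t)$ and the full flow $\Phi(t) = \Phi_\infty(t)$.
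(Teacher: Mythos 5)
Your argument is correct and is essentially identical to the paper's proof: Duhamel for the difference, the a priori bound $\Lambda(R,|t|)$ from Proposition~\ref{appendix prop first approximation prop}, the algebra property of $\hsigt$ for the quintic nonlinearity, and Gronwall to get the local Lipschitz estimate. The only (harmless) difference is that you carry the uniformity in $N$ explicitly, whereas the paper writes out the case $N=\infty$ only and notes the rest is identical.
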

\begin{proof}For better readability, we only provide the proof for $\Phi(t)$.
    Let $u,v \in \hsigt$. Without loss of generality, we can assume that $\hsignorm{u} \leq R $ and $\hsignorm{v} \leq R$ for some $R>0$. We invoke $\Lambda(R,t)>0$ from the Proposition~\ref{appendix prop first approximation prop}. Firstly, using the Duhamel formula and passing to the $\hsigt$-norm we get that for all $s \leq t$
    \begin{equation*}
        \hsignorm{\Phi(s)v - \Phi(s)u} \leq \hsignorm{v-u} + C \Lambda(R,t)^4 \mid \int_0^s \hsignorm{\Phi(\tau)v - \Phi(\tau)u}d\tau \mid
    \end{equation*}
    Secondly, the Gronwall's inequality yields 
    \begin{equation*}
        \hsignorm{\Phi(t)v - \Phi(t)u} \leq \hsignorm{v-u} e^{C \Lambda(R,t)^4 |t|}
    \end{equation*}
    In particular,
    \begin{equation*}
        \hsignorm{\Phi(t)v - \Phi(t)u} \tendsto{v \ra u} 0
    \end{equation*}
\end{proof}

We can go even further with the following proposition:

\begin{prop}\label{prop bicontinuity of the flow}
    For every $N\in \N \cup \{ \infty \}$, the map
    \begin{equation*}
        \begin{split}
            \Phi_N : \hsp \R &\times \hsigt \lra \hsigt \\
             & (t,u) \longmapsto \Phi_N(t)u
        \end{split}
    \end{equation*}
    is continuous.
\end{prop}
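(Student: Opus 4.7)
The plan is to combine two one-variable regularity properties already at hand: continuity of the trajectory $\tau\mapsto\Phi_N(\tau)u_0$ for fixed $u_0$ (which is part of the wellposedness statement Theorem~\ref{thm local wellposedness} together with the global existence result), and local Lipschitz continuity of $\Phi_N(t)$ in $\hsigt$ uniformly for $t$ in compact intervals, which is the content of the Gronwall estimate produced in the proof of Proposition~\ref{prop continuity of the flow}. A triangle inequality in $\hsigt$ then decouples the two variables and yields joint continuity.

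Concretely, I would fix $(t_0,u_0)\in\R\times\hsigt$, choose $T>|t_0|+1$ and $R>\hsignorm{u_0}+1$, and restrict attention to the neighborhood $\{(t,u):\,|t|\leq T,\,\hsignorm{u-u_0}\leq 1\}$ so that automatically $\hsignorm{u}\leq R$. Writing
\begin{equation*}
\hsignorm{\Phi_N(t)u-\Phi_N(t_0)u_0}\leq \hsignorm{\Phi_N(t)u-\Phi_N(t)u_0}+\hsignorm{\Phi_N(t)u_0-\Phi_N(t_0)u_0},
\end{equation*}
the first term is controlled by the estimate from the proof of Proposition~\ref{prop continuity of the flow},
\begin{equation*}
\hsignorm{\Phi_N(t)u-\Phi_N(t)u_0}\leq \hsignorm{u-u_0}\,e^{C_\sigma \Lambda(R,T)^4 T},
\end{equation*}
valid for all $|t|\leq T$ with a constant independent of $t\in[-T,T]$ and of $N\in\N\cup\{\infty\}$. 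The second term tends to $0$ as $t\to t_0$ because $\tau\mapsto\Phi_N(\tau)u_0$ belongs to $\cC(\R;\hsigt)$. Letting $(t,u)\to(t_0,u_0)$, both contributions vanish, which establishes joint continuity at $(t_0,u_0)$.

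I do not anticipate any real obstacle here: essentially all of the analytic content sits in the previous propositions. The only mild point to verify is that the Lipschitz constant coming from Proposition~\ref{prop continuity of the flow} can be chosen uniformly in $t\in[-T,T]$, but this is immediate from its explicit form $e^{C_\sigma \Lambda(R,t)^4|t|}$, which is monotone in $|t|$; taking its value at $|t|=T$ gives a bound uniform on the time interval. The argument is thus a straightforward "local uniform Lipschitz in $u$ plus pointwise continuity in $t$ implies joint continuity" diagonal-type argument.
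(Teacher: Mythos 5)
Your proof is correct and uses essentially the same decomposition as the paper: the paper's own argument also splits the increment as $\bigl(\Phi_N(t+h)(u+\delta u)-\Phi_N(t+h)u\bigr)+\bigl(\Phi_N(t+h)u-\Phi_N(t)u\bigr)$ and controls each piece separately. The only (harmless) difference is that you invoke the already-established outputs directly -- the uniform-in-$t$ Lipschitz bound from the Gronwall step of Proposition~\ref{prop continuity of the flow} and the membership $\Phi_N(\cdot)u_0\in\cC(\R;\hsigt)$ from Theorem~\ref{thm local wellposedness} -- whereas the paper re-expands both terms via the Duhamel formula and concludes with the strong continuity of $e^{ih\p_x^2}$ on $\hsigt$ together with dominated convergence; your shortcut is valid and slightly more economical.
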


\begin{proof} For better readability, we only provide the proof for $\Phi$.
We fix $t \in \R$ and $u \in \hsigt$ and we show that 
\begin{equation*}
    \hsignorm{\Phi(t+h)(u+\delta u) - \Phi(t)u} \lra 0, \hspace{0.5cm} \text{as $h \ra 0 $ and $\delta u \ra 0$}
\end{equation*}
We denote $F(\tau,v) := |\Phi(\tau)v|^4\Phi(\tau)v$. Let us compute the difference $\Phi(t+h)(u+\delta u) - \Phi(t)u$ using the Duhamel formula. 
\begin{equation*}
    \begin{split}
        &\Phi(t+h)(u+\delta u) - \Phi(t)u  =  \Bigl(\Phi(t+h)(u+\delta u) - \Phi(t+h)u\Bigr) + \Bigl(\Phi(t+h)u -\Phi(t)u\Bigr)\\
    & = e^{i(t+h)\p_x^2} \delta u - i \int_0^{t+h} e^{i(t+h-\tau)\p_x^2}(F(\tau,u+\delta u)-F(\tau,u)) d\tau \\
   & + e^{it\p_x^2}(e^{ih\p_x^2}-1)u -i \int_0^{t+h} e^{i(t+h-\tau)\p_x^2} F(\tau,u) d\tau + i \int_0^t e^{i(t-\tau)\p_x^2} F(\tau,u) d\tau \\
   & = e^{i(t+h)\p_x^2} \delta u  + e^{it\p_x^2}(e^{ih\p_x^2} -1)u - i \int_0^{t+h} e^{i(t+h-\tau)\p_x^2}(F(\tau,u+\delta u)-F(\tau,u)) d\tau \\
   & -i \int_t^{t+h} e^{i(t+h-\tau)\p_x^2} F(\tau,u) d\tau  -i \int_0^{t} e^{i(t-\tau)\p_x^2}(e^{ih\p_x^2}-1) F(\tau,u) d\tau
    \end{split}
\end{equation*}

Furthermore, without loss of generality we can assume that $t+h \leq t+1 =: T$ and $\hsignorm{u+\delta u} \leq \hsignorm{u}+1 =: R$. Now, we invoke $\Lambda(R,T)>0$ from Proposition~\ref{appendix prop first approximation prop}. Passing to the $\hsigt$-norm in the above formula we have 
\begin{equation*}
    \begin{split}
         \hsignorm{\Phi(t+h)(u+\delta u) - \Phi(t)u} & \leq \hsignorm{\delta u} + \hsignorm{(e^{ih\p_x^2}-1)u} \\
        & + C \Lambda(R,T)^4 \mid \int_0^T \hsignorm{\Phi(\tau)(u+\delta u)- \Phi(\tau)u} d\tau \mid \\
        & + |h| \Lambda(R,T)^5 + \mid \int_0^t \hsignorm{(e^{ih\p_x^2}-1)F(\tau,u)} d\tau \mid 
    \end{split}
\end{equation*}
Using the continuity of $v \longmapsto \Phi(\tau)v$ (Proposition~\ref{prop continuity of the flow}) and the dominated convergence theorem, we deduce that the right hand side tends to $0$ as $h \ra 0 $ and $\delta u \ra 0$. So the proof of Proposition~\ref{prop bicontinuity of the flow} is complete.
\end{proof}

\begin{prop}[Approximation property]\label{appendix second approximation prop}
    Let $\sigma \geq 1$. Let $K$ be a compact subset of $\hsigt$ and $T>0$. Then, uniformly in $|t| \leq T$ and $u_0 \in K$,
    \begin{equation*}
         \lim_{N \rightarrow +\infty} \hsignorm{\Phi(t)u_0-\Phi_N(t)u_0} = 0,
    \end{equation*}
In other words,
\begin{equation*}
    \sup_{|t|\leq T} \hsp \sup_{u_0 \in K} \hsp \hsignorm{\Phi(t)u_0-\Phi_N(t)u_0} \tendsto{N \ra +\infty} 0  
\end{equation*}
\end{prop}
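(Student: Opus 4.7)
\textbf{Proof proposal for Proposition~\ref{appendix second approximation prop}.}
Fix $\sigma \geq 1$, a compact set $K \subset \hsigt$ and $T > 0$. Denote $u(t) := \Phi(t) u_0$ and $u_N(t) := \Phi_N(t) u_0$. The plan is to set up a Duhamel-plus-Gronwall estimate in which the only obstruction to convergence is the operator $(I - \Pi_N)$ acting on two sources, both of which will lie in a compact subset of $\hsigt$. Using the Duhamel formulas and adding/subtracting appropriate terms, we write
\begin{equation*}
u(t) - u_N(t) = A_N(t) + B_N(t) + C_N(t),
\end{equation*}
where
\begin{align*}
A_N(t) &= -i\int_0^t e^{i(t-\tau)\p_x^2}(I-\Pi_N)\bigl(|u(\tau)|^4 u(\tau)\bigr)\, d\tau,\\
B_N(t) &= -i\int_0^t e^{i(t-\tau)\p_x^2}\Pi_N\bigl(|u(\tau)|^4 u(\tau) - |\Pi_N u(\tau)|^4 \Pi_N u(\tau)\bigr)\, d\tau,\\
C_N(t) &= -i\int_0^t e^{i(t-\tau)\p_x^2}\Pi_N\bigl(|\Pi_N u(\tau)|^4 \Pi_N u(\tau) - |\Pi_N u_N(\tau)|^4 \Pi_N u_N(\tau)\bigr)\, d\tau.
\end{align*}

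The first step is to control $A_N$ and $B_N$ uniformly in $u_0 \in K$ and $|t|\leq T$. By Proposition~\ref{prop bicontinuity of the flow}, the map $(\tau,u_0)\mapsto \Phi(\tau)u_0$ is continuous on the compact set $[-T,T]\times K$, so its image $K' := \Phi([-T,T]\times K)$ is compact in $\hsigt$. Since $\hsigt$ is a Banach algebra, $v \mapsto |v|^4 v$ is continuous on $\hsigt$, hence the image $K'' := \{|v|^4 v : v \in K'\}$ is also compact in $\hsigt$. Lemma~\ref{lem cvgce on compact set of bdd lin map}, applied to the uniformly bounded operators $(I-\Pi_N)$ that converge strongly to $0$, yields
\begin{equation*}
\eps_N := \sup_{v \in K'' \cup K'} \hsignorm{(I-\Pi_N) v} \tendsto{N \ra \infty} 0.
\end{equation*}
The $\hsigt$-isometry of $e^{i(t-\tau)\p_x^2}$ gives $\hsignorm{A_N(t)} \leq T \eps_N$; and using the algebra estimate $\hsignorm{|v|^4 v - |w|^4 w} \leq C_\sigma (\hsignorm{v}^4 + \hsignorm{w}^4) \hsignorm{v-w}$ together with $\hsignorm{\Pi_N}_{\hsig\to\hsig}\leq 1$ and $\hsignorm{u(\tau)}\leq \Lambda(R,T)$ (where $R := \sup_{u_0\in K}\hsignorm{u_0}$, invoking Proposition~\ref{appendix prop first approximation prop}), we bound $\hsignorm{B_N(t)} \leq C_\sigma \Lambda(R,T)^4 T \eps_N$.

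The second step is to close the Gronwall loop via $C_N$. Applying the same algebra estimate and using Proposition~\ref{appendix prop first approximation prop} to bound both $\hsignorm{\Pi_N u(\tau)}$ and $\hsignorm{\Pi_N u_N(\tau)}$ by $\Lambda(R,T)$, we obtain
\begin{equation*}
\hsignorm{C_N(t)} \leq C_\sigma \Lambda(R,T)^4 \int_0^{|t|} \hsignorm{u(\tau) - u_N(\tau)} \, d\tau.
\end{equation*}
Combining the three estimates gives, uniformly in $u_0 \in K$ and $|t|\leq T$,
\begin{equation*}
\hsignorm{u(t) - u_N(t)} \leq C(R,T,\sigma)\, \eps_N + C(R,T,\sigma) \int_0^{|t|} \hsignorm{u(\tau) - u_N(\tau)} \, d\tau.
\end{equation*}
Gronwall's inequality then yields $\sup_{|t|\leq T}\sup_{u_0\in K} \hsignorm{u(t)-u_N(t)} \leq C(R,T,\sigma)\,\eps_N\, e^{C(R,T,\sigma) T} \to 0$, which is the desired conclusion.

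The main obstacle lies in the uniformity claim: for a single fixed $u_0$ the pointwise convergence of $(I-\Pi_N)$ would suffice, but obtaining a uniform rate $\eps_N$ requires the compactness of the trajectory set $K'$ (hence of $K''$) in $\hsigt$, which is where we crucially invoke the joint continuity of the full flow $\Phi$ (Proposition~\ref{prop bicontinuity of the flow}) together with Lemma~\ref{lem cvgce on compact set of bdd lin map}.
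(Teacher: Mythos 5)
Your proof is correct, and the core mechanism is the same as the paper's: identify the trajectory set $K' = \{\Phi(\tau)u_0 : |\tau|\leq T,\, u_0 \in K\}$ and its quintic image $K''$ as compact subsets of $\hsigt$ (via the joint continuity of the flow and the algebra property), and then use the uniform convergence of $\Pi_N^{\perp}$ to $0$ on compact sets (Lemma~\ref{lem cvgce on compact set of bdd lin map}) to control the two "source" terms — your $A_N$ and $B_N$ appear verbatim in the paper as the two pieces of $\Gamma_{\infty}u - \Gamma_N u$. Where you genuinely diverge is in how the self-referential term $C_N$ is closed. The paper stays inside the fixed-point framework: since $u$ and $u_N$ are fixed points of $\Gamma_{\infty}$ and $\Gamma_N$, the contraction coefficient $\gamma<1$ absorbs $\norm{\Gamma_N u - \Gamma_N u_N}$, giving $\norm{u-u_N}\leq \frac{1}{1-\gamma}\norm{\Gamma_{\infty}u-\Gamma_N u}$ — but this only works on a time interval of length $\delta$ where $\Gamma_N$ is a contraction, so the paper needs a second step iterating the local estimate roughly $T/\delta$ times and propagating the error through the initial data at each step. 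You instead estimate $C_N$ by the Lipschitz bound on the nonlinearity, using the uniform-in-$N$ a priori bound $\hsignorm{\Phi_N(\tau)u_0}\leq \Lambda(R,T)$ from Proposition~\ref{appendix prop first approximation prop}, and close with Gronwall directly on all of $[-T,T]$. Your route is shorter and avoids the iteration entirely; the price is that it leans on the global a priori bound from the start, whereas the paper only uses $\Lambda(R,T)$ to calibrate the radius of the contraction ball. Both are complete and rigorous.
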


\begin{proof}
    First, from the compactness of $K$, we invoke $R>0$ such that $K \subset \hsigball{R}$, where $\hsigball{R}$ is the closed centered ball of radius $R$ in $\hsigt$. We then invoke $\Lambda(R,T)>0$ from Proposition~\ref{appendix prop first approximation prop}. Now, we set $R_1 := 1 + 2\Lambda(R,T)$ and invoke $\delta = \delta(R_1) > 0$ the local existence time associated to $R_1$ from the local theory (Theorem~\ref{thm local wellposedness}). These parameters ensure that for every $N \in \N \cup \{ \infty\}$ and $u_0 \in \hsigball{R}$, the Duhamel map,
\begin{equation*}
        \begin{split}
           \Gamma_N :  \hsp &\cjg{B}_{R_1}(\delta)  \longrightarrow \cjg{B}_{R_1}(\delta) \\
            & u \longmapsto e^{it\p_x^2}u_0 - i \int_{0}^t e^{i(t -\tau)\p_x^2}\Pi_N (|\Pi_N u(\tau)|^4 \Pi_N u(\tau)) d\tau
        \end{split}
    \end{equation*}
is a contraction, with a universal contraction coefficient $0<\gamma<1$ (for example $\gamma = \frac{2}{3}$ as in the proof of Theorem~\ref{thm local wellposedness}), where $\cjg{B}_{R_1}(\delta)$ is the closed centered ball in $\cC([-\delta,\delta],\hsigt)$ of radius $R_1$. \\

\underline{Step 1: Local-time convergence}  \\

Firstly, we prove the local property
\begin{equation}\label{local approximation estimate}
    \sup_{|t|\leq \delta} \hsp \sup_{u_0 \in K} \hsp \hsignorm{\Phi(t)u_0-\Phi_N(t)u_0} \tendsto{N \ra +\infty} 0
\end{equation}
Let $u_0 \in \hsigball{R}$. We denote $u(t) = \Phi(t)u_0 $ and $u_N(t) = \Phi_N(t)u_0$. Since $u$ and $u_N$ are respectively the fixed point of $\Gamma_{\infty}$ and $\Gamma_N$, we have
\begin{equation*}
    \begin{split}
        \norm{u -u_N}_{\cjg{B}_{R_1}(\delta)} & = \norm{\Gamma_{\infty}u - \Gamma_N u_N}_{\cjg{B}_{R_1} (\delta)} \\
        & \leq \norm{\Gamma_{\infty}u - \Gamma_N u}_{\cjg{B}_{R_1}(\delta)} + \norm{\Gamma_N u - \Gamma_N u_N}_{\cjg{B}_{R_1}(\delta)} \\
        & \leq \norm{\Gamma_{\infty}u - \Gamma_N u}_{\cjg{B}_{R_1}(\delta)} + \gamma \norm{u -u_N}_{\cjg{B}_{R_1}(\delta)}
    \end{split}
\end{equation*}
Therefore,
\begin{equation*}
    \norm{u -u_N}_{\cjg{B}_{R_1}(\delta)} \leq \frac{1}{1-\gamma} \norm{\Gamma_{\infty}u - \Gamma_N u}_{\cjg{B}_{R_1}(\delta)}
\end{equation*}
Hence, to prove \eqref{local approximation estimate}, it suffices to show that
\begin{equation*}
    \sup_{u_0 \in K} \norm{\Gamma_{\infty}u - \Gamma_N u}_{\cjg{B}_{R_1}(\delta)} \tendsto{N \ra +\infty} 0
\end{equation*}
Next, for every $|t|\leq \delta$ we have,
\begin{equation*}
    \begin{split}
         & \Gamma_{\infty}u(t) - \Gamma_N u(t) = -i \int_0^t e^{i(t-\tau)\p_x^2}(|u(\tau)|^4u(\tau) - \Pi_N(|\Pi_Nu(\tau)|^4\Pi_Nu(\tau)))d\tau \\
         & = -i \int_0^t e^{i(t-\tau)\p_x^2}\Pi_N^{\perp}(|u(\tau)|^4u(\tau))d\tau -i \int_0^t e^{i(t-\tau)\p_x^2}\Pi_N(|u(\tau)|^4u(\tau) - 
         |\Pi_Nu(\tau)|^4\Pi_Nu(\tau))d\tau
    \end{split}
\end{equation*}
So passing to the sup norm we get,
\begin{equation*}
    \begin{split}
        \norm{\Gamma_{\infty}u - \Gamma_N u}_{\cjg{B}_{R_1}(\delta)} & \leq  \delta  \hsp \sup_{|\tau|\leq \delta} \hsignorm{\Pi_N^{\perp}(|u(\tau)|^4u(\tau))} \\
        & + C \delta \hsp \sup_{|\tau| \leq \delta} \hsignorm{u(\tau)- \Pi_N u(\tau)}(\hsignorm{u(\tau)}^4+ \hsignorm{\Pi_N u(\tau)}^4) \\
        & \leq C \delta(\sup_{|\tau|\leq \delta} \hsignorm{\Pi_N^{\perp}(|u(\tau)|^4u(\tau))} + \sup_{|\tau|\leq \delta}\hsignorm{\Pi_N^{\perp}u(\tau)} \Lambda(R,T)^4 )
    \end{split}
\end{equation*}
Taking the supremum over $u_0 \in K$ we then obtain,
\begin{equation}\label{estimate sup over t and K}
    \begin{split}
         \sup_{u_0 \in K} & \norm{\Gamma_{\infty}u - \Gamma_N u}_{\cjg{B}_{R_1}(\delta)} \\ 
        & \leq C\delta (\sup_{u_0 \in K} \sup_{|\tau|\leq \delta} \hsignorm{\Pi_N^{\perp}(|u(\tau)|^4u(\tau))} + \Lambda(R,T)^4 \sup_{u_0 \in K}  \sup_{|\tau|\leq \delta} \hsignorm{\Pi_N^{\perp}u(\tau)} )
  \end{split}
\end{equation}
Besides, based on a classical result in functional analysis (see Lemma~\ref{lem cvgce on compact set of bdd lin map}), $\Pi_N^{\perp}$ satisfies the key property of converging uniformly to $0$ on compact sets as $N \ra + \infty$. At the same time, the two following sets 
\begin{align*}
    K_1 :&= \{\Phi(\tau)u_0 : \hsp u_0 \in K, \hsp |\tau|\leq \delta \},  &  K_2:&= \{|\Phi(\tau)u_0|^4\Phi(\tau)u_0 : \hsp u_0 \in K, \hsp |\tau|\leq \delta \}
\end{align*}
are compacts in $\hsigt$. Indeed, it results from the facts that the map 
\begin{equation*}
    \begin{split}
         & \R \times \hsigt \lra \hsigt \\
                    & (t,u_0) \longmapsto \Phi(t)u_0
    \end{split} 
\end{equation*}
and the map
\begin{equation*}
    \begin{split}
        & \hsigt^5 \lra \hsigt \\
         (u_1,u_2&,u_3,u_4,u_5) \longmapsto u_1 \cjg{u_2} u_3 \cjg{u_4} u_5
    \end{split}
\end{equation*}
are continuous and the fact that the image of a compact set under a continuous map is compact. Now, rewriting \eqref{estimate sup over t and K} we get
\begin{equation*}
    \begin{split}
         \sup_{u_0 \in K} \norm{\Gamma_{\infty}u -\Gamma_N u}_{\cjg{B}_{R_1}(\delta)} & \leq C\delta (\sup_{w_2 \in K_2} \Pi_N^{\perp} w_2 + \Lambda(R,T)^4 \sup_{w_1 \in K_1} \Pi_N^{\perp} w_1) \\
         & \tendsto{N \ra \infty} 0
    \end{split}
\end{equation*}
And this implies the desired local property \eqref{local approximation estimate}. \\

\underline{Step 2: Long-time convergence} \\

Secondly, we complete the proof of Proposition~\ref{appendix second approximation prop} by iterating this local argument. Let $m := \lfloor \frac{T}{\delta}\rfloor + 1$, and for any integer $|k| \leq m$, let $I_k := [k\delta-\delta,k\delta+\delta]$. We show that for any $|k| \leq$ m,
\begin{equation}\label{local approximation propagated}
    \sup_{t \in I_k} \hsp \sup_{u_0 \in K} \hsp \hsignorm{\Phi(t)u_0-\Phi_N(t)u_0} \tendsto{N \ra +\infty} 0  
\end{equation}
If we do so, the proof of Proposition~\ref{appendix second approximation prop} will be completed. At this stage, we know that \eqref{local approximation propagated} is true when $k=0$. Now, we assume that \eqref{local approximation propagated} is true for some integer $|k|\leq m-1$ and we show that \eqref{local approximation propagated} is still true for every integer $|k'| = |k|+1$. Also, without loss of generality, we assume that $k \geq 0$ and $k'=k+1$. The crucial point here is that since we chose $R_1 = 1 + 2\Lambda(R,T)$, we have that for every $N \in \N \cup \{ \infty\}$ and $u_0 \in \hsigball{R}$, the Duhamel map
\begin{equation*}
        \begin{split}
           \Gamma_N :  \hsp &\cjg{B}_{R_1}^{(k)}(\delta)  \longrightarrow \cjg{B}^{k}_{R_1}(\delta) \\
            & u \longmapsto e^{i(t-t_{k})\p_x^2}u_N(t_k) - i \int_{t_{k}}^t e^{i(t -\tau)\p_x^2}\Pi_N (|\Pi_N u(\tau)|^4 \Pi_N u(\tau)) d\tau
        \end{split}
    \end{equation*}
is a contraction with a universal contraction coefficient $0<\gamma<1$, where $t_{k}:= k \delta$, $u_{\infty}(t_k):=u(t_{k})$, and $\cjg{B}_{R_1}^{(k)}(\delta)$ is the closed centered ball in $\cC([t_k-\delta,t_k+\delta],\hsigt)$ of radius $R_1$. \\

Now, doing the same calculations as in the first step of the proof we obtain,
\begin{equation*}
    \norm{u -u_N}_{\cjg{B}^{(k)}_{R_1}(\delta)} \leq \frac{1}{1-\gamma} \norm{\Gamma_{\infty}u - \Gamma_N u}_{\cjg{B}^{(k)}_{R_1}(\delta)}
\end{equation*}
and, 
\begin{equation*}
    \begin{split}
         \sup_{u_0 \in K} & \norm{\Gamma_{\infty}u - \Gamma_N u}_{\cjg{B}^{(k)}_{R_1}(\delta)} \leq \sup_{u_0 \in K} \sup_{t \in I_k} \hsignorm{e^{i(t-t_k)\p_x^2}(u(t_k)-u_N(t_k))} \\
        & + C\delta (\sup_{u_0 \in K} \sup_{\tau \in I_k} \hsignorm{\Pi_N^{\perp}(|u(\tau)|^4u(\tau))} + \Lambda(R,T)^4 \sup_{u_0 \in K}  \sup_{\tau \in I_k} \hsignorm{\Pi_N^{\perp}u(\tau)} )
  \end{split}
\end{equation*}
On the right hand side, the first term : 
\begin{equation*}
    \sup_{u_0 \in K} \sup_{t \in I_k} \hsignorm{e^{i(t-t_k)\p_x^2}(u(t_k)-u_N(t_k))} = \sup_{u_0 \in K} \hsignorm{u(t_k)-u_N(t_k)}
\end{equation*}
tends to $0$ as $N \lra +\infty$ by our assumption. And, we handle the remaining term in the same way as in the first part of the proof. Finally \eqref{local approximation propagated} is true for all $|k|\leq m$, so the proof of Proposition~\ref{appendix second approximation prop} is completed.
\end{proof}

\begin{cor}\label{appendix set approximation}
    Let $\sigma \geq 1$, $T>0$ and let $K$ be a compact subset of $\hsigt$. Then, 
    \begin{enumerate}
        \item for any $\eps > 0$, there exists $N_0 \in \N$ such that for all $N\geq N_0$
    \begin{equation*}
        \Phi_N(t)(K) \subset \Phi(t)(K) + \hsigball{\eps}
    \end{equation*}
    for all $|t| \leq T$.
    \item for any $\eps > 0$, there exists $N_1 \in \N$ such that for all $N\geq N_1$
    \begin{equation*}
        \Phi(t)(K) \subset \Phi_N(t)(K + \hsigball{\eps})
    \end{equation*}
    for all $|t| \leq T$.
    \end{enumerate}
\end{cor}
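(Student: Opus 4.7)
\textbf{Proof proposal for Corollary~\ref{appendix set approximation}.} The plan is to derive both inclusions directly from the uniform convergence result of Proposition~\ref{appendix second approximation prop}, with part (1) being an immediate consequence and part (2) requiring a small trick of composing the flows.

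For part (1), fix $\eps > 0$. Applying Proposition~\ref{appendix second approximation prop} to the compact set $K$ and the horizon $T$, we get $N_0 \in \N$ such that for all $N \geq N_0$ and $|t| \leq T$, $\sup_{u_0 \in K} \hsignorm{\Phi_N(t)u_0 - \Phi(t)u_0} \leq \eps$. Then, for any $v \in \Phi_N(t)(K)$, writing $v = \Phi_N(t)u_0$ for some $u_0 \in K$, we have $v - \Phi(t)u_0 \in \hsigball{\eps}$, hence $v \in \Phi(t)(K) + \hsigball{\eps}$, which is the desired inclusion.

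For part (2), the idea is to rewrite $\Phi(t)u_0$ as $\Phi_N(t)w$ with $w := \Phi_N(-t)\Phi(t)u_0$, and then to show that $w$ lies close to $u_0$ in $\hsigt$. The key step is to apply Proposition~\ref{appendix second approximation prop} not to $K$ itself but to the enlarged compact set $\widetilde{K} := \{ \Phi(t)u_0 : u_0 \in K, \hsp |t|\leq T \}$, which is indeed compact in $\hsigt$ as the continuous image (by Proposition~\ref{prop bicontinuity of the flow}) of the compact set $[-T,T]\times K$. Applying Proposition~\ref{appendix second approximation prop} to $\widetilde{K}$, we obtain $N_1 \in \N$ such that for every $N \geq N_1$ and every $|\tau|\leq T$,
\begin{equation*}
    \sup_{v \in \widetilde{K}} \hsignorm{\Phi(\tau)v - \Phi_N(\tau)v} \leq \eps.
\end{equation*}
For $u_0 \in K$ and $|t|\leq T$, specialising this with $v = \Phi(t)u_0 \in \widetilde{K}$ and $\tau = -t$, and using that $\Phi(-t)\Phi(t)u_0 = u_0$, yields
\begin{equation*}
    \hsignorm{u_0 - \Phi_N(-t)\Phi(t)u_0} \leq \eps,
\end{equation*}
so $w := \Phi_N(-t)\Phi(t)u_0 \in u_0 + \hsigball{\eps} \subset K + \hsigball{\eps}$. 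Since $\Phi_N(t)w = \Phi(t)u_0$ by the group property of $\Phi_N$, we conclude $\Phi(t)u_0 \in \Phi_N(t)(K + \hsigball{\eps})$, which is the claimed inclusion.

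Neither step is a real obstacle: part (1) is just unwinding definitions, and the only subtle point in part (2) is recognising that one must apply the approximation result to the compact set $\widetilde{K}$ of trajectories rather than to $K$ itself, in order to invert $\Phi(t)$ by $\Phi_N(-t)$ up to a small error uniform in $|t|\leq T$. The compactness of $\widetilde{K}$ and the group property of the truncated flow (Proposition~\ref{structure of the truncated flow}) make this work.
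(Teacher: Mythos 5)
Your proposal is correct and follows essentially the same route as the paper: part (1) is the direct unwinding of Proposition~\ref{appendix second approximation prop}, and part (2) inverts $\Phi(t)$ by $\Phi_N(-t)$ and controls the error via the approximation property applied to the set of trajectories. Your variant of part (2) — applying Proposition~\ref{appendix second approximation prop} to the tube $\widetilde{K}=\{\Phi(t)u_0 : u_0\in K,\ |t|\leq T\}$ rather than invoking part (1) for the compact set $\Phi(t)(K)$ at each fixed $t$ — is in fact slightly more careful, since it makes the choice of $N_1$ manifestly uniform in $|t|\leq T$, a point the paper's argument leaves implicit.
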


\begin{proof}Let $\eps >0$.
    \begin{itemize}
        \item For the first point, we take $u_0 \in K$ and we write 
        \begin{equation}\label{phiN = phi + (phiN - phi)}
            \Phi_N(t)u_0 = \Phi(t)u_0 + \left( \Phi_N(t)u_0 - \Phi(t)u_0 \right)
        \end{equation}
        From Proposition \ref{appendix second approximation prop}, there exists $N_0 \in \N$ such that for all $N \geq N_0$ 
        \begin{equation*}
           \sup_{|t| \leq T} \sup_{v_0 \in K} \hsignorm{\Phi_N(t)v_0 - \Phi(t)v_0} \leq \eps
        \end{equation*}
        Thus, for all $N \geq N_0$, $\Phi_N(t)u_0 - \Phi(t)u_0 \in \hsigball{\eps}$ for all $|t|\leq T$ and all $u_0 \in K$. Coming back to \eqref{phiN = phi + (phiN - phi)}, it implies that for all $N \geq N_0$
        \begin{equation*}
            \Phi_N(t)u_0 \in \Phi(t)(K) + \hsigball{\eps}
        \end{equation*}
        for all $u_0 \in K$ and all $|t|\leq T$.
        \item The second point is a consequence of the first one. Let $|t|\leq T$. From the continuity of the map $\Phi(t)$ (see Proposition \ref{prop continuity of the flow}) we have that $\Phi(t)(K)$ is a compact subset of $\hsigt$. So from the first point there exists $N_1 \in \N$ such that for all $N\geq N_1$ 
        \begin{equation*}
            \Phi_N(-\tau)\left(\Phi(t)(K) \right) \subset \Phi(-\tau)\Phi(t)(K) + \hsigball{\eps} 
        \end{equation*}
        for all $|\tau| \leq T$. In particular, for $\tau = t$ we have :
        \begin{equation*}
            \Phi_N(-t)\left(\Phi(t)(K) \right) \subset \Phi(-t)\Phi(t)(K) + \hsigball{\eps} = K + \hsigball{\eps}
        \end{equation*}
        Applying $\Phi_N(t)$ to this we obtain 
        \begin{equation*}
            \Phi(t)(K) \subset \Phi_N(t) \left(  K + \hsigball{\eps} \right)
        \end{equation*}
    \end{itemize}
    This concludes the proof of Corollary \ref{appendix set approximation}.
\end{proof}

\subsection{Structure of the truncated flow}
We set
\begin{align*}
    E_N &:= \Pi_N \L^2(\T) \\
    E_N^{\perp} &:= \Pi_N^{\perp} \L^2(\T) = (Id - \Pi_N)  \L^2(\T)
\end{align*}

\begin{prop}\label{structure of the truncated flow}
\begin{enumerate}[leftmargin=0cm] We have the following properties ;
      \item The truncated flow $\Phi_N(t)$ commute with the frequency projector $\Pi_N$, that is
    \begin{equation*}
     \Phi_N(t) \circ \Pi_N = \Pi_N \circ \Phi_N(t)
     \end{equation*}
    As a consequence, the truncated flow $\Phi_N(t)$ maps the finite-dimensional space $E_N$ to itself. Moreover, the induced map 
    \begin{equation*}
        \begin{split}
        \tld{\Phi}_N(t) : \hsp & E_N \lra E_N \\
                          & u_0 \longmapsto \Phi_N(t)(u_0)
        \end{split}
    \end{equation*}
    is the flow of the ODE
    \begin{equation}\label{FNLS}
    \begin{cases}
        i\p_t u + \p_x^2 u = \Pi_N \left(|u|^4u \right) \\
        u|_{t=0} = u_0 \in E_N
    \end{cases}
    \tag{FNLS}
\end{equation}
which can be seen as the finite-dimensional Hamiltonian equation on $E_N$ : 
\begin{equation*}
    \begin{cases}
        i\p_t u = \frac{\p H_N}{\p \cjg{u}}(u) \\
        u|_{t=0} = u_0 \in E_N
    \end{cases}
\end{equation*}
with Hamiltonian $H_N(u) := \frac{1}{2} \norm{\p_x^2u}^2_{\L^2(\T)} + \frac{1}{6} \norm{u}_{\L^6(\T)}^6$, for $u \in E_N$.

\item The truncated flow $\Phi_N(t)$ commute with $\Pi_N^{\perp}$, that is
    \begin{equation*}
     \Phi_N(t) \circ \Pi_N^{\perp} = \Pi_N^{\perp} \circ \Phi_N(t)
     \end{equation*}
    As a consequence, the truncated flow $\Phi_N(t)$ maps the space $E_N^{\perp}$ to itself. Moreover, the induced map 
    \begin{equation*}
        \begin{split}
        \Phi^{\perp}_N(t) : \hsp & E_N^{\perp} \lra E_N^{\perp} \\
                          & u_0 \longmapsto \Phi_N(t)(u_0)
        \end{split}
    \end{equation*}
    is the solution of the linear Schrödinger equation 
\begin{equation*}
    \begin{cases}
        i\p_t u + \p_x^2 u = 0 \\
        u|_{t=0} = u_0 \in E_N^{\perp}
    \end{cases}
\end{equation*}
Hence, $\Phi^{\perp}_N(t)$ coincide with the linear operator $e^{it\p_x^2}$ on $E_N^{\perp}$.

\item  The truncated flow $\Phi_N(t)$ can be factorized as $(\tld{\Phi}_N(t),e^{it\p_x^2})$ on $E_N \times E_N^{\perp}$. In other words, for every $u_0 \in \hsigt$,
 \begin{equation*}
     \Phi_N(t)(u_0) = \tld{\Phi}_N(t) \Pi_N u_0  +  e^{it\p_x^2} \Pi_N^{\perp}u_0
 \end{equation*}
\end{enumerate}
\end{prop}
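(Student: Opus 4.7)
\textbf{Proof plan for Proposition \ref{structure of the truncated flow}.} The strategy is to use the fact that $\Pi_N$ and $\Pi_N^\perp$ are Fourier multipliers, hence they commute with $\partial_x^2$ and with each other (and satisfy $\Pi_N \Pi_N^\perp = 0$, $\Pi_N^2 = \Pi_N$). Combined with the uniqueness part of the local well-posedness from Theorem \ref{thm local wellposedness}, this will immediately yield all three items. Throughout, given $u_0 \in H^\sigma(\T)$, we write $u(t) := \Phi_N(t)u_0$ for the global solution of \eqref{truncated equation}.

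For item (1), I would apply $\Pi_N^\perp$ to the truncated equation. Since $\Pi_N^\perp$ commutes with $\partial_x^2$ and since $\Pi_N^\perp \Pi_N = 0$, the function $v := \Pi_N^\perp u$ solves the linear Schrödinger equation $i\partial_t v + \partial_x^2 v = 0$ with initial data $\Pi_N^\perp u_0$. Now if $u_0 \in E_N$, then $\Pi_N^\perp u_0 = 0$, and uniqueness for the linear Schrödinger equation gives $\Pi_N^\perp u(t) \equiv 0$, i.e., $u(t) \in E_N$ for all $t$. Since $\Pi_N u = u$ on $E_N$, the truncated equation then reads $i\partial_t u + \partial_x^2 u = \Pi_N(|u|^4 u)$, which is \eqref{FNLS}. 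To see the Hamiltonian structure on the finite-dimensional space $E_N$ equipped with its natural symplectic form inherited from $L^2(\T)$, I would compute the $L^2$-gradient of $H_N$ on $E_N$: for $u \in E_N$, since $\partial_x^2 u \in E_N$, projection gives $\Pi_N(-\partial_x^2 u + |u|^4 u) = -\partial_x^2 u + \Pi_N(|u|^4 u)$, which matches the right-hand side of \eqref{FNLS} (up to the factor $i$).

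For item (2), the argument is dual: applying $\Pi_N$ to the truncated equation, since $\Pi_N$ commutes with $\partial_x^2$ and $\Pi_N \Pi_N = \Pi_N$, the function $w := \Pi_N u$ satisfies exactly the truncated equation itself, $i\partial_t w + \partial_x^2 w = \Pi_N(|\Pi_N w|^4 \Pi_N w)$, with initial data $\Pi_N u_0$. For $u_0 \in E_N^\perp$ this initial data is $0$, and uniqueness in Theorem \ref{thm local wellposedness} forces $\Pi_N u(t) \equiv 0$, hence $u(t) \in E_N^\perp$ for all $t$. On $E_N^\perp$ the nonlinearity $\Pi_N(|\Pi_N u|^4 \Pi_N u)$ vanishes identically, so $u$ solves the free Schrödinger equation and $\Phi_N^\perp(t) = e^{it\partial_x^2}$ on $E_N^\perp$.

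For item (3), I would simply check that the candidate $v(t) := \tilde\Phi_N(t) \Pi_N u_0 + e^{it\partial_x^2} \Pi_N^\perp u_0$ solves \eqref{truncated equation} with datum $u_0$, and conclude by uniqueness. Clearly $v(0) = u_0$. Since the two summands lie in $E_N$ and $E_N^\perp$ respectively and these are orthogonal, we have $\Pi_N v(t) = \tilde\Phi_N(t)\Pi_N u_0$. Using items (1) and (2) to compute $i\partial_t v + \partial_x^2 v$ termwise, the $E_N^\perp$ contribution vanishes and the $E_N$ contribution equals $\Pi_N(|\tilde\Phi_N(t)\Pi_N u_0|^4 \tilde\Phi_N(t)\Pi_N u_0) = \Pi_N(|\Pi_N v|^4 \Pi_N v)$, which is the desired right-hand side. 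The only mildly subtle point in the whole argument is to justify uniqueness at the level of regularity at which we work; this is directly provided by Theorem \ref{thm local wellposedness}, so there is no real obstacle.
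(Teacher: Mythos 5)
Your proposal is correct and follows essentially the same route as the paper: apply the Fourier projectors $\Pi_N$, $\Pi_N^\perp$ to the truncated equation (using that they commute with $\p_x^2$ and that $\Pi_N^\perp \Pi_N=0$, $\Pi_N^2=\Pi_N$) and invoke uniqueness from Theorem~\ref{thm local wellposedness}. The only difference is organizational — you obtain the commutation identities of items (1) and (2) as consequences of the factorization in item (3), whereas the paper proves the commutation relations first and reads off the factorization — but the underlying argument is identical.
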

\bigskip

Let us now prove these three points.

\begin{proof} Let $u_0 \in L^2(\T)$.
    \begin{enumerate}[leftmargin=0cm]
        \item \textbullet \hsp $\Phi_N(t)$ is the flow of \eqref{truncated equation}. And, when we apply $\Pi_N$ to equation \eqref{truncated equation}, we see that $\Pi_N \Phi_N(t) u_0$ is the solution of the equation 
        \begin{equation*}
            \begin{cases}
                 i\p_t u + \p_x^2 u  = \Pi_N \left(|\Pi_Nu|^4\Pi_Nu \right) =\Pi_N \left(|u|^4u \right) \\
                 u|_{t=0} = \Pi_N u_0 
            \end{cases}
        \end{equation*}
        On the other hand, from the definition of the flow of \eqref{truncated equation}, the solution of the equation above is none other than $\Phi_N(t)\Pi_N u_0$ itself. This means that 
        \begin{equation*}
            \Pi_N \Phi_N(t) u_0=\Phi_N(t)\Pi_N u_0
        \end{equation*}
\textbullet \hsp Next, we show that \eqref{FNLS} can be seen as a finite-dimensional Hamiltonian equation on $E_N$. Every element $u \in E_N$ can be decompose as 
\begin{equation*}
    u = v + i w
\end{equation*}
where 
\begin{equation*}
    \begin{split}
        v &= \sum_{|k|\leq N} \Re(u_k)e^{ikx} =: \sum_{|k|\leq N} v_ke^{ikx} \in E_N \\
        w &= \sum_{|k|\leq N}\Im(u_k) e^{ikx} =: \sum_{|k|\leq N} w_ke^{ikx} \in E_N
    \end{split}
\end{equation*}
Furthermore, we invoke the operators 
\begin{equation*}
    \begin{split}
    &  \frac{\p}{\p \cjg{u}_k} := \frac{\p}{\p v_k} + i \frac{\p}{\p w_k} \\
    & \frac{\p}{\p \cjg{u}} := \frac{\p}{\p v} + i \frac{\p}{\p w} =:  \sum_{|k|\leq N}  e^{ikx}  \frac{\p}{\p v_k} + i \sum_{|k|\leq N}  e^{ikx}  \frac{\p}{\p w_k}
    \end{split}
\end{equation*}
and also the function 
\begin{equation*}
    \begin{split}
         H_N : \hsp & E_N \lra \R \\
            & u \longmapsto \frac{1}{2} \norm{\p_x^2u}^2_{\L^2(\T)} + \frac{1}{6} \norm{u}_{\L^6(\T)}^6
    \end{split}
\end{equation*} 
By performing elementary computations, we see that we can rewrite \eqref{FNLS} as 
\begin{equation*}
        \begin{cases}
        i\p_t u= \frac{\p H_N}{\p \cjg{u}}(u) \\
        u|_{t=0} = u_0 
    \end{cases}
    \tag{FNLS}
\end{equation*}
This means that \eqref{FNLS} is an Hamiltonian equation on $E_N$ with associated Hamiltonian $H_N$.

\bigskip

        \item Once again, applying $\Pi_N^{\perp}$ to equation \eqref{truncated equation}, we see that $\Pi_N^{\perp} \Phi_N(t)u_0$ is the solution of the equation 
        \begin{equation*}
            \begin{cases}
                i\p_t u + \p_x^2 u = 0 \\
        u|_{t=0} = \Pi_N^{\perp} u_0 
            \end{cases}
        \end{equation*}
        This means that 
        \begin{equation}\label{free Schro on hf}
            \Pi_N^{\perp} \Phi_N(t)u_0 = e^{it\p_x^2}\Pi_N^{\perp}u_0
        \end{equation}
        Furthermore, $\Pi_N \Phi_N(t) \Pi_N^{\perp}u_0$ is the solution of the equation 
        \begin{equation*}
            \begin{cases}
                 i\p_t u + \p_x^2 u  = \Pi_N \left(|\Pi_Nu|^4\Pi_Nu \right) \\
                 u|_{t=0} = 0
            \end{cases}
        \end{equation*}
        Thus, $\Pi_N \Phi_N(t) \Pi_N^{\perp}u_0$ is none other than $0$, and we obtain from \eqref{free Schro on hf} that
        \begin{equation*}
            \Phi_N(t) \Pi_N^{\perp}u_0 = \Pi_N^{\perp} \Phi_N(t) \Pi_N^{\perp}u_0 = e^{it\p_x^2}\Pi_N^{\perp}u_0 = \Pi_N^{\perp} \Phi_N(t)u_0
        \end{equation*}
        \item As a consequence of the two previous points, we can write 
        \begin{equation*}
            \begin{split}
                \Phi_N(t)u_0 &= \Pi_N \Phi_N(t)u_0 + \Pi_N^{\perp}\Phi_N(t)u_0 \\
                & = \Phi_N(t)\Pi_N u_0 + \Phi_N(t)\Pi_N^{\perp}u_0 \\
                & = \underbrace{\tld{\Phi}_N(t) \Pi_N u_0}_{\in E_N} + \underbrace{e^{it\p_x^2} \Pi_N^{\perp}u_0}_{\in E_N^{\perp}}
            \end{split}
        \end{equation*}
    \end{enumerate}
\end{proof}

\bibliographystyle{siam}
\bibliography{refs}

\begin{thebibliography}{10}

\bibitem{bernier2024dynamics}
{\sc J.~Bernier, B.~Grébert, and T.~Robert}, {\em Dynamics of quintic nonlinear schr{\"o}dinger equations in $h^{2/5+}(\mathbb{T})$}, arXiv preprint arXiv:2305.05236,  (2024).

\bibitem{bogachev1998gaussian}
{\sc V.~Bogachev}, {\em Gaussian Measures}, Mathematical surveys and monographs, American Mathematical Society, 1998.

\bibitem{Bourgain1993}
{\sc J.~Bourgain}, {\em Fourier transform restriction phenomena for certain lattice subsets and applications to nonlinear evolution equations. {I}. {S}chr\"{o}dinger equations}, Geom. Funct. Anal., 3 (1993), pp.~107--156.

\bibitem{bourgain1994periodic}
\leavevmode\vrule height 2pt depth -1.6pt width 23pt, {\em Periodic nonlinear {S}chr\"{o}dinger equation and invariant measures}, Comm. Math. Phys., 166 (1994), pp.~1--26.

\bibitem{bourgain2004remark}
{\sc J.~Bourgain}, {\em A remark on normal forms and the ``{$I$}-method'' for periodic {NLS}}, J. Anal. Math., 94 (2004), pp.~125--157.

\bibitem{burq2024almost}
{\sc N.~Burq and L.~Thomann}, {\em Almost {S}ure {S}cattering for the {O}ne {D}imensional {N}onlinear {S}chr\"{o}dinger {E}quation}, Mem. Amer. Math. Soc., 296 (2024).

\bibitem{burq2013probabilistic}
{\sc N.~Burq and N.~Tzvetkov}, {\em Probabilistic well-posedness for the cubic wave equation}, J. Eur. Math. Soc. (JEMS), 16 (2014), pp.~1--30.

\bibitem{coe2024sharp}
{\sc J.~Coe and L.~Tolomeo}, {\em Sharp quasi-invariance threshold for the cubic szeg\"{o} equation}, arXiv preprint arXiv:2404.14950,  (2024).

\bibitem{debussche2021quasi}
{\sc A.~Debussche and Y.~Tsutsumi}, {\em Quasi-invariance of {G}aussian measures transported by the cubic {NLS} with third-order dispersion on {$\mathbb{T}$}}, J. Funct. Anal., 281 (2021), pp.~Paper No. 109032, 23.

\bibitem{forlano_and_soeng2022transport}
{\sc J.~Forlano and K.~Seong}, {\em Transport of {G}aussian measures under the flow of one-dimensional fractional nonlinear {S}chr\"{o}dinger equations}, Comm. Partial Differential Equations, 47 (2022), pp.~1296--1337.

\bibitem{forlano2022quasi}
{\sc J.~Forlano and L.~Tolomeo}, {\em Quasi-invariance of gaussian measures of negative regularity for fractional nonlinear schr\" odinger equations}, arXiv preprint arXiv:2205.11453,  (2022).

\bibitem{forlano_and_trenberth2019transport}
{\sc J.~Forlano and W.~J. Trenberth}, {\em On the transport of {G}aussian measures under the one-dimensional fractional nonlinear {S}chr\"{o}dinger equations}, Ann. Inst. H. Poincar\'{e} C Anal. Non Lin\'{e}aire, 36 (2019), pp.~1987--2025.

\bibitem{genovese2022quasi}
{\sc G.~Genovese, R.~Luc\`a, and N.~Tzvetkov}, {\em Quasi-invariance of low regularity {G}aussian measures under the gauge map of the periodic derivative {NLS}}, J. Funct. Anal., 282 (2022), pp.~Paper No. 109263, 45.

\bibitem{genovese2023quasi}
\leavevmode\vrule height 2pt depth -1.6pt width 23pt, {\em Quasi-invariance of {G}aussian measures for the periodic {B}enjamin-{O}no-{BBM} equation}, Stoch. Partial Differ. Equ. Anal. Comput., 11 (2023), pp.~651--684.

\bibitem{genovese2023transport}
\leavevmode\vrule height 2pt depth -1.6pt width 23pt, {\em Transport of {G}aussian measures with exponential cut-off for {H}amiltonian {PDE}s}, J. Anal. Math., 150 (2023), pp.~737--787.

\bibitem{gunaratnam2022quasi}
{\sc T.~Gunaratnam, T.~Oh, N.~Tzvetkov, and H.~Weber}, {\em Quasi-invariant {G}aussian measures for the nonlinear wave equation in three dimensions}, Probab. Math. Phys., 3 (2022), pp.~343--379.

\bibitem{kuo2006gaussian}
{\sc H.-H. Kuo}, {\em Gaussian measures in banach spaces}, Gaussian Measures in Banach Spaces,  (2006), pp.~1--109.

\bibitem{Li_Wu_Xu_global}
{\sc Y.~Li, Y.~Wu, and G.~Xu}, {\em Global well-posedness for the mass-critical nonlinear {S}chr\"{o}dinger equation on {$\mathbb{T}$}}, J. Differential Equations, 250 (2011), pp.~2715--2736.

\bibitem{mcconnell_nonlin_smoothing}
{\sc R.~McConnell}, {\em Nonlinear smoothing for the periodic generalized nonlinear {S}chr\"{o}dinger equation}, J. Differential Equations, 341 (2022), pp.~353--379.

\bibitem{oh_soeng2021quasi}
{\sc T.~Oh and K.~Seong}, {\em Quasi-invariant {G}aussian measures for the cubic fourth order nonlinear {S}chr\"{o}dinger equation in negative {S}obolev spaces}, J. Funct. Anal., 281 (2021), pp.~Paper No. 109150, 49.

\bibitem{oh2018optimal}
{\sc T.~Oh, P.~Sosoe, and N.~Tzvetkov}, {\em An optimal regularity result on the quasi-invariant {G}aussian measures for the cubic fourth order nonlinear {S}chr\"{o}dinger equation}, J. \'{E}c. polytech. Math., 5 (2018), pp.~793--841.

\bibitem{oh2019quasi}
{\sc T.~Oh, Y.~Tsutsumi, and N.~Tzvetkov}, {\em Quasi-invariant {G}aussian measures for the cubic nonlinear {S}chr\"{o}dinger equation with third-order dispersion}, C. R. Math. Acad. Sci. Paris, 357 (2019), pp.~366--381.

\bibitem{oh2017quasi}
{\sc T.~Oh and N.~Tzvetkov}, {\em Quasi-invariant {G}aussian measures for the cubic fourth order nonlinear {S}chr\"{o}dinger equation}, Probab. Theory Related Fields, 169 (2017), pp.~1121--1168.

\bibitem{oh2020quasi}
\leavevmode\vrule height 2pt depth -1.6pt width 23pt, {\em Quasi-invariant {G}aussian measures for the two-dimensional defocusing cubic nonlinear wave equation}, J. Eur. Math. Soc. (JEMS), 22 (2020), pp.~1785--1826.

\bibitem{planchon2020transport}
{\sc F.~Planchon, N.~Tzvetkov, and N.~Visciglia}, {\em Transport of {G}aussian measures by the flow of the nonlinear {S}chr\"{o}dinger equation}, Math. Ann., 378 (2020), pp.~389--423.

\bibitem{planchon2022modified}
\leavevmode\vrule height 2pt depth -1.6pt width 23pt, {\em Modified energies for the periodic generalized {K}d{V} equation and applications}, Ann. Inst. H. Poincar\'{e} C Anal. Non Lin\'{e}aire, 40 (2023), pp.~863--917.

\bibitem{Simon+1974}
{\sc B.~Simon}, {\em The {$P(\phi )\sb{2}$} {E}uclidean (quantum) field theory}, Princeton Series in Physics, Princeton University Press, Princeton, NJ, 1974.

\bibitem{sosoe2020quasi}
{\sc P.~Sosoe, W.~J. Trenberth, and T.~Xian}, {\em Quasi-invariance of fractional {G}aussian fields by the nonlinear wave equation with polynomial nonlinearity}, Differential Integral Equations, 33 (2020), pp.~393--430.

\bibitem{sun2023quasi}
{\sc C.~Sun and N.~Tzvetkov}, {\em Quasi-invariance of gaussian measures for the $3 d $ energy critical nonlinear schr\" odinger equation}, arXiv preprint arXiv:2308.12758,  (2023).

\bibitem{thomann2010gibbs}
{\sc L.~Thomann and N.~Tzvetkov}, {\em Gibbs measure for the periodic derivative nonlinear {S}chr\"{o}dinger equation}, Nonlinearity, 23 (2010), pp.~2771--2791.

\bibitem{tzvetkov2008invariant}
{\sc N.~Tzvetkov}, {\em Invariant measures for the defocusing nonlinear {S}chr\"{o}dinger equation}, Ann. Inst. Fourier (Grenoble), 58 (2008), pp.~2543--2604.

\bibitem{tzvetkov2010construction}
{\sc N.~Tzvetkov}, {\em Construction of a {G}ibbs measure associated to the periodic {B}enjamin-{O}no equation}, Probab. Theory Related Fields, 146 (2010), pp.~481--514.

\bibitem{tzvetkov2015quasiinvariant}
{\sc N.~Tzvetkov}, {\em Quasiinvariant {G}aussian measures for one-dimensional {H}amiltonian partial differential equations}, Forum Math. Sigma, 3 (2015), pp.~Paper No. e28, 35.

\bibitem{tzvetkov2013gaussian}
{\sc N.~Tzvetkov and N.~Visciglia}, {\em Gaussian measures associated to the higher order conservation laws of the {B}enjamin-{O}no equation}, Ann. Sci. \'{E}c. Norm. Sup\'{e}r. (4), 46 (2013), pp.~249--299.

\end{thebibliography}

\end{document}